\DeclareMathAlphabet\mathbb{U}{msb}{m}{n}
\tikzset{%
  dummy/.style    = {circle,draw,inner sep=0pt,minimum size=2mm}%
}%
\def\@testdef #1#2#3{%
  \def\reserved@a{#3}\expandafter \ifx \csname #1@#2\endcsname
  \reserved@a  \else
  \typeout{^^Jlabel #2 changed:^^J%
    \meaning\reserved@a^^J%
    \expandafter\meaning\csname #1@#2\endcsname^^J}%
  \@tempswatrue \fi}
\numberwithin{equation}{section} 
\numberwithin{figure}{section}
\newtheorem{theorem}[equation]{Theorem}%
\newtheorem*{theorem*}{Theorem}%
\newtheorem{lemma}[equation]{Lemma}%
\newtheorem{proposition}[equation]{Proposition}%
\newtheorem{corollary}[equation]{Corollary}%
\newtheorem*{conjecture*}{Conjecture}%
\providecommand{\customgenericname}{}
\newcommand{\newcustomtheorem}[2]{%
  \newenvironment{#1}[1]
  {%
   \renewcommand\customgenericname{#2}%
   \renewcommand\theinnercustomgeneric{##1}%
   \innercustomgeneric
  }
  {\endinnercustomgeneric}
}
\theoremstyle{definition} 
\newtheorem{definition}[equation]{Definition}%
\newtheorem*{definition*}{Definition}%
\newtheorem{example}[equation]{Example}%
\newtheorem{remark}[equation]{Remark}%
\newtheorem{notation}[equation]{Notation}%
\newcommand{\set}[1]{\left\{#1\right\}}%
\newcommand{\sets}[2]{\left\{ #1 \;|\; #2\right\}}%
\newcommand{\longto}{\longrightarrow}%
\newcommand{\into}{\hookrightarrow}%
\newcommand{\vect}[1]{\text{\overrightharp{\ensuremath{#1}}}}
\newcommand{\Sym}{\ensuremath{\mathsf{Sym}}}%
\newcommand{\Set}{\ensuremath{\mathsf{Set}}}
\newcommand{\sSet}{\ensuremath{\mathsf{sSet}}}%
\newcommand{\sOp}{\ensuremath{\mathsf{sOp}}}%
\newcommand{\dSet}{\mathsf{dSet}}
\newcommand{\sdSet}{\mathsf{sdSet}}
\newcommand{\PreOp}{\mathsf{PreOp}}
\DeclareMathOperator{\colim}{colim}%
\DeclareMathOperator{\Lan}{Lan}%
\DeclareMathOperator{\Ho}{Ho}
\DeclareMathOperator{\Aut}{Aut}%
\newcommand{\F}{\ensuremath{\mathcal F}}
\renewcommand{\O}{\ensuremath{\mathcal O}}
\renewcommand{\P}{\ensuremath{\mathcal P}}
\title{Equivariant dendroidal sets and simplicial operads}
\author{Peter Bonventre, Lu\'is A. Pereira}%
\begin{document}

\maketitle

\begin{abstract}
	We establish a Quillen equivalence between the homotopy theories of 
	equivariant Segal operads and equivariant simplicial operads with norm maps.
	Together with previous work, 
	we further conclude that the homotopy coherent nerve
	is a right-Quillen equivalence from the 
	model category of
	equivariant simplicial operads with norm maps
	to the
	model category structure
	for equivariant-$\infty$-operads in equivariant dendroidal sets.
\end{abstract}

\tableofcontents

\section{Introduction}

This paper is the last in a series of five
(after \cite{Per18,BP20,BP_FCOP,BP_ACOP})
and concludes a project to establish a homotopy theoretical equivalence
between equivariant colored simplicial operads with norm maps
and equivariant-$\infty$-operads in equivariant dendroidal sets,
thus generalizing the analogous 
Cisinski-Moerdijk project 
\cite{CM13a,CM13b,CM11}
in the non-equivariant setting.

The key novelty (and difficulty) faced in the equivariant setting is that the homotopy theory of operads
needs to account for an 
extra piece of structure,
the so called \emph{norm maps},
which we now briefly recall
(for further discussion, 
see the introductions to any of
\cite{Per18},\cite{BP21},\cite{BP_FCOP}).

For simplicity,
let us focus on the category
$\mathsf{sOp}_{\**} = \mathsf{Op}_{\**}(\mathsf{sSet})$
of single colored simplicial operads.
Letting $G$ be a fixed finite group,
a $G$-equivariant (single colored simplicial) operad
is a $G$-object
$\O \in \mathsf{sOp}_{\**}^G$.
Crucially, 
note that the $n$-th level
$\O(n)$ then admits commuting actions
by $\Sigma_n$ and $G$ or, equivalently,
a $G \times \Sigma_n$ action.
One upshot of Blumberg and Hill's work in 
\cite{BH15}
is that the preferred notion of equivalence
in $\mathsf{sOp}_{\**}^G$
is that of \emph{graph equivalence},
by which we mean maps
$\mathcal{O} \to \mathcal{P}$
in $\mathsf{sOp}_{\**}^G$
such that the fixed point maps for \emph{graph subgroups}
\begin{equation}\label{NORMMAP EQ}
	\mathcal{O}(n)^{\Gamma}
	\xrightarrow{\sim}
	\mathcal{P}(n)^{\Gamma}
\qquad
	\text{for }
	\Gamma \leq G \times \Sigma_n
	\text{ such that }
	\Gamma \cap \Sigma_n = \**
\end{equation}
are Kan equivalences in $\mathsf{sSet}$.
The term graph subgroup is motivated by a description of such $\Gamma$:
they are necessarily of the form
$\Gamma = \{(h,\phi(H)) | h \in H\}$
for some subgroup $H \leq G$
and homomorphism $\phi \colon H \to \Sigma_n$.
Such fixed points $\mathcal{O}(n)^{\Gamma}$
are called 
\emph{spaces of norm maps}
since, for $X$ an $\O$-algebra,
the algebra multiplication maps on the left below
(see, e.g., \cite[\eqref{AC-ALGNORM EQ}]{BP_ACOP})
\[
	\O(n) \times X^{\times n} \to X
\qquad
	\O(n)^{\Gamma} \times N_{\Gamma} X \to X
\]
induce $H$-equivariant maps as on the right above, 
where $N_{\Gamma} X$ is a so called \emph{norm object},
which denotes $X^{\times n}$ together with the $H$-action induced by the identification $H \simeq \Gamma$.

The cornerstone of this project was the discovery by the authors of 
a category $\Omega_G$ of $G$-trees 
whose objects encode compositions of norm maps
in a $G$-operad $\O$
(a detailed discussion can be found in \cite[\S 4.3]{Per18}),
and which extends the Moerdijk-Weiss category $\Omega$
of trees (whose objects encode composition in an operad).
This category $\Omega_G$
then allowed us to build model structures
on the categories
$\mathsf{dSet}^G = \mathsf{Set}^{\Omega^{op} \times G}$
of \emph{equivariant dendroidal sets}
\cite[Thm. 2.1]{Per18}
and $\mathsf{sOp}^G$
of \emph{equivariant colored simplicial operads}
\cite[Thm. \ref{AC-THMA}]{BP_ACOP},
where in both cases the notion of weak equivalence
is determined by (a colored variant of) the norm map data as in 
\eqref{NORMMAP EQ}.
Our main result in this paper is then the following,
generalizing \cite[Thm. 8.15]{CM13b}.
\begin{customthm}{I}\label{QE THM}
	There is a Quillen equivalence
	\begin{equation}
	\label{QE_EQ}
	W_! \colon \dSet^G \rightleftarrows \sOp^G \colon hcN
	\end{equation}
	between equivariant dendroidal sets and
	equivariant simplicial operads with norm maps.
\end{customthm}
Here the right adjoint $hcN$
is a variant of the nerve functor accounting for homotopy information,
called the \emph{homotopy coherent nerve},
while the left adjoint $W_!$
is a ``fattened operadification'' 
which is related to the Boardman-Vogt resolution of operads
(see \cite[\S 4]{CM13b} for details).

Writing $\eta$ for both the terminal category and its nerve,
one has natural identifications
of slice categories
$\mathsf{dSet}^G_{/\eta} \simeq 
\mathsf{sSet}^G$
and
$\mathsf{sOp}^G_{/\eta} \simeq 
\mathsf{sCat}^G$,
so that by slicing Theorem \ref{QE THM}
one recovers 
Bergner's result \cite{Ber17}
in the $\infty$-category context.

However, it is worth nothing that
the presence of norm maps make 
our operadic result far more subtle
than the categorical analogue.
Indeed, the model structures on 
$\mathsf{sSet}^G$, $\mathsf{sCat}^G$
used in \cite{Ber17}
are built formally from the non-equivariant model structures
on $\mathsf{sSet}$, $\mathsf{sCat}$
by using the abstract framework in \cite{Ste16}.
On the other hand, applying that framework to the operadic context results in model structures
on 
$\mathsf{dSet}^G$
and
$\mathsf{sOp}^G$
that only account for the \emph{trivial norm maps}
(i.e. those for which $\Gamma \leq G$ in \eqref{NORMMAP EQ}).

The (conclusion to the) proof of Theorem \ref{QE THM}
can be found at the end of \S \ref{PFMNTHM SEC}.
However, this proof requires some background, which we now recall.
Just as in \cite{CM13b},
we make use of two additional categories,
the category 
$\mathsf{sdSet}^G
= \mathsf{Set}^{\Omega^G\times \Delta^{op} \times G}$
of \emph{equivariant dendroidal simplicial sets}
and its 
subcategory
$\mathsf{PreOp}^G$
of \emph{equivariant preoperads}, which fit into a diagram
\begin{equation}\label{ADJSQ EQ}
	\begin{tikzcd}
	\mathsf{PreOp}^G \ar{d}[swap]{\gamma^{\**}}&
	\mathsf{sOp}^G \ar{l}[swap]{N} \ar{d}{hcN}
\\
	\mathsf{sdSet}^G &
	\mathsf{dSet}^G \ar{l}{c_{!}}
	\end{tikzcd}
\end{equation}
where $N$ is the \emph{nerve functor}
and $\gamma^{\**}$, $c_!$ are the natural inclusions.

The model structures on the categories featured in 
\eqref{ADJSQ EQ}
were built in previous work.
More specifically,
\cite[Thm. 2.1]{Per18} provides the model structure on
$\mathsf{dSet}^G$,
\cite[Thm. I]{BP_ACOP} provides the model structure on
$\mathsf{sOp}^G$,
\cite[Def. 4.22]{BP20} gives the model structure on 
$\mathsf{sdSet}^G$,
and \cite[Thm. 4.39]{BP20} provides the model structure on
$\mathsf{PreOp}^G$.
These model structures generalize those in the work of Cisinski-Moerdijk
in the non-equivariant operadic context,
which in turn generalize corresponding model structures in the categorical context.
The following table, first appearing in \cite{BP20}, summarizes the relevant model structures, along with the nomenclature for the fibrant objects.
\begin{table}[htbp]
	\label{TABLE}
	\centering
	\resizebox{\columnwidth}{!}{%
		\begin{tabular}{|c|c|c|}
			\hline
			``categories up to htpy'' & ``operads up to htpy'' & ``equivariant operads up to htpy''
			\\ \hline
			simplicial sets $\sSet$ & dendroidal sets $\dSet$ & equivariant dendroidal sets $\dSet^G$
			\\
			Joyal model structure & model str. from \cite{CM11} & model structure from \cite{Per18}
			\\
			$\infty$-categories & $\infty$-operads & $G$-$\infty$-operads
			\\ \hline
			bisimplicial sets $\mathsf{ssSet}$ & simp. dend. sets $\mathsf{sdSet}$ & equiv. simp. dend. sets $\mathsf{sdSet}^G$
			\\
			Rezk model structure & model str. from \cite{CM13a} & model structure from \cite{BP20}
			\\
			complete Segal spaces & complete dend. Segal spaces & complete equiv. dend. Segal spaces
			\\ \hline
			Segal precategories $\mathsf{SeCat}$ & Segal preoperads $\mathsf{PreOp}$ & equiv. Segal preoperads $\mathsf{PreOp}^G$
			\\
			Hirschowitz-Simpson & model str. from \cite{CM13a} & model structure from \cite{BP20}
			\\
			Segal categories & Segal operads & equiv. Segal operads
			\\ \hline
			simplicial categories $\mathsf{sCat}$ & simplicial operads $\sOp$ & equiv. simplicial operads $\sOp^G$
			\\
			Bergner model structure & model str. from \cite{CM13b} & model structure from \cite{BP_ACOP}
			\\ \hline
		\end{tabular}
	}
	\caption{A summary of models for $\infty$-categories, $\infty$-operads, and $G$-$\infty$-operads.}
\end{table}

Considering now the functors in 
\eqref{ADJSQ EQ},
we have previously established that 
$c_!$ and $\gamma^{\**}$
are both left-Quillen equivalences
\cite[Thms. 4.30 and 4.41]{BP20}.

The proof strategy for establishing 
Theorem \ref{QE THM}
can then be summarized as follows.

First, the $(W_!,hcN)$ adjunction
is shown to be Quillen (Proposition \ref{W!_LEFTQ_PROP}).

Second, the square 
\eqref{ADJSQ EQ}
is shown to commute at the level of homotopy categories 
(this is 
shown at the end of 
\S \ref{PFMNTHM SEC}, 
by establishing the zigzag of weak equivalences in 
\eqref{BIGZIG EQ}).

Third and last, it thus suffices to show that the top horizontal 
functor $N$ in \eqref{ADJSQ EQ}
induces an equivalence of homotopy categories.
As in \cite{CM13b},
this last step requires some care.
The functor $N$ preserves all weak equivalences
(cf. the proof of Theorem \ref{PREQUIEQUIV THM})
and is thus already a derived functor,
but it is not quite right Quillen 
due to $\mathsf{PreOp}^G$ not having enough fibrant objects 
or, dually, having too many cofibrant objects.
To address this, we show in \S \ref{TAMEDEFEX SEC}
that $\mathsf{PreOp}^G$
admits an alternative model structure, 
called the \emph{tame model structure} 
(Theorem \ref{TAMEMS_THM}),
with the same weak equivalences but
less cofibrant objects.
Using this alternative model structure,
it can then be shown 
(Theorem \ref{PREQUIEQUIV THM})
that $N$ becomes a right-Quillen equivalence,
concluding the argument.

\subsection{Outline}

First, in \S 2 we mostly recall 
some notions from previous work that are used throughout.
Namely, 
\S \ref{FORESTS_SEC} and \S \ref{GTREES SEC}
recall the necessary properties of the categories 
$\Omega$ of trees and $\Omega_G$ of $G$-trees,
while \S \ref{EDS_SEC}
recalls the model structure on the category 
$\mathsf{dSet}^G$
of equivariant dendroidal sets.

The overall goal of \S \ref{TAME_SEC}
is to establish the existence of the alternative
\emph{tame model structure} on $\mathsf{PreOp}^G$.
The content of \S \ref{JT_SEC} and \S \ref{SPREOP_SEC}
is again expository in nature, 
recalling the model structures on 
$\mathsf{sdSet}^G$, 
$\mathsf{PreOp}^G$.
In \S \ref{FIBTENS_SEC}
we introduce a somewhat novel construction,
called the \emph{fibered tensor product}
$\otimes_{\mathfrak{C}_{\bullet}}$,
which is used in 
\S \ref{TAMEDEFEX SEC}
to describe and build the tame model structure on 
$\mathsf{PreOp}^G$.
The use of $\otimes_{\mathfrak{C}_{\bullet}}$
is motivated by the observation that the
model structure on $\mathsf{sOp}^G$
can be described using an analogous tensor product,
thus simplifying the task of showing that  
$\tau \colon \mathsf{PreOp}^G_{tame}
\rightleftarrows
\mathsf{sOp}^G \colon N$
is a Quillen adjunction
(cf. Theorem \ref{PREQUIEQUIV THM}).

The goal of \S \ref{QE_SEC} is to prove 
Theorem \ref{QE THM}
(up to Lemma \ref{KEYPRVAR LEM},
whose proof is postponed to \S \ref{KEYRES SEC}).
First,
\S \ref{GSOP_SEC} recalls the model structure on
$\mathsf{sOp}^G$.
Then, in 
\S \ref{PREOPOPEQUIV SEC}
we establish Theorem \ref{PREQUIEQUIV THM},
showing that the top functor $N$ in 
\eqref{ADJSQ EQ}
induces an equivalence of homotopy categories.
Lastly, \S \ref{PFMNTHM SEC}
concludes the proof of Theorem \ref{QE THM}
by showing that
\eqref{ADJSQ EQ}
commutes in a homotopical sense.

Our last main section \S \ref{KEYRES SEC}
is dedicated to the rather technical proof of 
Lemma \ref{KEYPRVAR LEM},
which examines the homotopical properties
of certain pushouts in 
$\mathsf{Op}^G$
after applying the nerve functor
$N \colon \mathsf{Op}^G \to \mathsf{dSet}^G$,
and is at the core of the proof of 
Theorem \ref{PREQUIEQUIV THM}
and thus also of Theorem \ref{QE THM}.

Lastly, in Appendix \ref{HGEO AP}
we give an explicit description of the discretization of
a $G$-$\infty$-operad $X \in \mathsf{dSet}^G$,
adapting the similar non-equivariant description
in \cite[\S 6]{MW09}.
This then allows us to
show that,
for a fibrant operad
$\O \in \mathsf{sOp}^G$,
the natural discretizations of
$hcN(\O) \in \mathsf{dSet}^G$
and
$N(\O) \in \mathsf{sdSet}^G$
coincide (Proposition \ref{HOOPID_PROP}),
thus generalizing the non-equivariant analogue result
\cite[Prop. 4.8]{CM13b}.
Just as in the non-equivariant story,
Proposition \ref{HOOPID_PROP} is closely related
to the proof of 
Proposition \ref{W!_LEFTQ_PROP},
which shows that 
$W_! 
\colon 
\mathsf{dSet} 
\rightleftarrows 
\mathsf{Op}
\colon 
hcN$
is a Quillen adjunction,
though Proposition \ref{W!_LEFTQ_PROP}
does not require the full strength of 
Proposition \ref{HOOPID_PROP}.
A detailed discussion can be found in 
Remark \ref{TWOHOMOP REM}.

\begin{remark}
        \label{MSLIST_REM}
        This paper utilizes and compares several model structures on related categories.
        We list them below, along with internal references for their definitions.
        \begin{itemize}
        \item model structure on $\mathsf{dSet}^G$, Theorem \ref{DSETGMOD THM}.
        \item joint model structure on $\mathsf{sdSet}^G$, Theorem \ref{JB_THM}.
        \item normal model structure on $\mathsf{PreOp}^G$, Theorems \ref{PREOPMS THM} and \ref{FIBPREOP THM}.
        \item tame model structure on $\mathsf{PreOp}^G$, Theorem \ref{TAMEMS_THM}.
        \item model structure on $\mathsf{sOp}^G$, Theorem \ref{SOPG_THM}.
        \end{itemize}
\end{remark}

\section{Equivariant trees and dendroidal sets}
\label{EQTRDS SEC}

In this mostly expository section, 
we recall the categories of trees,
as well as the associated presheaf categories,
which will be needed throughout the paper.
A more detailed discussion can be found in 
\cite[\S 5,\S 6]{Per18}, \cite[\S 2]{BP20}.

\subsection{Trees and forests}
\label{FORESTS_SEC}

We start by recalling the Moerdijk-Weiss category $\Omega$ of trees
\cite{MW07}.
First, each object of $\Omega$ can be encoded by 
a (rooted) tree diagram $T$ as below.
\begin{equation}\label{eq:TREE}
	\begin{tikzpicture}[auto,grow=up, level distance = 2.2em,
	every node/.style={font=\scriptsize,inner sep = 2pt}]%
	\tikzstyle{level 2}=[sibling distance=4em]%
	\tikzstyle{level 3}=[sibling distance=2.25em]%
            \node [font=\normalsize] {$T$}
            child{node [dummy] {}
              child{node [dummy] {}
                edge from parent node [swap] {$e$}
              }
              child[level distance = 2.9
              em]{edge from parent node [swap] {$d$}}
              child{node [dummy] {}
                child{edge from parent node [near end, swap] {$b$}}
                child{edge from parent node [near end] {$\phantom{b}a$}}
                edge from parent node {$c$}
              }
              edge from parent node [swap] {$r$}
            };        
      \end{tikzpicture}
\end{equation}
Edges with no vertices $\circ$ above them are called \textit{leaves}, the unique bottom edge is called the \textit{root},
and edges that are neither are called \textit{inner edges}.
In the example above, $a$, $b$ and $d$ are leaves, $r$ is the root, and $c$ and $e$ are inner edges.
The sets of edges, inner edges, and vertices of a tree $T$ are denoted 
$\boldsymbol{E}(T)$, 
$\boldsymbol{E}^{\mathsf{i}}(T)$, 
and $\boldsymbol{V}(T)$, respectively.

Describing the maps in $\Omega$ requires some care.
To do so, we recall the algebraic notion of
a \emph{broad poset},
originally due to Weiss \cite{Wei12}
and further developed in \cite{Per18}.
For each edge $t$ in a tree topped by a vertex $\circ$, we write
$t^{\uparrow}$
for the tuple of edges immediately above $t$.
In \eqref{eq:TREE} one has  
$r^{\uparrow} = cde$, 
$c^\uparrow = ab$, 
and $e^\uparrow = \epsilon$,
where $\epsilon$ denotes the empty tuple.
We then encode each vertex symbolically as
$t^{\uparrow} \leq t$,
which we call a 
\emph{generating broad relation}.
This notation is motivated by a form of transitivity.
For example,
in \eqref{eq:TREE}
the relations
$cde \leq r$ and $ab \leq c$
generate, under \emph{broad transitivity},
the relation $abde \leq r$,
and one may similarly obtain relations
$cd \leq r$ and $abd \leq r$.
These relations, together with identity relations $t \leq t$,
then form the \emph{broad poset associated with $T$}
(alternatively, this broad poset data
is essentially equivalent to the data of
the colored operad $\Omega(T)$ associated to $T$, 
cf. \cite[\S 3]{MW07},
\cite[Rem. 4.4]{Per18}
or \eqref{TAUNER EQ}).

A map of trees $\varphi \colon S \to T$
in $\Omega$ is then an underlying map
of edge sets 
$\varphi \colon \boldsymbol{E}(S) \to \boldsymbol{E}(T)$
which preserves broad relations.

If an edge $t$ is pictorially above (or equal to) an edge $s$, we write $t \leq_d s$.
Equivalently, $t \leq_d s$ if there exists a broad relation $s_1\dots s_n \leq s$ such that $t = s_i$ for some $i$.
\\

Moreover,
our discussion will be simplified by assuming 
that $\Omega$ has exactly one representative of 
each \emph{planarized tree},
by which we mean a tree together with a planar representation as in \eqref{eq:TREE}
(alternatively, 
planarizations can be formalized as
suitable extensions of $\leq_d$ to a total order
\cite[\S 3.1]{BP21}).
Importantly, this implies that each map  
$\varphi \colon S \to T$ in $\Omega$
has a strictly\footnote{That is, this factorization is not simply unique up to unique isomorphism.} unique factorization
$S \xrightarrow{\simeq} S' \to T$
as an isomorphism followed by a \emph{planar map}
\cite[Prop. 3.24]{BP21}.
Informally, $S'$ is obtained by giving $S$ the planarization ``pulled back'' from $T$.
Note that, 
in particular, the subcategory of planar maps is skeletal,
i.e. the only planar isomorphisms are the identities.

\begin{notation}
	We write $\eta$ for the \textit{stick} tree, the unique tree with a single edge and no vertices.
\end{notation}

\begin{example}\label{TREEMAP_EX}
	The edge labels in each tree $S_i$ below determine maps
	$\boldsymbol{E}(S_i) \to \boldsymbol{E}(T)$,
	where $T$ is as in \eqref{eq:TREE}.
	For $i \leq 4$ this encodes maps
	$S_i \to T$ in $\Omega$,
	but not for $i=5$.
\begin{equation}
\begin{tikzpicture}[auto, grow=up, level distance = 2.2em,
	every node/.style={font=\scriptsize,inner sep = 2pt}]
\tikzstyle{level 2}=[sibling distance=3em]%
\tikzstyle{level 3}=[sibling distance=2.25em]%
	\node at (0,0) [font=\normalsize] {$S_1$} %
                  child{node [dummy] {}
                    child{edge from parent node [swap] {$d\phantom{c}$}}
                    child{node [dummy] {}
                      child{edge from parent node [near end, swap] {$b$}}
                      child{edge from parent node [near end] {$\phantom{b}a$}}
                      edge from parent node {$\phantom{d}c$}
                    }
                    edge from parent node [swap] {$r$}
                  };
\tikzstyle{level 2}=[sibling distance=2em]%
	\node at (2.5,0) [font=\normalsize] {$S_2$} %
                  child{node [dummy] {}
                    child{edge from parent node [swap] {$e$}}
                    child [level distance = 2.7em] {edge from parent node [swap,near end] {$d$}}
                    child{edge from parent node {$c$}}
                    edge from parent node [swap] {$r$}
                  };
	\node at (5.5,0) [font=\normalsize] {$S_3$} 
                  child{node [dummy] {}
                    child{
                      edge from parent node [swap] {$e\phantom{a}$}
                    }
                    child [level distance = 2.7em]{edge from parent node [swap, very near end] {$d$}}
                    child [level distance = 2.7em]{edge from parent node [very near end] {$b$}}
                    child{edge from parent node {$\phantom{e}a$}}
                    edge from parent node [swap] {$r$}
                  };
\tikzstyle{level 2}=[sibling distance=3em]%
	\node at (9.2,0) [font=\normalsize] {$S_4$} 
                  child{node [dummy] {}
                    child{node [dummy] {}
                      edge from parent node [swap] {$e$}
                    }
                    child[level distance = 2.5em]{node [dummy] {}
                      child[level distance = 2.2em]{edge from parent node [swap] {$d'$}}
                      edge from parent node [swap] {$d$}
                    }
                    child{node [dummy] {}
                      child{edge from parent node [swap,near end] {$b$}}
                      child{edge from parent node [near end] {$\phantom{b}a$}}
                      edge from parent node {$c$}
                    }
                    edge from parent node [swap] {$r$}
                  };                    
	\node at (12,0) [font=\normalsize] {$S_5$}
                  child{node [dummy] {}
                    child{edge from parent node [swap] {$d\phantom{c}$}}
                    child{node [dummy] {}
                      child{edge from parent node {$b$}}
                      edge from parent node {$\phantom{d}c$}
                    }
                    edge from parent node [swap] {$r$}
                  };
\end{tikzpicture}
\end{equation}
\end{example}

A map of trees $\varphi \colon S \to T$ is called:
\begin{itemize}
\item a \textit{tall map} if
      $\varphi(\underline{l}_S) = \underline{l}_T$ and $\varphi(r_S) = r_T$,
      with $\underline{l}_{(-)}$ and $r_{(-)}$ denoting the tuple of leaf edges and the root edge;
\item a \textit{face map} if it is injective on edges;
      an \textit{inner face} if it is also tall; and
      an \textit{outer face} if, for any factorization
      $\varphi \simeq \varphi_1\varphi_2$
      with $\varphi_1,\varphi_2$ face maps
      and $\varphi_2$ inner, 
      $\varphi_2$ is an isomorphism;
\item a \textit{degeneracy} if it is surjective on edges and preserves leaves
      (and is thus tall).
\end{itemize}

Pictorially, inner face maps 
$S \to T$ remove some edges in $T$
(and merge the vertices adjacent to those edges),
outer face maps remove some vertices of $T$,
and degeneracies collapse some of the unary vertices of $S$.

\begin{example}
	In Example \ref{TREEMAP_EX},
	$S_1 \to T$ is an inner face, 
	$S_2 \to T$ is an outer face,
	$S_3 \to T$ is a face that is neither inner nor outer,
	and $S_4 \to T$ is a degeneracy.
\end{example}

\begin{notation}\label{MAPLABELS_NOT}
	In the remainder of \S \ref{EQTRDS SEC}
      we will label a 
      map in $\Omega$
      by the letters d/i/o/t/f/p
      to indicate that the map is
      a degeneracy/inner face/outer face/tall/face/planar.
\end{notation}

\begin{proposition}[{\cite[Prop. 2.2]{BP20}}]
        \label{NP_TREEFACT_PROP}
	A map of trees $\varphi \colon S \to T$ has a factorization, unique up to unique isomorphisms,
        \begin{equation}\label{NP_TREEFACT_EQ}
              S \xrightarrow{d} 
              S' \xrightarrow{i} 
              S'' \xrightarrow{o}
              T
        \end{equation}
        as a degeneracy followed by an inner face followed by an outer face.
\end{proposition}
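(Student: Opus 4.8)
The plan is to build the factorization in two stages — first peeling off the outer face on the right to leave a tall map, then splitting that tall map as a degeneracy followed by an inner face — and afterwards to prove uniqueness by characterizing each intermediate tree intrinsically.

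\emph{Existence.} Start from the observation that $\underline{l}_S \leq r_S$ is a broad relation of $S$ (the one encoding all of $S$, obtained by composing every generating relation of $S$ upward from the root), so its image $\varphi(\underline{l}_S) \leq \varphi(r_S)$ is a broad relation of $T$; let $S''$ be the subtree of $T$ that this relation cuts out, equivalently the smallest subtree of $T$ containing $\varphi(\boldsymbol{E}(S))$. Then $\varphi$ factors as $S \xrightarrow{\psi} S'' \xrightarrow{o} T$, where $o\colon S'' \hookrightarrow T$ deletes only vertices of $T$ lying outside $S''$, all of which are outer, so that $o$ is an outer face; and $\psi$ sends the root of $S$ to that of $S''$ and the leaves of $S$ onto those of $S''$, so $\psi$ is tall. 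To split $\psi$, note that $\psi$ can identify two distinct edges $e,e'$ only when they are $\leq_d$-comparable — otherwise a leaf cut of $S$ running through both would map to a broad relation of $T$ with a repeated entry, which cannot occur in a tree — and, by the same argument, no branching vertex can lie strictly between them, so $\psi$ collapses only chains of unary vertices. Collapsing exactly these chains produces a tree $S'$ with a degeneracy $d\colon S\to S'$ and an induced map $i\colon S'\to S''$ that is injective on edges; since $\psi = i\circ d$ is tall and $d$ is a degeneracy, $i$ is tall as well, hence an inner face. This yields $S \xrightarrow{d} S' \xrightarrow{i} S'' \xrightarrow{o} T$.

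\emph{Uniqueness.} Let $S\xrightarrow{d_j}S_j'\xrightarrow{i_j}S_j''\xrightarrow{o_j}T$, $j=1,2$, be two factorizations of the stated form. Degeneracies and inner faces are tall, so each composite $S\to S_j''$ is tall; hence $o_j$ is an outer face whose images of the root and leaves of $S_j''$ are $\varphi(r_S)$ and $\varphi(\underline{l}_S)$. Its image is therefore the unique subtree of $T$ with that root and that leaf set, namely the one cut out by $\varphi(\underline{l}_S)\leq\varphi(r_S)$; since face maps are monomorphisms in $\Omega$, this pins down a unique isomorphism $S_1''\cong S_2''$ over $T$. The tall maps $S\to S_j''$ then agree under this identification, and the degeneracy--inner-face factorization of a fixed tall map is unique up to unique isomorphism, because the degeneracy must collapse precisely the edges the map identifies (degeneracies are epimorphisms, inner faces monomorphisms in $\Omega$) while the inner face carries the complementary data. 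Combining these compatibilities gives the required unique isomorphisms $S_1'\cong S_2'$ and $S_1''\cong S_2''$.

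The step I expect to be the main obstacle is the bookkeeping around stumps (nullary vertices) in the first stage: one must check that $\varphi(\underline{l}_S)\leq\varphi(r_S)$ really does cut out a \emph{subtree} of $T$, with the right stumps closed off, and that the vertices of $T$ it omits are genuinely outer — so that $o$ is an \emph{outer} face rather than merely a face and $\psi$ is genuinely tall. Once the combinatorics of trees and broad relations are unwound, the remaining checks (that $d$ is a degeneracy, that $i$ is injective on edges and tall, and the epi/mono arguments for uniqueness) are routine.
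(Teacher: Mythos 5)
Your plan --- peel off the outer face first via the tall--outer factorization, then split the resulting tall map as a degeneracy followed by an inner face, and use epi/mono arguments for uniqueness --- is exactly the route indicated by Remark \ref{TODF REM}, which identifies the stated factorization as the combination of the tall--outer and degeneracy--face factorizations. The existence and uniqueness arguments you give are sound.

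One correction to your construction of $S''$, however: the parenthetical ``equivalently the smallest subtree of $T$ containing $\varphi(\boldsymbol{E}(S))$'' is false when $T$ has stumps, and this is precisely the danger you flag at the end. For instance, let $T$ have root $r$, a binary vertex $v_0$ with inputs $a,b$, a nullary vertex above $a$, and a unary vertex above $b$ with leaf $c$; let $S$ be the $1$-corolla with root $s$ and leaf $d$, and $\varphi(s)=r$, $\varphi(d)=c$. The smallest outer face of $T$ containing $\varphi(\boldsymbol{E}(S))=\{r,c\}$ is the one omitting the nullary vertex, whose leaf set is $\{a,c\}$; but $\varphi$ does not factor through it, since $c\leq r$ fails there. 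The correct $S''$ is the outer face with root $\varphi(r_S)$ and leaf set \emph{exactly} $\varphi(\underline{l}_S)$ --- in this example all of $T$ --- which forces the inclusion of the entire (necessarily leafless) subtree of $T$ above any branch lying on no path to an image leaf. Since your argument elsewhere consistently invokes the ``cut out by $\varphi(\underline{l}_S)\leq\varphi(r_S)$'' description rather than the smallest-containing-subtree one, this is a repairable slip rather than a structural gap.
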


\begin{remark}\label{TODF REM}
	A map $\varphi \colon S \to T$
	is tall (resp. a face)
	iff in the decomposition \eqref{NP_TREEFACT_EQ}
	the component labeled $o$ (resp. $d$)
	is an isomorphism.
	As such, by combining the 
	first two (resp. last two)
	maps in \eqref{NP_TREEFACT_EQ}
	one recovers the 
	``tall-outer face'' 
	(resp. ``degeneracy-face'')
	factorization of the map $\varphi$
	\cite[Prop. 3.36]{BP21}, \cite[Prop. 5.37]{Per18}.
\end{remark}

\begin{remark}\label{CNVXM REM}
	Following the previous remark, 
	it is natural to consider the class of maps
	$\varphi \colon S \to T$
	such that the inner face factor
	in \eqref{NP_TREEFACT_EQ} is an isomorphism.
	We call these maps \textit{convex},
	since they are readily seen to be 
	characterized by the following property:
	if $e <_d e' <_d e''$ in $T$ 
	and $e,e''$ are in the image of $\varphi$
	then so is $e'$.
	Notably, it follows from this characterization that convex maps are also closed under composition.
	
	Equivalently, $\varphi$ is convex precisely if
	the ``tall-outer face'' and ``degeneracy-face''
	factorizations coincide.
	In particular, outer faces are characterized
	as the convex faces.
\end{remark}

When accounting for planar structures,
one has the following refinement of 
Proposition \ref{NP_TREEFACT_PROP}.

\begin{proposition}[{cf. \cite[Prop. 2.2]{BP20}}]
      \label{TREEFACT_PROP}
      A map of trees $\varphi \colon S \to T$ has a strictly unique factorization
\begin{equation}\label{TREEFACT_EQ}
	S \xrightarrow{\simeq} 
	S_p \xrightarrow{pd} 
	\varphi S \xrightarrow{pi} 
	\overline{\varphi S} \xrightarrow{po} T
\end{equation}
	as an isomorphism followed by a planar degeneracy, a planar inner face, and a planar outer face.
\end{proposition}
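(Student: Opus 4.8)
The plan is to bootstrap this from the unplanarized factorization of Proposition~\ref{NP_TREEFACT_PROP} together with the strictly unique ``isomorphism followed by planar map'' factorization \cite[Prop.~3.24]{BP21}. First I would use the latter to factor $\varphi$ strictly uniquely as $S \xrightarrow{\simeq} S_p \xrightarrow{p} T$ with $p$ planar. Since the leading isomorphism and the planar map $p$ are then uniquely determined by $\varphi$, it suffices to produce a strictly unique factorization of an arbitrary planar map $p \colon S_p \to T$ as a planar degeneracy followed by a planar inner face followed by a planar outer face; this reduction handles both existence and uniqueness at once.

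For existence, apply Proposition~\ref{NP_TREEFACT_PROP} to $p$, yielding $S_p \xrightarrow{d} S' \xrightarrow{i} S'' \xrightarrow{o} T$ with $S'$, $S''$ not yet carrying planarizations. Because $o$ and $i$ are face maps, hence injective on edges, and $T$ already carries a planar order, I would planarize $S''$ by restricting the planar order of $T$ along $o$, and $S'$ by restricting that of $S''$ along $i$ (equivalently, restricting the order of $T$ along $o\circ i$); one checks routinely that these restrictions are total orders extending $\leq_d$, hence genuine planarizations, and that they are the unique planarizations making $i$, $o$ planar. Replacing $S'$, $S''$ by the corresponding objects of $\Omega$ --- which I write as $\varphi S$ and $\overline{\varphi S}$ --- the maps $i$ and $o$ become a planar inner face and a planar outer face, innerness and outerness being unaffected by such replacements.

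The crux is that the degeneracy $d \colon S_p \to \varphi S$ is then automatically planar, and this is the step I expect to require the most care. The key observation is that an edge-injective planar map is an order-embedding for the planar orders; indeed, by construction the planar order on $\varphi S$ is precisely the restriction of that of $T$ along the edge-injection $o\circ i$, so $o\circ i$ both preserves and reflects the planar order. Since $p = (o\circ i)\circ d$ is planar and hence preserves the planar order, it follows that $d$ preserves it as well, i.e.\ $d$ is a planar degeneracy. This yields the factorization $S_p \xrightarrow{pd} \varphi S \xrightarrow{pi} \overline{\varphi S} \xrightarrow{po} T$, and hence, reinstating the leading isomorphism, the factorization in \eqref{TREEFACT_EQ}.

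Finally, for strict uniqueness it remains, by the reduction above, to see that a planar map $p$ admits at most one such planar factorization. Given two, Proposition~\ref{NP_TREEFACT_PROP} supplies unique isomorphisms between the corresponding intermediate trees that commute with all the structure maps. Processing these from the outer-face end inward, each comparison isomorphism fits in a commuting triangle whose other two legs are planar face maps; as these are order-embeddings, the isomorphism is order-preserving, hence planar, and therefore --- the planar subcategory being skeletal --- an identity. Thus the two factorizations coincide on the nose, which completes the argument.
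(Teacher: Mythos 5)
The paper states this proposition without proof, deferring to \cite[Prop.~2.2]{BP20}, so there is no internal argument to compare against; it is evidently treated as a routine planar refinement of Proposition~\ref{NP_TREEFACT_PROP}. Your reconstruction is correct and is what such a refinement must amount to: factor off the isomorphism via \cite[Prop.~3.24]{BP21}, planarize the unplanarized factorization by restricting the planar order of $T$ along the (edge-injective) face maps, deduce that the degeneracy factor is automatically planar from order-preservation, and settle uniqueness by checking that the comparison isomorphisms supplied by Proposition~\ref{NP_TREEFACT_PROP} are planar (hence identities), working inward from the outer-face end. One point worth flagging explicitly in a polished write-up: the crux step quietly identifies ``planar'' with ``planar-order-preserving on edges'' in a sense robust enough to apply to non-injective maps (so that it is stable under composition and can be transferred from $p$ to $d$ through the order-embedding $o\circ i$); this does follow from the \cite{BP21} definition of planar map, but the present paper leaves that characterization implicit, so the argument would benefit from a pointer to it.
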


\begin{remark}\label{TREEFACT_REM}
      The notation $\varphi S$ is motivated by the fact that this tree has edge set
      $\boldsymbol{E}(\varphi S) = \varphi (\boldsymbol{E}(S))$,
      while the 
      notation $\overline{\varphi S}$ is an instance of the 
      \emph{outer closure of an inner face}
      notation in \cite[Not. 2.14]{BP20}.
\end{remark}

\begin{remark}
        \label{TREEMAPCOMP_REM}
	Generalizing Remarks \ref{TODF REM} and \ref{CNVXM REM},
	one has that, for any subset 
	$\mathcal{S} \subseteq \{\simeq,pd,pi,po\}$
	of the arrow labels 
	in \eqref{TREEFACT_EQ},
	the type of maps whose
	factors labeled by $\mathcal{S}$ are identities 
	is closed under composition.

	For example, the maps such that 
	the factors labeled $\simeq$ and $pi$
	are identities are the planar convex maps,
	while those maps such that
	the factors labeled $pd$ and $po$ 
	are identities are the 
	(possibly not planar) inner face maps.
	Both of these kinds of maps are closed under composition.	
\end{remark}

A \textit{corolla} is a tree with a single vertex.
We note that the subcategory of $\Omega$ spanned by corollas and isomorphisms is naturally identified with to the category $\Sigma$ of standard finite ordered sets
$\{1,2,\cdots,n\}$ and (non-ordered) isomorphisms.


\begin{notation}
        \label{TV_NOT}
        For $T \in \Omega$ and $v \in \boldsymbol{V}(T)$, 
        we write $T_v \hookrightarrow T$
        for the subcorolla whose vertex is $v$.
\end{notation}

Next, we recall the categories of (colored) forests used in 
\cite[Def. \ref{OC-COLFOR DEF}]{BP_FCOP}.

\begin{definition}\label{FOR DEF}
      The category $\Phi$ of \textit{forests} is the coproduct completion of the category $\Omega$ of trees:
      objects are formal coproducts 
      $F = \amalg_{i \in I} F_i$ with $F_i \in \Omega$,
      and an arrow 
      $\varphi \colon \amalg_{i \in I} F_i \to
      \amalg_{j \in J} F'_j$ is given by
      a map of indexing sets $\varphi \colon I \to J$ and
      maps $\varphi_i \colon F_i \to F'_{\varphi(i)}$ in $\Omega$ for each $i \in I$.

The sets of \emph{edges}, \emph{inner edges}, \emph{vertices}
of a forest $F = \amalg_i F_i$
are defined in the natural way as
\[
	\boldsymbol{E}(F)
	\simeq
	\amalg_i \boldsymbol{E}(F_i),
\qquad
	\boldsymbol{E}^{\mathsf{i}}(F)
	\simeq
	\amalg_i \boldsymbol{E}^{\mathsf{i}}(F_i),
\qquad
	\boldsymbol{V}(F)
	\simeq
	\amalg_i \boldsymbol{V}(F_i).
\]
\end{definition}

\begin{remark}
        \label{PLANARFOR_REM}
        As with trees $T \in \Omega$, 
        we assume that each forest $F = \amalg_{i \in I} F_i \in \Phi$
        is planarized \cite[Def. 3.2 and Rem. 3.15]{BP21}, 
        which is equivalent to choosing a total order of the indexing set $I$ and a planarization of each $F_i$.
        Moreover, we similarly assume that $\Phi$ contains exactly one representative of each planarization,
        so that the only planar isomorphisms are again the identities.
        
        However, we caution that, in order to pullback a planarization
        along $\varphi \colon F \to \tilde{F}$,
        one needs to assume  
        $\varphi$ sends roots of $F$
        to $\leq_d$-incomparable edges of $\tilde{F}$, 
        i.e. that $\varphi$ is an \emph{independent map}
        \cite[Def. 5.28]{Per18}.
        As such, in the context of forests our definition of 
        planar map requires that the map is independent
        \cite[Prop. 3.19]{BP21}.
        In particular, the factorization
        $F \xrightarrow{\simeq} F' \to \tilde{F}$
        of a map $\varphi$ as an isomorphism followed by a planar map
        exists only if $\varphi$ is independent
        \cite[Prop. 3.24]{BP21}.
\end{remark}

\begin{definition}\label{CFOREST_DEF}
      Let $\mathfrak C$ be a set of colors.
      The category $\Phi_{\mathfrak C}$ of \textit{$\mathfrak C$-forests} has:
      \begin{itemize}
      \item objects pairs $\vect F = (F, \mathfrak c)$ with
            $F \in \Phi$ a forest and
            $\mathfrak c \colon \boldsymbol{E}(F) \to \mathfrak C$ a coloring of its edges;
      \item arrows $\vect F = (F, \mathfrak c) \to (F', \mathfrak c') = \vect{F'}$ maps
            $\varphi \colon F \to F'$ in $\Phi$ such that $\mathfrak c = \mathfrak c' \varphi$.
      \end{itemize}      
	Lastly, we write
	$\Omega_{\mathfrak{C}} \subset \Phi_{\mathfrak{C}}$,
	which we call the category of 
	\emph{$\mathfrak{C}$-trees},
	for the full subcategory spanned by the objects 
	whose underlying forest is a tree,
	and 
	$\Sigma_{\mathfrak{C}} \subset \Omega_{\mathfrak{C}}$,
	which we call the category of 
	\emph{$\mathfrak{C}$-corollas},
	for the further subcategory of objects whose underlying tree is a corolla
	and whose maps are isomorphisms.
	Note that a change of color map
	$f \colon \mathfrak{C} \to \mathfrak{D}$
	induces a map
	$f \colon \Phi_\mathfrak{C} \to \Phi_\mathfrak{D}$ via $\vect{C} = (\mathfrak{c}_i)_{0\leq i \leq n}
	\mapsto
	(f\mathfrak{c}_i)_{0\leq i \leq n}= f\vect{C}$.
\end{definition}

\subsection{Equivariant trees}
\label{GTREES SEC}

We next recall the category $\Omega_G$ of equivariant trees, which encodes the combinatorics of compositions of norm maps.
A thorough discussion can be found in \cite[\S 5]{Per18} or \cite[\S 2]{BP20}.

We begin with an example.
Let $G =  
\langle \rho, \sigma | \rho^4=1, \sigma^2 =1, 
\sigma \rho = \rho^3 \sigma \rangle =
\set{1,\rho,\rho^2,\rho^3, \sigma, \rho\sigma, \rho^2\sigma, \rho^3\sigma}$
be the dihedral group with 8 elements,
and $L \leq K \leq H \leq G$ denote the subgroups
$H = \langle \rho^2,\sigma \rangle$, $K = \langle \rho^2 \rangle$, $L = \{1\}$.
There is then a $G$-tree $T \in \Omega_G$ with 
\textit{expanded representation} given by the two trees on the left below,
and \textit{orbital representation} given by the 
single tree on the right.
\begin{equation}\label{GTREE_EQ}
\begin{tikzpicture}[auto,grow=up, level distance = 2.2em,
	every node/.style={font=\scriptsize,inner sep = 2pt}]%
\tikzstyle{level 2}=[sibling distance=7.5em]%
\tikzstyle{level 3}=[sibling distance=2.2em]%
	\node at (0,0){}%
	child{node [dummy] {}%
		child{node [dummy] {}%
			child{node {}%
			edge from parent node [swap,very near end] {$\rho^2\sigma a$}}%
			child[level distance = 2.7em]{
			edge from parent node [swap,near end] {$\sigma b$}}%
			child{node {}%
			edge from parent node [very near end] 
			{$\phantom{\rho^2}\sigma a$}}%
		edge from parent node [swap] {$\sigma c$}}%
		child{node [dummy] {}%
			child{node {}%
			edge from parent node [swap,very near end] {$\rho^2 a$}}%
			child[level distance = 2.4em]{%
			edge from parent node [swap,near end] {$b\phantom{j}$}}%
			child{node {}%
			edge from parent node [very near end] {$\phantom{\rho^2}a$}}%
		edge from parent node  {$c$}}%
	edge from parent node [swap] {$d$}};%
	\node at (5.5,0) {}%
	child{node [dummy] {}%
		child{node [dummy] {}%
			child{node {}%
			edge from parent node [swap,very near end] {$\rho^3 \sigma a$}}%
			child[level distance = 2.7em]{%
			edge from parent node [swap,near end] {$\rho \sigma b$}}%
			child{node {}%
			edge from parent node [very near end] 
			{$\phantom{\rho^3} \rho \sigma a$}}%
		edge from parent node [swap] {$\rho \sigma c$}}%
		child{node [dummy] {}%
			child{node {}%
			edge from parent node [swap,very near end] {$\rho^3 a$}}%
			child[level distance = 2.7em]{%
			edge from parent node [swap,near end] {$\rho b$}}%
			child{node {}%
			edge from parent node [very near end] 
			{$\phantom{\rho^3}\rho a$}}%
		edge from parent node  {$\rho c$}}%
	edge from parent node [swap] {$\rho d$}};%
            \begin{scope}[every node/.style={font=\footnotesize}]%
                  \node at (10.3,0) {}
                  child{node [dummy] {}%
                    child{node [dummy] {}%
                      child{node {}%
                        edge from parent node [swap,very near end] {$(G/K) \cdot b$}}%
                      child{node {}%
                        edge from parent node [very near end] {$(G/L) \cdot a$}}%
                      edge from parent node [right] {$(G/K) \cdot c$}}%
                    edge from parent node [right] {$(G/H) \cdot d$}};%
            \end{scope}%
            \draw[decorate,decoration={brace,amplitude=2.5pt}] (5.6,0) -- (-0.1,0) node[midway,inner sep=4pt,font=\normalsize]{$T$}; %
            \node at (10.3,-0.15) [font=\normalsize] {$T$}; 
     \end{tikzpicture}%
\end{equation}
Edge labels in the expanded representation 
encode a $G$-action, 
so that the edges labeled $a,b,c,d$
have isotropies $L,K,K,H$, respectively.
On the other hand, the orbital representation displays the edge orbits in the expanded representation,
labeled by the associated transitive $G$-set.

Formally, $\Omega_G$ is defined as follows.
Write $\Phi^G$ for the category of 
$G$-forests, i.e. $G$-objects in the category 
$\Phi$ of forests
in Definition \ref{FOR DEF}.
We then define
$\Omega_G \subset \Phi^{G}$
as the full subcategory consisting of the
(non-empty) $G$-forests
such that the $G$-action is transitive on tree components.

For $T \in \Omega_G$, we write
$\boldsymbol{E}_G(T) = \boldsymbol{E}(T)/G$, $\boldsymbol{E}^{\mathsf{i}}_G(T) = \boldsymbol{E}^{\mathsf{i}}(T)/G$,
$\boldsymbol{V}_G(T) = \boldsymbol{V}(T)/G$
for the sets of \textit{edge orbits}, \textit{inner edge orbits}, and \textit{$G$-vertices}, respectively.

\begin{remark}
	We caution that there is a 
	\emph{proper} inclusion,
	$\Omega^G \subsetneq \Omega_G$,
	where $\Omega^G$ denotes $G$-objects in $\Omega$.
\end{remark}

\begin{remark}
	We find it convenient to consider
	multiple descriptions of a $G$-tree $T\in \Omega_G$.

	First, we have the forest description
	$T = \amalg_{i \in I} T_i$,
	where $I$ is a transitive $G$-set.
	Second, given a tree component $T_{\**}$
	with stabilizer $H \leq G$,
	one has a decomposition
	$T \simeq G \cdot_H T_{\**}$.
	Third, writing 
	$\Gamma \leq G \times \Aut(T_{\**})$
	for the graph subgroup (cf. \eqref{NORMMAP EQ})
	associated to the homomorphism 
	$\phi \colon H \to \Aut(T_{\**})$ that encodes the $H$-action on $T_{\**}$,
	one has a quotient description
	$T \simeq \left(G \cdot T_{\**}\right)/\Gamma$.
\end{remark}

\begin{remark}
	The two representations in \eqref{GTREE_EQ} 
	play complementary roles in 
	the discussion of $G$-trees.
	On the one hand,
	maps in $\Omega_G$ are best understood 
	via expanded representations,
	which encode the relevant broad poset structures.
	On the other hand, orbital representations
	pictorially encode composition data 
	for norm maps of operads
	(see e.g. \cite[Ex. 4.9]{Per18}, \cite[Rem. 3.39]{BP20}).
\end{remark}

Following the discussion at the end of \S \ref{FORESTS_SEC},
each $G$-tree
$T = \amalg_{i \in I} T_i$
is a planarized forest,
so that each tree component $T_i$
is planarized and the set $I$ of components
is totally ordered.

As was the case in $\Omega$,
maps $\varphi \colon S \to T$ in $\Omega_G$
are likewise built from a few basic types of maps.

Recall that a map
$\varphi \colon 
\amalg_I S_i
\to
\amalg_J S_j
$
in $\Omega_G$
is described by a map of sets
$\varphi \colon I \to J$
and maps of trees
$S_i \to T_{\varphi(i)}$ for $i \in I$.
A map $\varphi$
is called a 
\emph{quotient map}
if all maps 
$S_i \to T_{\varphi(i)}, i \in I$
are isomorphisms in $\Omega$
and called a 
\emph{sorted map} if
$\varphi \colon I \to J$
is an ordered isomorphism of sets.
Further, a sorted map
is called a
\emph{sorted degeneracy/tall map/face/inner face/outer face}
if each of the component maps is a
degeneracy/tall map/face/inner face/outer face 
in $\Omega$.

\begin{definition}
	We write $\Omega_G^0 \subseteq \Omega_G$ for the wide subcategory of $G$-trees and quotient maps.
	Further, we write $\Sigma_G \subseteq \Omega_G^0$ for the full subcategory spanned by \textit{$G$-corollas} and quotient maps,
	where $C \in \Omega_G$ is a $G$-corolla if $|\boldsymbol{V}_G(C)| = 1$
	or, equivalently,
	if its trees components are corollas.
\end{definition}

\begin{example}
      Let $G = \mathbb{Z}/ 2\mathbb{Z} = \{\pm 1\}$
      be the cyclic group with two elements.
      In the diagram below,
      the assignment 
	$\alpha \mapsto a$, 
	$\bar{\alpha} \mapsto -a$, 
	$\beta \mapsto b$, 
	$\gamma \mapsto c$
	determines a quotient map
	$S \to T$.
\[
\begin{tikzpicture}[grow=up,auto,level distance=2em,
	every node/.style={font=\scriptsize,inner sep = 2pt}]%
\tikzstyle{level 2}=[sibling distance=2.5em]%
\begin{scope}[every node/.style={font=\footnotesize},xshift=6.25cm]%
	\node at (0,0) {}
	child{node [dummy] {}
                          child{edge from parent node [swap,near end] {$G + \bar{\alpha}$}}
                          child[level distance=2.5em]{node {$G + \beta$}}
                          child{edge from parent node [near end] {$G + \alpha$}}
	edge from parent node [swap] {$G + \gamma$} };
	\node at (2.25,0.8) {$\xrightarrow{\qquad}$};
                        \node at (4,0) {}
                        child{node [dummy] {}
                          child{edge from parent node [swap,near end] {$G/G + b$}}
                          child{edge from parent node [near end]{$G+a$}}
                          edge from parent node [swap] {$G/G+c$}
                        };
	\node at (0,-.15) [font=\normalsize] {$S$};
	\node at (4,-.15) [font=\normalsize] {$T$};
\end{scope}%
\begin{scope}[level distance=2em]
\tikzstyle{level 2}=[sibling distance=1.8em]%
	\node at (-2,0) {}
	child{node [dummy] {}
		child{
		edge from parent node [swap, near end] {$\bar{\alpha}\phantom{|}$}}
		child[level distance=2.5em]{
		edge from parent node [swap, very near end] {$\beta$}}
		child{
		edge from parent node [near end] {$\phantom{|}\alpha$}}
	edge from parent node [swap] {$\gamma$} };
	\node at (0.2,0) {}
	child{node [dummy] {}
		child{
		edge from parent node [swap, near end] {$-\bar{\alpha}\phantom{|}$}}
		child[level distance=2.5em]{
		edge from parent node [swap, very near end] {$-\beta$}}
		child{
		edge from parent node [near end] 
		{$\phantom{|}-\alpha$}}
	edge from parent node [swap] {$-\gamma$}};
	\node at (1.75,0.8) {$\xrightarrow{\qquad}$};
	\node at (3,0) {}
	child{node [dummy] {}
		child{
		edge from parent node [swap, near end] {$b$}}
		child[level distance=2.5em]{
		edge from parent node [swap, very near end] {$-a$}}
		child{
		edge from parent node [near end] {$\phantom{b}a$}}
	edge from parent node [swap] {$c$}};
\draw[decorate,decoration={brace,amplitude=2.5pt}] 
(0.3,0) -- (-2.1,0) node[midway,inner sep=4pt,font=\normalsize]{$S$}; %
\node at (3,-0.15) [font=\normalsize] {$T$};
\end{scope}
\end{tikzpicture}
\]
\end{example}

\begin{notation}\label{TVG_NOT}
        Following Notation \ref{TV_NOT}, 
        for $T \in \Omega_G$ and a $G$-vertex $v \in \boldsymbol{V}_G(T)$
        given as a $G$-orbit $[v_i]$ with 
        $v_i\in \boldsymbol{V}(T)$,
        we write
		$T_v = \amalg_{v = [v_i]} T_{v_i}$.
		Informally, $T_v$ is the $G$-corolla formed by the 
		associated ``$G$-orbit of corollas'' $T_{v_i}$.
\end{notation}

\begin{corollary}[{cf. Prop. \ref{NP_TREEFACT_PROP}, \cite[Rem. 5.49]{Per18}}]
	A map $\varphi \colon S \to T$ in $\Omega_G$ has
	a factorization
	\[
	S \xrightarrow{sd} 
	S' \xrightarrow{si} 
	S'' \xrightarrow{so} 
	T^{\**} \xrightarrow{q} T,
	\]
	unique up to unique 
	sorted isomorphisms, as
	a sorted degeneracy followed by
	a sorted inner face,
	a sorted outer face,
	and a quotient map.
\end{corollary}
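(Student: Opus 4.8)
The plan is to reduce the statement to the non-equivariant factorization of Proposition~\ref{NP_TREEFACT_PROP}, applied componentwise, after first splitting off the quotient map. Write $S = \amalg_{i \in I} S_i$ and $T = \amalg_{j \in J} T_j$, so that $\varphi$ is encoded by a $G$-map $\varphi \colon I \to J$ of (transitive) indexing sets together with tree maps $\varphi_i \colon S_i \to T_{\varphi(i)}$ in $\Omega$. The first step is to form the $G$-forest $T^{\**} = \amalg_{i\in I} T_{\varphi(i)} \in \Omega_G$, where $I$ is given the total order and $G$-action it carries as the indexing set of $S$, each $T_{\varphi(i)}$ is given the planarization it carries in $T$, and $g \in G$ acts through the isomorphisms $T_{\varphi(i)} \xrightarrow{g} T_{g\varphi(i)} = T_{\varphi(gi)}$ supplied by the $G$-action on $T$; equivariance of $\varphi$ makes this well defined, and since $I$ is transitive we indeed get $T^{\**} \in \Omega_G$. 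Then $\varphi = q \circ s$, where $s \colon S \to T^{\**}$ has components $\varphi_i$ and identity indexing map, hence is sorted, and $q \colon T^{\**} \to T$ has identity-isomorphism components and indexing map $\varphi\colon I \to J$, hence is a quotient map. Uniqueness of this sorted--quotient splitting up to unique sorted isomorphism is checked directly: for a competing factorization $S \xrightarrow{s'} \tilde T \xrightarrow{q'} T$ with $\tilde T = \amalg_{k \in K} \tilde T_k$, the map $T^{\**} \to \tilde T$ with indexing the ordered bijection $I \to K$ underlying $s'$ and components the inverses of those of $q'$ is the unique sorted isomorphism with $\Theta s = s'$ and $q'\Theta = q$.

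It remains to factor the sorted map $s$. Componentwise, $s$ is the family of tree maps $\varphi_i \colon S_i \to T^{\**}_{i}$, $i \in I$, and Proposition~\ref{NP_TREEFACT_PROP} gives each a factorization $S_i \xrightarrow{d} S'_i \xrightarrow{i} S''_i \xrightarrow{o} T^{\**}_{i}$, unique up to unique isomorphism. By that uniqueness, the structure isomorphisms $g \colon S_i \to S_{gi}$ and $g \colon T^{\**}_{i} \to T^{\**}_{gi}$ induce isomorphisms $g \colon S'_i \to S'_{gi}$ and $g \colon S''_i \to S''_{gi}$ compatible with the $d/i/o$ factors, and the associativity of the resulting $G$-action again follows from uniqueness. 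Thus $S' := \amalg_{i\in I} S'_i$ and $S'' := \amalg_{i\in I} S''_i$ are objects of $\Omega_G$ (transitivity of their component actions being inherited from $I$), and $s$ factors as $S \xrightarrow{sd} S' \xrightarrow{si} S'' \xrightarrow{so} T^{\**}$ as a sorted degeneracy, a sorted inner face, and a sorted outer face, all indexing maps being $\mathrm{id}_I$. Concatenating with $q$ produces the asserted factorization. For uniqueness, any factorization of $\varphi$ of the stated shape has a quotient last arrow and a sorted composite of its first three arrows; the uniqueness of the sorted--quotient splitting pins down $T^{\**}$ and $s$ up to unique sorted isomorphism, and comparing two degeneracy/inner-face/outer-face factorizations of $s$ reduces, component by component, to the uniqueness clause of Proposition~\ref{NP_TREEFACT_PROP}, the componentwise isomorphisms assembling (since all indexing maps are $\mathrm{id}_I$) into the required unique sorted isomorphism of factorizations.

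The main obstacle is really the bookkeeping rather than any genuine difficulty: one must pin down the planarizations and induced $G$-actions on $T^{\**}$, $S'$, $S''$ coherently enough that these are honest objects of $\Omega_G$, and track the precise meaning of ``sorted'' (ordered-isomorphism indexing maps) through the gluing. Once the definitions are unwound, every verification is forced by the uniqueness clauses of the non-equivariant Proposition~\ref{NP_TREEFACT_PROP}, consistent with the ``cf.'' in the statement; the one genuinely new ingredient, the sorted--quotient splitting of a map in $\Omega_G$, is also the natural place to invoke \cite[Rem.~5.49]{Per18}.
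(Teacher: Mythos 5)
Your proof is correct and takes the route the paper clearly intends: split $\varphi$ into a sorted map followed by a quotient (this is the content of the cited remark in Pereira, and yields the pullback map $q$ with identity components), then factor the sorted part componentwise via the non-equivariant Proposition~\ref{NP_TREEFACT_PROP}, using its uniqueness clause to equip the intermediate forests with coherent $G$-actions. The paper gives no explicit proof, leaving this as a consequence of the cited results, and your write-up supplies exactly the bookkeeping that is being elided. One small simplification worth noting: had you invoked the planar version (Proposition~\ref{TREEFACT_PROP}) componentwise rather than Proposition~\ref{NP_TREEFACT_PROP}, the strict uniqueness would make the $G$-equivariance of $S'$, $S''$ automatic, with no need to argue that the componentwise isomorphisms assemble into an action — the argument you give for this is correct, but the strictly unique version lets you skip it entirely, which is in fact the reason the paper works with planarized trees.
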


\begin{remark}
        \label{PULLBACK_REM}
	All sorted maps
	$\varphi \colon S \to T$ in $\Omega_G$
	are independent maps,
	so one may ask if such a map is a planar map,
	which is then equivalent
	to each component map
	$S_i \to T_{\varphi(i)}$
	being planar.

	On the other hand, a quotient map
	$S \to T$ in $\Omega_G$
	is independent iff it is an isomorphism,
	so a quotient is planar iff it is an identity.
	More generally, 
	asking for a 
	a quotient map to be componentwise planar,
	i.e. for the maps 
	$S_i \to T_{\varphi(i)}$
	to be planar,
        is the same as requiring the maps
	$S_i \to T_{\varphi(i)}$
	being identities (since we already know they are isomorphisms).
	A quotient map with this additional property
	of being componentwise planar is called 
	a \emph{pullback map} (cf. \cite[Ex. 3.27]{BP21}).
\end{remark}

\begin{corollary}[{cf. Prop. \ref{TREEFACT_PROP}, \cite[Rem. 5.49]{Per18}}]
        \label{OMGFACT COR}
	A map $\varphi \colon S \to T$ in $\Omega_G$ has
	a strictly unique factorization
\[
	S \xrightarrow{\simeq,s}
	S \xrightarrow{spd} 
	S' \xrightarrow{spi} 
	S'' \xrightarrow{spo} 
	\varphi^{\**}T \to T,
\]
	as a sorted isomorphism
	followed by a sorted planar degeneracy,
	a sorted planar inner face,
	a sorted planar outer face,
	and a pullback map.
\end{corollary}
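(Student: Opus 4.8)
The plan is to refine the non-planar sorted factorization of the preceding Corollary by grafting on planarization data, exactly as Proposition~\ref{TREEFACT_PROP} refines Proposition~\ref{NP_TREEFACT_PROP} in the non-equivariant case. First I would apply the preceding Corollary to write $\varphi$ as a composite $S \xrightarrow{sd} S_1 \xrightarrow{si} S_2 \xrightarrow{so} T^{\**} \xrightarrow{q} T$ of a sorted degeneracy, a sorted inner face, a sorted outer face, and a quotient map, and write $\varphi \colon I \to J$ also for the underlying map of indexing sets (which equals that of $q$, since the other three factors are sorted). Next I would repackage $q$ as a pullback map at the cost of one sorted isomorphism: let $\varphi^{\**}T$ be the planarized $G$-tree with underlying $G$-forest $\amalg_{i \in I} T_{\varphi(i)}$, planarized by the total order that the indexing set of $S$ carries together with the planarization each $T_{\varphi(i)}$ carries in $T$. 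It comes with a canonical pullback map $\pi \colon \varphi^{\**}T \to T$ (underlying indexing map $\varphi$, identity component maps), and the component isomorphisms underlying $q$ assemble into a sorted isomorphism $\theta \colon T^{\**} \xrightarrow{\simeq,s} \varphi^{\**}T$ with $\pi\theta = q$. Thus $\varphi = \pi \circ \chi$, where $\chi := \theta \circ so \circ si \circ sd \colon S \to \varphi^{\**}T$ is sorted.

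It then remains to planarize $\chi$, which I would do componentwise by applying Proposition~\ref{TREEFACT_PROP} to each component $\chi_i \colon S_i \to (\varphi^{\**}T)_i$. Since those factorizations are strictly unique, the $G$-action carries the chosen factorization of $\chi_i$ onto that of $\chi_{gi}$, so they assemble into a factorization $S \xrightarrow{\simeq,s} S \xrightarrow{spd} S' \xrightarrow{spi} S'' \xrightarrow{spo} \varphi^{\**}T$ of $\chi$ into a sorted isomorphism followed by a sorted planar degeneracy, inner face, and outer face; composing with $\pi$ yields the asserted factorization of $\varphi$.

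For strict uniqueness, suppose given a second such factorization whose last arrow is a pullback map $\pi' \colon {\varphi'}^{\**}T \to T$, and let $\chi'$ be the composite of its sorted factors. Since every factor but $\pi'$ is sorted, the underlying indexing map of $\pi'$ is forced to be $\varphi$, and since $\pi'$ has identity component maps, ${\varphi'}^{\**}T$ has underlying $G$-forest $\amalg_I T_{\varphi(i)}$; its planarization is forced as well, since it is received from $S$ through sorted planar maps and only planar isomorphisms are identities (Remark~\ref{PLANARFOR_REM}), so $\pi' = \pi$. Because $\pi$ is componentwise an identity, $\pi\chi' = \varphi = \pi\chi$ forces $\chi' = \chi$ componentwise, and then the remaining factors coincide with ours by the strict uniqueness of the componentwise factorizations in Proposition~\ref{TREEFACT_PROP}.

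The hard part here is bookkeeping, not concept: one must carefully track how the planarization data (the total orders on indexing sets and the planar structures of the tree components) is transported along the sorted and quotient factors, and check $G$-equivariance at each assembly step, which is precisely what the strict, on-the-nose uniqueness of Proposition~\ref{TREEFACT_PROP} is for. A second subtlety is that a pullback map $\pi$ need not be a monomorphism, since its underlying indexing map $I \to J$ can be non-injective, so the cancellation in the uniqueness argument cannot be carried out by left-cancelling $\pi$, but instead relies on $\pi$ being componentwise an identity.
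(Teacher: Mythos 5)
The paper states this corollary without proof, pointing to the non-equivariant Proposition~\ref{TREEFACT_PROP} and to \cite[Rem. 5.49]{Per18}, so there is no paper argument to compare against word-for-word; your job here is essentially to assemble the cited ingredients into an explicit proof, and you do so correctly. The structure — (1) start from the non-planar sorted factorization in the preceding corollary, (2) split the quotient $q \colon T^{\**} \to T$ as (sorted isomorphism) $\circ$ (pullback map) by choosing the planarization of $\varphi^{\**}T$ to be the one inherited from $S$'s ordered index set and $T$'s component planarizations, (3) planarize the remaining sorted map $\chi$ componentwise via Proposition~\ref{TREEFACT_PROP} and use strict uniqueness to glue the componentwise factorizations $G$-equivariantly, (4) establish strict uniqueness by pinning down $\pi'$ first and then the sorted composite — is exactly the natural refinement of Proposition~\ref{NP_TREEFACT_PROP} $\Rightarrow$ Proposition~\ref{TREEFACT_PROP} that the paper's ``cf.'' invites. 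Your identification of the two genuine subtleties is on point: the $G$-equivariant assembly in step (3) really does hinge on the \emph{on-the-nose} (not up-to-iso) uniqueness of Proposition~\ref{TREEFACT_PROP}, and the last step of uniqueness cannot left-cancel $\pi$ (its indexing map $I \to J$ is generally non-injective) and must instead use that $\pi$ is componentwise an identity.

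One place you are slightly terse is in the $G$-equivariance claim of step (3): saying ``the $G$-action carries the chosen factorization of $\chi_i$ onto that of $\chi_{gi}$'' is the right conclusion, but it deserves a sentence of justification since the conjugated factorization $g_{\varphi^{\**}T} \circ out_i \circ inn_i \circ d_i \circ \iota_i \circ g_S^{-1}$ is not \emph{a priori} in the canonical $\simeq / pd / pi / po$ form; one has to iteratively re-factor the composites (isomorphism $\circ$ planar face), shuffling the isomorphism to the left using Remark~\ref{TODF REM} at each stage, and then invoke strict uniqueness to conclude it \emph{is} the canonical factorization of $\chi_{gi}$. This is bookkeeping, as you say, but it is the precise point where strict (rather than up-to-unique-iso) uniqueness is actually consumed, so it is worth making the mechanism explicit. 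Everything else is sound.
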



Lastly, in contrast to the notion of sorted face map above,
the description of the model structure
on equivariant dendroidal sets $\mathsf{dSet}^G$
will require an additional notion of face.
\begin{definition}
	Let $T = \amalg_i T_i$ be a $G$-tree.	
	An \textit{(outer) face} of $T$ is an (outer) face map 
	$U \to T_i$ from some $U \in \Omega$ to a component $T_i$ of $T$.
	A face of $T$ is called \textit{planar} if the map $U \to T_i$ is a planar map.

	We write $\mathsf{Face}(T)$ for the $G$-poset of planar faces of $T$,
	and let $\mathsf{Face}_{\mathsf{sc}}(T) \subseteq \mathsf{Face}(T)$ denote the subposet spanned by the planar outer faces of $T$ with no inner edges
	(these are the faces determined by either a single edge or a single vertex of $T$).
\end{definition}


\subsection{Equivariant dendroidal sets}
\label{EDS_SEC}

Recall that the category of \emph{dendroidal sets} \cite{MW07}
is the presheaf category
$\mathsf{dSet} = \mathsf{Set}^{\Omega^{op}}$.
For a (finite) group $G$,
the category of
\emph{$G$-dendroidal sets}
is then the $G$-object category
$\mathsf{dSet}^G = \mathsf{Set}^{G \times \Omega^{op}}$.

One key subtlety when working with 
$\mathsf{dSet}^G$
is that for each equivariant dendroidal set 
$X \in \mathsf{dSet}^G$
its levels $X(U)$ are indexed by
non-equivariant trees $U \in \Omega$,
while the key classes of maps in $\mathsf{dSet}^G$ are defined in terms
of $G$-trees $T \in \Omega_G$.

To describe these maps, 
we first extend the Yoneda embedding
$\Omega[-]\colon \Omega \to \mathsf{dSet}$
notation
for the representable functors
$\Omega[U](V) = \Omega(V,U)$
to obtain extended Yoneda embeddings
(here the right embedding is simply obtained from the left embedding by taking $G$-objects)
\begin{equation}\label{FORESREP EQ}
\begin{tikzcd}[row sep=0]
	\Phi \ar{r}{\Omega[-]}
&
	\mathsf{dSet}
&&
	\Phi^G \ar{r}{\Omega[-]}
&
	\mathsf{dSet}^G
\\
	\amalg_i F_i
	\ar[mapsto]{r}
&
	\amalg_i \Omega[F_i]
&&
	\amalg_i F_i
	\ar[mapsto]{r}
&
	\amalg_i \Omega[F_i]
\end{tikzcd}
\end{equation}

Note that,
since $G$-trees $\Omega_G$
are defined as a subcategory of $G$-forests $\Phi^G$,
the right side of 
\eqref{FORESREP EQ}
defines representables 
$\Omega[T] \in \mathsf{dSet}^G$
for $T \in \Omega_G$.
These representable presheaves 
$\Omega[T]$
then allow us to generalize the key 
presheaves in dendroidal sets $\mathsf{dSet}$
(cf. \cite[\S 2]{CM13a})
to equivariant dendroidal sets
$\mathsf{dSet}^G$
(cf. \cite[\S 6]{Per18} or \cite[\S 2.3]{BP20}),
as follows.

\begin{definition}\label{DSETPRESHEAF_DEF}
      Let $T = \amalg_i T_i$ be $G$-tree.
      The \emph{boundary}
      $\partial \Omega[T] \subseteq \Omega[T]$ is defined by
      \begin{equation}
            \partial \Omega[T]
            = \coprod_i \partial \Omega[T_i]
            = \mathop{\colim}\limits_{U \in \mathsf{Face}(T), U \neq T_i} \Omega[U]
            = \bigcup_{U \in \mathsf{Face}(T), U \neq T_i} \Omega[U].
      \end{equation}
      Next, for $\emptyset \neq E \subseteq \boldsymbol{E}^{\mathsf{i}}(T)$ a non-empty $G$-subset of inner edges and writing $E_i = E \cap \boldsymbol{E}(T_i)$,
      the \textit{$G$-inner horn} 
      $\Lambda^E[T] \subseteq \Omega[T]$ is defined by
      \begin{equation}\label{GINNERHORN_EQ}
            \Lambda^{E}[T]
            = \coprod_i \Lambda^{E_i}[T_i]
            = \mathop{\colim}\limits_{U \in \mathsf{Face}(T), (T_i - E_i) \not\into U} \Omega[U]
            = \bigcup_{U \in \mathsf{Face}(T), (T_i - E_i) \not\into U} \Omega[U].
      \end{equation}
      Lastly, the \textit{Segal core} $Sc[T] \subseteq \Omega[T]$ is defined by
      \begin{equation}\label{eq:SC}
              Sc[T] 
              = \coprod_i Sc[T_i]
              = \mathop{\colim}\limits_{U \in \mathsf{Face}_{\mathsf{sc}}(T)} \Omega[U]
              = \bigcup_{U \in \mathsf{Face}_{\mathsf{sc}}(T)} \Omega[U].
      \end{equation}
\end{definition}

\begin{remark}
For $T \in \Omega_G$, a decomposition $T \simeq G \cdot_H T_{\**}$ with $T_{\**} \in \Omega^H$ yields identifications
\[
	\Omega[T] \simeq G \cdot_H \Omega[T_{\**}],
\qquad
	\partial\Omega[T] \simeq G \cdot_H \partial \Omega[T_{\**}],
\qquad
	\Lambda^{E}[T] \simeq G \cdot_H \Lambda^{E_{\**}}[T_{\**}],
\qquad
	Sc[T] \simeq G \cdot_H Sc[T_{\**}],
\]
where $E_{\**} = E \cap \boldsymbol{E}(T_{\**})$.
\end{remark}

Adapting the non-equivariant story, the maps in Definition \ref{DSETPRESHEAF_DEF} are then the basis for a model structure on $\dSet^G$.
In the following, recall that a class of maps in a category is called \textit{saturated} if it is closed under pushouts, retracts, and transfinite compositions.

\begin{definition}\label{GINNERANN DEF}
	The class of \textit{$G$-normal monomorphisms}
	in $\mathsf{dSet}^G$
	is the saturation of the boundary inclusions 
	$\partial \Omega[T] \into \Omega[T]$ for $T \in \Omega_G$.
	
	The class of \textit{$G$-inner anodyne extensions}
	in $\mathsf{dSet}^G$
	is the saturation of the $G$-inner horn inclusions
	$\Lambda^E [T] \into \Omega[T]$ 
	for $T \in \Omega_G$ and 
	$\emptyset \neq E \subseteq
	\boldsymbol{E}^{\mathsf{i}}(T)$
	a non-empty $G$-subset.
\end{definition}

\begin{definition}\label{GINNERFIB DEF}
	A map $X \to Y$ in $\dSet^G$
	is called a \emph{$G$-inner fibration}
	if it has the right lifting property
	with respect to all $G$-inner horn inclusions
	$\Lambda^E[T] \to \Omega[T]$.
	
	Moreover, 
	if $X \to \**$
	is a $G$-inner fibration
	then 
	$X \in \dSet^G$ is called a \textit{$G$-$\infty$-operad}.
\end{definition}

Informally, one may view
$G$-$\infty$-operads as
``operads with weak composition laws for norm maps''.


To recall the model structure on $\mathsf{dSet}^G$,
we need two more ingredients.
First, we write
\begin{equation}\label{IOTASHADJ EQ}
	\iota_! \colon 
	\mathsf{sSet}
	\rightleftarrows
	\mathsf{dSet}
	\colon
	\iota^{\**}
\qquad
	\iota_! \colon 
	\mathsf{Cat}
	\rightleftarrows
	\mathsf{Op}
	\colon
	\iota^{\**}
\end{equation}
for the adjunctions
where the left adjoints $\iota_!$
are the natural inclusions
(given by ``extension by $\emptyset$'').
Note that the leftmost adjunction is induced by the natural inclusion
$\iota \colon \Delta \to \Omega$
as the linear trees.
Second, we write
\begin{equation}\label{TAUADJ EQ}
	\tau \colon 
	\mathsf{sSet}
	\rightleftarrows
	\mathsf{Cat}
	\colon
	N
\qquad
	\tau \colon 
	\mathsf{dSet}
	\rightleftarrows
	\mathsf{Op}
	\colon
	N
\end{equation}
for the adjunctions where the right adjoints are the 
\emph{nerve functors}
given by
$(N \mathcal{C})(n) = 
\mathsf{Cat}\left([n],\mathcal{C}\right)$
for $\mathcal{C}\in \mathsf{Cat}$
and $[n] = (0 \to 1 \to \cdots \to n)$,
and
$(N \mathcal{O})(T) = 
\mathsf{Op}\left(\Omega(T),\mathcal{O}\right)$
for $\mathcal{O}\in \mathsf{Op}$
and $\Omega(T)$ the colored operad (of sets) generated by $T$
(cf. \cite[\S 3]{MW07}, \cite[Rem. 4.4, Ex. 4.6]{Per18} or \eqref{TAUNER EQ}).

Recall \cite[Prop. 5.3 and Thm. 6.1]{MW09}
that the nerve functors $N$ are fully faithful, 
with their essential image characterized
as those presheaves 
that satisfy a Segal condition.

The following theorem synthesizes Theorem 2.1, Proposition 8.8, and Theorem 8.22 of \cite{Per18}.
\begin{theorem}[\cite{Per18}]
        \label{DSETGMOD THM}
        There exists a left proper model structure on $\dSet^G$ such that:
        \begin{itemize}
	\item the cofibrations are the $G$-normal monomorphisms;
	\item the fibrant objects are the $G$-$\infty$-operads;
	\item the fibrations between
                $G$-$\infty$-operads 
                are the $G$-inner fibrations $X \to Y$
                such the induced maps on 
                fixed-point homotopy categories 
                $\tau \iota^{\**}(X^H \to Y^H)$ are isofibrations of categories for all $H \leq G$;
        \item the weak equivalences are the smallest class of maps closed under 2-out-of-3 which
                contains the $G$-inner anodyne extensions and the trivial fibrations
                (i.e. those maps with the right lifting property against the $G$-normal monomorphisms).
        \end{itemize}
\end{theorem}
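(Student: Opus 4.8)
The statement is a synthesis of \cite[Thm.~2.1, Prop.~8.8 and Thm.~8.22]{Per18}; I indicate the architecture one follows to establish it. The plan is to produce a Cisinski-type model structure on the presheaf category $\dSet^G = \Set^{G \times \Omega^{op}}$ whose cofibrations are the $G$-normal monomorphisms. The first step is combinatorial: one verifies that the set $\{\partial\Omega[T] \into \Omega[T] : T \in \Omega_G\}$ generates a well-behaved class of cofibrations. This rests on a \emph{skeletal filtration} of an arbitrary $X \in \dSet^G$ indexed by $G$-trees, built from the factorization structure of $\Omega_G$ (Corollary \ref{OMGFACT COR}) and the face posets $\mathsf{Face}(T)$: one checks that every object is cellular over $\emptyset$, that the representables $\Omega[T]$ are cofibrant, that being $G$-normal is equivalent to a freeness condition on the $G$-action on nondegenerate dendrices, and that the $G$-normal monomorphisms are closed under pushout-product with themselves.

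Next one fixes an exact cylinder --- the trivially-$G$-equivariant dendroidal interval --- and defines the relevant class of anodyne maps: the smallest saturated class containing the $G$-inner horn inclusions $\Lambda^E[T] \into \Omega[T]$ and the pushout-products of the interval's endpoint inclusions with the boundary inclusions $\partial\Omega[T] \into \Omega[T]$, closed under pushout-product with $G$-normal monomorphisms. Feeding the data (cofibrations; cylinder; anodyne maps) into Cisinski's recognition machinery then yields a model structure whose cofibrations are exactly the $G$-normal monomorphisms, whose fibrant objects have the right lifting property against the anodyne maps, and whose weak equivalences form the smallest class closed under $2$-out-of-$3$ containing the anodyne maps and the trivial fibrations; a further bookkeeping step shows that the $G$-inner anodyne extensions alone suffice to generate the weak equivalences together with the trivial fibrations, matching the statement.

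The heart of the matter --- and the step I expect to be the main obstacle --- is to reconcile this abstractly produced structure with the explicit descriptions in the theorem: that the fibrant objects are exactly the $G$-$\infty$-operads, and that a $G$-inner fibration $X \to Y$ between $G$-$\infty$-operads is a fibration precisely when each $\tau\iota^{\**}(X^H \to Y^H)$ is an isofibration. Identifying the fibrant objects amounts to showing that the endpoint generators become redundant for fibrancy once one assumes the right lifting property against the $G$-inner horns --- i.e.\ that a $G$-$\infty$-operad ``already carries the invertibility data it needs'' --- and, as in \cite{CM13b}, this is delicate, proceeding by an induction over $G$-trees and their faces that exploits the interplay of the nerve $N$ with its left adjoint $\tau$. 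Granting this, the fibration characterization follows by combining it with the corresponding non-equivariant statements for $\dSet$ \cite[Prop.~5.3 and Thm.~6.1]{MW09} applied to each fixed-point object: taking $H$-fixed points and applying $\iota^{\**}$ commute with the colimits defining the horn and boundary inclusions, so an equivariant lifting problem decomposes into a compatible family of non-equivariant ones indexed by $H \leq G$.

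Finally, left properness either follows from the general features of the Cisinski-type construction, or may be checked directly: weak equivalences are detected after applying $(-)^H$, $\iota^{\**}$ and $\tau$ and passing to homotopy categories, which reduces the statement to left properness of the Joyal model structure on $\sSet$ --- where all objects are cofibrant --- together with the stability of $G$-normal monomorphisms under pushout along $G$-normal monomorphisms.
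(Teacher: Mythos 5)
The overall architecture you sketch --- a Cisinski-type model structure generated by the boundary inclusions $\partial\Omega[T]\into\Omega[T]$, a groupoid-interval cylinder, and the $G$-inner horn inclusions as anodyne generators --- is the right one and matches the approach of \cite{Per18}. You also correctly single out the identification of the fibrant objects as the hard step. However, there is a genuine gap in the mechanism you propose for the fibration characterization, and it bears on the central subtlety of the paper.

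You assert that ``taking $H$-fixed points and applying $\iota^{\**}$ commute with the colimits defining the horn and boundary inclusions, so an equivariant lifting problem decomposes into a compatible family of non-equivariant ones indexed by $H \leq G$.'' This is false for $G$-trees with genuine $G$-action, and if it were true it would produce only the na\"ive model structure, not the one in the theorem. Concretely, for $T = G \cdot_H T_{\**}$ with $T_{\**} \in \Omega^H$ a tree carrying a nontrivial $H$-action, adjunction gives $\dSet^G(\Omega[T],X) \simeq \dSet(\Omega[T_{\**}],X)^H \simeq X(T_{\**})^{\Gamma}$ for $\Gamma \leq H \times \Aut(T_{\**})$ the graph subgroup encoding the action; this graph-fixed-point set is \emph{not} $X^H(T_{\**})$ when $\Gamma \not\leq H$, and $T^H$ is not even a tree. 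The colimit defining $\Lambda^E[T]$ is indexed by the $G$-poset $\mathsf{Face}(T)$, and $(-)^H$ does not commute with it. This is exactly the point the paper's introduction makes: the framework of \cite{Ste16}, which does proceed by inducing from fixed points, only sees trivial norm maps. The fact that the fibration criterion in the theorem can be \emph{stated} purely in terms of $\tau\iota^{\**}(X^H \to Y^H)$ is itself a theorem (Prop.\ 8.8 of \cite{Per18}), exploiting that the isofibration condition lives in the arity-$1$ layer where $\Sigma_1 = \{\**\}$ forces graph subgroups to be subgroups of $G$; it is not a formal consequence of colimit-commutation. Likewise, the fibrant-object characterization is not established by an induction taking place entirely within $\dSet^G$: as the paper itself records, the argument in \cite[\S 8.2]{Per18} essentially passes through the genuine category $\dSet_G = \Set^{\Omega_G^{op}}$ and its adjunction $\upsilon_G^{\**}\dashv\upsilon_{G,\**}$, which your plan does not mention. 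Finally, the references \cite[Prop.\ 5.3 and Thm.\ 6.1]{MW09} you invoke concern full faithfulness of the nerve and the Segal-condition description of its image, not fibration characterizations; the relevant non-equivariant input is from \cite{CM11}.
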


In addition to the category
$\mathsf{dSet}^G = \mathsf{Set}^{G \times \Omega^{op}}$
of equivariant dendroidal sets,
there is also a category
$\mathsf{dSet}_G = \mathsf{Set}^{\Omega_G^{op}}$,
which we call the category of 
\emph{genuine dendroidal sets}.
As it turns out,
several natural constructions in the non-equivariant setting 
generalize to produce objects in 
$\mathsf{dSet}_G$
rather than in 
$\mathsf{dSet}^G$
(e.g. $\mathsf{dSet}_G$ is essential to establishing the characterization
of the fibrant objects in $\mathsf{dSet}^G$,
cf. \cite[\S 8.2]{Per18}),
so we next recall the connection between the two categories.

Let $\mathsf{O}_G$
denote the \emph{orbit category} of the group $G$,
i.e. the category of transitive $G$-sets $G/H$ for $H\leq G$ and $G$-set maps.
Regarding the group $G$ as a single object category,
one then has a fully faithful inclusion
$\upsilon \colon G^{op} \to \mathsf{O}_G$
sending the object of $G$ to the free $G$-orbit $G/e$. 
In addition, there is a fully faithful inclusion
$\Omega \times \mathsf{O}_G \to \Omega_G$
given by
$(T,G/H) \mapsto G/H \cdot T$.
Altogether, one obtains a commutative diagram of fully faithful inclusions as follows.
\begin{equation}\label{UPSIOTADIAG EQ}
\begin{tikzcd}[row sep=9,column sep=11]
	&
	\Delta \times \mathsf{O}_G 
	\ar{rd}{\iota}
	\ar[bend left=10]{rrd}{\iota_G}
	&
\\
	\Delta \times G^{op} 
	\ar{ru}{\upsilon}
	\ar{rd}[swap]{\iota}
	&&
	\Omega \times \mathsf{O}_G
	\ar{r}
	&
	\Omega_G
\\
	&
	\Omega \times G^{op}
	\ar{ru}[swap]{\upsilon}
	\ar[bend right=10]{rru}[swap]{\upsilon_G}
	&
\end{tikzcd}
\end{equation}

The connection between 
$\mathsf{dSet}^G$ and $\mathsf{dSet}_G$
is then given by the rightmost adjunction
\begin{equation}\label{UPSILONADJ EQ}
	\upsilon^{\**} \colon 
	\mathsf{sSet}^G
	\rightleftarrows
	\mathsf{sSet}^{\mathsf{O}_G^{op}}
	\colon
	\upsilon_{\**}
\qquad
	\upsilon_G^{\**} \colon 
	\mathsf{dSet}^G
	\rightleftarrows
	\mathsf{dSet}_G
	\colon
	\upsilon_{G,\**}
\end{equation}
where we note that the right adjoints 
$\upsilon_{\**}$, $\upsilon_{G,\**}$
are fully faithful inclusions.
Explicitly, one has 
$\upsilon_{\**} X(G/H) = X^H$ and
$\upsilon_{G,\**} X(T) = \dSet^G(\Omega[T], X) \simeq X(T_{\**})^H$
for $T \simeq G \cdot_H T_{\**}$ with $T_{\**} \in \Omega^H$.

The fully faithful functors appearing in 
\eqref{IOTASHADJ EQ},
\eqref{TAUADJ EQ},
\eqref{UPSILONADJ EQ}
then fit into commutative diagrams
as below, where $\mathsf{Op}_G$
is the category of \emph{genuine equivariant operads} (discussed below).
\begin{equation}\label{ALLFULL EQ}
\begin{tikzcd}
\mathsf{Cat}^G
\ar{r}{\iota_!}
\ar{d}[swap]{N}
&
\mathsf{Op}^G
\ar{r}{\upsilon_{G,\**}}
\ar{d}[swap]{N}
&
\mathsf{Op}_G
\ar{d}{N_G}
&&
\mathsf{Cat}^G
\ar{r}{\upsilon_{\**}}
\ar{d}[swap]{N}
&
\mathsf{Cat}^{\mathsf{O}_G^{op}}
\ar{r}{\iota_{G,!}}
\ar{d}[swap]{N}
&
\mathsf{Op}_G
\ar{d}{N_G}
\\
\mathsf{sSet}^G
\ar{r}[swap]{\iota_!}
&
\mathsf{dSet}^G
\ar{r}[swap]{\upsilon_{G,\**}}
&
\mathsf{dSet}_G
&&
\mathsf{sSet}^G
\ar{r}[swap]{\upsilon_{\**}}
&
\mathsf{sSet}^{\mathsf{O}_G^{op}}
\ar{r}[swap]{\iota_{G,!}}
&
\mathsf{dSet}_G
\end{tikzcd}
\end{equation}

Here, genuine equivariant operads are an extension of the notion of operad
which in the single colored context was 
first defined in \cite{BP21}
via algebraic means. 
However, to sidestep the technical work needed to extend 
the definition in \cite{BP21} 
to the colored context,
here we follow the approach in
\cite[Def. 3.35]{BP20}
and regard $\mathsf{Op}_G$
simply as the full subcategory of 
$\mathsf{dSet}_G$
of those objects satisfying the strict Segal condition below.
As such, 
the fact that $N_G$ in \eqref{ALLFULL EQ}
is fully faithful is tautological
(so that the top 
$\upsilon_{G,\**}$ functor
is just a restriction of 
the lower $\upsilon_{G,\**}$ functor).

\begin{definition}\label{OPG_DEF}
	A presheaf $Z \in \dSet_G$ is called a \textit{genuine equivariant operad} if
	it satisfies the strict right lifting condition against the Segal core inclusions
	$\upsilon_{G,\**}\left(Sc[T] \to \Omega[T] \right)$ for all $T \in \Omega_G$.
\end{definition}

By taking left adjoints of 
the vertical nerve functors in \eqref{ALLFULL EQ} 
we obtain the following diagram
where those squares that feature natural transformations
\emph{do not} commute. 
Here the existence of the dashed left adjoint 
$\tau_G$ to $N_G$
requires justification, 
with a full discussion of $\tau_G$
being the objective of Appendix \ref{HGEO AP}
(see also the discussion in 
\cite[Rem. \ref{W-GTAUFUNEX REM}]{BP_WCONS}).
\begin{equation}\label{TAUFUNCTS EQ}
\begin{tikzcd}
	\mathsf{sSet}^G
	\ar{r}{\iota_!}
	\ar{d}[swap]{\tau}
	&
	\mathsf{dSet}^G
	\ar{r}{\upsilon_{G,\**}}
	\ar{d}[swap]{\tau}
	&
	\mathsf{dSet}_G
	\ar[dashed]{d}{\tau_G}
	\ar[Rightarrow,dashed]{dl}
&&
	\mathsf{sSet}^G
	\ar{r}{\upsilon_{\**}}
	\ar{d}[swap]{\tau}
	&
	\mathsf{sSet}^{\mathsf{O}_G^{op}}
	\ar{r}{\iota^{\**}_G}
	\ar{d}[swap]{\tau}
	\ar[Rightarrow]{dl}
	&
	\mathsf{dSet}_G
	\ar[dashed]{d}{\tau_G}
\\
	\mathsf{Cat}^G
	\ar{r}[swap]{\iota_!}
	&
	\mathsf{Op}^G
	\ar{r}[swap]{\upsilon_{G,\**}}
	&
	\mathsf{Op}_G
&&
	\mathsf{Cat}^G
	\ar{r}[swap]{\upsilon_{\**}}
	&
	\mathsf{Cat}^{\mathsf{O}_G^{op}}
	\ar{r}[swap]{\iota_{G}^{\**}}
	&
	\mathsf{Op}_G
\end{tikzcd}
\end{equation}

\section{The tame model structure on preoperads}
\label{TAME_SEC}

Our goal in this section is to build the alternative 
\emph{tame model structure}
$\mathsf{PreOp}^G_{tame}$
on preoperads that is needed for the nerve functor
$N \colon \mathsf{sOp}^G \to \mathsf{PreOp}^G$
in \eqref{ADJSQ EQ}
to be right Quillen. 

First, \S \ref{JT_SEC},\S \ref{SPREOP_SEC} 
recall the model structures on
simplicial dendroidal sets $\mathsf{sdSet}^G$
and preoperads $\mathsf{PreOp}^G$ built in \cite{BP20}.
Then, \S \ref{FIBTENS_SEC}
builds an auxiliary construction, 
the \emph{fibered simplicial tensoring} $\otimes_{\mathfrak{C}_{\bullet}}$,
which has a key role in our description of
the tame model structure in \S \ref{TAMEDEFEX SEC}.

\subsection{Equivariant dendroidal Segal spaces}
\label{JT_SEC}

We recall the several model structures on the category of
\textit{equivariant simplicial dendroidal sets}
$\mathsf{sdSet}^G = \Set^{\Delta^{op} \times \Omega^{op} \times G}$
introduced in \cite{BP20}.
We first recall some notation.

\begin{notation}
        \label{SDSETEVAL_NOT}
      For $X \in \mathsf{sdSet}^G$, we write $X_n(U)$ for the evaluation at $n \in \Delta$ and $U \in \Omega$,
      and refer to $n$ and $U$ as the \textit{simplicial} and \textit{dendroidal} directions.
      More generally, we write
      \begin{equation}
            \label{SDSET_EQ}
            X_{(-)} \colon \sSet \to \dSet^G,
            \qquad
            X(-) \colon \dSet^G \to \sSet
      \end{equation}
      for the colimit-preserving functors
      such that $X_{\Delta[n]} = X_n$ and 
      $X\left(G \cdot\Omega[U]\right) = X(U)$ for $n \geq 0$, $U \in \Omega$.
      Explicitly, $X_K(U) = \sSet(K, X(U))$ and $X(A)_n = \dSet^G(A, X_n)$.
      
      Additionally, we have a natural fully-faithful inclusion
      \[
              c_{!} \colon \dSet^G \longto \mathsf{sdSet}^G
      \]
      as presheaves that are constant along the simplicial direction
      (that is, $X(T) \in \sSet^G$ is discrete for all $T \in \Omega$),
      and we will often identify presheaves with their images under $c_!$.
      
      Lastly, 
      for $A \in \dSet^G$ and $K \in \sSet$ we write $A \times K$ for the presheaf $(A \times K)_n(U) = A(U) \times K_n$;
      more generally,
      for maps $A \to B$ in $\dSet^G$ and $K \to L$ in $\sSet$,
      we write $(A \to B) \square (K \to L)$ for the \textit{pushout product} map with target $B \times L$ (see, for example, \cite[11.1.7]{Ri14}).
\end{notation}

The various model structures on $\mathsf{sdSet}^G$ arise
from the theory of (generalized) Reedy categories.

First, since $\Delta^{op}$ is a Reedy category,
the identification
$\mathsf{sdSet}^G \simeq 
\left(\mathsf{dSet}^G\right)^{\Delta^{op}}$
together with the model structure on 
$\mathsf{dSet}^G$ from \cite{Per18}
(cf. Theorem \ref{DSETGMOD THM})
yields the 
\textit{simplicial Reedy model structure} on $\mathsf{sdSet}^G$.
Note that the weak equivalences in this model structure
are the \textit{dendroidal equivalences},
i.e. maps
$f \colon X \to Y$
such that $X_n \to Y_n$ is a weak equivalence in $\dSet^G$ for all $n \geq 0$.

Second, as discussed in 
\cite[Ex. A.7]{BP20},
$\Omega^{op} \times G$ is \emph{a generalized Reedy category}
and the family of graph subgroups 
$\{\Gamma \leq \mathsf{Aut}(T)^{op} \times G
\ | \ \Gamma \cap \mathsf{Aut}(T)^{op} = \**\}_{T\in \Omega}$
(see \eqref{NORMMAP EQ}) is
\emph{Reedy admissible}
in the sense of \cite[Ex. A.2]{BP20}.
Hence, by \cite[Thm. A.8]{BP20},
the identification 
$\mathsf{sdSet}^G \simeq 
\mathsf{sSet}^{\Omega^{op} \times G}$
together with the Kan model structure on 
$\mathsf{sSet}$
yields the 
\textit{(equivariant) dendroidal Reedy model structure} on $\mathsf{sdSet}^G$.
The weak equivalences in this model structure are the
\emph{simplicial equivalences},
i.e. maps $f \colon X \to Y$
such that 
$X(\Omega[T]) \to Y(\Omega[T])$
are Kan equivalences in $\mathsf{sSet}$
for all $T \in \Omega_G$.

Third, as the simplicial and dendroidal Reedy model structures
in $\mathsf{sdSet}^G$ above have the same cofibrations,
the joint left Bousfield localization framework in 
\cite[\S 4.1]{BP20}
yields the following.

\begin{theorem}\label{JB_THM}
	The simplicial and dendroidal Reedy model structures on 
	$\mathsf{sdSet}^G$
	have a smallest\footnote{Here ``smallest'' means that the class of weak equivalences is as small as possible.}
	common left Bousfield localization,
	which we call the \emph{joint model structure}.
	Moreover:
\begin{enumerate}[label = (\roman*)]
	\item \label{PROPER_LBL}
	the joint model structure is left proper;
	\item \label{SDEQUIV_LBL}
	both the dendroidal and simplicial equivalences in $\mathsf{sdSet}^G$ are also joint equivalences;
	\item \label{JTFIB_LBL}
	$X$ is joint fibrant iff $X$ is both simplicial and dendroidal Reedy fibrant;
	\item \label{SFIB_JEQ_LBL} if $X,Y$ are joint fibrant
	then a map $X \to Y$ is a joint equivalence iff it is a simplicial
	equivalence iff it is a dendroidal equivalence;
	\item \label{DFIB_JEQ_LBL} if $X,Y$ are dendroidal fibrant
	then a map $X \to Y$ is a joint equivalence iff 
	it is a dendroidal equivalence iff $X_0 \to Y_0$ is an equivalence in $\mathsf{dSet}^G$;
	\item \label{JTCFIB_MAP_LBL} 
	if $X \to Y$ is a joint (co)fibration,
	the level maps 
	$X_n \to Y_n, n \geq 0$
	are (co)fibrations in $\mathsf{dSet}^G$
	and the maps
	$X\left(\Omega[T]\right) \to Y\left(\Omega[T]\right), T \in \Omega_G$
	are (co)fibrations in $\mathsf{sSet}$.

	In particular, if 
	$X$ is joint fibrant
	then $X_n \in \dSet^G$ and $X(\Omega[T]) \in \sSet$ are fibrant.
\end{enumerate}
\end{theorem}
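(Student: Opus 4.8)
The plan is to derive the joint model structure and properties \ref{PROPER_LBL}--\ref{JTCFIB_MAP_LBL} by feeding the simplicial and dendroidal Reedy model structures on $\mathsf{sdSet}^G$ into the general joint left Bousfield localization framework of \cite[\S 4.1]{BP20}, whose hypotheses are that the two structures be left proper, combinatorial, and share the same cofibrations. Combinatoriality is automatic, since both are cofibrantly generated model structures on a presheaf category. Left properness of the simplicial Reedy structure follows from that of $\mathsf{dSet}^G$ (Theorem \ref{DSETGMOD THM}), a Reedy model structure over $\Delta^{op}$ being levelwise; left properness of the dendroidal Reedy structure follows from that of $\mathsf{sSet}$ via \cite[Thm. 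A.8]{BP20}. The coincidence of cofibrations was recalled just above. The framework then produces the joint model structure, whose cofibrations are the common ones, yielding \ref{PROPER_LBL} at once; \ref{SDEQUIV_LBL} is immediate since by construction the joint structure is a left Bousfield localization of each Reedy structure; and the framework also supplies the characterization of joint-fibrant objects as the objects that are fibrant in \emph{both} Reedy structures (which is \ref{JTFIB_LBL}) and the detection, between joint-fibrant objects, of joint equivalences by the weak equivalences of either Reedy structure (which, together with \ref{JTFIB_LBL}, is \ref{SFIB_JEQ_LBL}).

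For \ref{JTCFIB_MAP_LBL} the key point is that a left Bousfield localization leaves the cofibrations unchanged, so joint cofibrations are Reedy cofibrations in each structure, and it is a standard fact about ordinary and generalized Reedy model structures that Reedy cofibrations are levelwise cofibrations --- giving the assertions about $X_n\to Y_n$ in $\mathsf{dSet}^G$ and $X(\Omega[T])\to Y(\Omega[T])$ in $\mathsf{sSet}$. Dually, joint fibrations are among the Reedy fibrations of each structure, hence levelwise fibrations, and since the matching object of a Reedy fibrant object is again fibrant, a joint-fibrant $X$ has all $X_n\in\mathsf{dSet}^G$ and $X(\Omega[T])\in\mathsf{sSet}$ fibrant. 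Finally, \ref{DFIB_JEQ_LBL} is the one statement that goes beyond the formal machinery. Its easy halves are \ref{SDEQUIV_LBL} (a dendroidal equivalence is a joint equivalence) and evaluation at $0\in\Delta$ (a dendroidal equivalence is an equivalence on $0$-th objects). The content is the converse for dendroidal fibrant $X$ and $Y$, which I would obtain by comparing the joint structure with $\mathsf{dSet}^G$ along the adjunction $c_!\dashv(-)_0$: one checks that $c_!$ preserves cofibrations and sends trivial cofibrations to dendroidal equivalences --- hence, by \ref{SDEQUIV_LBL}, to joint trivial cofibrations --- so that $c_!\colon\mathsf{dSet}^G\to\mathsf{sdSet}^G$ is left Quillen and in fact a left Quillen equivalence (\cite[Thm. 4.30]{BP20}); its right adjoint $(-)_0$ then detects joint equivalences between fibrant objects, and one reduces the dendroidal fibrant case to this via the fact that a dendroidal fibrant object is, up to dendroidal equivalence, determined by its $0$-th object.

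The main obstacle I anticipate is precisely \ref{DFIB_JEQ_LBL}: making precise the sense in which the $0$-th object controls the joint homotopy type of a dendroidal fibrant object, which forces one to use the specific interaction of the two Reedy structures (through the comparison with $\mathsf{dSet}^G$) rather than the formal localization theory that suffices for \ref{PROPER_LBL}--\ref{JTCFIB_MAP_LBL}. The only other point needing genuine input, the coincidence of the cofibrations of the two Reedy structures, is already recorded in \cite{BP20} and would simply be cited.
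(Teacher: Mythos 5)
Your high-level strategy is the same as the paper's: feed the two Reedy model structures into the joint left Bousfield localization framework of \cite[\S 4.1]{BP20} and read off items \ref{PROPER_LBL}--\ref{SFIB_JEQ_LBL}. The paper cites Hirschhorn's \cite[Thm. 4.1.1]{Hir03} (with the cellularity/left-properness checks for generalized Reedy categories) to guarantee that the localizations exist, whereas you invoke combinatoriality directly; both routes are fine, and your sourcing of left properness of the two Reedy structures is reasonable. The differences that matter are in \ref{DFIB_JEQ_LBL} and \ref{JTCFIB_MAP_LBL}.

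For \ref{JTCFIB_MAP_LBL} your assertion that ``Reedy cofibrations/fibrations are levelwise'' does not directly yield the claim about $X(\Omega[T])\to Y(\Omega[T])$ for $T\in\Omega_G$. The levels of the generalized Reedy structure on $\mathsf{sSet}^{\Omega^{op}\times G}$ are $X(U)$ for $U\in\Omega$, and ``levelwise'' there means a condition on $\Gamma$-fixed points for the admissible (graph) subgroups $\Gamma\leq\mathsf{Aut}(U)^{op}\times G$, not a bare statement about Kan (co)fibrations. Since $\Omega[T]$ for a $G$-tree $T$ is \emph{not} a representable of the indexing category $\Omega^{op}\times G$, passing from the Reedy condition to the desired statement about $X(\Omega[T]) \simeq X(T_{\**})^H$ requires the specific computations in \cite[Lemmas A.27(i), A.29(i)]{BP20} (which the paper cites). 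What you have written is the conclusion, not an argument for it.

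For \ref{DFIB_JEQ_LBL} your proposed route via the adjunction $c_!\dashv (-)_0$ has a real gap that you flag but do not close. The detection principle you want --- that $(-)_0$ reflects joint equivalences --- is a statement about \emph{joint}-fibrant objects (the fibrant objects for the Quillen equivalence $c_!\colon\mathsf{dSet}^G\rightleftarrows\mathsf{sdSet}^G$), whereas \ref{DFIB_JEQ_LBL} is stated for merely \emph{dendroidal} Reedy fibrant $X,Y$, a strictly weaker hypothesis by \ref{JTFIB_LBL}. Your phrase ``a dendroidal fibrant object is, up to dendroidal equivalence, determined by its $0$-th object'' is precisely the content that needs proving, and it is not supplied by the formal Bousfield localization machinery. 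There is also a circularity risk: \cite[Thm. 4.30]{BP20}, which you want to invoke for the Quillen equivalence $c_!$, sits downstream of the analysis of dendroidal fibrant objects in \cite[\S 4]{BP20}, and \cite[Cor. 4.29]{BP20} --- the reference the paper actually cites for \ref{DFIB_JEQ_LBL} --- is one of its inputs. So you would need to verify that your argument does not silently presuppose the statement you are proving. The paper avoids both issues simply by citing \cite[Cor. 4.29(iii)]{BP20} directly.
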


\begin{proof}
	The existence of a smallest common left Bousfield localization is 
	an application of \cite[Prop. 4.1]{BP20},
	with the hypothesis that the dendroidal/simplicial Reedy model structures admit localizations being guaranteed by 
	Hirschhorn's existence result 
	\cite[Thm. 4.1.1]{Hir03}
        (in particular, both Reedy model structures are left proper and cellular by \cite[Theorems 15.3.4, 15.7.6]{Hir03} respectively,
        the proofs of which do not depend on the Reedy category being strict).
	\ref{PROPER_LBL} then follows from \cite[Thm. 4.1.1(3)]{Hir03}.
	\ref{SDEQUIV_LBL} holds by definition.
	\ref{JTFIB_LBL} and \ref{SFIB_JEQ_LBL} are \cite[Prop. 4.1(i)(ii)]{BP20}.
	\ref{DFIB_JEQ_LBL} is \cite[Cor. 4.29(iii)]{BP20}.
	Lastly, \ref{JTCFIB_MAP_LBL} follows from \cite[Lemmas A.27(i), A.29(i)]{BP20}.
\end{proof}

\begin{remark}\label{SQUAREEQUI REM}
	For $A \to B$ a normal monomorphism in $\mathsf{dSet}^G$
	and $K\to L$ a monomorphism in $\mathsf{sSet}$,
	the map $(A\to B) \square (K\to L)$
	is a cofibration in any of the model structures on $\mathsf{sdSet}^G$.
	Moreover, if $A\to B$ (resp. $K\to L$)
	is a weak equivalence in $\mathsf{dSet}^G$ (resp. $\mathsf{sSet}$),
	then $(A\to B) \square (K\to L)$ is a 
	dendroidal (resp. simplicial)
	equivalence in $\mathsf{sdSet}^G$.
\end{remark}

Fourth (and last), one has the \textit{(equivariant) dendroidal Segal space} model structure on $\mathsf{sdSet}^G$,
which is the left Bousfield localization of the dendroidal Reedy model structure with respect to the Segal core inclusions
\[
	Sc[T] \longto \Omega[T],
\qquad
	T \in \Omega_G.
\]

\begin{remark}
	The joint model structure on
	$\mathsf{sdSet}^G$
	can also be described as a further localization of the 
	dendroidal Segal space model structure
	imposing a "completion condition"
	\cite[Rem. 4.27]{BP20},
	and is thus also called the 
	\emph{complete (equivariant) dendroidal Segal space}
	model structure
        with \textit{complete equivalences}.
\end{remark}

\begin{lemma}\label{FCOLIM_WE_LEM}
	The weak equivalences in the dendroidal Reedy, dendroidal Segal space, and joint Reedy model structures 
	on $\mathsf{sdSet}^G$ are closed under filtered colimits.
\end{lemma}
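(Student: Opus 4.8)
The plan is to prove the statement for the dendroidal Reedy model structure directly, and then to deduce the dendroidal Segal space and joint cases from it via a general fact about left Bousfield localizations.

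\textbf{The dendroidal Reedy case.} Here the weak equivalences are the simplicial equivalences, i.e. the maps $f\colon X\to Y$ such that $f(\Omega[T])\colon X(\Omega[T])\to Y(\Omega[T])$ is a Kan equivalence for every $T\in\Omega_G$. Writing $T\simeq G\cdot_H U$ with $U\in\Omega^H$ and using the (by now familiar) identification $X(\Omega[T])\simeq X(U)^H$, the functor $X\mapsto X(\Omega[T])$ is the composite of evaluation at $U\in\Omega$, which preserves all colimits, with the $H$-fixed-point functor, which preserves filtered colimits since $H$ is finite. As Kan equivalences in $\sSet$ are themselves closed under filtered colimits (e.g. because $\mathrm{Ex}^\infty$ commutes with filtered colimits and homotopy groups of Kan complexes commute with filtered colimits), it follows that for a filtered system $i\mapsto(f_i\colon X_i\to Y_i)$ of simplicial equivalences the map $\colim_i f_i$ is again a simplicial equivalence, since $(\colim_i f_i)(\Omega[T])\simeq\colim_i\bigl(f_i(\Omega[T])\bigr)$ is a filtered colimit of Kan equivalences.

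\textbf{Reduction of the remaining cases.} Both the dendroidal Segal space model structure and the joint model structure are left Bousfield localizations of the dendroidal Reedy model structure (the former by definition, the latter as a smallest common localization of the simplicial and dendroidal Reedy structures), and in particular all three share the same cofibrations. It therefore suffices to establish the general principle: if $\mathcal M$ is a combinatorial model category whose weak equivalences are closed under filtered colimits, then the same holds for any left Bousfield localization $L_S\mathcal M$. Given a filtered system $i\mapsto(f_i\colon X_i\to Y_i)$ of $L_S$-equivalences, I would form the homotopy colimits $\hocolim_i X_i$, $\hocolim_i Y_i$ as colimits of projectively cofibrant replacements $QX_\bullet\to X_\bullet$, $QY_\bullet\to Y_\bullet$ of the $\mathcal I$-diagrams; since $\mathcal M$ and $L_S\mathcal M$ have the same cofibrations, the projective model structures on the diagram categories have the same cofibrant objects, so this homotopy colimit and the induced map $\hocolim_i f_i$ are computed identically in both settings, and $\hocolim_i f_i$ is an $L_S$-equivalence because homotopy colimits preserve levelwise weak equivalences. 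On the other hand, the canonical comparison maps $\hocolim_i X_i\to\colim_i X_i$ and $\hocolim_i Y_i\to\colim_i Y_i$ are the filtered colimits of the levelwise $\mathcal M$-equivalences $QX_\bullet\to X_\bullet$, $QY_\bullet\to Y_\bullet$, hence $\mathcal M$-equivalences by hypothesis, hence $L_S$-equivalences. Two-out-of-three in the resulting commuting square then forces $\colim_i f_i$ to be an $L_S$-equivalence. Applying this with $\mathcal M$ the dendroidal Reedy model structure --- which is combinatorial, being a generalized Reedy model structure on a presheaf category of sets --- and $L_S\mathcal M$ either the dendroidal Segal space or the joint model structure finishes the argument.

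\textbf{Main obstacle.} The dendroidal Reedy case is essentially formal once the corresponding $\sSet$-level fact is granted. The real subtlety is the Segal space and, especially, the joint cases: a naive approach through fibrant replacement is awkward because the completion localization defining the joint structure is not visibly finitary, so I would deliberately avoid fibrant replacements and route everything through homotopy colimits, as above. The two points that need genuine (if routine) care are that ``same cofibrations'' makes the homotopy colimit and its comparison with the ordinary colimit independent of which of the three model structures one works in, and that homotopy colimits preserve levelwise weak equivalences in each of the localized structures --- both of which are standard given that all the model structures involved are combinatorial.
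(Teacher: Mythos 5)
Your proof is correct and follows essentially the same route as the paper's: the dendroidal Reedy case is inherited from the analogous fact for Kan equivalences in $\sSet$, and the Segal space and joint cases are deduced by observing they are left Bousfield localizations of the dendroidal Reedy structure and routing through the reformulation that ``filtered colimits are homotopy colimits.'' The paper's proof asserts this reformulation (Remark \ref{WEFILRES REM}) is ``clearly preserved under localization,'' whereas you supply the details of that preservation via projectively cofibrant replacements and two-out-of-three — a welcome elaboration, but not a different argument.
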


\begin{remark}\label{WEFILRES REM}
	More explicitly, weak equivalences are closed
	under filtered colimits if a map of filtered colimits
	$\colim_i C_i \to \colim_i D_i$
	is a weak equivalence whenever all $C_i \to D_i$ are.
	Notably, this is 
	equivalent to 
	the claim that filtered colimits
	$\colim_i C_i$ are homotopy colimits.
\end{remark}

\begin{proof}
	Weak equivalences in the dendroidal Reedy model structure are simplicial equivalences, 
	so in that case the result is inherited from the analogous claim for $\mathsf{sSet}$.
	The result for the latter two model structures follows
	since they are left Bousfield localizations of the dendroidal Reedy model structure
	(as the alternative condition in
	Remark \ref{WEFILRES REM}
	is clearly preserved under localization).
\end{proof}

In what follows we will often make reference
to the $0$-(co)skeleton
of some $X \in \mathsf{sdSet}^G$
in the dendroidal Reedy structure.
To avoid confusion with the
$0$-(co)skeleton for the simplicial Reedy structure,
and noting that $\eta$ is the only tree in $\Omega$
of degree $0$, we introduce the following notation.

\begin{notation}
Let $X \in \mathsf{sdSet}^G$.
We write
$\mathsf{sk}_{\eta}X,\mathsf{csk}_{\eta}X \in \mathsf{sdSet}^G$
for the (co)skeleta described by
\[
	\left(\mathsf{sk}_{\eta} X\right)(U) =
	\coprod_{\boldsymbol{E}(U)} X(\eta),
\qquad \qquad
	\left(\mathsf{csk}_{\eta} X\right)(U) = 
	\prod_{\boldsymbol{E}(U)} X(\eta).
\]
\end{notation}

\subsection{Segal preoperads}
\label{SPREOP_SEC}

We next recall the normal model structure on preoperads
$\mathsf{PreOp}^G$
studied in \cite[\S 4, \S 5]{BP20}.

\begin{definition}\label{PREOP DEF} 
	The category of \textit{(equivariant) preoperads} $\mathsf{PreOp}^G$ is the full subcategory of $\mathsf{sdSet}^G$ spanned by those $X$ such that
	$X(\eta)$ is a discrete simplicial set.
\end{definition}

\begin{definition}
	Given $X \in \mathsf{PreOp}^G$ we call $X(\eta)$ the 
	\emph{color set of $X$}
	and denote the associated \emph{color set functor} as follows.
	\begin{equation}\label{COLORSET EQ}
	\begin{tikzcd}[row sep=0]
	\mathsf{PreOp}^G \ar{r}{\mathfrak{C}_{\bullet}} &
	\mathsf{Set}^G
	\\
	X \ar[mapsto]{r} &
	\mathfrak{C}_X = X(\eta)
	\end{tikzcd}
	\end{equation}

Further, for each fixed $\mathfrak{C} \in \mathsf{Set}^G$ we write 
$\mathsf{PreOp}^G_{\mathfrak{C}} \subset \mathsf{PreOp}^G$
for the fiber subcategory of 
\eqref{COLORSET EQ}
over $\mathfrak{C}$,
consisting of those $X$
such that $X(\eta) = \mathfrak{C}$
and maps that are the identity on colors.

	Lastly, for $f \colon \mathfrak{C} \to \mathfrak{D}$
	a map of $G$-sets of colors
	we define adjoint functors
\begin{equation}\label{PREOPCOLCH EQ}
	f_{!} \colon
	\mathsf{PreOp}^{G}_{\mathfrak{C}}
	\rightleftarrows
	\mathsf{PreOp}^{G}_{\mathfrak{D}}
	\colon f^{\**}
\end{equation}
	via the pushout and pullback squares below
	(note that 
	$\mathsf{sk}_{\eta} f_! A = \coprod_{\mathfrak{C}} \Omega[\eta]$ depends only on 
	$\mathfrak{C}$ while 
	$\mathsf{csk}_{\eta} f^{\**} X
	= \prod_{\mathfrak{D}} \Omega[\eta]$ depends only on
	$\mathfrak{D}$)
	\[
	\begin{tikzcd}
	\mathsf{sk}_{\eta} A \ar{r} \ar{d} \arrow[dr, phantom, "\ulcorner", very near start]  &
	\mathsf{sk}_{\eta} f_! A \ar{d}
	&&
	f^{\**} X \ar{r} \ar{d} &
	X \ar{d}
	\\
	A \ar{r} & 
	f_! A
	&&
	\mathsf{csk}_{\eta} f^{\**} X \ar{r} & 
	\mathsf{csk}_{\eta} X
	\arrow[ul, phantom, "\lrcorner", very near start]
	\end{tikzcd}
	\]
\end{definition}

\begin{remark}\label{GROTHFIBOP REM}
	The color functor \eqref{COLORSET EQ}
	is both a Grothendieck opfibration and fibration
	(cf. \cite[\S \ref{OC-GROTFIB SEC}]{BP_FCOP}),
	with cocartesian (resp. cartesian) arrows the natural maps
	$A \to f_! A$ (resp. $f^{\**} X \to X$).
	In particular, maps
	$f_! A \to B$ over a
	map of colors $\mathfrak{D} \to \mathfrak{E}$
	are in bijection with maps
	$A \to B$ over the
	over the composite map of colors $\mathfrak{C} \to \mathfrak{D} \to \mathfrak{E}$,
	and dually for $f^{\**} X$.
\end{remark}

The inclusion 
$\gamma^{\**} \colon \mathsf{PreOp}^G \to \mathsf{sdSet}^G$
admits both a left adjoint $\gamma_!$
and a right adjoint $\gamma_{\**}$
\[
      \begin{tikzcd}[column sep =5em]
            \mathsf{PreOp}^G \ar{r}[swap]{\gamma^{\**}} 
            &
            \mathsf{sdSet}^G
            \ar[bend right]{l}[swap,midway]{\gamma_{!}}
            \ar[bend left]{l}{\gamma_{\**}}
      \end{tikzcd}
\]
described by the following pushout and pullback squares.
\begin{equation}\label{GAMMASTAR_EQ}
\begin{tikzcd}
	\mathsf{sk}_{\eta} X \ar{r} \ar{d} \arrow[dr, phantom, "\ulcorner", very near start]  
&
	\pi_0 \mathsf{sk}_{\eta} X \ar{d}
&& 
	\gamma_{\**}X \ar{r} \ar{d} 
&
	X \ar{d}
\\
	X \ar{r} 
&
	\gamma_! X 
&&
	\mathsf{csk}_{\eta} X_0
	\ar{r} 
&
	\mathsf{csk}_{\eta} X 
	\arrow[lu, phantom, "\lrcorner", very near start]
\end{tikzcd}
\end{equation}
More explicitly: 
$\gamma_{!}X (U) = X(U)$ for non-linear trees $U \in \Omega \setminus \Delta$,
while $\gamma_{!}X ([n])$ for $[n] \in \Delta$ linear is given by the pushout on the left below; 
$\gamma_{\**}X(U)$ is given by the pullback on the right below.
\begin{equation}\label{GAMMASTAR2_EQ}
      \begin{tikzcd}
            X(\eta) \ar{r} \ar{d} \arrow[dr, phantom, "\ulcorner", very near start]  &
            \pi_0 X(\eta) \ar{d}
            && 
            \gamma_{\**}X(U) \ar{r} \ar{d} & X(U) \ar{d}
            \\
            X([n]) \ar{r} & \gamma_! X([n]) 
            &&
            \prod_{\boldsymbol{E}(U)} X_0(\eta) \ar{r} &
            \prod_{\boldsymbol{E}(U)} X(\eta)
            \arrow[lu, phantom, "\lrcorner", very near start]
      \end{tikzcd}
\end{equation}


By largely formal arguments, the joint model structure on 
$\mathsf{sdSet}^G$ in Theorem \ref{JB_THM}
induces a model structure on $\mathsf{PreOp}^G$,
as follows.
We say a map $X \to Y$ in $\PreOp^G$ is a 
\textit{joint equivalence} (resp. \textit{normal monomorphism}) if
$\gamma^{\**} X \to \gamma^{\**} Y$ is a joint equivalence 
(resp. normal monomorphism) in $\sdSet^G$.
The following is then \cite[Thms. 4.39 and 4.42]{BP20},
with the additional ``moreover'' claims 
inherited from the 
analogous conditions in $\mathsf{sdSet}^G$ in
Theorem \ref{JB_THM}\ref{PROPER_LBL}
and Lemma \ref{FCOLIM_WE_LEM}.

\begin{theorem}\label{PREOPMS THM}
	There is a model structure on $\PreOp^G$, called the 
	\emph{normal model structure}, 
	such that weak equivalences (resp. cofibrations) are the joint equivalences (normal monomorphisms).
	
	Moreover, this model structure is left proper
	and weak equivalences are closed under filtered colimits.
	
	Lastly, the adjunction $\gamma^{\**} \colon \PreOp^G \rightleftarrows \sdSet^G \colon \gamma_{\**}$ is a Quillen equivalence
	between the normal model structure and the joint model structure on $\sdSet^G$.
\end{theorem}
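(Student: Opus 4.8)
The overall idea is to transport the joint model structure on $\sdSet^G$ across the adjunction $\gamma^{\**} \dashv \gamma_{\**}$. The first thing I would record is that, because $\gamma^{\**}$ is fully faithful and has \emph{both} a left adjoint $\gamma_!$ and a right adjoint $\gamma_{\**}$, the inclusion $\PreOp^G \subseteq \sdSet^G$ is bireflective; hence $\gamma^{\**}$ preserves and reflects all small limits and colimits, $\PreOp^G$ is locally presentable, and -- crucially for everything below -- pushouts, transfinite compositions and filtered colimits of preoperads are computed exactly as in $\sdSet^G$. By the very definitions, the normal monomorphisms and the joint equivalences in $\PreOp^G$ are the maps created by $\gamma^{\**}$.

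To produce the normal model structure I would apply a recognition theorem for combinatorial model categories (J.~Smith's), or, alternatively, present it as a left Bousfield localization of a Reedy-type model structure on $\PreOp^G$, as in the Segal precategory case. The joint equivalences of $\PreOp^G$ form an accessible, accessibly embedded class satisfying two-out-of-three and closed under retracts, all inherited from $\sdSet^G$ since $\gamma^{\**}$ is accessible. One must then: (i) exhibit a \emph{set} $I$ of generating normal monomorphisms -- this needs some care, since the standard cells $\Omega[T]\times\Delta[n]$ of $\sdSet^G$ are not preoperads for $n\geq 1$, so one instead uses their reflections $\gamma_!(\Omega[T]\times\Delta[n])$ together with the colour cells $\emptyset \to G/H\cdot\eta$; (ii) check that the trivial normal monomorphisms are closed under pushout and transfinite composition, which is immediate because these colimits agree with those in $\sdSet^G$, where the trivial cofibrations of the combinatorial, left proper joint model structure enjoy this property; and (iii) check that maps with the right lifting property against $I$ are joint equivalences. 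Step (iii) is the delicate point of the existence proof and is handled by transposing lifting problems back to $\sdSet^G$ along $\gamma_! \dashv \gamma^{\**}$, using the explicit description \eqref{GAMMASTAR2_EQ} of $\gamma_!$ on linear trees. Once the model structure exists, left properness and closure of weak equivalences under filtered colimits follow at once from Theorem~\ref{JB_THM}\ref{PROPER_LBL} and Lemma~\ref{FCOLIM_WE_LEM}, since the relevant pushouts and filtered colimits in $\PreOp^G$ are the ones computed in $\sdSet^G$.

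For the Quillen adjunction there is nothing to prove: $\gamma^{\**}$ carries cofibrations to cofibrations and trivial cofibrations to trivial cofibrations by the definitions of normal monomorphism and joint equivalence in $\PreOp^G$. To upgrade this to a Quillen equivalence I would use that $\gamma^{\**}$ is fully faithful, so the unit $A \to \gamma_{\**}\gamma^{\**}A$ is an isomorphism, and then observe that, by a two-out-of-three argument with fibrant (resp.\ cofibrant) replacements, both the derived unit at a normal $A$ and the derived counit at a joint-fibrant $X$ reduce to the single statement
\[
\text{the counit } \varepsilon_X \colon \gamma^{\**}\gamma_{\**}X \to X \text{ is a joint equivalence whenever } X\in\sdSet^G \text{ is joint fibrant.}
\]
I expect this to be the main obstacle. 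Here $\gamma_{\**}X$ is the ``colour-discretization'' of $X$: it replaces the space of colours $X(\eta)$ by the discrete set of its vertices while, at each tree $T$, retaining only those $T$-simplices whose boundary colours are honest vertices (the pullback in \eqref{GAMMASTAR2_EQ}). The claim is that this does not alter the homotopy type of $X(\Omega[T])$ for any $T\in\Omega_G$, and this is precisely where the completeness condition built into the joint (complete dendroidal Segal space) model structure is used: Rezk-completeness forces the colour space to be recovered, up to homotopy, from its vertices together with the space of unary operations, so that $\gamma_{\**}X$ presents the same complete dendroidal Segal space as $X$. I would carry this out levelwise at each $\Omega[T]$ by induction on the number of vertices of $T$, using the Segal condition to reduce $T$ to corollas and copies of $\eta$ and the completeness condition at $\eta$ for the base case, and then conclude that $\varepsilon_X$ is a joint equivalence via the characterization of joint equivalences between fibrant objects in Theorem~\ref{JB_THM}\ref{SFIB_JEQ_LBL}.
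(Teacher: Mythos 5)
The paper does not prove this theorem from scratch: it is a restatement of \cite[Thms.\ 4.39 and 4.42]{BP20}, and the moreover claims are attributed, exactly as you attribute them, to Theorem \ref{JB_THM}\ref{PROPER_LBL} and Lemma \ref{FCOLIM_WE_LEM} via the fact that $\gamma^{\**}$ creates weak equivalences and preserves colimits. Your high-level outline — transport the joint model structure along $\gamma^{\**} \dashv \gamma_{\**}$, invoke a Smith-type recognition theorem, and reduce the Quillen-equivalence claim via full faithfulness of $\gamma^{\**}$ and two-out-of-three to the single assertion that the counit $\gamma^{\**}\gamma_{\**}X \to X$ is a joint equivalence for joint-fibrant $X$ — is correct and surely close to what \cite{BP20} does.

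There is, however, a genuine gap in your sketch of the counit claim, and it would derail the levelwise induction you propose. You assert that $\gamma_{\**}X(\Omega[T]) \to X(\Omega[T])$ is a Kan equivalence for every $T \in \Omega_G$, with $T=\eta$ as the base case ``by completeness.'' But at $T=\eta$ this map is the inclusion of the discrete set of vertices $X_0(\eta)$ into the colour space $X(\eta)$, and that is \emph{not} a Kan equivalence unless $X(\eta)$ is homotopy discrete — which Rezk completeness does \emph{not} force (for a complete Segal space, $X(\eta)$ classifies the core $\infty$-groupoid and has higher homotopy whenever objects have non-trivial automorphisms). So the counit is a joint equivalence without being a simplicial equivalence, and the proposed induction has a false base case. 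The correct argument does not run level by level: it goes through a Dwyer--Kan-style characterization of joint equivalences between fibrant objects (cf.\ Theorem \ref{FIBPREOP THM} and its complete-dendroidal-Segal-space analogue in \cite{BP20}). Essential surjectivity is immediate because $\gamma_{\**}X$ and $X$ have the same vertices; full faithfulness holds because $X$ is dendroidal Reedy fibrant, so $X(\Omega[C]) \to \prod_{\boldsymbol{E}_G(C)} X(\eta)$ is a Kan fibration and the strict fibers over honest vertices (which is all that $\gamma_{\**}$ retains) agree with the homotopy fibers. Completeness is indeed the decisive input — you were right to single it out — but it enters through the theorem ``Dwyer--Kan equivalences between (appropriately) fibrant objects are joint equivalences,'' not through an equivalence of mapping spaces $\gamma_{\**}X(\Omega[T]) \xrightarrow{\sim} X(\Omega[T])$ for each individual $T$.
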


For our purposes, we also need to recall
a convenient ``Dwyer-Kan description'' of the joint equivalences
between fibrant objects in $\PreOp^G$.
For this purpose, we first introduce the following new notation,
which extends notation in \cite[Def. 5.7]{BP20}
and
will simplify our discussion of the nerve functor
(see, e.g. \eqref{STRSEGCON EQ}).

\begin{notation}\label{XUDECALT NOT}
	Let $X \in \mathsf{PreOp}^G$, 
	$A \in \mathsf{dSet}^G$,
	and $\mathfrak{c} \colon A(\eta) \to X(\eta)$ 
	be a $G$-equivariant map.
	
	We define $X_{\mathfrak{c}}(A)\in \mathsf{sSet}$
	as the pullback	below
	(here the two squares are identical,
	providing only different descriptions of the bottom-right corner).
\begin{equation}\label{XUDECALT EQ}
\begin{tikzcd}
	X_{\mathfrak{c}}(A) \ar{r} \ar{d}
&
	X(A) \ar{d}
&
	X_{\mathfrak{c}}(A) \ar{r} \ar{d}
&
	X(A) \ar{d}
\\
	\** \ar{r}[swap]{\mathfrak{c}} 
&
	X \left( \mathsf{sk}_{\eta} A \right)
	\arrow[lu, phantom, "\lrcorner", very near start]
&
	\** \ar{r}[swap]{\mathfrak{c}} 
&
	\left(\prod_{A(\eta)} X(\eta)\right)^G
	\arrow[lu, phantom, "\lrcorner", very near start]
\end{tikzcd}
\end{equation}
Further, when $A = G \cdot \Omega[U]$
for $U \in \Omega$,
we abbreviate
$X_{\mathfrak{c}}(G \cdot \Omega[U])$
as 
$X_{\mathfrak{c}}(U)$.

Note that one thus has a coproduct decomposition
\begin{equation}
\label{COLDEC_EQ}
X(A) \simeq 
\coprod_{\mathfrak{c} \colon A(\eta) \to X(\eta)}
X_{\mathfrak{c}}(A).
\end{equation}
\end{notation}

\begin{remark}\label{PRIEX REM}
	Our primary instances of Notation \ref{XUDECALT NOT}
	occur when $A=\Omega[T]$ for $T \in \Omega_G$,
	in which case 
	$\Omega[T](\eta) = \boldsymbol{E}(T)$,
	so that
	$\mathfrak{c} \colon 
	\Omega[T](\eta) \to X(\eta)$
	can be regarded as a coloring of the edges 
	$\boldsymbol{E}(T)$
	of $T$ by the colors $X(\eta)$ of the preoperad $X$.
\end{remark}

\begin{remark}\label{MAPSPTRANS REM}
	Specifying Remark \ref{PRIEX REM} to the case 
	of $T=C$ a $G$-corolla, the coloring
	$\mathfrak{c} \colon 
	\Omega[C](\eta) \to X(\eta)$
	is tantamount to a $G$-equivariant map
	$\mathfrak{c} \colon \partial \Omega[C] \to X$,
	i.e. to a \emph{$C$-profile} in the sense of 
	\cite[Def. 5.6]{BP20}.
	Further, since there is an identification
	$X(\partial \Omega[C])
	\simeq
	\prod_{[e_i] \in \boldsymbol{E}_G(C)} X(\eta)^{H_i}$
	with $H_i \leq G$ the isotropy of the edge $e_i$,
	the data of the coloring $\mathfrak{c}$
	is equivalent to a choice of $x_i \in X(\eta)^{H_i}$
	for each $[e_i] \in \boldsymbol{E}_G(C)$.
	As such, one has an identification
\begin{equation}\label{MAPSPTRANS EQ}
	X_{\mathfrak{c}}(\Omega[C]) = X(x_1,\cdots,x_n;x_0)
\end{equation}
	where the \emph{mapping space}
	$X(x_1,\cdots,x_n;x_0)$
	is as defined in \cite[Def. 5.7]{BP20}.
	Further, 
	the decomposition in \eqref{COLDEC_EQ} 
	then extends the decomposition in 
	\cite[Rem. 5.14]{BP20}.
\end{remark}

\begin{remark}
	Fix $\mathfrak{C} \in \mathsf{Set}^G$
	and consider the fiber subcategory 
	$\mathsf{PreOp}^G_{\mathfrak{C}} \subset \mathsf{PreOp}^G$.
	
	For $U \in \Omega$, the decomposition
	$X(U) = \coprod_{\mathfrak{c} \colon \boldsymbol{E}(U) \to \mathfrak{C}} X_{\mathfrak{c}}(U)$
	in \eqref{COLDEC_EQ} 
	(note that we are using the abbreviated notation at the end of
	Notation \ref{XUDECALT NOT})
	then induces an equivalence of categories
\begin{equation}\label{PREOPCOLFIXEQ EQ}
\begin{tikzcd}[row sep = 0]
	\mathsf{PreOp}^G_{\mathfrak{C}}
	\ar{r}{\simeq}
&
	\mathsf{Fun}_{\**}(G \ltimes \Omega^{op}_{\mathfrak{C}},\mathsf{sSet})
\\
	(U \mapsto X(U))
	\ar[mapsto]{r}
&
	((U,\mathfrak{c}) \mapsto X_{\mathfrak{c}}(U))
\end{tikzcd}
\end{equation}
	where $\Omega_{\mathfrak{C}}$
	denotes the category of $\mathfrak{C}$-colored trees of 
	Definition \ref{CFOREST_DEF},
	$G \ltimes \Omega^{op}_{\mathfrak{C}}$,
	is an instance of
	\cite[Ex. \ref{OC-GLTIMES EQ}]{BP_FCOP}
	extending the category  
	$\Omega^{op}_{\mathfrak{C}}$
	by adding $G$-action arrows
	$\vect{U} = (U, \mathfrak{c})
	\to (U, g \mathfrak{c}) = g\vect{U}$
	for $g \in G$,
	and 
	$\mathsf{Fun}_{\**}(G \ltimes \Omega^{op}_{\mathfrak{C}},\mathsf{sSet})
	\subset
	\mathsf{Fun}(G \ltimes \Omega^{op}_{\mathfrak{C}},\mathsf{sSet})$
	is the subcategory of pointed functors,
	i.e. functors $Y$ such that
	$Y(\eta_{\mathfrak{c}}) = \**$,
	where $\mathfrak{c} \in \mathfrak{C}$ is a color
	and $\eta_{\mathfrak{c}}$ denotes the stick tree colored by $\mathfrak{c}$.
\end{remark}

\begin{remark}
	Given the equivalence \eqref{PREOPCOLFIXEQ EQ}
	and the alternative notation
	$\vect{U} = (U,\mathfrak{c})$
	for $\mathfrak{C}$-trees in $\Omega_{\mathfrak{C}}$, 
	it seems natural to abbreviate 
	$X_{\mathfrak{c}}(U)$ as $X(\vect{U})$.
	However, we will make significant use of the notation
	$X_{\mathfrak{c}}(\Omega[T])$
	in Remarks \ref{PRIEX REM}, \ref{MAPSPTRANS REM},
	and this latter notation is not readily recovered from 
	\eqref{PREOPCOLFIXEQ EQ}.
	As such, when dealing with preoperads we work only with the 
	$X_{\mathfrak{c}}(A),X_{\mathfrak{c}}(U)$
	notations,
	reserving the $\O(\vect{C})$ style notation for the context of operads 
	$\O \in \mathsf{sOp}^G$ (see \S \ref{GSOP_SEC}).
\end{remark}

\begin{remark}\label{SCTCOLPR REM}
	Let $X \in \mathsf{PreOp}^G$.
	For any $G$-tree $T \in \Omega_G$
	and coloring 
	$\mathfrak{c} \colon \boldsymbol{E}(T) \to X(\eta)$
	one has
	\[
                X_{\mathfrak{c}}(Sc[T]) 
                \simeq
                \prod_{v \in \boldsymbol{V}_G(T)}
                X_{\mathfrak{c}_v}(\Omega[T_v]) 
	\]
        where
        $\mathfrak{c}_v$
        denotes the restricted coloring given by the composite
        $\boldsymbol{E}(T_v) \to \boldsymbol{E}(T) 
        \xrightarrow{\mathfrak{c}} X(\eta)$,
        and $T_v$ is as in Notation \ref{TVG_NOT}.
\end{remark}

We can now recall the notion of 
Segal operad, 
cf. \cite[Def. 5.5]{CM13b}, \cite[Def. 4.40]{BP20}.

\begin{definition}\label{SEGCOLCHAR DEF}
	A preoperad $X \in \mathsf{PreOp}^G$ is called a \emph{(equivariant) Segal operad} if
	$X(\Omega[T]) \to 	X(Sc[T])$
	is a Kan equivalence for each $T \in \Omega_G$.
	A Segal operad $X$ is further called a
	\emph{Reedy fibrant Segal operad} 
	if $\gamma^{\**}X$ is dendroidal Reedy fibrant in $\sdSet^G$.
\end{definition}

\begin{remark}\label{SEGCOLCHAR_REM}
        By \eqref{COLDEC_EQ} 
	and Remark \ref{SCTCOLPR REM},
	$X \in \mathsf{PreOp}^G$ is a Segal operad iff
        the natural maps
        \begin{equation}
                X_{\mathfrak{c}}(\Omega[T])
                \longto 
		\prod_{v \in \boldsymbol{V}_G(T)}
                X_{\mathfrak{c}_v}(\Omega[T_v])
        \end{equation}
	are Kan equivalences for all 
	$T \in \Omega_G$
	and $G$-equivariant colorings
	$\mathfrak{c} \colon 
	\boldsymbol{E}(T) \to \mathfrak{C}_X = X(\eta)$.

        Additionally,
        by \cite[Rem. 4.41]{BP20}
        a Segal operad $X$ is a Reedy fibrant Segal operad iff
        $\gamma^{\**}X$ is fibrant in the dendroidal Segal space model structure on $\sdSet^G$.
\end{remark}

\begin{remark}\label{REPSEGOPS REM}
Since,
for any preoperad 
$X \in \mathsf{PreOp}^G$
the discrete simplicial sets 
$X(\eta)$ are Kan complexes,
one can form a dendroidal 
fibrant replacement 
$X \to \widetilde{X}$ in 
$\mathsf{sdSet}^G$
such that 
$X(\eta) = \widetilde{X}(\eta)$,
so that $\widetilde{X}$
is again a preoperad.

Moreover, since the maps
$X(\Omega[T]) \to \widetilde{X}(\Omega[T])$ for $T \in \Omega_G$
are Kan equivalences,
\eqref{COLDEC_EQ} 
implies that so are the maps
$X_{\mathfrak{c}}(\Omega[T]) \to \widetilde{X}_{\mathfrak{c}}(\Omega[T])$
for any coloring
$\mathfrak{c} \colon \boldsymbol{E}(T) \to X(\eta)$.

Note that Remark \ref{SEGCOLCHAR_REM} 
then implies that $X$ is a Segal operad iff 
$\widetilde{X}$ is a Reedy fibrant Segal operad.
\end{remark}

We will show that joint equivalences 
between Segal operads
admit a Dwyer-Kan type description 
in terms of fully faithfulness and essential surjectivity 
conditions (cf. Theorem \ref{SOPG_THM}).
To describe essential surjectivity,
we need to recall a discrete 
algebraic structure associated to a Segal preoperad.
In the following, we make use of the category
$\mathsf{dSet}_G= \mathsf{Set}^{\Omega_G^{op}}$
of genuine dendroidal sets discussed in \S \ref{EDS_SEC},
as well as its natural simplicial generalization
$\mathsf{sdSet}_G= \mathsf{sSet}^{\Omega_G^{op}}$.

\begin{definition}\label{HOMGENOP DEF}
	Given a Segal operad $X \in \mathsf{PreOp}^G$,
	we define its \emph{homotopy genuine operad} $ho(X) \in \dSet_G$ by
\begin{equation}\label{HOGENOP EQ}
	ho(X) = \pi_0\left(\upsilon_{\**}\gamma^{\**}X\right)
\end{equation}
	with 
	$\upsilon_{\**} \colon \mathsf{sdSet}^G \to \mathsf{sdSet}_G$
	(cf. \eqref{UPSILONADJ EQ})
	and 
	$\pi_0 \colon \mathsf{sdSet}_G \to \mathsf{dSet}_G$
	defined in the natural way. 
\end{definition}

\begin{remark}\label{HOISNERVENON REM}
	When $G=\**$, \eqref{HOGENOP EQ}
	reduces to $ho(X) = \pi_0(\gamma^{\**}X)$
	and the Segal condition for 
	$X \in \mathsf{PreOp}$
	induces a \emph{strict} Segal condition
	on $ho(X) \in \mathsf{dSet}$,
	i.e. $ho(X)$ is the nerve of an operad,
	cf. \eqref{TAUADJ EQ}.
	Moreover, if
	$X \in \mathsf{PreCat} \subset \mathsf{PreOp}$ is a \textit{precategory},
        i.e. $X \in \mathsf{ssSet} \subset \mathsf{sdSet}$ with $X(\eta)$ a discrete simplicial set,
	then $ho(X) \in \mathsf{sSet} \subset \mathsf{dSet}$
	is the nerve of a category.
\end{remark}

\begin{remark}
	The ``genuine operad'' moniker for 
	$ho(X)\in \dSet_G$ refers to the fact that this presheaf satisfies a certain strict Segal condition, as shown in \cite[Prop. 5.9]{BP20}
	(technically the cited result only covers the case of $X$ Reedy fibrant, but it is immediate that for 
	$X,\widetilde{X}$ as in
	Remark \ref{REPSEGOPS REM} 
	there is a natural identification 
	$ho(X) \simeq ho(\widetilde{X})$).

	However, for our present purposes we will not need the full strength of this statement, 
	but only a more familiar consequence.
	Recalling 
	(cf. \eqref{UPSIOTADIAG EQ})
	the inclusion
	$\iota_G \colon \Delta \times \mathsf{O}_G \to \Omega_G$
	given by 
	$([n],G/H) \mapsto G/H \cdot [n]$,
	one has that
	$\iota_G^{\**}ho(X)
	\in \mathsf{sSet}^{\mathsf{O}_G^{op}}$
	is given by
	$\left(\iota_G^{\**}ho(X)\right)(G/H)
	= \iota^{\**} ho(X^H) = 
	ho(\iota^{\**} X^H)$,
	where $\iota^{\**},ho$ are as in
	\eqref{IOTASHADJ EQ},
	Remark \ref{HOISNERVENON REM}.
	As such, the Segal condition for $ho(X)$ implies that 
	$\iota_G^{\**}ho(X)$
	is a coefficient system of 
	nerves of categories \cite[Rem. 5.11]{BP20}.
\end{remark}

\begin{definition}\label{DKEQUIV DEF}
      A map $f \colon X \to Y$ of Segal operads in $\mathsf{PreOp}^G$ is called:
\begin{enumerate}[label = (\roman*)]
	\item \textit{fully-faithful} if,
	for all $G$-corollas $C \in \Sigma_G$ 
	and $G$-equivariant colorings
	$\mathfrak{c} \colon \boldsymbol{E}(C) \to
	\mathfrak{C}_X = X(\eta)$,
	the induced map
\[
	X_{\mathfrak{c}}(\Omega[C]) \to 
	Y_{f\mathfrak{c}}(\Omega[C])
\]
	is a Kan equivalence in $\mathsf{sSet}$;
	\item \textit{essentially surjective} if the map 
      $\iota_G^{\**}ho(X) \to \iota_G^{\**}ho(Y)$ of $G$-coefficient systems of categories
      is levelwise essentially surjective;
	\item a \textit{Dwyer-Kan equivalence} if it is both fully-faithful and essentially surjective.
      \end{enumerate}
\end{definition}

\begin{notation}\label{HEQUIV NOT}
	Arrows in the category (with nerve)
	$\left(\iota_G^{\**}ho(X)\right)(G/H)
	= ho(\iota^{\**}X^H)$
	are encoded by maps
	$\Omega[1] \to X^H$.
	If $\Omega[1] \to X^H$
	encodes an isomorphism,
	we call $\Omega[1] \to X^H$
	a \emph{$H$-equivalence}.
\end{notation}

The following summarizes 
\cite[Thms. 5.51 and 5.48]{BP20},
with the additional fact that the ``further'' claim
holds for all Segal operads, rather than just the Reedy fibrant ones, 
following from Remark \ref{REPSEGOPS REM}.

\begin{theorem}\label{FIBPREOP THM}
	The fibrant objects in $\mathsf{PreOp}^G_{normal}$
	are precisely the Reedy fibrant Segal operads,
	i.e. the preoperads $X \in \mathsf{PreOp}^G$ such that
	$\gamma^{\**} X \in \mathsf{sdSet}^G$ is a Segal space.
	
	Further, a map between Segal operads is a joint equivalence iff it is a Dwyer-Kan equivalence.
\end{theorem}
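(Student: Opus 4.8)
The plan is to observe that the statement breaks into three pieces, two of which are direct transcriptions of results from \cite{BP20} and the third of which is a short reduction to the second. The identification of the fibrant objects of $\PreOp^G_{normal}$ with the Reedy fibrant Segal operads is \cite[Thm. 5.51]{BP20}, and its reformulation as the condition that $\gamma^{\**}X$ be a dendroidal Segal space is Remark \ref{SEGCOLCHAR_REM}. Likewise, the ``further'' claim restricted to Reedy fibrant Segal operads --- that a joint equivalence between such objects is the same as a Dwyer--Kan equivalence --- is \cite[Thm. 5.48]{BP20}. So the only genuinely new content is the removal of the Reedy-fibrancy hypothesis from the ``further'' claim, and that is where I would spend the actual effort.

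For that reduction, I would begin with a map $f \colon X \to Y$ of (not necessarily Reedy fibrant) Segal operads and apply Remark \ref{REPSEGOPS REM} to choose dendroidal fibrant replacements $X \to \widetilde X$ and $Y \to \widetilde Y$ in $\sdSet^G$ with $\widetilde X(\eta) = X(\eta)$ and $\widetilde Y(\eta) = Y(\eta)$; by that remark $\widetilde X$ and $\widetilde Y$ are then Reedy fibrant Segal operads. Since $X \to \widetilde X$ is a dendroidal Reedy trivial cofibration and $\widetilde Y$ is dendroidal Reedy fibrant, a lift $\widetilde f \colon \widetilde X \to \widetilde Y$ of $f$ exists, and it agrees with $f$ on color sets. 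The claim would then follow by checking that $f$ is a joint equivalence (resp. a Dwyer--Kan equivalence) if and only if $\widetilde f$ is, and invoking \cite[Thm. 5.48]{BP20} for $\widetilde f$.

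Each of the three comparisons is a routine two-out-of-three argument. Since $X \to \widetilde X$ and $Y \to \widetilde Y$ are simplicial equivalences, hence joint equivalences by Theorem \ref{JB_THM}\ref{SDEQUIV_LBL}, two-out-of-three in the evident commuting square shows $f$ is a joint equivalence iff $\widetilde f$ is. For full faithfulness, for each $G$-corolla $C \in \Sigma_G$ and coloring $\mathfrak{c} \colon \boldsymbol{E}(C) \to X(\eta)$ the maps $X_{\mathfrak{c}}(\Omega[C]) \to \widetilde X_{\mathfrak{c}}(\Omega[C])$ and $Y_{f\mathfrak{c}}(\Omega[C]) \to \widetilde Y_{f\mathfrak{c}}(\Omega[C])$ are Kan equivalences by Remark \ref{REPSEGOPS REM}, so two-out-of-three again shows $f$ is fully faithful iff $\widetilde f$ is. For essential surjectivity, the identifications $ho(X) \simeq ho(\widetilde X)$ and $ho(Y) \simeq ho(\widetilde Y)$ are natural and compatible with $f$ and $\widetilde f$ (immediate from the definition of $ho$; cf. the discussion after Definition \ref{HOMGENOP DEF}), so $\iota_G^{\**}ho(f)$ and $\iota_G^{\**}ho(\widetilde f)$ are simultaneously levelwise essentially surjective or not.

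The only point requiring care --- and it is a mild one --- is precisely this compatibility: one single fibrant replacement $X \to \widetilde X$ must simultaneously control all three ingredients of Definition \ref{DKEQUIV DEF}, namely the mapping spaces $X_{\mathfrak{c}}(\Omega[C])$, the color set $X(\eta)$, and the homotopy genuine operad $ho(X)$. This is exactly what Remark \ref{REPSEGOPS REM} (together with the computation of $ho$ after Definition \ref{HOMGENOP DEF}) is arranged to provide; granting it, the theorem is a transcription of \cite[Thms. 5.48 and 5.51]{BP20}.
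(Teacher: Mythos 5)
Your proposal is correct and follows exactly the paper's approach: the paper states (just before the theorem) that it summarizes Theorems 5.51 and 5.48 of \cite{BP20}, with the extension to non-Reedy-fibrant Segal operads following from Remark \ref{REPSEGOPS REM}. You have simply filled in the details that the paper leaves implicit — constructing the lift $\widetilde f$ and running the three two-out-of-three checks — and those details are all sound.
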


\subsection{Fibered simplicial (co)tensor on preoperads}
\label{FIBTENS_SEC}

In this section we introduce an auxiliary 
simplicial tensoring on
$\mathsf{PreOp}^G$
that will play a key role in our definition of the tame model structure 
$\mathsf{PreOp}^G_{tame}$ in \S \ref{TAMEDEFEX SEC},
as well as streamline the comparison 
between
$\mathsf{PreOp}^G_{tame}$
and 
$\mathsf{sOp}^G$ in \S \ref{PREOPOPEQUIV SEC}.

We first define the adjoint simplicial cotensoring,
which admits a very simple description in terms
of the $X_{\mathfrak{c}}(U)$ construction introduced in
Notation \ref{XUDECALT NOT}
(and the identification 
\eqref{PREOPCOLFIXEQ EQ}).

\begin{definition}
	Given $X \in \mathsf{PreOp}^{G}_{\mathfrak{C}}$ 
	and $K \in \mathsf{sSet}$
	we define their
	\emph{fiber cotensor} 
	$\left\{K, X \right\}_{\mathsf{\mathfrak{C}_{\bullet}}} \in \mathsf{PreOp}^G_{\mathfrak{C}}$
	by
	\begin{equation}\label{EASYCOTEN EQ}
	\left(
	\left\{ K, X \right\}_{\mathsf{\mathfrak{C}_{\bullet}}}
	\right)_{\mathfrak{c}} (U)
	=
	X_{\mathfrak{c}}(U)^K.
	\end{equation}	
	for $U \in \Omega$ a tree and 
	$\mathfrak{c} \colon \boldsymbol{E}(T) \to \mathfrak{C} =  X(\eta)$
	a coloring.
	
	Alternatively, 
	$\left\{K, X \right\}_{\mathsf{\mathfrak{C}_{\bullet}}}$
	is given by the pullback in $\mathsf{sdSet}^G$
	(where the bottom map is induced by the map $K \to \**$,
	and the left square simply evaluates the right square
	at $U \in \Omega$)
\begin{equation}
\begin{tikzcd}
	\left\{ K, X \right\}_{\mathsf{\mathfrak{C}_{\bullet}}} (U) \ar{r} \ar{d}
&
	X(U)^K \ar{d}
&
	\left\{ K, X \right\}_{\mathsf{\mathfrak{C}_{\bullet}}} \ar{r} \ar{d}
&
	X^K \ar{d}
\\
	\prod_{\boldsymbol{E}(U)} X(\eta) \ar{r}
&
	\left(\prod_{\boldsymbol{E}(U)} X(\eta)\right)^K
	\arrow[lu, phantom, "\lrcorner", very near start]
&
	\mathsf{csk}_{\eta} X  \ar{r}
&
	\left(\mathsf{csk}_{\eta} X\right)^K
	\arrow[lu, phantom, "\lrcorner", very near start]
\end{tikzcd}
\end{equation}
\end{definition}

\begin{definition}
        \label{FIBERTENSOR_DEF}
	Given $X \in \mathsf{PreOp}^{G}_{\mathfrak{C}}$ 
	and $K \in \mathsf{sSet}$
	we define their
	\emph{fiber tensor}
	$X \otimes_{\mathfrak{C}_{\bullet}} K
	\in \mathsf{PreOp}^G_{\mathfrak{C}}$
	by the pushout in $\mathsf{sdSet}^G$
\begin{equation}\label{PREOPTENS EQ}
\begin{tikzcd}
	\left(\mathsf{sk}_{\eta}X \right) \times K \ar{r} \ar{d} \arrow[dr, phantom, "\ulcorner", very near start]  
&
	\mathsf{sk}_{\eta}X \ar{d}
\\
	X \times K \ar{r} 
& 
	X \otimes_{\mathfrak{C}_{\bullet}} K.
\end{tikzcd}
\end{equation}
More explicitly, 
one has
$(X \otimes_{\mathfrak{C}_{\bullet}} K)(U) = X(U) \times K$
whenever $U$ is a non-linear tree 
(equivalently, $\Omega(U,\eta)=\emptyset$)
and that $(X \otimes_{\mathfrak{C}_{\bullet}} K)([n])$
is given by the following pushout when $U=[n]$ is linear.
\[
\begin{tikzcd}
	X(\eta) \times K \ar{r} \ar{d} \arrow[dr, phantom, "\ulcorner", very near start]  
&
	X(\eta) \ar{d}
\\
	X([n]) \times K \ar{r} 
& 
	(X \otimes_{\mathfrak{C}_{\bullet}} K)([n]) 
\end{tikzcd}
\]
We have an analogous \textit{fibered pushout product} map we'll denote by $(X \to Y) \square_{\mathfrak C_\bullet} (K \to L)$.
\end{definition}

\begin{remark}\label{BYHAND1 REM}
	In \cite[(7.1.5)]{CM13b}
	the objects
	$\Omega[T] \otimes_{\mathfrak{C}_{\bullet}} K$ were denoted 
	$\Omega[K,T]$
	and built by hand.
\end{remark}

\begin{remark}\label{OTIMCON REM}
	If $K \in \mathsf{sSet}$ is connected,
	comparing the left square in \eqref{GAMMASTAR_EQ}
	with the square \eqref{PREOPTENS EQ} yields an identification
	$\gamma_! \left(X \times K\right) \simeq 
	X \otimes_{\mathfrak{C}_{\bullet}} K$.
\end{remark}

\begin{remark}\label{TENSCOADJ REM}
	For each fixed $K \in \mathsf{sSet}$
	the fiber tensor and cotensor 
	determine an adjunction as on the left below.
\[
\begin{tikzcd}[column sep =50]
	\mathsf{PreOp}^G \ar[shift left=1]{r}
	{(-) \otimes_{\mathfrak{C}_{\bullet}} K}
&
	\mathsf{PreOp}^G \ar[shift left=1]{l}
	{\{K,-\}_{\mathfrak{C}_{\bullet}}}
&
	\mathsf{PreOp}^G_{\mathfrak{C}} \ar[shift left=1]{r}
	{(-) \otimes_{\mathfrak{C}_{\bullet}} K}
&
	\mathsf{PreOp}^G_{\mathfrak{C}} \ar[shift left=1]{l}
	{\{K,-\}_{\mathfrak{C}_{\bullet}}}
\end{tikzcd}
\]
Moreover, this adjunction is a fibered adjunction over the color set functor
$\mathfrak{C}_{\bullet} \colon
\mathsf{PreOp}^G \to \mathsf{Set}^G$,
cf. \cite[Def. \ref{OC-FIBADJ DEF}]{BP_FCOP}.
In particular, for each fixed $G$-set of colors
$\mathfrak{C}$
one has a restricted adjunction as on the right above.
Moreover, by 
\cite[Prop. \ref{OC-FIBADJCAR PROP}]{BP_FCOP} and its dual,
these functors are compatible with (co)cocartesian arrows.
I.e., cf. Remark \ref{GROTHFIBOP REM},
one has
$(f_!A) \otimes_{\mathfrak{C}_{\bullet}} K
\simeq
f_!(A \otimes_{\mathfrak{C}_{\bullet}} K)$
and 
$\{K,f^{\**}X\}_{\mathfrak{C}_{\bullet}}
\simeq
f^{\**}(\{K,X\}_{\mathfrak{C}_{\bullet}})$.
\end{remark}

\begin{remark}\label{NOTTWOVARADJ REM}
	Remark \ref{TENSCOADJ REM} implies that the fiber cotensor
	$X \otimes_{\mathfrak{C}_{\bullet}} K$
	preserves colimits on the $X$ variable.
	However, some care is needed when dealing with the 
	$K$ variable.
	For each fixed color $G$-set $\mathfrak{C}$, one has that the functor
\[
\begin{tikzcd}[row sep = 0, column sep = 40pt]
	\mathsf{PreOp}^G_{\mathfrak{C}} \times \mathsf{sSet} \ar{r}{(-)\otimes_{\mathfrak{C}_{\bullet}}(-)} 
&
	\mathsf{PreOp}^G_{\mathfrak{C}}
\end{tikzcd}
\]
is part of a two-variable adjunction,
which in particular means that
$X \otimes_{\mathfrak{C}_{\bullet}} (-) \colon 
\mathsf{sSet} \to \mathsf{PreOp}^G_{\mathfrak{C}}$ 
(where $\mathfrak{C}=X(\eta)$)
preserves colimits.
On the other hand, this means that
$X \otimes_{\mathfrak{C}_{\bullet}} (-) \colon
\mathsf{sSet} \to \mathsf{PreOp}^G$
only preserves those colimits which coincide in
$\mathsf{PreOp}^G$ and $\mathsf{PreOp}^G_{\mathfrak{C}}$,
namely the connected colimits.
On the other hand, for coproducts one instead has that the canonical map
\[
\amalg_i X \otimes_{\mathfrak{C}_{\bullet}} K_i
	\to
X \otimes_{\mathfrak{C}_{\bullet}} (\amalg_i K_i)
\]
is a cocartesian arrow over the fold map
$\nabla \colon 
\amalg_i \mathfrak{C} \to \mathfrak{C}$, 
i.e.
$
\nabla_!
\left(\amalg_i X \otimes_{\mathfrak{C}_{\bullet}} K_i\right)
\simeq
X \otimes_{\mathfrak{C}_{\bullet}} (\amalg_i K_i)
$.
\end{remark}

\begin{remark}\label{COLORTENSGAM REM}
	Let $K \in \mathsf{sSet}$, and
        $X \to Y$ any map in $\mathsf{PreOp}^G$
	which is the identity on colors and 

	Then the top horizontal maps in \eqref{PREOPTENS EQ}
	for $X,Y$ coincide, 
	and likewise for the 
	left square in \eqref{GAMMASTAR_EQ} for
	$X \times K, Y \times K$.
	It thus follows that the squares below are pushout squares in $\mathsf{sdSet}^G$.
\[
\begin{tikzcd}
	X \times K \ar{r} \ar{d} 
	\arrow[dr, phantom, "\ulcorner", very near start] 
&
	\gamma_! \left( X \times K \right) \ar{r} \ar{d} 
	\arrow[dr, phantom, "\ulcorner", very near start] 
&
	X \otimes_{\mathfrak{C}_{\bullet}} K \ar{d}
\\
	Y \times K \ar{r} 
&
	\gamma_! \left( Y \times K \right) \ar{r} 
&
	Y \otimes_{\mathfrak{C}_{\bullet}} K
\end{tikzcd}
\]
\end{remark}

\begin{lemma}\label{OTIMSETPUSH LEM}
	Let $f\colon X \to Y$ be a map in $\mathsf{PreOp}^G$
	and $k \colon K \to L$ be a map in $\mathsf{sSet}$.

	Then $\gamma^{\**}\left(f \square_{\mathfrak{C}_{\bullet}} k\right)$
	is a pushout in $\mathsf{sdSet}^G$
	of the map
\[
	\left(f_! X \to Y\right)
	\square
	\left( K \to L \right).
\]
\end{lemma}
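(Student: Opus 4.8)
The plan is to reduce the statement to the special case in which $f$ is the identity on colors, and then to verify that case by repeatedly pasting pushout squares. Two observations are used throughout. First, $\gamma^{\**}$ preserves pushouts, since it has a right adjoint $\gamma_{\**}$ (as recalled before \eqref{GAMMASTAR_EQ}). Second, for $Z \in \mathsf{PreOp}^G$ and $M \in \mathsf{sSet}$ the object $\mathsf{sk}_{\eta} Z = \coprod_{Z(\eta)} \Omega[\eta]$ depends only on the color set $Z(\eta)$ — so that $\mathsf{sk}_{\eta}(Z \otimes_{\mathfrak{C}_{\bullet}} M) = \mathsf{sk}_{\eta} Z$ — and Definition \ref{FIBERTENSOR_DEF} presents $Z \otimes_{\mathfrak{C}_{\bullet}} M$ directly as a pushout in $\mathsf{sdSet}^G$, giving
\[
	\gamma^{\**}\!\left(Z \otimes_{\mathfrak{C}_{\bullet}} M\right) \;=\; \left(Z \times M\right) \cup_{\mathsf{sk}_{\eta} Z \times M} \mathsf{sk}_{\eta} Z .
\]

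For the reduction, factor $f$ through its cocartesian lift as $X \xrightarrow{\iota} f_! X \xrightarrow{g} Y$, where $g$ is the identity on colors. By Remark \ref{TENSCOADJ REM} the map $\iota \otimes_{\mathfrak{C}_{\bullet}} M$ is the cocartesian arrow $X \otimes_{\mathfrak{C}_{\bullet}} M \to f_!\!\left(X \otimes_{\mathfrak{C}_{\bullet}} M\right)$; combined with the pushout defining $f_!$ in \eqref{PREOPCOLCH EQ} and the displayed formula, this yields $f_! X \otimes_{\mathfrak{C}_{\bullet}} M = \left(X \otimes_{\mathfrak{C}_{\bullet}} M\right) \cup_{\mathsf{sk}_{\eta} X} \mathsf{sk}_{\eta} f_! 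X$. Factoring the map $f \otimes_{\mathfrak{C}_{\bullet}} K$ appearing in the source of $f \square_{\mathfrak{C}_{\bullet}} k$ as $X \otimes_{\mathfrak{C}_{\bullet}} K \to f_! X \otimes_{\mathfrak{C}_{\bullet}} K \to Y \otimes_{\mathfrak{C}_{\bullet}} K$ and inserting the intermediate object into the defining pushout, pasting of pushouts together with the identity just obtained gives
\[
	\left(X \otimes_{\mathfrak{C}_{\bullet}} L\right) \cup_{X \otimes_{\mathfrak{C}_{\bullet}} K} \left(f_! X \otimes_{\mathfrak{C}_{\bullet}} K\right) \;=\; f_! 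X \otimes_{\mathfrak{C}_{\bullet}} L ,
\]
and hence identifies the source of $f \square_{\mathfrak{C}_{\bullet}} k$ with that of $g \square_{\mathfrak{C}_{\bullet}} k = (f_! X \to Y) \square_{\mathfrak{C}_{\bullet}} k$, compatibly with the two maps to $Y \otimes_{\mathfrak{C}_{\bullet}} L$; thus $f \square_{\mathfrak{C}_{\bullet}} k$ is identified with $(f_! X \to Y) \square_{\mathfrak{C}_{\bullet}} k$ as an arrow, and it suffices to prove the lemma when $f$ is the identity on colors.

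So assume $f \colon X \to Y$ is the identity on a color set $\mathfrak{C}$, write $J = \mathsf{sk}_{\eta} X = \mathsf{sk}_{\eta} Y = \coprod_{\mathfrak{C}} \Omega[\eta]$, and let $P_0 = (X \times L) \cup_{X \times K} (Y \times K)$ be the source of $(f_! X \to Y) \square (K \to L)$ (here $f_! X = X$). Applying $\gamma^{\**}$ to the pushout defining the source of $f \square_{\mathfrak{C}_{\bullet}} k$ and substituting $\gamma^{\**}(Z \otimes_{\mathfrak{C}_{\bullet}} M) = (Z \times M) \cup_{J \times M} J$: since $f$ is the identity on colors, $\gamma^{\**}(f \otimes_{\mathfrak{C}_{\bullet}} K)$ is the pushout of $f \times K \colon X \times K \to Y \times K$ along $X \times K \to (X \times K) \cup_{J \times K} J$, and using this together with two further pasting steps amalgamating the copies of $J \times M$ one identifies $\gamma^{\**}$ of the source of $f \square_{\mathfrak{C}_{\bullet}} k$ with $P_0 \cup_{J \times L} J$, where $J \times L \to P_0$ is the composite $J \times L \to X \times L \to P_0$. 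Likewise $\gamma^{\**}(Y \otimes_{\mathfrak{C}_{\bullet}} L) = (Y \times L) \cup_{J \times L} J$; and — this is the one place where $f$ being the identity on colors enters — the composite $J \times L \to P_0 \xrightarrow{(f_! X \to Y) \square (K \to L)} Y \times L$ is the canonical inclusion $J \times L \to Y \times L$. A final pasting identity, $(Y \times L) \cup_{P_0} \left(P_0 \cup_{J \times L} J\right) = (Y \times L) \cup_{J \times L} J$, then shows that the square with left edge $(f_! X \to Y) \square (K \to L)$, right edge $\gamma^{\**}(f \square_{\mathfrak{C}_{\bullet}} k)$, and horizontal edges $P_0 \to P_0 \cup_{J \times L} J$ and $Y \times L \to (Y \times L) \cup_{J \times L} J$ is a pushout, which is the desired statement.

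Nothing here is conceptually deep; the main obstacle is clerical, namely keeping careful track that each identification of an iterated pushout is natural in, and compatible with, the structure maps that reappear later — especially the various maps targeting $Y \otimes_{\mathfrak{C}_{\bullet}} L$ — so that the effort is concentrated on organizing the double pasting in the identity-on-colors case rather than on any single computation.
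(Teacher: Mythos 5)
Your proof is correct, and it follows essentially the same two-step strategy as the paper's: first reduce to the case where $f$ is an identity on colors by splitting through the cocartesian lift $X \to f_!X$, and then verify that case using the pushout presentation of $\otimes_{\mathfrak{C}_{\bullet}}$ and Remark \ref{COLORTENSGAM REM}. The only stylistic difference is in the second step, where the paper cites the abstract pasting observation of \cite[Obs.\ 5.1]{RV14} applied to two parallel decompositions of a common composite square, whereas you expand the relevant pushouts explicitly and amalgamate the copies of $J$ and $J\times L$ by hand.
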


\begin{proof}
Since the left square below is a pushout square
(this follows by noting that the 
horizontal arrows are cocartesian, as per 
Remark \ref{TENSCOADJ REM};
see the universal property in Remark \ref{GROTHFIBOP REM})
\[
\begin{tikzcd}
	X \otimes_{\mathfrak{C}_{\bullet}} K \ar{r} \ar{d} 
	\arrow[dr, phantom, "\ulcorner", very near start]
&
	f_! X \otimes_{\mathfrak{C}_{\bullet}} K \ar{r} \ar{d} 
&
	Y \otimes_{\mathfrak{C}_{\bullet}} K \ar{d}
\\
	X \otimes_{\mathfrak{C}_{\bullet}} L \ar{r} 
&
	f_! X \otimes_{\mathfrak{C}_{\bullet}} L \ar{r} 
&
	Y \otimes_{\mathfrak{C}_{\bullet}} L
\end{tikzcd}
\]
one has
$(X \to Y) \square_{\mathfrak{C}_{\bullet}} k 
\simeq 
(f_!X \to Y) \square_{\mathfrak{C}_{\bullet}} k$.
Since $f_!X \to Y$ is the identity on colors,
Remark \ref{COLORTENSGAM REM} then says that 
the two middle squares below are pushout squares.
\[
\begin{tikzcd}
	f_! X \times K 
	\ar{r} \ar{d} 
&
	f_! X \times L
	\ar{r} \ar{d} 
	\arrow[dr, phantom, "\ulcorner", very near start] 
&
	f_! X \otimes_{\mathfrak{C}_{\bullet}} L
	\ar{d}
&
	f_! X \times K \ar{r} \ar{d} 
	\arrow[dr, phantom, "\ulcorner", very near start] 
&
	f_! X \otimes_{\mathfrak{C}_{\bullet}} K 
	\ar{r} \ar{d} 
&
	f_! X \otimes_{\mathfrak{C}_{\bullet}} L
	\ar{d}
\\
	Y \times K 
	\ar{r} 
&
	Y \times L
	\ar{r}
&
	Y \otimes_{\mathfrak{C}_{\bullet}} L
&
	Y \times K 
	\ar{r} 
&
	Y \otimes_{\mathfrak{C}_{\bullet}} K
	\ar{r}
&
	Y \otimes_{\mathfrak{C}_{\bullet}} L
\end{tikzcd}
\]
A standard argument
(e.g. \cite[Obs. 5.1]{RV14})
then shows that the pushout map for the rightmost square above is a pushout of the pushout map for the leftmost square, 
finishing the proof.	
\end{proof}

\subsection{Definition and existence of the tame model structure}
\label{TAMEDEFEX SEC}

The following adapts \cite[\S 7.7]{CM13b},
repackaged using the fibered cotensoring
$\otimes_{\mathfrak{C}_{\bullet}}$.

\begin{definition}\label{TAMEGENCOF DEF}
	The \emph{tame cofibrations} in $\mathsf{PreOp}^G$
	are the saturation of the following maps
	\begin{itemize}
		\item[(TC1)] $G/H \cdot \left(\emptyset \to\Omega[\eta]\right)$ for $H\leq G$;
		\item[(TC2)] 
		$\Omega[C] \otimes_{\mathfrak{C}_{\bullet}} \left(\partial \Delta[n] \to \Delta[n]\right)$ for 
		$G$-corollas $C \in \Sigma_G$, $n \geq 0$;
		\item[(TC3)] 
		$\left( Sc[T] \to \Omega[T] \right) 
		\square_{\mathfrak{C}_{\bullet}} 
		\left(\partial \Delta[n] \to \Delta[n]\right)$ for 
		$G$-trees $T \in \Omega_G$, $n \geq 0$.
	\end{itemize}
\end{definition}

\begin{definition}\label{PSEUINT DEF}
	A precategory $I \in \mathsf{PreCat}\simeq \mathsf{PreOp} \downarrow \Omega[\eta]$ 
	is called a \emph{pseudo-interval}
	if $I(\eta) = \{0,1\}$,
	the map 
	$\Omega[\eta] \amalg \Omega[\eta]
	= \mathsf{sk}_{\eta} I \to I$
	is a tame cofibration,
	and the map $I \to \Omega[\eta]$ is a weak equivalence. 
\end{definition}

\begin{definition}\label{TAMEGENANO DEF}
	The \emph{tame anodyne cofibrations} in $\mathsf{PreOp}^G$ 
	are the saturation of the following maps
	\begin{itemize}
		\item[(TA1)] $G/H \cdot 
		\left(\Omega[\eta] \to I \right)$ for $H \leq G$
		and $I \in \mathsf{PreCat}$
		a countable pseudo-interval;
		\item[(TA2)] $\Omega[C] \otimes_{\mathfrak{C}_{\bullet}} \left(\Lambda^i[n] \to \Delta[n]\right)$ for $C \in \Sigma_G$, $0 \leq i \leq n$, $n \geq 1$;
		\item[(TA3)] 
		$\left( Sc[T] \to \Omega[T] \right) 
		\square_{\mathfrak{C}_{\bullet}}
		\left(\partial \Delta[n] \to \Delta[n]\right)$ for $T \in \Omega_G$, $n \geq 0$.
	\end{itemize}
\end{definition}

We can now state the main result of this section.

\begin{theorem}[{cf. \cite[Thm. 7.19]{CM13b}}]
      \label{TAMEMS_THM}
	There is a left proper model structure on 
	$\mathsf{PreOp}^G$,
	called the \emph{tame model structure},
	such that:
	\begin{itemize}
		\item weak equivalences are the 
		joint equivalences 
		(i.e. detected by inclusion into $\mathsf{sdSet}^G$);
		\item the generating cofibrations are the maps (TC1),(TC2),(TC3);
		\item $X \in \mathsf{PreOp}^G$ is fibrant iff
		$X \to \**$ has the right lifting property against 
		(TA1),(TA2),(TA3);
		\item a map $X \to Y$ between fibrant objects is a fibration iff
		it has the right lifting property against 
		(TA1),(TA2),(TA3).
	\end{itemize}
	Moreover, the identity adjunction
	$
	\mathsf{PreOp}^G_{tame} 
	\rightleftarrows
	\mathsf{PreOp}^G_{normal} 
	$
	is a Quillen equivalence.
\end{theorem}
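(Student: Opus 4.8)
The plan is to obtain the tame model structure via the standard recognition principle for cofibrantly generated model categories (e.g.\ \cite[Thm.~11.3.1]{Hir03}; the ``between fibrant objects'' phrasing in the statement reflects the usual Segal-category-type caveat, cf.\ \cite[\S 7.7--7.19]{CM13b}), taking the generating cofibrations to be (TC1),(TC2),(TC3), the generating trivial cofibrations to be (TA1),(TA2),(TA3), and the weak equivalences to be the joint equivalences. Since this class of weak equivalences is inherited verbatim from $\mathsf{PreOp}^G_{normal}$, it satisfies two-out-of-three, is closed under retracts, and (by Theorem \ref{PREOPMS THM}) under filtered colimits; and since $\mathsf{PreOp}^G$ is locally presentable and both generating sets are small — the ``countable'' hypothesis in (TA1) being precisely what guarantees this — both permit the small object argument. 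Left properness will then follow automatically once the tame cofibrations are known to be normal monomorphisms, because $\mathsf{PreOp}^G_{normal}$ is left proper. What remains for existence is the compatibility of the two generating sets with the weak equivalences.

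I would begin with the routine inclusions. Each generator in (TC1)--(TC3) is a normal monomorphism: this is clear for (TC1), while for (TC2) and (TC3) one applies Lemma \ref{OTIMSETPUSH LEM} to reduce to the $\square$ of a $G$-normal monomorphism of dendroidal sets with a monomorphism of simplicial sets, which is a cofibration in $\mathsf{sdSet}^G$ by Remark \ref{SQUAREEQUI REM}; hence the tame cofibrations are a subclass of the normal monomorphisms and the tame trivial cofibrations a subclass of the normal trivial cofibrations. Next, each of (TA1),(TA2),(TA3) is itself a tame cofibration: (TA3) is literally (TC3); (TA2) follows from (TC2) because the horn inclusions lie in the saturation of the boundary inclusions in $\mathsf{sSet}$ while $\Omega[C]\otimes_{\mathfrak{C}_{\bullet}}(-)$ preserves colimits into the fibre $\mathsf{PreOp}^G_{\boldsymbol{E}(C)}$ (Remark \ref{NOTTWOVARADJ REM}); and (TA1) follows from the definition of a pseudo-interval (Definition \ref{PSEUINT DEF}) together with the fact that $G/H\cdot(-)$ sends tame cofibrations to tame cofibrations. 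Finally, each of (TA1),(TA2),(TA3) is a \emph{trivial} cofibration of $\mathsf{PreOp}^G_{normal}$: by Lemma \ref{OTIMSETPUSH LEM}, $\gamma^{\**}$ of (TA3) is a pushout of $(Sc[T]\to\Omega[T])\square(\partial\Delta[n]\to\Delta[n])$, a joint trivial cofibration since the Segal core inclusions are weak equivalences in $\mathsf{dSet}^G$ and Remark \ref{SQUAREEQUI REM} together with Theorem \ref{JB_THM}\ref{SDEQUIV_LBL} then apply; $\gamma^{\**}$ of (TA2) is a pushout of $(\partial\Omega[C]\to\Omega[C])\square(\Lambda^i[n]\to\Delta[n])$, a joint trivial cofibration since $\Lambda^i[n]\to\Delta[n]$ is a Kan equivalence; and (TA1) is a trivial cofibration because $G/H\cdot(-)$ is left Quillen for the normal structure while $\Omega[\eta]\to I$ is a normal trivial cofibration by two-out-of-three against the weak equivalence $I\to\Omega[\eta]$.

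The main obstacle, and the heart of the argument (mirroring \cite[\S 7.7--7.19]{CM13b}), is the remaining recognition hypothesis: that the maps with the right lifting property against (TC1)--(TC3) are weak equivalences, together with the complementary comparison pinning down the generating trivial cofibrations. Here the fibered cotensor $\otimes_{\mathfrak{C}_{\bullet}}$ is what makes the analysis tractable. Via the fibered two-variable adjunction (Remarks \ref{TENSCOADJ REM}, \ref{NOTTWOVARADJ REM}), right lifting of $p\colon X\to Y$ against (TC2) and (TC3) says exactly that, for every $T\in\Omega_G$ and colouring $\mathfrak{c}$ of $\boldsymbol{E}(T)$, the mapping-space maps $X_{\mathfrak{c}}(\Omega[C])\to Y_{f\mathfrak{c}}(\Omega[C])$ and the relative Segal maps $X_{\mathfrak{c}}(\Omega[T])\to X_{\mathfrak{c}}(Sc[T])\times_{Y_{f\mathfrak{c}}(Sc[T])}Y_{f\mathfrak{c}}(\Omega[T])$ are trivial Kan fibrations; using the product decomposition $X_{\mathfrak{c}}(Sc[T])\simeq\prod_{v}X_{\mathfrak{c}_v}(\Omega[T_v])$ of Remark \ref{SCTCOLPR REM} and the stability of trivial Kan fibrations under products, base change and composition, one concludes that each $X_{\mathfrak{c}}(\Omega[T])\to Y_{f\mathfrak{c}}(\Omega[T])$ is a trivial Kan fibration, so (combined with surjectivity on colours from (TC1)) $p$ is fully faithful and essentially surjective, whence — as in the non-equivariant case, comparing with the Segal-operad replacements of Remark \ref{REPSEGOPS REM} and invoking Theorem \ref{FIBPREOP THM} — $p$ is a joint equivalence. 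For the complementary step one argues colour-by-colour and corolla-by-corolla, promoting the Kan fibrations of mapping spaces coming from (TA2) to trivial ones via ``a Kan fibration that is a Kan equivalence is trivial'', and checking that (TA1) upgrades essential surjectivity to surjectivity on colours; pinning down this bookkeeping, and the exact characterization of the fibrations between fibrant objects, is where I expect the substantive work to lie.

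Finally, the ``moreover'' is immediate once the tame model structure is in place: the identity functor $\mathsf{PreOp}^G_{tame}\to\mathsf{PreOp}^G_{normal}$ preserves cofibrations (tame cofibrations are normal monomorphisms, as above) and, the two structures having the same weak equivalences, also preserves trivial cofibrations; thus it is left Quillen, and being the identity on underlying categories with identical weak equivalences on both sides it is automatically a Quillen equivalence.
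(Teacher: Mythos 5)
Your proposal follows essentially the same strategy as the paper: a recognition principle adapted to prescribe fibrant objects, with (TC1)--(TC3) as generating cofibrations and (TA1)--(TA3) as generating ``anodyne'' maps, and the key non-formal input being that maps with the right lifting property against (TC1)--(TC3) are joint equivalences. Your analysis of that last point (decomposing $p \colon X \to Y$ through $f^*Y$, deducing trivial Kan fibrations on corolla mapping spaces from (TC2) and propagating through $Sc[T]$ via Remark \ref{SCTCOLPR REM}) matches the paper's Lemma \ref{TAMETRIVFIB LEM} almost verbatim, and your verification that the generators are (trivial) normal cofibrations matches Lemma \ref{TAMECOFCOF_LEM}.

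Two points. First, a citation slip: Hirschhorn's Thm.\ 11.3.1 cannot be applied directly here. Its last hypothesis requires that \emph{every} $\mathcal{J}$-injective weak equivalence be $\mathcal{I}$-injective, but the entire point of the tame structure (as for Segal categories) is that this fails globally and only holds between $\mathcal{J}$-fibrant objects. The paper instead invokes Stanculescu \cite[Prop.\ 2.3]{Sta14}, a variant of Smith's theorem whose condition C5 is stated exactly ``between fibrant objects,'' plus an accessibility hypothesis on the weak equivalences (discharged via \cite[Cor.\ A.2.6.5--6]{Lur09}). You gesture at the caveat but do not name a theorem that actually accommodates it; this is a substantive difference in what needs to be assembled.

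Second, the genuine gap: what you call ``checking that (TA1) upgrades essential surjectivity to surjectivity on colours'' is where the real work lives, and it is not bookkeeping. The paper's verification of C5 reduces, for a $\mathcal{J}$-fibration $X \to Y$ between Segal operads that is also a DK-equivalence, to solving lifting problems against (TC1), and for that one must convert an $H$-equivalence $\Omega[1] \to X^H$ into a lifting problem against (TA1). This is precisely Lemma \ref{SLIMOD LEM}: every such $H$-equivalence factors through a \emph{countable} pseudo-interval $I$. Producing $I$ requires (i) lifting into a normal-fibrant replacement $\widetilde{X}$ and invoking Rezk's characterization of equivalences in Segal spaces to get a $J \to \widetilde{X}^H$, (ii) pulling back to $X^H$, (iii) a countability reduction in the style of Bergner's Lemma 4.2/4.3 or \cite[Lemma 5.1.7]{HSS} to shrink to a countable subpresheaf with trivial homotopy, and (iv) a tame cofibrant replacement via the small object argument for (TC2),(TC3) that fixes the object set (Remark \ref{TC2TC3REP REM}). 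Without this lemma there is no way to connect lifting against (TC1) to lifting against (TA1), so the C5-type hypothesis — and hence the characterization of fibrant objects and fibrations between them — is not established.
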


Before proving Theorem \ref{TAMEMS_THM},
we collect a few lemmas.

\begin{lemma}\label{TAMECOFCOF_LEM}
	Tame cofibrations (resp. tame anodyne cofibrations) are  cofibrations (resp. trivial cofibrations) in the normal model structure on $\mathsf{PreOp}^G$.
\end{lemma}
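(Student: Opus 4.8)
The plan is to argue the two assertions separately and, in each case, to reduce to the explicit generators: the tame cofibrations and the tame anodyne cofibrations are saturated by definition, and the cofibrations (resp. trivial cofibrations) of $\mathsf{PreOp}^G_{normal}$ -- the $G$-normal monomorphisms (resp. those which are also joint equivalences) -- again form saturated classes. So it suffices to check that each of (TC1)--(TC3) is a $G$-normal monomorphism and each of (TA1)--(TA3) is a normal trivial cofibration; throughout I pass between $\mathsf{PreOp}^G$ and $\mathsf{sdSet}^G$ along $\gamma^{\ast}$, which by construction of the normal model structure detects both cofibrations and weak equivalences.

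\emph{Tame cofibrations.} The map (TC1) is $\emptyset \to \Omega[G/H\cdot\eta]$, a generating $G$-normal monomorphism since $\partial\Omega[G/H\cdot\eta] = \emptyset$. For (TC2) and (TC3) I invoke Lemma \ref{OTIMSETPUSH LEM}, taking $f$ to be $\emptyset \to \Omega[C]$ (so $f_!\emptyset = \mathsf{sk}_{\eta}\Omega[C]$), resp. $Sc[T]\to\Omega[T]$ (which is the identity on colors, so $f_!Sc[T] = Sc[T]$): this exhibits $\gamma^{\ast}$ of (TC2) (resp. (TC3)) as a pushout in $\mathsf{sdSet}^G$ of the ordinary pushout--product $(\mathsf{sk}_{\eta}\Omega[C] \to \Omega[C]) \square (\partial\Delta[n]\to\Delta[n])$ (resp. $(Sc[T]\to\Omega[T]) \square (\partial\Delta[n]\to\Delta[n])$). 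Since $\mathsf{sk}_{\eta}\Omega[C]\to\Omega[C]$ and $Sc[T]\to\Omega[T]$ are $G$-normal monomorphisms of dendroidal sets (monomorphisms into $G$-normal objects), Remark \ref{SQUAREEQUI REM} shows both pushout--products are cofibrations in $\mathsf{sdSet}^G$; as cofibrations are stable under pushout, (TC2) and (TC3) are $G$-normal monomorphisms in $\mathsf{PreOp}^G$.

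\emph{Tame anodyne cofibrations.} Note that (TA3) coincides with (TC3), hence is already a cofibration. To see it is a joint equivalence, recall that the Segal core inclusion $Sc[T]\to\Omega[T]$ is a $G$-inner anodyne extension, and in particular a weak equivalence, in $\mathsf{dSet}^G$ (cf.\ \cite{Per18}); then Remark \ref{SQUAREEQUI REM} upgrades $(Sc[T]\to\Omega[T])\square(\partial\Delta[n]\to\Delta[n])$ to a dendroidal equivalence, which is a joint equivalence by Theorem \ref{JB_THM}\ref{SDEQUIV_LBL}, so its pushout $\gamma^{\ast}(\mathrm{TA3})$ is a joint trivial cofibration and (TA3) is a normal trivial cofibration. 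For (TA2), Lemma \ref{OTIMSETPUSH LEM} identifies $\gamma^{\ast}(\mathrm{TA2})$ with a pushout of $(\mathsf{sk}_{\eta}\Omega[C]\to\Omega[C])\square(\Lambda^i[n]\to\Delta[n])$, which by Remark \ref{SQUAREEQUI REM} is simultaneously a cofibration and -- since $\Lambda^i[n]\to\Delta[n]$ is a Kan equivalence -- a simplicial equivalence, hence again, via Theorem \ref{JB_THM}\ref{SDEQUIV_LBL}, a joint trivial cofibration; pushing out finishes this case. For (TA1), the map $\Omega[\eta]\to I$ factors as $\Omega[\eta]\to\mathsf{sk}_{\eta}I = \Omega[\eta]\amalg\Omega[\eta]\to I$, a composite of a coproduct inclusion and a tame cofibration, hence (by the tame-cofibration part of the lemma, applied with trivial group) a $G$-normal monomorphism; and it is a weak equivalence by two-out-of-three, since $I\to\Omega[\eta]$ is a weak equivalence and $\Omega[\eta]\to I\to\Omega[\eta]$ is the identity. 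Thus $\Omega[\eta]\to I$ is a normal trivial cofibration in $\mathsf{PreOp}$, and it remains to observe that $G/H\cdot(-)\colon\mathsf{PreOp}\to\mathsf{PreOp}^G$ preserves normal trivial cofibrations: it is left Quillen for the normal model structures, its right adjoint being the ``genuine $H$-fixed points'' functor $Y\mapsto(\mathrm{res}^G_H Y)^H$, which one checks preserves fibrations and trivial fibrations by adjoining against the generating ($G$-inner anodyne and $G$-normal) maps.

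The steps I expect to be most delicate are, first, the bookkeeping surrounding Lemma \ref{OTIMSETPUSH LEM}: although $(-)\otimes_{\mathfrak{C}_{\bullet}}(-)$ fails to preserve arbitrary colimits in the simplicial variable (Remark \ref{NOTTWOVARADJ REM}), one must confirm that each generator is nonetheless sent by $\gamma^{\ast}$ to a genuine pushout of an \emph{ordinary} pushout--product in $\mathsf{sdSet}^G$, so that Remark \ref{SQUAREEQUI REM} and Theorem \ref{JB_THM} apply cleanly; and second, the two external inputs invoked above -- that Segal core inclusions are $G$-inner anodyne in $\mathsf{dSet}^G$, and that $G/H\cdot(-)$ is left Quillen -- which, while standard, carry the real content of cases (TA3) and (TA1) respectively.
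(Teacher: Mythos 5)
Your proposal is correct and follows essentially the same strategy as the paper: reduce to generators, treat (TC1) directly, apply Lemma \ref{OTIMSETPUSH LEM} together with Remark \ref{SQUAREEQUI REM} to the cases (TC2), (TC3), (TA2), (TA3), and handle (TA1) via the definition of pseudo-interval combined with the already-established (TC1)–(TC3) cases. The only difference is that you spell out some details the paper leaves implicit — notably the factorization $\Omega[\eta]\to\mathsf{sk}_\eta I\to I$, the two-out-of-three argument for (TA1), and the justification that $G/H\cdot(-)$ is left Quillen — but these are elaborations of the paper's terse "(TA1) follows by definition of pseudo-interval and the (TC1),(TC2),(TC3) cases," not a different route.
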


\begin{proof}
	It suffices to check the given claims for the generating maps
	in Definitions \ref{TAMEGENCOF DEF}, \ref{TAMEGENANO DEF}.
	
	The (TC1) case is immediate.
	For (TC2),(TC3),(TA2),(TA3)
	we apply 
	Lemma \ref{OTIMSETPUSH LEM}
	(note that for (TC2),(TA2)
	the map $f\colon X \to Y$ is $\emptyset \to \Omega[C]$,
	so that $f_!X \to Y$ is the inclusion
	$\partial \Omega[C] \to \Omega[C]$)
	and in all such cases it is 
	clear by Remark \ref{SQUAREEQUI REM} that the 
	corresponding map $(f_!X \to Y) \square k$
	is a (trivial) cofibration in $\mathsf{sdSet}^G$.
	(TA1) follows by definition of pseudo-interval
	and the 
	(TC1),(TC2),(TC3) cases.
\end{proof}

\begin{lemma}\label{TAMETRIVFIB LEM}
	Any map $X \to Y$ which has the right lifting property against 
	(TC1),(TC2),(TC3)
	is a joint equivalence in 
	$\mathsf{PreOp}^G$.
\end{lemma}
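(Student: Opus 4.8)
The strategy is to extract from the lifting hypothesis a pointwise homotopical description of $p$ and then invoke the Dwyer--Kan characterization of joint equivalences between Segal operads (Theorem \ref{FIBPREOP THM}).

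\emph{Step 1 (unpacking the lifting property).} First I would show that $p\colon X\to Y$ has the right lifting property against (TC1), (TC2), (TC3) if and only if:
\begin{itemize}
\item[(a)] $\mathfrak{C}_X\to\mathfrak{C}_Y$ is surjective on $H$-fixed points for every $H\leq G$; and
\item[(b)] for every $G$-tree $T\in\Omega_G$ and every coloring $\mathfrak{c}\colon\boldsymbol{E}(T)\to\mathfrak{C}_X$, the map $X_{\mathfrak{c}}(\Omega[T])\to Y_{p\mathfrak{c}}(\Omega[T])$ is a trivial fibration of simplicial sets.
\end{itemize}
Condition (a) is the identification $\mathsf{PreOp}^G(G/H\cdot\Omega[\eta],Z)=\mathfrak{C}_Z^H$ applied to (TC1). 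For (b), after fixing the coloring a lifting problem against $\Omega[C]\otimes_{\mathfrak{C}_{\bullet}}(\partial\Delta[n]\to\Delta[n])$ — resp.\ against $(Sc[T]\to\Omega[T])\,\square_{\mathfrak{C}_{\bullet}}\,(\partial\Delta[n]\to\Delta[n])$ — is rewritten, via the pullback defining $X_{\mathfrak{c}}(-)$ (Notation \ref{XUDECALT NOT}), the fibered adjunction of Remark \ref{TENSCOADJ REM}, and Remark \ref{GROTHFIBOP REM}, as a lifting problem against $\partial\Delta[n]\to\Delta[n]$ for $X_{\mathfrak{c}}(\Omega[C])\to Y_{p\mathfrak{c}}(\Omega[C])$ — resp.\ for the pullback--corner map $X_{\mathfrak{c}}(\Omega[T])\to X_{\mathfrak{c}}(Sc[T])\times_{Y_{p\mathfrak{c}}(Sc[T])}Y_{p\mathfrak{c}}(\Omega[T])$. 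Since $X_{\mathfrak{c}}(Sc[T])\simeq\prod_v X_{\mathfrak{c}_v}(\Omega[T_v])$ (Remark \ref{SCTCOLPR REM}) and likewise for $Y$, the (TC2) case makes $X_{\mathfrak{c}}(Sc[T])\to Y_{p\mathfrak{c}}(Sc[T])$ a trivial fibration, whence so is the projection off the pullback; composing with the (TC3) case yields (b) for \emph{all} $G$-trees $T$.

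\emph{Step 2 (reduction to Segal operads).} Next, choosing functorial normal fibrant replacements $X\xrightarrow{u}X'$ and $Y\xrightarrow{v}Y'$ that keep the color set fixed — possible since the normal trivial cofibrations may be chosen inside each fiber $\mathsf{PreOp}^G_{\mathfrak{C}}$ — we obtain Reedy fibrant Segal operads with $\mathfrak{C}_{X'}=\mathfrak{C}_X$, $\mathfrak{C}_{Y'}=\mathfrak{C}_Y$ and an induced map $p'\colon X'\to Y'$. Here $u$ and $v$ are joint equivalences, so by two-out-of-three it suffices to prove $p'$ is a joint equivalence, and since $X',Y'$ are Segal operads, Theorem \ref{FIBPREOP THM} reduces this to showing $p'$ is a Dwyer--Kan equivalence (Definition \ref{DKEQUIV DEF}).

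\emph{Step 3 ($p'$ is a Dwyer--Kan equivalence).} Essential surjectivity is immediate: the objects of $\iota_G^{\**}ho(X')(G/H)$ and $\iota_G^{\**}ho(Y')(G/H)$ are $\mathfrak{C}_X^H$ and $\mathfrak{C}_Y^H$, and $p'$ acts by the surjection of (a). For full faithfulness one must see that $X'_{\mathfrak{c}}(\Omega[C])\to Y'_{p'\mathfrak{c}}(\Omega[C])$ is a Kan equivalence for every $G$-corolla $C$. Since the fibrant replacement was chosen functorially, the stages of the replacements of $X$ and of $Y$ fit into compatible cell attachments, and one shows by induction along these stages — using left properness, the identification of each attaching cell in terms of mapping spaces $(-)_{\mathfrak{c}'}(\Omega[T])$, and condition (b) applied to the trees $T$ occurring at that stage (not merely to corollas) — that the comparison map $X'_{\mathfrak{c}}(\Omega[C])\to Y'_{p'\mathfrak{c}}(\Omega[C])$ remains a Kan equivalence throughout. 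Hence $p'$ is a Dwyer--Kan equivalence, so a joint equivalence by Theorem \ref{FIBPREOP THM}, and therefore so is $p$.

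\emph{The main obstacle} is the inductive argument in Step 3: unlike in the categorical (Bergner) analogue, the Reedy-fibrant-Segal-operad replacement genuinely alters the corolla mapping spaces by inserting composite operations, so the full-faithfulness of $p'$ is \emph{not} formal. Its proof is precisely the place where condition (b) for all $G$-trees — rather than the corolla-level full-faithfulness one already gets from (TC2) alone — is indispensable, and where one leans on Remark \ref{REPSEGOPS REM} and the explicit form of the generating (trivial) cofibrations.
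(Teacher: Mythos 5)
Your Step~1 correctly extracts the two facts the paper's proof hinges on: (a) surjectivity of $\mathfrak{C}_X^H\to\mathfrak{C}_Y^H$ from (TC1), and (b) that $X_{\mathfrak{c}}(\Omega[T])\to Y_{p\mathfrak{c}}(\Omega[T])$ is a trivial Kan fibration for \emph{all} $T\in\Omega_G$, by combining (TC2) and (TC3). But you then abandon the direct conclusion available from (a) and (b) in favor of a much heavier route, and the crucial Step~3 is not actually an argument.

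The direct route (the paper's): factor $p$ as $X\to f^{\**}Y\to Y$ with $f\colon\mathfrak{C}_X\to\mathfrak{C}_Y$ the color map and $f^{\**}Y$ the cartesian pullback, cf.\ \eqref{PREOPCOLCH EQ}. Condition (b) is literally the statement that $X(\Omega[T])\to (f^{\**}Y)(\Omega[T])$ is a trivial Kan fibration for every $T\in\Omega_G$, i.e.\ that $X\to f^{\**}Y$ is a simplicial equivalence. Condition (a) together with the cartesian universal property of $f^{\**}Y\to Y$ gives that each level $(f^{\**}Y)_n\to Y_n$ is a trivial fibration in $\dSet^G$ (lifting against $\partial\Omega[T]\to\Omega[T]$ is exactly (TC1) when $T$ is a stick $G$-tree, and for any other $T$ the lift exists and is unique by cartesian-ness since $\partial\Omega[T]\to\Omega[T]$ is an isomorphism on $\eta$-levels), i.e.\ $f^{\**}Y\to Y$ is a dendroidal equivalence. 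By Theorem~\ref{JB_THM}\ref{SDEQUIV_LBL} both are joint equivalences, so $p$ is. No fibrant replacement and no appeal to Theorem~\ref{FIBPREOP THM} are needed.

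The gap: your Step~3 asserts that full-faithfulness of $p'\colon X'\to Y'$ after normal fibrant replacement can be verified ``by induction along the stages'' of the small object argument using condition (b). But (b) controls the mapping spaces of $X$ and $Y$, not of their replacements, and fibrant replacement in $\mathsf{PreOp}^G_{normal}$ genuinely alters $X_{\mathfrak{c}}(\Omega[C])$ when $X$ is not already a Segal operad --- which is not assumed here, and which $X$ need not be under the stated hypotheses. Moreover the cell attachments produced by the small object argument for $X$ and for $Y$ are independent; ``compatible cell attachments'' is not something functoriality supplies, and there is no mechanism proposed for propagating the equivalence of mapping spaces across an attachment on the $X$ side that has no counterpart on the $Y$ side. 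As written, Step~3 is a hope rather than a proof, and the entire detour through Segal operads and the Dwyer--Kan criterion is avoidable once one recognizes that (b) already exhibits $X\to f^{\**}Y$ as a simplicial equivalence.
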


\begin{proof}
Writing $f \colon \mathfrak{C} \to \mathfrak{D}$ for the underlying map of colors,
consider the factorization $X \to f^{\**}Y \to Y$.
Noting that lifting problems against (TC1) depend only on objects and both of (TC2) and (TC3) consist of maps which are identities on objects,
we see that $X \to Y$ has the right lifting property against (TC1) iff 
$f^{\**} Y \to Y$ does
and the right lifting property against 
(TC2),(TC3) iff $X \to f^{\**}Y$ does.
We argue separately that 
$f^{\**} Y \to Y$ and $X \to f^{\**}Y$
are joint equivalences.

Consider first the map $f^{\**} Y \to Y$. Note now that $f^{\**} Y \to Y$ has the right lifting proper against all maps 
$\left(\partial \Omega[T] \to \Omega[T] \right) \times \Delta[n]$.
Indeed, if $T \simeq G/H \cdot \eta$ is a stick $G$-tree,
this is precisely the lifting condition against (TC1), and otherwise it follows automatically since $\left(\partial \Omega[T] \to \Omega[T] \right) \times \Delta[n]$ is the identity on objects.
Therefore, the levels 
$\left(f^{\**} Y \right)_n \to Y_n$ are trivial fibrations in 
$\mathsf{dSet}^G$, showing that 
$f^{\**} Y \to Y$ is a dendroidal equivalence, 
and thus a joint equivalence, 
cf. Theorem \ref{JB_THM}\ref{SDEQUIV_LBL}.

Consider now the map $X \to f^{\**} Y$.
The lifting property against (TC2) 
together with the decompositions in
\eqref{COLDEC_EQ} 
then say that the maps
$X_{\mathfrak{c}}(\Omega[C]) \to 
(f^{\**} Y)_{\mathfrak{c}} (\Omega[C])$
are trivial Kan fibrations for all $G$-corollas $C \in \Sigma_G$
and colorings $\mathfrak{c} \colon \boldsymbol{E}(C) \to \mathfrak{C}$.
Now let $T \in \Omega_G$ be a $G$-tree and 
$\mathfrak{c} \colon \boldsymbol{E}(T) \to \mathfrak{C}$
be a coloring,
and consider the following diagram.
\[
\begin{tikzcd}
X_{\mathfrak{c}}(\Omega[T]) \ar{r} \ar[->>]{d}{\sim}
&
X_{\mathfrak{c}}(Sc[T]) \ar{r}{\simeq} \ar[->>]{d}{\sim}
&
\prod_{v \in \boldsymbol{V}_G(T)} 
X_{\mathfrak{c}_v}(\Omega[T_v])
\ar[->>]{d}{\sim}
\\
(f^{\**} Y)_{\mathfrak{c}}(\Omega[T]) \ar{r}
&
(f^{\**} Y)_{\mathfrak{c}}(Sc[T]) \ar{r}{\simeq} 
&
\prod_{v \in \boldsymbol{V}_G(T)} 
(f^{\**} Y)_{\mathfrak{c}_v}
(\Omega[T_v]) 
\end{tikzcd}
\]
By the above, the right vertical map is a trivial Kan fibration,
and thus so is the isomorphic map
$X_{\mathfrak c}(Sc[T]) \to (f^{\**} Y)_{\mathfrak{c}} (Sc[T])$,
and likewise for the total map
$X(Sc[T]) \to f^{\**} Y (Sc[T])$.
As the lifting property against (TC3)
yields that
$
X(\Omega[T]) \overset{\sim}{\twoheadrightarrow}
f^{\**}Y(\Omega[T]) \times_{f^{\**}Y(Sc[T])} X(Sc[T]) 
$
is a trivial Kan fibration for all $G$-trees,
so is $X(\Omega[T]) 
\overset{\sim}{\twoheadrightarrow} f^{\**} Y (\Omega[T])$,
showing that $X \to f^{\**} Y$ is a simplicial equivalence, and thus a joint equivalence,
cf. Theorem \ref{JB_THM}\ref{SDEQUIV_LBL}.
\end{proof}

\begin{remark}\label{TC2TC3REP REM}
	Tame cofibrant replacement in $\mathsf{PreOp}^G$
	can be performed without changing objects.
	Indeed, given any 
	$A \in \mathsf{PreOp}^G$,
	one has that
	$\mathsf{sk}_{\eta} A = 
	\coprod_{A(\eta)} \Omega[\eta]$
	is tame cofibrant by (TC1).
	Thus, the small object argument for (TC2),(TC3) 
	applied to the map 
	$\mathsf{sk}_{\eta} A \to A$
	gives a factorization
	$\mathsf{sk}_{\eta} A \to \widetilde{A} \to A$
	where:
	$\mathsf{sk}_{\eta} A \to \widetilde{A}$
	is in the saturation of (TC2),(TC3), so that
	$\widetilde{A}$ is tame cofibrant;
	$\widetilde{A} \to A$
	has the right lifting property against 
	(TC2),(TC3) by construction and against (TC1)
	since it is the identity on objects, 
	and is thus a joint equivalence by 
	Lemma \ref{TAMETRIVFIB LEM}.
\end{remark}

\begin{lemma}\label{SLIMOD LEM}
	Let $X \in \mathsf{PreOp}^G$
	be a Segal operad,
	and $\Omega[1] \to X^H$
	be a $H$-equivalence
	(Notation \ref{HEQUIV NOT})
        for some $H \leq G$.
	Then there exists a countable
	pseudo-interval $I$
	and factorization
	$\Omega[1] \to I \to X^H$.
\end{lemma}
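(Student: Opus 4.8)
The plan is to reduce the statement to its non-equivariant incarnation --- which is (a mild variant of) the construction in \cite[\S 7.7]{CM13b} --- and then to invoke that construction. For the reduction, the first step is to observe that $X^{H} \in \mathsf{PreOp}$, the preoperad obtained from $X$ by taking $H$-fixed points levelwise ($X^{H}(U) = X(U)^{H}$), is again a Segal operad. Indeed, for each $U \in \Omega$ the $G$-tree $G/H \cdot U$ satisfies $\Omega[G/H \cdot U] \simeq G \cdot_{H} \Omega[U]$ and $Sc[G/H \cdot U] \simeq G \cdot_{H} Sc[U]$, while the induction--restriction adjunction gives $X(G \cdot_{H} \Omega[U]) \simeq X(\Omega[U])^{H} = X^{H}(\Omega[U])$ and the analogous identity for $Sc$; hence the $G$-Segal condition on $X$, restricted to the $G$-trees $G/H \cdot U$, is exactly the (non-equivariant) Segal condition for $X^{H}$. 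Moreover $X^{H}(\eta) = X(\eta)^{H}$ is discrete, so $X^{H}$ is genuinely a preoperad, and by Notation \ref{HEQUIV NOT} the hypothesis that $\Omega[1] \to X^{H}$ is an $H$-equivalence says precisely that this map represents an isomorphism in $ho(\iota^{\**} X^{H})$, i.e.\ that it is an ``equivalence edge'' of the Segal operad $X^{H}$. Since a pseudo-interval is by definition an object of $\mathsf{PreCat} \subset \mathsf{PreOp}$, independent of $G$, it therefore suffices to prove the lemma in the non-equivariant case (applied to $X^{H}$).

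For that case the plan is to build $I$ as a countable transfinite composite of tame cofibrations, following \cite[\S 7.7]{CM13b}. One starts with $I_{0} = \Omega[1] = \Omega[C]$ for $C$ the unary corolla, noting that $\mathsf{sk}_{\eta}\Omega[1] \to \Omega[1]$ is itself the generating tame cofibration (TC2) for $C$ and $n = 0$; in particular $\mathsf{sk}_{\eta} I_{0} \to I_{0}$ is a tame cofibration and $I_{0}(\eta) = \{0,1\}$. Since the given edge represents an isomorphism in $ho(\iota^{\**} X^{H})$ one first picks a $1$-cell $g \colon 1 \to 0$ representing its inverse; then, using repeatedly that the homotopical Segal maps $X^{H}(\Omega[T]) \xrightarrow{\sim} X^{H}(Sc[T])$ allow one to fill spine inclusions up to homotopy --- together with the discreteness of $X^{H}(\eta)$ --- one inductively constructs $I_{0} \hookrightarrow I_{1} \hookrightarrow I_{2} \hookrightarrow \cdots$ together with compatible maps $I_{k} \to X^{H}$ extending $f$, where each $I_{k} \hookrightarrow I_{k+1}$ is a pushout of a countable coproduct of generators of types (TC2), (TC3) attaching the next layer of coherence data (the $2$-cells witnessing $g f \simeq \mathrm{id}$ and $f g \simeq \mathrm{id}$, then homotopies between those, and so on). The colimit $I = \colim_{k} I_{k}$ then has $I(\eta) = \{0,1\}$, its map $\mathsf{sk}_{\eta} I \to I$ is a tame cofibration (a composite of pushouts of tame cofibrations), $I$ is countable (a countable colimit of finite objects), and --- the remaining verification proceeding exactly as in \cite[\S 7.7]{CM13b} --- the canonical map $I \to \Omega[\eta]$ is a weak equivalence. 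Thus $I$ is a countable pseudo-interval factoring $f$, as required.

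The step I expect to be the main obstacle is the inductive extension of the maps $I_{k} \to X^{H}$. Because $X^{H}$ is only a Segal operad and need not be Reedy fibrant, the Segal maps $X^{H}(\Omega[T]) \to X^{H}(Sc[T])$ are weak equivalences but not trivial fibrations, so the ``inner-horn fillers'' one wants exist only up to a further zigzag of homotopies; this is precisely why $I$ must be built as an infinite telescope, each stage correcting the previous one up to homotopy, with only the colimit genuinely exhibiting invertibility of $f$, rather than obtaining $I \to X^{H}$ from a single lift against a fibrant replacement. Checking that this can be carried out using only countably many finite cells at each stage, and that the resulting $I$ really is a pseudo-interval, is routine but requires some care; it proceeds exactly as in \cite[\S 7.7]{CM13b}, with $\Omega[1]$ and the linear cells playing the roles they do there, the only genuinely new ingredient being the reduction carried out in the first paragraph.
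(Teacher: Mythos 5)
The reduction to the non-equivariant case via $X^{H}$ in your first paragraph is sound (and, indeed, the paper also ultimately works with $X^{H}$ after fibrant replacement), but the second paragraph — the direct telescope construction — has a genuine gap that the paper's own argument is specifically engineered to avoid.

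The issue is the inductive step: to produce $I_{k+1} \to X^{H}$ extending $I_{k} \to X^{H}$ by attaching cells of type (TC2) or (TC3), you need to solve lifting problems of the form $A \to X^{H}$ extending along a generating tame cofibration $A \to B$. But $X^{H}$ is only a Segal operad, not a Reedy fibrant one, so the Segal maps $X^{H}(\Omega[T]) \to X^{H}(Sc[T])$ are weak equivalences but not trivial Kan fibrations; consequently, these lifts need not exist at any finite stage. You flag exactly this as ``the main obstacle,'' but the proposed remedy — building $I$ as a telescope where ``each stage correct[s] the previous one up to homotopy'' — does not actually resolve it, because at each finite stage you still need a \emph{literal} map $I_{k} \to X^{H}$, and ``correcting up to homotopy'' does not specify how to produce the next one against a non-fibrant target. (Concretely: already extracting the $2$-cell witnessing $gf \simeq \mathrm{id}$ as a single cell in $X^{H}$, rather than a zigzag, requires filling a horn, which the bare Segal condition does not supply.) The paper's proof sidesteps this entirely: it first takes a Reedy fibrant replacement $X \to \widetilde{X}$ within $\mathsf{PreOp}^{G}$ (possible while keeping $X(\eta)$ fixed, cf.\ Remark \ref{REPSEGOPS REM}), applies Rezk's completeness argument to $\widetilde{X}^{H}$ to obtain a map from the nerve $J$ of the free-living isomorphism, factors $J \to \widetilde{X}^{H}$ as a trivial cofibration followed by a fibration, and then forms the pullback $J' = X^{H} \times_{\widetilde{X}^{H}} \widetilde{J}$, invoking right properness of $\mathsf{sSet}$ levelwise to conclude $J' \to \Omega[\eta]$ is still a simplicial equivalence — thereby landing back in $X^{H}$ without ever having to solve lifting problems against $X^{H}$. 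The countability and tame-cofibrancy corrections (the Bergner/HSS subobject argument plus a (TC2)/(TC3)-small-object replacement) are then applied to $J'$ at the end.

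So the approach is genuinely different, and in my view the telescope strategy cannot be completed as sketched; the fibrant-replacement/pullback/right-properness route (or an equivalent way of importing fibrancy) is not an optional simplification here but is doing essential work.
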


\begin{proof}
	Recall (Remark \ref{REPSEGOPS REM})
	that one can find a simplicial equivalence
	$X \xrightarrow{\sim} \widetilde{X}$
	with $\widetilde{X}$ fibrant in the normal model structure on 
	$\mathsf{PreOp}^G$.
	Since, cf. Theorem \ref{FIBPREOP THM},
	Reedy fibrant Segal preoperads are in particular
	Segal spaces, 
	it is well known that,
	for $J$ the nerve of the contractible groupoid
	with two objects,
	one has a dashed arrow as on the left below
	(this is originally due to 
	Rezk \cite[Thm. 6.2]{Rez01},
	applied to $\iota^{\**} \widetilde{X}^H$; 
	alternatively, see \cite[Prop. 5.26(iv)]{BP20}).
\[
\begin{tikzcd}
	\Omega[1] \ar{r} \ar{d} 
&
	X^H \ar{d}{\sim}
&&
	\Omega[1] \ar[dashed]{r}
&
	J' \ar[dashed,twoheadrightarrow]{r} \ar[dashed]{d}[swap]{\sim} 
&
	X^H \ar{d}{\sim}
\\
	J \ar[dashed]{r}{\sim}
&
	\widetilde{X}^H
&&&
	\widetilde{J} \ar[twoheadrightarrow]{r}
&
	\widetilde{X}^H
	\arrow[lu, phantom, "\lrcorner", very near start]
\end{tikzcd}
\]	
Writing 
$J \xrightarrow{\sim} 
\widetilde{J} \twoheadrightarrow 
\widetilde{X}^H$
for the ``trivial cofibration followed by fibration''
factorization in the normal model structure
on $\mathsf{PreOp}$,
we now form the right diagram above, 
where the square is a pullback.
Here we note that
$X^H \to \widetilde{X}^H$
is a simplicial equivalence,
i.e. the maps
$X^H(T) \to \widetilde{X}^H(T)$
are Kan equivalences for each $T \in \Omega$,
while the maps 
$\widetilde{J}(T) \to \widetilde{X}^H(T)$
are Kan fibrations.
Hence, 
since $\mathsf{sSet}$ is (right) proper, 
$J' \to \widetilde{J}$ is again a simplicial equivalence.

By construction, the canonical map 
$J' \to \Omega[\eta]$
is a simplicial equivalence, 
but to obtain the required 
countability and the tame cofibrancy condition
for $J'$ to be a pseudo-interval 
(Definition \ref{PSEUINT DEF}),
we will need to replace $J'$.
Firstly, a countable replacement can be obtained by adapting
either the argument
between Lemmas 4.2 and 4.3 of
\cite{Ber07}
or the more refined argument in the proof of 
\cite[Lemma 5.1.7]{HSS}.
Briefly, since the spaces $J'(\vect{T})$
are all contractible,
one may build nested
countable subpresheaves
$I'^{,n} \subseteq J'$ as in
\[
\Omega[1] = 
I'^{,0} \subseteq
I'^{,1} \subseteq
I'^{,2} \subseteq
\cdots \subseteq
J'
\]
such that all maps 
$I'^{,n}(\vect{T}) \to I'^{,n+1}(\vect{T})$
are nullhomotopic
(informally, and given a countable $I'^{,n}$, 
one needs only countably many simplices of $J'$
to kill of the homotopy groups of $I'^{,n}$; 
hence by adding those simplices and closing under the presheaf operations one obtains $I'^{,n+1}$).
Thus, setting $I' = \bigcup_{n} I'^{,n}$
we still have that $I' \to \Omega[\eta]$
is a simplicial equivalence, 
but $I'$ is now countable.

Lastly, the small object argument for 
(TC2),(TC3) applied to 
$\Omega[1] \to I'$
gives a factorization 
$\Omega[1] \to I \to I'$
where $I \to I'$ is a joint equivalence
(see the argument in Remark \ref{TC2TC3REP REM}),
and the countable preoperad $I$
now has the tame cofibrancy property required 
to be a pseudo-interval. 
\end{proof}

\begin{proof}[Proof of Theorem \ref{TAMEMS_THM}]
	Note that, assuming the existence of the tame model structure,
	the ``moreover'' and ``left proper'' claims
	follow from Lemma \ref{TAMECOFCOF_LEM},
	saying that tame cofibrations are normal cofibrations,
	and the fact that
	$\mathsf{PreOp}^{G}_{tame}$, 
	$\mathsf{PreOp}^{G}_{normal}$
	both have joint equivalences as weak equivalences. 

	To show the existence claims, we will verify conditions C1,C2,C3,C4,C5 in 
	\cite[Prop. 2.3]{Sta14},
	which is a variation of 
	J. Smith's theorem \cite[Thm. 1.7]{Bek00}
	that includes a further criterion for detecting 
	fibrations between fibrant objects.
	In the remainder of the proof,
	we write $\mathcal{I}$ (resp. $\mathcal{J}$)
	for the union of the sets 
	(TC1),(TC2),(TC3)
	(resp. (TA1),(TA2),(TA3)),
	and $\mathcal{W}$ for the class of weak equivalences,
	so that notions such as 
	$\mathcal{I}$-cof, $\mathcal{J}$-fibrant
	have the same meaning as in \cite{Sta14}.
	
	$\mathsf{PreOp^G}$ is certainly locally presentable, 
	as it is a presheaf category.
	That the weak equivalences in $\mathsf{PreOp^G}$ are accessible follows since they are the preimage by $\gamma^{\**}$ of the weak equivalences in 
	$\mathsf{sdSet}^G$ 
	(see \cite[Cor. A.2.6.5]{Lur09} and \cite[Cor. A.2.6.6]{Lur09}).
	
	Conditions C1 and C3 therein are equivalent to the $2$-out-of-$6$
	condition for weak equivalences, and are thus inherited 
	from $\mathsf{sdSet}^G$.
	Moreover, C2 has already been verified in Lemma \ref{TAMETRIVFIB LEM}.

We next check C4. 
Note first that the maps in 
$\mathcal{I} \text{-cof} \cap \mathcal{W}$
are closed under pushout and transfinite composition, as they are trivial cofibrations in the normal model structure in 
$\mathsf{PreOp}^G$,
so that C4 needs only be checked for the maps in (TA1),(TA2),(TA3) themselves, rather than their saturation.
The case of maps in (TA1) is tautological, by definition
of pseudo-interval.
The fact that the maps in 
(TA2),(TA3) are in the saturation of (TC2),(TC3) is clear, and the fact that these maps are weak equivalences follows from 
Lemma \ref{TAMECOFCOF_LEM}.

Lastly, we check C5.
The lifting condition against (TA3) says that 
$\mathcal{J}$-fibrant objects are such that the maps $X(\Omega[T]) \to X(Sc[T])$
are trivial Kan fibrations, and thus that such $X$ are Segal operads.
Therefore, by the ``further'' statement in
Theorem \ref{FIBPREOP THM} 
it suffices to check that $\mathcal{J}$-fibrations
between Segal operads which are also DK equivalences have the right lifting property against the maps in (TC1),(TC2),(TC3).
Given $X \to Y$ a $\mathcal{J}$-fibration with 
$\mathcal{J}$-fibrant $Y$,
the lifting property against (TC3) is tautological since 
(TC3) equals (TA3).
Next, the lifting property against (TA2) says that the maps
$X(\Omega[T]) \to f^{\**} Y(\Omega[T])$
are Kan fibrations, and the DK condition says that these are Kan equivalences,
so that such maps have the right lifting property against (TC2).
Lastly, given any lifting problem against a map in (TC1),
essential surjectivity (Definition \ref{DKEQUIV DEF}(ii))
 and Lemma \ref{SLIMOD LEM}
produce a lifting problem against a map in (TA1) which has a solution, providing a solution to the original problem.
This finishes the proof.	
\end{proof}

For later use, we record the following.

\begin{lemma}\label{OMEGATTAME_LEM}
	For all $T \in \Omega_G$, the objects $Sc[T],\Omega[T]$ are tame cofibrant in $\mathsf{PreOp}^G$.
\end{lemma}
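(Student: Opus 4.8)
The plan is to realize both $\emptyset \to Sc[T]$ and $\emptyset \to \Omega[T]$ as finite composites of pushouts of the generating tame cofibrations of Definition \ref{TAMEGENCOF DEF}. First I would record two elementary consequences of the explicit description of $\otimes_{\mathfrak{C}_{\bullet}}$ in Definition \ref{FIBERTENSOR_DEF}: for every $X \in \mathsf{PreOp}^G$ one has $X \otimes_{\mathfrak{C}_{\bullet}} \Delta[0] = X$ and $X \otimes_{\mathfrak{C}_{\bullet}} \emptyset = \mathsf{sk}_{\eta} X$. Since $Sc[T]$ and $\Omega[T]$ have the same color $G$-set $\boldsymbol{E}(T)$, hence $\mathsf{sk}_{\eta} Sc[T] = \mathsf{sk}_{\eta} \Omega[T]$, it follows that the fibered pushout product $\left(Sc[T] \to \Omega[T]\right) \square_{\mathfrak{C}_{\bullet}} \left(\partial\Delta[0] \to \Delta[0]\right)$ is simply the inclusion $Sc[T] \to \Omega[T]$; that is, $Sc[T] \to \Omega[T]$ is the $n=0$ instance of (TC3), hence a generating tame cofibration. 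By the same unwinding, for $C \in \Sigma_G$ the $n=0$ instance of (TC2) is the map $\mathsf{sk}_{\eta}\Omega[C] \to \Omega[C]$. Consequently it suffices to prove that $\emptyset \to Sc[T]$ is a tame cofibration, as then so is the composite $\emptyset \to Sc[T] \to \Omega[T]$.

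Since $\Omega[T](\eta) = \boldsymbol{E}(T)$ as $G$-sets, there is an identification $\mathsf{sk}_{\eta}\Omega[T] \cong \coprod_{[e] \in \boldsymbol{E}_G(T)} G/H_{[e]} \cdot \Omega[\eta]$, a finite coproduct of the maps (TC1) (taken out of $\emptyset$), so $\emptyset \to \mathsf{sk}_{\eta}\Omega[T]$ is a tame cofibration. If $\boldsymbol{V}_G(T) = \emptyset$, i.e.\ $T$ is a stick $G$-tree, then $\mathsf{sk}_{\eta}\Omega[T] = Sc[T]$ and we are done; otherwise I would enumerate $\boldsymbol{V}_G(T) = \{v_1, \dots, v_k\}$ and build subpresheaves
\[
	\mathsf{sk}_{\eta}\Omega[T] = S_0 \subseteq S_1 \subseteq \cdots \subseteq S_k = Sc[T]
\]
of $\Omega[T]$, where $S_m$ is the pushout of $S_{m-1} \leftarrow \mathsf{sk}_{\eta}\Omega[T_{v_m}] \to \Omega[T_{v_m}]$, the right-hand map being the $n=0$ instance of (TC2) for the $G$-corolla $T_{v_m}$, and the left-hand map being induced by the inclusion of the edges of $T$ incident to $v_m$. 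The essential point is the identity $S_{m-1} \cap \Omega[T_{v_m}] = \mathsf{sk}_{\eta}\Omega[T_{v_m}]$ inside $\Omega[T]$: distinct $G$-corollas $T_v, T_w$ of $T$ meet only along stick faces (in each tree component two distinct vertices share at most one edge, and distinct components are disjoint), so $\Omega[T_{v_j}] \cap \Omega[T_{v_m}] \subseteq \mathsf{sk}_{\eta}\Omega[T_{v_m}]$ for $j < m$, while $\mathsf{sk}_{\eta}\Omega[T] \cap \Omega[T_{v_m}] = \mathsf{sk}_{\eta}\Omega[T_{v_m}]$. Hence the displayed pushout is genuinely the union $S_m = \mathsf{sk}_{\eta}\Omega[T] \cup \bigcup_{j \le m} \Omega[T_{v_j}]$, and at $m = k$ this is $\bigcup_{v \in \boldsymbol{V}_G(T)} \Omega[T_v] = Sc[T]$, using that every edge of the non-stick tree $T$ is incident to a vertex together with the description of $Sc[T]$ as the colimit over $\mathsf{Face}_{\mathsf{sc}}(T)$ in Definition \ref{DSETPRESHEAF_DEF}. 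Each $S_{m-1} \to S_m$ is a pushout of a (TC2) generator, so $\emptyset \to S_0 \to \cdots \to S_k = Sc[T]$ exhibits $Sc[T]$, and hence also $\Omega[T]$, as tame cofibrant.

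The step I expect to require the most care is the combinatorial bookkeeping in the second paragraph: namely, that the Segal core of an equivariant tree is assembled exactly from its spinal $G$-corollas glued along stick faces, and that each successive intersection $S_{m-1} \cap \Omega[T_{v_m}]$ is precisely $\mathsf{sk}_{\eta}\Omega[T_{v_m}]$, so that every attachment is a genuine pushout of a generating cofibration. The remaining steps — identifying the relevant maps with the $n=0$ instances of (TC1)--(TC3) — are routine unwindings of Definition \ref{FIBERTENSOR_DEF}; I would also note that every object above is a preoperad and every pushout is realized as a subobject of $\Omega[T]$, so these colimits, computed in $\mathsf{PreOp}^G$, agree with the corresponding colimits in $\mathsf{sdSet}^G$.
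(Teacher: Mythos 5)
Your proof is correct and rests on the same three observations the paper uses: the $n=0$ cases of (TC2) and (TC3) identify $\mathsf{sk}_{\eta}\Omega[C] \to \Omega[C]$ and $Sc[T]\to\Omega[T]$ as generating tame cofibrations, and the edge skeleton is built from (TC1). The only difference is organizational: the paper attaches all the $G$-corollas $T_v$ in a single pushout over $\coprod_{\boldsymbol{E}(T)}\Omega[\eta]$, whereas you attach them one $G$-vertex at a time, which requires (and you correctly supply) the intersection bookkeeping $S_{m-1}\cap\Omega[T_{v_m}] = \mathsf{sk}_{\eta}\Omega[T_{v_m}]$.
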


\begin{proof}
	The case of $Sc[T]$ follows from the pushout below, 
	where $\coprod_{\boldsymbol{E}(T)}\Omega[\eta]$
	is tame cofibrant by (TC1) and the left vertical map is 
	a tame cofibration by (TC2) with $n=0$.
	\[
	\begin{tikzcd}
	\displaystyle{
		\coprod_{v \in \boldsymbol{V}_G(T)} \partial\Omega[T_{v}]
	}
	\arrow[d] \arrow[r]
	&
	\displaystyle{
		\coprod_{\boldsymbol{E}(T)} \Omega[\eta]
	}
	\arrow[d]
	\\
	\displaystyle{
		\coprod_{v \in \boldsymbol{V}_G(T)} \Omega[T_{v}]
	}
	\arrow[r]
	&
	Sc[T]
	\end{tikzcd}
	\]
	The case of $\Omega[T]$
	follows since (TC3) with $n=0$
	says that $Sc[T] \to \Omega[T]$ is a tame cofibration.
\end{proof}

\section{The Quillen equivalences}\label{QE_SEC}

This section establishes our main result, Theorem \ref{QE THM},
up to the key Lemma \ref{KEYPRVAR LEM}, 
to be shown in \S \ref{KEYRES SEC}.

We do so in two steps.
After recalling the Dwyer-Kan model structure on operads $\mathsf{sOp}^G$ in \S \ref{GSOP_SEC},\S \ref{NERTENSCOOP SEC}
we show in Theorem \ref{PREQUIEQUIV THM} in \S \ref{PREOPOPEQUIV SEC},
that the nerve functor
$N \colon \mathsf{sOp}^G
 \to 
\PreOp^G_{tame}$
is a right Quillen equivalence,
where $\PreOp^G$
has the tame model structure from \S \ref{TAMEDEFEX SEC}.
Using \eqref{ADJSQ EQ},
this yields two zigzags of Quillen adjunctions between $\sOp^G$ and the joint model structure on $\sdSet^G$,
and the proof concludes in \S \ref{PFMNTHM SEC}
by showing that the two associated derived composites agree up to joint equivalence,
cf. \eqref{BIGZIG EQ}.

\subsection{Equivariant simplicial operads}
\label{GSOP_SEC}

We write $\mathsf{sOp}^G = \mathsf{Op}^{G}(\mathsf{sSet})$
for the 
category of \textit{$G$-equivariant colored simplicial operads}.
This category admits several descriptions:
as the algebras for a \emph{composition product $\circ$};
as the algebras for a \emph{free operad monad} $\mathbb{F}$;
as the subcategory of preoperads satisfying a \emph{strict Segal condition}.

Our primary goal in this section and the following is to recall (and repackage) the model structure on $\mathsf{sOp}^G$ built in 
\cite[Thm. \ref{AC-THMA}]{BP_ACOP}.
The work in loc. cit. is based on the free operad monad perspective on 
$\mathsf{sOp}^G$, which is technically involved,
but this paper needs only a brief overview of that perspective.
Instead, and in preparation for the proof of the Quillen equivalence
$\mathsf{PreOp}^G \rightleftarrows \mathsf{sOp}^G$
in \S \ref{PREOPOPEQUIV SEC},
we will find it useful in this section to also use the 
strict Segal condition perspective,
which also plays a key role in Appendix \ref{HGEO AP}
and the side paper \cite{BP_WCONS}.

We start by recalling the category 
$\Sigma_{\mathfrak{C}}$
of $\mathfrak{C}$-corollas (cf. Definition \ref{CFOREST_DEF}).
An object of $\Sigma_{\mathfrak{C}}$
is given by a 
$\mathfrak{C}$-colored corolla as on the left below.
Then letting $n$ be the number of leaves of $\vect{C}$,
which we call the \emph{arity of $\vect{C}$},
and $\sigma \in \Sigma_n$,
the picture below depicts a 
generic map in $\Sigma_{\mathfrak{C}}$.
\begin{equation}\label{CSYM EQ2}
\begin{tikzpicture}
[grow=up,auto,level distance=2.3em,every node/.style = {font=\footnotesize},dummy/.style={circle,draw,inner sep=0pt,minimum size=1.75mm}]

\node at (0,0) [font=\normalsize]{$\vect{C}$}
child{node [dummy] {}
	child{
		edge from parent node [swap,near end] {$\mathfrak c_n$} node [name=Kn] {}}
	child{
		edge from parent node [near end] {$\mathfrak c_1$}
		node [name=Kone,swap] {}}
	edge from parent node [swap] {$\mathfrak c_0$}
};
\draw [dotted,thick] (Kone) -- (Kn) ;
\node at (5,0) [font=\normalsize] {$\vect{C} \sigma^{-1}
	$}
child{node [dummy] {}
	child{
		edge from parent node [swap,near end] {$\mathfrak c_{\sigma^{-1}(n)}$} node [name=Kn] {}}
	child{
		edge from parent node [near end] {$\mathfrak c_{\sigma^{-1}(1)}$}
		node [name=Kone,swap] {}}
	edge from parent node [swap] {$\mathfrak c_0$}
};
\draw [dotted,thick] (Kone) -- (Kn) ;

\draw[->] (1.5,0.8) -- node{$\sigma$} (3,0.8);
\end{tikzpicture}
\end{equation}
Alternatively,
$\mathfrak{C}$-corollas can be represented simply as
strings in $\mathfrak{C}$,
which we call $\mathfrak{C}$-profiles \footnote{These were called $\mathfrak C$-signatures in \cite{BP_ACOP,BP_FCOP}.}
(cf. Remark \ref{MAPSPTRANS REM}).
In profile notation, 
\eqref{CSYM EQ2} then becomes
\begin{equation}\label{CSYM EQ1}
\vect{C} =
(\mathfrak c_1, \dots, \mathfrak c_n; \mathfrak c_0) \xrightarrow{\sigma} (\mathfrak c_{\sigma^{-1}(1)}, \dots, \mathfrak c_{\sigma^{-1}(n)}; \mathfrak c_0)
= \vect{C} \sigma^{-1}.
\end{equation}
For this reason, we also refer to 
$\Sigma_{\mathfrak{C}}$ as the 
$\mathfrak{C}$-symmetric category,

Lastly, we note that the notation
$\vect{C} \sigma^{-1}$ used above
comes from the natural right action of $\Sigma_n$
on $\mathfrak{C}$-profiles of arity $n$ via
$
(\mathfrak c_1, \dots, \mathfrak c_n; \mathfrak c_0) \sigma
=
(\mathfrak c_{\sigma(1)}, \dots, \mathfrak c_{\sigma(n)}; \mathfrak c_0)$.

\begin{definition}\label{SSYM DEF}
	The category $\mathsf{sSym}$ of \textit{simplicial symmetric sequences} has:
\begin{itemize}
	\item objects given by pairs
	$(\mathfrak C, X)$ with $\mathfrak{C} \in \mathsf{Set}$
	a set of colors and
	$\Sigma_{\mathfrak C}^{op} \xrightarrow{X}\sSet$
	a functor; 
	\item maps
	$(\mathfrak{C},X) \to (\mathfrak{D},Y)$
	given by maps of colors
	$f \colon \mathfrak{C} \to \mathfrak{D}$
	and $\Phi$ as below.
\begin{equation}
\begin{tikzcd}[row sep = tiny, column sep = 45pt]
	\Sigma_{\mathfrak{C}}^{op} \arrow[dr, "X"{name=U}] 
	\arrow{dd}[swap]{f}
\\
	& \mathsf{sSet}
\\
	|[alias=V]| \Sigma_{\mathfrak{D}}^{op} \arrow[ur, "Y"']
\arrow[Leftarrow, from=V, to=U,shorten >=0.25cm,shorten <=0.25cm, swap,"\Phi"]
\end{tikzcd}
\end{equation}
\end{itemize} 
\end{definition}

More explicitly,
by specifying \eqref{CSYM EQ2} to a map
$\vect{C} \sigma \to \vect{C}$ in $\Sigma_{\mathfrak{C}}$
(note that $\vect{C} = (\vect{C}\sigma)\sigma^{-1}$),
one has that a symmetric sequence
$X \colon \Sigma_{\mathfrak{C}}^{op} \to \mathsf{sSet}$
has structure maps
\begin{equation}\label{SSYMSTMAP EQ}
	X(\vect{C}) = 
	X(\mathfrak c_1, \dots, \mathfrak c_n; \mathfrak c_0) \xrightarrow{\simeq} 
	X(\mathfrak c_{\sigma(1)}, \dots, \mathfrak c_{\sigma(n)}; \mathfrak c_0) =	
	X(\vect{C}\sigma)
\end{equation}
for $\vect{C}$ of arity $n$ and $\sigma \in \Sigma_n$.

We next describe operads.
We write 
$\mathsf{lr} \colon 
\Omega_{\mathfrak{C}} \to \Sigma_{\mathfrak{C}}$,
which we call the \emph{leaf-root} functor,
for the assignment
$\vect{T} = (T,\mathfrak{c})
\mapsto
(\mathfrak{c}(l_1),\cdots,\mathfrak{c}(l_n);
\mathfrak{c}(r))$
where the $l_i$ are the leaves of $T$ and $r$ the root.
The free operad monad $\mathbb{F}$
is then a monad on $\mathsf{sSym}$ which,
when evaluated on $X \colon \Sigma_{\mathfrak{C}}^{op} \to \mathsf{sSet}$,
is given by the left Kan extension below
(the description of the monad structure is found in 
\cite[Defs. \ref{OC-FREEOP DEF},\ref{OC-NCOLOR DEF},\ref{OC-COLORMON_DEF}]{BP_FCOP}).
\begin{equation}\label{FREEOP_EQ}
\begin{tikzcd}[column sep = 150pt]
	\Omega^{0,op}_{\mathfrak{C}}
	\arrow[d, "\mathsf{lr}^{op}"']
	\arrow[r, 
	"\vect{T} \mapsto \prod_{v \in \boldsymbol{V}(T)} X(\vect{T}_v)"{name=U}]
&
	\mathsf{sSet}
\\
	|[alias=V]|
	\Sigma^{op}_{\mathfrak{C}}
	\arrow[ur, "\Lan = \mathbb F X"']
	\arrow[Leftarrow, from=V, to=U,shorten >=0.25cm,shorten <=0.25cm]
\end{tikzcd}
\end{equation}
The category 
$\mathsf{sOp}$ of \emph{colored simplicial operads}
can then be described as the category of 
fiber $\mathbb{F}$-algebras on $\mathsf{sSym}$
(where fiber algebras are those algebras for which the multiplication
$\mathbb{F}\mathcal{O} \to \mathcal{O}$
preserves colors, cf. \cite[Def. \ref{OC-FIBMON DEF}]{BP_FCOP}).
For $G$ a finite group, we then write
$\mathsf{sOp}^G$ (resp. $\mathsf{sSym}^G$)
for the category of 
$G$-objects on 
$\mathsf{sOp}$ (resp. $\mathsf{sSym}$)
which we call the category of
\emph{$G$-equivariant colored simplicial operads
(symmetric sequences)}.
Note that, by an abstract argument \cite[Prop. \ref{OC-DIAGRAMFM_PROP}]{BP_FCOP},
$\mathbb{F}$
induces a monad on $\mathsf{sSym}^G$
whose category of fiber algebras is $\mathsf{sOp}^G$.

Mirroring \eqref{COLORSET EQ},
we then have \emph{color set functors}
\begin{equation}\label{OPERCOLFUN EQ}
	\mathsf{sSym} \xrightarrow{\mathfrak{C}_{\bullet}} \mathsf{Set}
\qquad
	\mathsf{sOp} \xrightarrow{\mathfrak{C}_{\bullet}} \mathsf{Set}
\qquad
	\mathsf{sSym}^G \xrightarrow{\mathfrak{C}_{\bullet}} \mathsf{Set}^G
\qquad
	\mathsf{sOp}^G \xrightarrow{\mathfrak{C}_{\bullet}} \mathsf{Set}^G.
\end{equation}

\begin{remark}\label{CTTCOLSET REM}
	Replacing $\mathsf{sSet}$ with 
	$\mathsf{Set}$
	in Definition \ref{SSYM DEF}
	and \eqref{FREEOP_EQ}
	one recovers the analogue non-simplicial categories
	$\mathsf{Sym}$, 
	$\mathsf{Op}$, 
	$\mathsf{Sym}^G$, 
	$\mathsf{Op}^G$. 
	As is well known, there is then a fully faithful inclusion
	$\mathsf{sOp} \subset \mathsf{Op}^{\Delta^{op}}$
	as those simplicial objects with a constant set of colors.
\end{remark}

The color set functors above are all 
Grothendieck fibrations and, moreover, the monad 
$\mathbb{F}$ is suitably compatible with these fibrations.
In \cite[\S \ref{OC-ECO_SEC}]{BP_FCOP} the 
Grothendieck fibration perspective 
is used to describe the fibers 
$\mathsf{sSym}^G_{\mathfrak{C}}$, 
$\mathsf{sOp}^G_{\mathfrak{C}}$
of those objects with a fixed $G$-color set $\mathfrak{C}$
and maps which are the identity on colors. 
However, here we will be able to take a more direct approach.

If $\mathfrak{C}$ is a $G$-set of colors, 
one has a left $G$-action on the set of $\mathfrak{C}$-profiles.
Thus, if $X \in \mathsf{sSym}^G$
has color set $\mathfrak{C}_X =\mathfrak{C}$,
one has, generalizing
\eqref{SSYMSTMAP EQ},
that $X$ has structure maps
\begin{equation}\label{SSYMGSTMAP EQ}
X(\vect{C}) = 
X(\mathfrak c_1, \dots, \mathfrak c_n; \mathfrak c_0) \xrightarrow{\simeq} 
X(g\mathfrak c_{\sigma(1)}, \dots, g \mathfrak c_{\sigma(n)}; g \mathfrak c_0) =	
X(g \vect{C}\sigma)
\end{equation}
for $\vect{C}$ a $\mathfrak{C}$-profile of arity $n$
and
$(g,\sigma) \in G \times \Sigma^{op}_n$.

Note that, implicit in the $g \vect{C} \sigma$ notation in
\eqref{SSYMGSTMAP EQ}
is the fact that $G \times \Sigma_n^{op}$
has a left action on 
$\mathfrak{C}$-profiles of arity $n$.
As such, given a subgroup
$\Gamma \leq G \times \Sigma_n^{op}$
and $\vect{C}$ of arity $n$, we say that 
\emph{$\Gamma$ stabilizes $\vect{C}$} 
if 
$g \vect{C} \sigma = \vect{C}$
for all $(g,\sigma) \in \Gamma$.
In particular, 
\eqref{SSYMGSTMAP EQ}
then implies that, 
if $\Gamma$ stabilizes $\vect{C}$
and for
$X\in \mathsf{sSym}^G$
(resp.
$\O \in \mathsf{sOp}^G$)
with $G$-color set $\mathfrak{C}$,
the level
$X(\vect{C})$
(resp. $\O(\vect{C})$)
has an action by $\Gamma$.

\begin{notation}\label{SIGFREE NOT}
	For $x \in X(\vect{C})$
	with $\vect{C}$ of arity $n$ and 
	$(g,\sigma) \in G \times \Sigma_n^{op}$
	we write
	$gx\sigma \in X(g \vect{C} \sigma)$
	for the image of $x$ under 
	\eqref{SSYMGSTMAP EQ}.
	Note that this defines an action of
	$G \times \Sigma_n^{op}$
	on $\coprod_{\vect{C} \text{ of arity }n} X(\vect{C})$.
	
	Moreover, if $x \sigma = x$ only when $\sigma = id$ 
	we say that $x$ is \emph{$\Sigma$-free}.
\end{notation}

\begin{remark}\label{SYMGCPRESH REM}
	Let $\mathcal{G}_{\mathfrak{C}}$
	denote the groupoid with objects the 
	$\mathfrak{C}$-profiles
	and arrows 
	$\vect{C} \to g \vect{C} \sigma$
	for $\vect{C}$ of arity $n$
	and $(g,\sigma) \in G \times \Sigma^{op}_n$.
	In other words, 
	$\mathcal{G}_{\mathfrak{C}}$
	is the coproduct over $n\geq 0$ of the action groupoids 
	for the actions of
	$G \times \Sigma^{op}_n$
	on $n$-ary profiles\footnote{In \cite[Prop. \ref{OC-EQUIVFNCON PROP}]{BP_FCOP}, we use the alternative description
	$\mathcal{G}_{\mathfrak{C}} = G \ltimes \Sigma_{\mathfrak{C}}^{op}$.}.
	Equation \eqref{SSYMGSTMAP EQ} then identifies 
	$\mathsf{sSym}^G_{\mathfrak{C}} \simeq 
	\mathsf{sSet}^{\mathcal{G}_{\mathfrak{C}}}$.	
\end{remark}

Before describing the model structure on $\mathsf{sOp}^G$
we need to recall two more ingredients.

First, 
a subgroup $\Gamma \leq G \times \Sigma_n^{op}$
is called a \emph{$G$-graph subgroup}
if $\Gamma \cap \Sigma_n^{op} = \{*\}$.
Equivalently, one can readily show
that we must have
$\Gamma = \sets{(h,\phi(h)^{-1})}{h \in H}$
for some subgroup $H \leq G$
and homomorphism $\phi \colon H \to \Sigma_n$.

Second, one has functors (compare with Definition \ref{HOMGENOP DEF} and the subsequent remarks)
\[
	\mathsf{sOp} \xrightarrow{\pi_0}
	\mathsf{Op} \xrightarrow{\iota^{\**}}
	\mathsf{Cat}
\]
where $\pi_0$ is computed levelwise,
i.e. 
$(\pi_0 \O)(\vect{C}) = 
\pi_0(\O(\vect{C}))$
and
$\iota^{\**}$ forgets non-unary operations.

Generalizing \cite{Ber07b,CM13b},
we showed in \cite{BP_ACOP}
that $\mathsf{sOp}^G$ 
has a Dwyer-Kan style model structure.
More precisely, we have the following result, which is
\cite[Thm. \ref{AC-THMA}]{BP_ACOP},
with the alternative characterization of fibrations 
provided by 
\cite[Prop. \ref{AC-ISOFIBHARD PROP}]{BP_ACOP}.

\begin{theorem}\label{SOPG_THM}
      The category $\sOp^G$ has a cofibrantly generated model structure with weak equivalences (resp. fibrations) those maps
      $F \colon \O \to \P$ such that:
\begin{itemize}
\item $F$ is \emph{fully faithful} (resp. a \emph{local fibration}), i.e. the induced maps
	\begin{equation}\label{DKEQUIV_EQ}
		\O ( \vect{C})^\Gamma \longto 
		\P(F \vect{C})^\Gamma
	\end{equation}
	are Kan equivalences (resp. Kan fibrations) in $\sSet$
	for all $\mathfrak C_\O$-profiles $\vect C = (\mathfrak c_1, \dots, \mathfrak c_n; \mathfrak c_0)$
	and all graph subgroups $\Gamma \leq G \times \Sigma_n^{op}$ which stabilize $\vect C$;
\item $F$ is \emph{essentially surjective} (resp. an \emph{isofibration}), i.e. the induced maps of usual categories
	\begin{equation}\label{ESSSURJ EQ}
		\iota^{\**}\pi_0 \O^H \longto \iota^{\**}\pi_0 \P^H
	\end{equation}
	are essentially surjective (resp. isofibrations) for all $H \leq G$.
\end{itemize}
\end{theorem}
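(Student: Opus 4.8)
The plan is to recall the construction of this model structure from \cite{BP_ACOP}, where it is \cite[Thm.~\ref{AC-THMA}]{BP_ACOP}, with the alternative description of the fibrations between fibrant objects supplied by \cite[Prop.~\ref{AC-ISOFIBHARD PROP}]{BP_ACOP}. For orientation I outline the shape of the argument. The natural route is to work fiberwise over the color functor $\mathsf{sOp}^G \xrightarrow{\mathfrak{C}_{\bullet}} \mathsf{Set}^G$: for each fixed $G$-set $\mathfrak{C}$ one first equips $\mathsf{sSym}^G_{\mathfrak{C}} \simeq \mathsf{sSet}^{\mathcal{G}_{\mathfrak{C}}}$ (Remark \ref{SYMGCPRESH REM}) with the model structure whose weak equivalences and fibrations are detected on all $\Gamma$-fixed points for graph subgroups $\Gamma$ stabilizing a profile, and then transfers it along the free operad adjunction $\mathbb{F} \colon \mathsf{sSym}^G_{\mathfrak{C}} \rightleftarrows \mathsf{sOp}^G_{\mathfrak{C}}$ built in \cite{BP_FCOP}, obtaining the ``fixed color set'' model structures, with generating (trivial) cofibrations the free maps on the equivariant analogues of the usual ones in $\mathsf{sSet}$.

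Assembling these fiber model structures into a single model structure on $\mathsf{sOp}^G$ is where the real content lies, and is carried out following the template of Bergner \cite{Ber07b} and Cisinski--Moerdijk \cite{CM13b}, now equivariantly and for all graph subgroups simultaneously. The two substantive inputs are: (a) acyclicity of pushouts in $\mathsf{sOp}^G$ of free maps generated by trivial cofibrations of symmetric sequences, proved by the tree-indexed filtration of such a pushout together with an equivariant cofibrancy analysis of each stage; and (b) for the ``change of colors'' direction of the weak equivalences --- i.e.\ to accommodate maps that are fully faithful and essentially surjective in the sense of \eqref{ESSSURJ EQ} without being isomorphisms on colors --- the construction of a categorical path object $\O^{H}$ for each fibrant operad $\O$ detecting its invertible unary arrows, which lets one show that the class of weak equivalences of Theorem \ref{SOPG_THM} agrees, between cofibrant--fibrant objects, with the maps becoming equivalences on underlying equivariant symmetric sequences after a suitable fibrant replacement. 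Verifying Smith's recognition criteria (or the variant in \cite{Sta14} used for Theorem \ref{TAMEMS_THM}) then produces the model structure, with the cofibrations, weak equivalences, and fibrations as stated.

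The step I expect to be hardest is the characterization of the fibrations between fibrant objects as precisely the local isofibrations --- the maps $F$ satisfying the two displayed conditions with ``Kan fibration'' and ``isofibration'' in place of the corresponding equivalence conditions --- that is, \cite[Prop.~\ref{AC-ISOFIBHARD PROP}]{BP_ACOP}. Its proof is a delicate lifting argument: one shows that a local fibration $F \colon \O \to \P$ of fibrant operads which is an isofibration on each $\iota^{\**}\pi_0(-)^H$ has the right lifting property against the transferred generating trivial cofibrations, the crux being to lift invertible unary arrows of $\P$ through $F$ compatibly with the higher tree-level data, which again forces a combination of the tree-filtration arguments with the path-object technology of step (b). Once this is in place, the stated model structure and both of its characterizations follow.
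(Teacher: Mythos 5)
The paper proves this theorem by citation: it states that the result is \cite[Thm.~I]{BP\_ACOP} combined with the alternative fibration characterization of \cite[Prop.~ISOFIBHARD]{BP\_ACOP}, and your proposal correctly identifies exactly these two references as the source of the statement. Your outline of the internal structure of that external proof (fiberwise transfer along the free operad adjunction from the graph-family model structure on $\mathsf{sSym}^G_{\mathfrak C}$, acyclicity of pushouts via tree filtrations, interval/path-object technology for the essential-surjectivity direction, and Smith-style recognition) is consistent with the surrounding material recalled here (Prop.~\ref{AUXSYM PROP}, Remark~\ref{SYMGCPRESH REM}, Remarks~\ref{CHECKF REM} and~\ref{GENCOF_SOPG_REM}); this is reasonable supplementary context, but it goes beyond what the present paper supplies, which is just the citation.
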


The sets of generating (trivial) cofibrations in $\sOp^G$
are recalled in Remark \ref{GENCOF_SOPG_REM} in \S \ref{NERTENSCOOP SEC},
after discussing the fibered simplicial cotensoring 
in $\sOp^G$.

For a fixed $G$-set of colors $\mathfrak{C}$,
the category of $\mathfrak C$-colored symmetric sequences
$\mathsf{sSym}_{\mathfrak{C}}^G$
admits an auxiliary model structure defined analogously to \eqref{DKEQUIV_EQ}.

\begin{proposition}
        \label{AUXSYM PROP}
        For all $G$-sets $\mathfrak C$,
        $\mathsf{sSym}_{\mathfrak C}^G$
        has a model structure where
        $X \to Y$ is a (trivial) fibration (resp. weak equivalence)
        iff 
        $X(\vect{C})^{\Gamma} \to Y(\vect{C})^{\Gamma}$
        is a (trivial) Kan fibration (resp. weak equivalence)
        for all $n$-ary $\mathfrak C$-profiles $\vect C$ and
        all graph subgroups $\Gamma \leq G \times \Sigma_n^{op}$ stabilizing $\vect C$.
\end{proposition}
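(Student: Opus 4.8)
The plan is to deduce the statement from the groupoid-presheaf description of $\mathsf{sSym}^G_{\mathfrak{C}}$ in Remark~\ref{SYMGCPRESH REM}, together with the standard existence of \emph{family} model structures on $\Gamma$-simplicial sets.

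First I would invoke Remark~\ref{SYMGCPRESH REM}, which identifies $\mathsf{sSym}^G_{\mathfrak{C}} \simeq \mathsf{sSet}^{\mathcal{G}_{\mathfrak{C}}}$ with $\mathcal{G}_{\mathfrak{C}}$ the groupoid of $\mathfrak{C}$-profiles and arrows $\vect{C} \to g\vect{C}\sigma$. Choosing one representative $\vect{C}$ in each $G \times \Sigma_n^{op}$-orbit of $\mathfrak{C}$-profiles yields an equivalence $\mathcal{G}_{\mathfrak{C}} \simeq \coprod_{[\vect{C}]} B\Gamma_{\vect{C}}$, where $\Gamma_{\vect{C}} = \Stab(\vect{C}) \leq G \times \Sigma_n^{op}$ is the full stabilizer ($n$ the arity of $\vect{C}$), and hence an equivalence of categories $\mathsf{sSym}^G_{\mathfrak{C}} \simeq \prod_{[\vect{C}]} \Gamma_{\vect{C}}\text{-}\mathsf{sSet}$. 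Since model structures transport along equivalences of categories and a product of model categories is a model category, it suffices to equip each $\Gamma_{\vect{C}}\text{-}\mathsf{sSet}$ with a model structure in which $f$ is a fibration (resp.\ weak equivalence, trivial fibration) iff $f^K$ is a Kan fibration (resp.\ weak equivalence, trivial Kan fibration) for every subgroup $K \leq \Gamma_{\vect{C}}$ with $K \cap \Sigma_n^{op} = \{\ast\}$. That this matches the statement is routine: given any $\mathfrak{C}$-profile $\vect{D}$ of arity $n$ and any graph subgroup $\Gamma' \leq G \times \Sigma_n^{op}$ stabilizing $\vect{D}$, writing $\vect{D} = g\vect{C}\sigma$ with $\vect{C}$ the chosen orbit representative, the subgroup $K := (g,\sigma)^{-1}\Gamma'(g,\sigma)$ satisfies $K \leq \Stab(\vect{C})$ and $K \cap \Sigma_n^{op} = \{\ast\}$, and the structure map~\eqref{SSYMGSTMAP EQ} gives an isomorphism $X(\vect{C})^{K} \cong X(\vect{D})^{\Gamma'}$ natural in $X$; thus testing over all profiles and all stabilizing graph subgroups is the same as testing over one representative per orbit.

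Next I would fix $\vect{C}$ of arity $n$, abbreviate $\Gamma = \Gamma_{\vect{C}}$, and let $\mathcal{F}$ be the set of subgroups $K \leq \Gamma$ with $K \cap \Sigma_n^{op} = \{\ast\}$. I would check that $\mathcal{F}$ is a \emph{family of subgroups of $\Gamma$}, i.e.\ closed under passage to subgroups (clear) and under $\Gamma$-conjugation: since $\Sigma_n^{op}$ is normal in $G \times \Sigma_n^{op}$, one has $(\gamma K \gamma^{-1}) \cap \Sigma_n^{op} = \gamma(K \cap \Sigma_n^{op})\gamma^{-1} = \{\ast\}$ for all $\gamma \in \Gamma$. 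With $\mathcal{F}$ a family, the desired $\mathcal{F}$-model structure on $\Gamma\text{-}\mathsf{sSet}$ exists by the standard theory of family model structures on $\Gamma$-objects (cf.\ \cite{Ste16})---or, directly, by transferring the product model structure on $\prod_{K \in \mathcal{F}}\mathsf{sSet}$ along the functor $\prod_{K \in \mathcal{F}}(-)^K \colon \Gamma\text{-}\mathsf{sSet} \to \prod_{K \in \mathcal{F}}\mathsf{sSet}$ (whose left adjoint sends $(Z_K)_K$ to $\coprod_{K}\Gamma/K \times Z_K$) via Kan's recognition theorem. This model structure is cofibrantly generated, with generating (trivial) cofibrations the maps $\Gamma/K \times (\partial\Delta[m] \to \Delta[m])$ (resp.\ $\Gamma/K \times (\Lambda^i[m] \to \Delta[m])$) for $K \in \mathcal{F}$, and it is left proper. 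Taking the product over all orbits $[\vect{C}]$ and transporting back along the equivalence of the first step produces the model structure on $\mathsf{sSym}^G_{\mathfrak{C}}$ with the asserted (trivial) fibrations and weak equivalences.

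The main obstacle is the existence of the $\mathcal{F}$-model structure on $\Gamma\text{-}\mathsf{sSet}$. Carried out by hand via transfer, the one nontrivial point is the acyclicity hypothesis of Kan's recognition theorem: one must verify that the image under the left adjoint of each generating trivial cofibration, namely $\Gamma/K \times (\Lambda^i[m]\to\Delta[m])$ itself, is a weak equivalence lying in the saturation of the generating cofibrations. Both facts follow from the observation that for $K, L \in \mathcal{F}$ the fixed-point set $(\Gamma/L)^K$ is a disjoint union of points, so that applying $(-)^K$ to $\Gamma/L \times (\Lambda^i[m]\to\Delta[m])$ again yields a coproduct of horn inclusions. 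The remaining hypotheses---local presentability of $\Gamma\text{-}\mathsf{sSet}$, smallness of the generators, and preservation of filtered colimits by the fixed-point functors---are immediate since $\Gamma$ is finite; everything else in the argument is formal.
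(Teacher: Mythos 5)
Your proof is correct and follows essentially the same route as the paper: both begin with the identification $\mathsf{sSym}^{G}_{\mathfrak{C}} \simeq \mathsf{sSet}^{\mathcal{G}_{\mathfrak{C}}}$ from Remark~\ref{SYMGCPRESH REM} and then invoke a family (genuine) model structure on this presheaf category, with the family given at each profile $\vect{C}$ by the graph subgroups of the stabilizer. The paper outsources the existence of this ``family-of-subgroups-of-a-groupoid'' model structure to the companion paper \cite{BP_FCOP}, whereas you unpack it by decomposing $\mathcal{G}_{\mathfrak{C}}$ into its connected components $\coprod B\Gamma_{\vect{C}}$ and running the standard transfer argument on each factor $\Gamma_{\vect{C}}\text{-}\mathsf{sSet}$; this is precisely the argument underlying the cited result, so there is no substantive difference in method.
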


\begin{proof}
        Using the identification
        $\mathsf{sSym}^{G}_{\mathfrak{C}}
        \simeq \mathsf{sSet}^{\mathcal{G}_{\mathfrak{C}}}$
        in Remark \ref{SYMGCPRESH REM},
        this is the $\F^\Gamma_{\mathfrak C}$-model structure 
        from \cite[Defn. \ref{OC-FCMODEL_DEF}]{BP_FCOP},
        where
        $\mathcal{F}^{\Gamma}_{\mathfrak C}$
        is the family of subgroups of $\mathcal{G}_{\mathfrak C}$ (\cite[Defn. 4.11]{BP_FCOP})
        defined by
        $\mathcal{F}^{\Gamma}_{\mathfrak C} = \pi_{\mathfrak C}^{\**} \mathcal F^\Gamma$
        for $\pi_\mathfrak C \colon G \ltimes \Sigma_{\mathfrak C}^{op} \to G \times \Sigma^{op}$
        (\cite[Defn. \ref{OC-GSFAM_DEF}, Remark \ref{OC-FAMC_DEF_REM}]{BP_FCOP});
        explicitly, for an $n$-ary $\mathfrak C$-profile $\vect C$,
        $\left(\mathcal F^\Gamma_{\mathfrak C}\right)_{\vect C}$ is the collection of
        graph subgroups $\Gamma \leq G \times \Sigma_n^{op}$ which stabilize $\vect C$.
        
        As the genuine model structure on $\sSet^G$ exists by e.g. \cite[Prop. 4.10]{BP_FCOP},
        this $\F^\Gamma_{\mathfrak C}$-model structure exists by \cite[Prop. \ref{OC-ALLEQ PROP}]{BP_FCOP}.
\end{proof}

\begin{proposition}\label{SSYMCOFCH PROP}
	Let $f \colon A \to B$ be a map in
	$\mathsf{sSym}^G_{\mathfrak{C}} \simeq \mathsf{sSet}^{\mathcal{G}_{\mathfrak{C}}}$.
	The following are equivalent:
\begin{enumerate}
	\item[(i)] $f$ is a cofibration;
	\item[(ii)] $f$ is a monomorphism and the stabilizer of every
	$x \in B \setminus f(A)$
	is a graph subgroup;
	\item[(iii)] $f$ is a monomorphism and every 
	$x \in B \setminus f(A)$ is $\Sigma$-free
	(cf. Notation \ref{SIGFREE NOT}).
\end{enumerate}
\end{proposition}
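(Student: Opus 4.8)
The plan is to reduce the statement to the explicit description of the family $\mathcal F^\Gamma_{\mathfrak C}$ underlying the model structure of Proposition~\ref{AUXSYM PROP}, together with the standard characterization of cofibrations in a family (``genuine'') model structure on a presheaf category over a groupoid. First I would recall from Remark~\ref{SYMGCPRESH REM} the identification $\sSym^G_{\mathfrak C} \simeq \sSet^{\mathcal G_{\mathfrak C}}$, and from (the proof of) Proposition~\ref{AUXSYM PROP} that the model structure in question is the $\mathcal F^\Gamma_{\mathfrak C}$-model structure, whose generating cofibrations are, up to the usual identifications, the maps $\mathcal G_{\mathfrak C}/\Gamma \cdot (\partial\Delta[n] \to \Delta[n])$ with $\Gamma$ ranging over $\mathcal F^\Gamma_{\mathfrak C}$; explicitly, over an $m$-ary $\mathfrak C$-profile $\vect C$ the relevant $\Gamma$ are exactly the graph subgroups $\Gamma \leq G \times \Sigma_m^{op}$ that stabilize $\vect C$. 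Since $\mathcal G_{\mathfrak C}$ is a disjoint union of action groupoids, a simplex $x$ of any $B \in \sSym^G_{\mathfrak C}$ lying over the orbit of an $m$-ary profile $\vect C$ has $\mathcal G_{\mathfrak C}$-stabilizer $\Stab(x) \leq \Stab(\vect C) \leq G \times \Sigma_m^{op}$, and in particular $\Stab(x)$ automatically stabilizes $\vect C$.

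With this in place, (ii)$\Leftrightarrow$(iii) is elementary: $\Stab(x)$ is a graph subgroup iff $\Stab(x) \cap \Sigma_m^{op} = \{\ast\}$, iff the only $\sigma \in \Sigma_m$ with $x\sigma = x$ (in the sense of Notation~\ref{SIGFREE NOT}) is $\sigma = \mathrm{id}$, iff $x$ is $\Sigma$-free; one also notes this condition is automatically inherited by degeneracies and, among simplices outside $f(A)$, by faces, since faces can only enlarge stabilizers while $d_i s_i = \mathrm{id}$ forces $\Stab(s_i x) = \Stab(x)$. For (i)$\Leftrightarrow$(ii) I would invoke the general cofibration characterization for the $\mathcal F^\Gamma_{\mathfrak C}$-model structure from \cite{BP_FCOP}: a map is a cofibration iff it is a monomorphism all of whose non-image simplices have stabilizer in $\mathcal F^\Gamma_{\mathfrak C}$, which by the previous sentence means ``a graph subgroup''. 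Concretely, in the direction (i)$\Rightarrow$(ii) each pushout of a generator $\mathcal G_{\mathfrak C}/\Gamma\cdot(\partial\Delta[n]\to\Delta[n])$ adjoins an orbit of simplices whose stabilizers are subconjugate to $\Gamma$, hence graph subgroups --- graph subgroups being closed under subconjugacy because $\Sigma_m^{op}$ is normal in $G \times \Sigma_m^{op}$ --- and both monomorphy and the graph-subgroup condition survive transfinite composition and retracts (for retracts one uses that a simplex outside $f(A)$ maps to a simplex outside the corresponding subobject of the larger map, with possibly larger stabilizer). In the direction (ii)$\Rightarrow$(i) one runs the usual skeletal filtration: at stage $m$ one forms the pushout of $\coprod \mathcal G_{\mathfrak C}/\Stab(x)\cdot(\partial\Delta[m]\to\Delta[m])$ over orbit representatives of the nondegenerate $m$-simplices of $B$ not lying in the image of $A$, and (ii) guarantees each such $\Stab(x)$ is a graph subgroup stabilizing the ambient profile, so each of these maps is a generating cofibration and $f$ is their transfinite composite.

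The only step with genuine content is the cofibration characterization for the $\mathcal F^\Gamma_{\mathfrak C}$-model structure --- i.e.\ the skeletal-filtration argument and its converse --- but this is exactly the form taken by the corresponding general result in \cite{BP_FCOP}, so I expect no real obstacle beyond quoting that machinery and performing the short bookkeeping that identifies $\mathcal F^\Gamma_{\mathfrak C}$ with the graph subgroups stabilizing $\vect C$ and translates ``graph subgroup'' into ``$\Sigma$-free''.
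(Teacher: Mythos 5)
Your proof is correct and follows essentially the same route as the paper's: both recall the generating cofibrations $\mathcal G_{\mathfrak C}(\vect C,-)/\Gamma \cdot (\partial\Delta[k]\to\Delta[k])$ with $\Gamma \in \mathcal F^\Gamma_{\vect C}$ from \cite{BP_FCOP}, then deduce (i)$\Leftrightarrow$(ii) from the standard cofibration characterization for such genuine/family model structures (the paper cites \cite[Prop.~2.16]{Ste16} and \cite[Prop.~6.5]{Per18}, whose content is exactly the skeletal-filtration argument you spell out), and dispatch (ii)$\Leftrightarrow$(iii) by unwinding the definitions of graph subgroup and $\Sigma$-freeness. You supply the filtration and closure-under-subconjugacy details that the paper compresses into a citation, but there is no difference in approach.
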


\begin{proof}
By \cite[Rem. \ref{OC-VGFGEN REM}]{BP_FCOP}
the generating cofibrations in 
$\mathsf{sSym}^G_{\mathfrak{C}}$
then have the form
$\mathcal{G}_{\mathfrak{C}}
(\vect{C},-)/\Gamma \cdot (\partial \Delta[k] \to \Delta[k])$
with $\Gamma \in \mathcal{F}^{\Gamma}_{\vect{C}}$,
$k \geq 0$,
so that (i) $\Leftrightarrow$ (ii)
follows by adapting 
\cite[Prop. 2.16]{Ste16}
or
\cite[Prop. 6.5]{Per18}.
(ii) $\Leftrightarrow$ (iii)
is straightforward.
\end{proof}

In light of (iii) in the previous result,
a color fixed map of operads
$\O \to \mathcal{P}$
such that the underlying map of symmetric sequences is a cofibration
is called a \emph{$\Sigma$-cofibration}.
Combining Proposition \ref{SSYMCOFCH PROP} with 
\cite[Prop. \ref{OC-SIGMAG_COF PROP}]{BP_FCOP}
(also, see \cite[Prop. \ref{AC-FIBERGLMOD PROP}(ii)]{BP_ACOP})
yields the following.

\begin{proposition}\label{COPOFSIGCOF PROP}
	If $f \colon \O \to \mathcal{P}$
	is a color fixed cofibration in $\mathsf{sOp}^G$
	and $\O$ is $\Sigma$-cofibrant
	then $f$ is a $\Sigma$-cofibration.
	In particular, cofibrant operads are $\Sigma$-cofibrant.
\end{proposition}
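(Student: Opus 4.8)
The plan is to reduce the cofibration $f$ to a retract of a relative cell complex built from the generating cofibrations of $\mathsf{sOp}^G$ recalled in Remark \ref{GENCOF_SOPG_REM}, and then to feed each cell attachment into \cite[Prop. \ref{OC-SIGMAG_COF PROP}]{BP_FCOP}, the statement that pushing out a free map $\mathbb{F}(i)$ (for $i$ a cofibration of symmetric sequences) along a map with $\Sigma$-cofibrant domain produces a $\Sigma$-cofibration.

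First I would record the relevant closure properties. By Proposition \ref{SSYMCOFCH PROP}, a color-fixed map $\O \to \mathcal{P}$ in $\mathsf{sOp}^G$ is a $\Sigma$-cofibration exactly when its underlying map in $\mathsf{sSym}^G_{\mathfrak{C}}$ is a cofibration. Since the forgetful functor $\mathsf{sOp}^G_{\mathfrak{C}} \to \mathsf{sSym}^G_{\mathfrak{C}}$ is monadic for a finitary monad, it preserves filtered colimits (hence transfinite compositions) as well as retracts; so $\Sigma$-cofibrations are closed under retract and transfinite composition. I would also note that if $\O \to \mathcal{P}$ is a $\Sigma$-cofibration and $\O$ is $\Sigma$-cofibrant, then $\mathcal{P}$ is $\Sigma$-cofibrant, by composing underlying maps $\emptyset \to \O \to \mathcal{P}$ in $\mathsf{sSym}^G_{\mathfrak{C}}$.

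Next I would reduce to generating cofibrations. Because $f$ is color-fixed, it is a cofibration in the fiber model structure on $\mathsf{sOp}^G_{\mathfrak{C}}$ (using that color-fixed trivial fibrations of $\mathsf{sOp}^G$ are precisely the trivial fibrations of $\mathsf{sOp}^G_{\mathfrak{C}}$, cf. \cite[Prop. \ref{AC-FIBERGLMOD PROP}(ii)]{BP_ACOP}); hence it is a retract, under $\O$, of a relative cell complex $\O = \mathcal{P}_0 \to \mathcal{P}_1 \to \cdots \to \mathcal{P}_\lambda$ whose attaching maps are pushouts of free maps $\mathbb{F}(i)$, with $i$ ranging over the generating cofibrations of $\mathsf{sSym}^G_{\mathfrak{C}}$ described in Proposition \ref{SSYMCOFCH PROP}. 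I would then induct along this filtration: $\mathcal{P}_0 = \O$ is $\Sigma$-cofibrant by hypothesis; if $\mathcal{P}_\alpha$ is $\Sigma$-cofibrant then $\mathcal{P}_\alpha \to \mathcal{P}_{\alpha+1}$ is a $\Sigma$-cofibration by \cite[Prop. \ref{OC-SIGMAG_COF PROP}]{BP_FCOP}, so $\mathcal{P}_{\alpha+1}$ is again $\Sigma$-cofibrant; limit stages are handled by closure under transfinite composition. Thus $\O \to \mathcal{P}_\lambda$, and therefore its retract $f$, is a $\Sigma$-cofibration. For the ``in particular'' assertion one runs the identical induction starting from the initial operad $\emptyset$, which is vacuously $\Sigma$-cofibrant; here the cell complex may also involve the remaining, color-adjoining generating cofibrations of $\mathsf{sOp}^G$, but these are immediately $\Sigma$-cofibrations, since on underlying symmetric sequences they merely freely adjoin unit operations on a new orbit of colors, which are manifestly $\Sigma$-free.

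I expect the genuine difficulty to be concentrated in \cite[Prop. \ref{OC-SIGMAG_COF PROP}]{BP_FCOP} itself, whose proof proceeds through the explicit ``free operad pushout'' filtration of $\mathcal{P}_\alpha \sqcup_{\mathbb{F}(A)} \mathbb{F}(B)$ by trees whose vertices are labelled by old ($\mathcal{P}_\alpha$) and new ($B$) operations, together with the verification that each successive layer attaches only $\Sigma$-free operations when $\mathcal{P}_\alpha$ is $\Sigma$-cofibrant. Since that input is available to us, the only real care needed here is bookkeeping: organizing the reduction inside the fibers $\mathsf{sOp}^G_{\mathfrak{C}}$ so that, for the first statement, no color-changing cells intervene, and checking that the forgetful functor to symmetric sequences is compatible with the colimits in play.
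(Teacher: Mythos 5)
Your proposal is correct and matches the paper's approach: the paper proves this proposition in one line by citing exactly the three ingredients you invoke, namely the cofibration characterization of Proposition \ref{SSYMCOFCH PROP}, the single-cell pushout result \cite[Prop. \ref{OC-SIGMAG_COF PROP}]{BP_FCOP}, and the fiber model structure comparison \cite[Prop. \ref{AC-FIBERGLMOD PROP}(ii)]{BP_ACOP}. Your write-up simply makes explicit the standard bookkeeping (reduction to a retract of a relative cell complex, transfinite induction using that the forgetful functor to $\mathsf{sSym}^G_{\mathfrak{C}}$ preserves filtered colimits and retracts, and the observation that the color-adjoining cells (C1) add only $\Sigma$-free units) which the paper's terse statement leaves to the reader.
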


\begin{remark}[{cf. \cite[Rem. \ref{OC-OP_MAP REM}]{BP_FCOP}}]
	\label{CHECKF REM}
As in \eqref{PREOPCOLCH EQ},
for $f \colon \mathfrak{C} \to \mathfrak{D}$
a map of $G$-sets of colors one has adjunctions
\begin{equation}
f_{!} \colon
\mathsf{sSym}^{G}_{\mathfrak{C}}
\rightleftarrows
\mathsf{sSym}^{G}_{\mathfrak{D}}
\colon f^{\**}
\qquad
\check{f}_{!} \colon
\mathsf{sOp}^{G}_{\mathfrak{C}}
\rightleftarrows
\mathsf{sOp}^{G}_{\mathfrak{D}}
\colon f^{\**}.
\end{equation}
The right adjoints, 
both denoted $f^{\**}$, are given by 
$f^{\**}Y(\vect{C}) = Y(f\vect{C})$
(cf. Definition \ref{CFOREST_DEF}).
The left adjoint $f_!$ on symmetric sequences is
$f_!X = 
\mathsf{Lan}_{\Sigma^{op}_{\mathfrak{C}}\to \Sigma^{op}_{\mathfrak{D}}} X$.
The left adjoint  $\check{f}_!$ on operads
is given on free operads by 
$\check{f}_! \mathbb{F} \O = \mathbb{F} f_! \O$
and thus
(since any operad $\O$ is a coequalizer
$\mathop{\mathrm{coeq}}
\left(
\mathbb{F}\mathbb{F} \O 
\rightrightarrows
\mathbb{F} \O
\right)$)
on general operads $\O$ by
\begin{equation}\label{CHECKF EQ}
\mathop{\mathrm{coeq}}
\left(
\mathbb{F}\left(f_! \mathbb{F} \O \right)
\rightrightarrows
\mathbb{F} f_! \O
\right).
\end{equation}
\end{remark}

\subsection{The nerve and the fibered simplicial (co)tensor on operads}\label{NERTENSCOOP SEC}

This section discusses two necessary constructions: 
the nerve functor
$
N \colon \mathsf{sOp} \to
\mathsf{PreOp}$
and, in analogy with
the fibered tensoring
$\otimes_{\mathfrak{C}_{\bullet}}$ in 
$\mathsf{PreOp}$ from
\S \ref{FIBTENS_SEC},
a similar tensoring
$\otimes_{\mathfrak{C}_{\bullet}}$
on $\mathsf{sOp}$.

We first recall and extend the 
\emph{operadification-nerve functor}
adjunction in \eqref{TAUADJ EQ} to
\begin{equation}\label{TAUNER EQ}
	\tau\colon \mathsf{dSet}
	\rightleftarrows
	\mathsf{Op} \colon N
\qquad
	\tau\colon \mathsf{PreOp}
	\rightleftarrows
	\mathsf{sOp} \colon N
\end{equation}
where the rightmost adjunction 
simply applies the leftmost adjunction
to each simplicial level
(indeed, by Definition \ref{PREOP DEF}, 
Remark \ref{CTTCOLSET REM}
both $\mathsf{PreOp}$ and $\mathsf{sOp}$
are characterized by demanding that
the color sets are constant in the 
simplicial direction).
One then has the formulas
\[
\tau X = \colim_{\Omega[U] \to X} \Omega(U),
\qquad
(N \O) (U) = \mathsf{Op}(\Omega(U),\O),\quad U \in \Omega
\]
for $X \in \mathsf{dSet}$,
$\O \in \mathsf{Op}$,
and with $\Omega(U)$ the free operad determined by a tree 
$U \in \Omega$, that we now recall.
In fact, we define $\Omega(-)$ slightly more generally.
For $F \in \Phi$ a forest, 
$\Omega(F)$ is the $\boldsymbol{E}(F)$-colored operad
which, evaluated at
a $\boldsymbol{E}(F)$-colored corolla
$\vect{C} = 
(C,\mathfrak{c} \colon \boldsymbol{E}(C) \to \boldsymbol{E}(F))$,
is given by
\begin{equation}\label{OMEGADEF_EQ}
\Omega(F)(\vect C) =
	\begin{cases}
		\** \qquad & 
		\mbox{if
			$\mathfrak{c} \colon \boldsymbol{E}(C) \to \boldsymbol{E}(F)$
			defines a map $C \to F$ in $\Phi$}
	\\
		\varnothing & \text{otherwise.}
	\end{cases}
\end{equation}
Note that, as the levels of 
$\Omega(F)$ are $\**$ or $\emptyset$,
one has at most one way to compose operations,
and thus at most one possible operad structure on $\Omega(F)$.
That this structure exists 
(i.e. that composition is well defined)
follows since,
for a tree $U\in \Omega$,
a coloring
$\mathfrak{c} \colon \boldsymbol{E}(U) \to \boldsymbol{E}(F)$
defines a map
$U \to F$ in $\Phi$
iff
the restrictions
$\mathfrak{c}_v \colon \boldsymbol{E}(U_v) \to \boldsymbol{E}(F)$,
$v \in \boldsymbol{V}(U)$
define maps
$U_v \to F$ in $\Phi$.

\begin{remark}\label{NERVESIMDES REM}
	Recalling the $X_{\mathfrak{c}}(U)$ notation in 
	Notation \ref{XUDECALT NOT},
	one has that
	$\left(N\Omega(F)\right)_{\mathfrak{c}}(U)$
	is given exactly as in 
	\eqref{OMEGADEF_EQ} with the corolla $C$ replaced by a general tree $U$,
	so that
	$N\Omega(F) = \Omega[F]$.
\end{remark}

We will also make use of an alternative description of $\Omega(F)$, as follows.

First, if $\vect{C}\in \Sigma_{\mathfrak{C}}$ is a
$\mathfrak{C}$-corolla,
we denote its representable functor
in 
$\Sym_{\mathfrak C} = \mathsf{Set}^{\Sigma^{op}_{\mathfrak{C}}}$
by 
$\Sigma_{\mathfrak C}[\vect C] =
\Sigma_{\mathfrak C}^{op}(\vect C, -)$.
Second, for
$\vect{F}\in \Phi_{\mathfrak{C}}$ a
$\mathfrak{C}$-forest, 
we extend the 
$\Sigma_{\mathfrak C}[-]$ notation via
\[
	\Sigma_{\mathfrak C}[\vect F] = \sideset{}{^{\mathfrak C}}\coprod_{v \in \boldsymbol{V}(F)} \Sigma_{\mathfrak C}[\vect F_v],
\]
where $\amalg^{\mathfrak C}$ is the coproduct
in $\Sym_{\mathfrak C}$ (rather than in the larger category $\Sym$).
Third, for 
$F \in \Phi$
an (uncolored) forest we write
$F^{\tau}$ for $F$ with its tautological 
$\boldsymbol{E}(F)$-coloring, i.e.
$F^{\tau} = 
(F,\mathfrak{t}\colon \boldsymbol{E}(F) \xrightarrow{=} \boldsymbol{E}(F))$,
and abbreviate
$\Sigma_{\tau}[F] = \Sigma_{\boldsymbol{E}(T)}[F^{\tau}]$.
All together, one then has an identification
\begin{equation}\label{OMFFREE EQ}
	\Omega(F) = \mathbb{F} \Sigma_{\tau}[F]
\end{equation}
which, informally, says that 
``$\Omega(F)$ is freely generated by the vertices of $F$''.

We now recall \cite[Prop. 5.3 and Thm. 6.1]{MW09}
that the nerve
$N \colon \mathsf{Op} \to \mathsf{dSet}$
is then a fully faithful inclusion
whose (essential) image can be characterized
as those
dendroidal sets $X \in \mathsf{dSet}$
with the strict right lifting property against inner horn inclusions
$\Lambda^e[U] \to \Omega[T]$ for $U\in\Omega,e\in \boldsymbol{E}(U)$.
Next,
following either \cite[Prop. 2.5 and Cor. 2.6]{CM13a}
or \cite[Props. 3.22 and 3.31]{BP20},
this is in turn equivalent to the strict right lifting property of $X$
against Segal core inclusions
$Sc[U] \to \Omega[T]$ for $U \in \Omega$,
which is in turn equivalent to the strict Segal conditions
(cf. Definition \ref{SEGCOLCHAR DEF}) below,
demanding that the maps
\begin{equation}\label{STRSEGCON EQ}
	X(U)
	\xrightarrow{\simeq}
	X(Sc[U]),
	\quad
	U \in \Omega
\qquad \qquad
	X_{\mathfrak{c}}(U) 
	\xrightarrow{\simeq}
	\prod_{v \in \boldsymbol{V}(U)}
	X_{\mathfrak{c}_v}(U_v),
	\quad
	U \in \Omega,
	\mathfrak{c} \colon 
	\boldsymbol{E}(U) \to X(\eta)
\end{equation}
are all isomorphisms.
Moreover, one then has the following alternate formula for the nerve
$N \O$ evaluated at
$U\in \Omega,
\mathfrak{c} \colon \boldsymbol{E}(U) \to \mathfrak{C}_{\O}$.
\begin{equation}\label{ALTNER EQ}
	(N \O)_{\mathfrak{c}} (U) 
	= 
	\prod_{v \in \boldsymbol{E}(U)}
	\O(U_v,\mathfrak{c}_v)
	=
	\prod_{v \in \boldsymbol{E}(U)}
	\O(\vect{U}_v).
\end{equation}

\begin{remark}\label{TAUSC REM}
	Setting $X=N\O$ and unpacking \eqref{SDSET_EQ},
	the left isomorphisms in \eqref{STRSEGCON EQ}
	become
	$\mathsf{dSet}(\Omega[U],N\O)
	\xrightarrow{\simeq} 
	\mathsf{dSet}(Sc[U],N\O)$,
	yielding isomorphisms
	$\tau(Sc[U]) \xrightarrow{\simeq} \tau(\Omega[U])=\Omega(U)$.
\end{remark}

Mirroring \S \ref{FIBTENS_SEC},
we next describe the fibered simplicial (co)-tensoring
$\otimes_{\mathfrak{C}_{\bullet}}$
on $\mathsf{sOp}^G$.

First, for $\O \in \mathsf{sOp}^G_{\mathfrak{C}}$
and 
$K \in \mathsf{sSet}$
we define the \emph{fiber cotensor}
$\{K,\O\}_{\mathfrak{C}_{\bullet}} 
\in \mathsf{sOp}^G_{\mathfrak{C}}$
via the pointwise simplicial cotensor, i.e.
\begin{equation}\label{KOCOPER EQ}
	\set{K, \O}_{\mathfrak{C}_{\bullet}} (\vect C) = \O(\vect C)^K.
\end{equation}

\begin{remark}\label{NERTENID REM}
	The fact that 
	$\set{K, \O}_{\mathfrak{C}_{\bullet}}$
	as described above has an operad structure
	can be seen by considering nerves. 
	Indeed, 
	one readily checks that the strict Segal condition
	\eqref{STRSEGCON EQ} for $N\O$ 
	implies the same condition for 
	the preoperad
	$\{K,N \O\}_{\mathfrak{C}_{\bullet}}$
	(defined as in \eqref{EASYCOTEN EQ}).
	Thus, \eqref{KOCOPER EQ}
	describes the levels
	of the unique (up to isomorphism) operad
	$\set{K, \O}_{\mathfrak{C}_{\bullet}}$
	such that
	$N\set{K, \O}_{\mathfrak{C}_{\bullet}} 
	\simeq
	\set{K, N \O}_{\mathfrak{C}_{\bullet}}$.
\end{remark}

We now turn to the \emph{fiber tensor} 
$(-)\otimes_{\mathfrak{C}_{\bullet}} K$,
left adjoint to \eqref{KOCOPER EQ}.
First, note that 
\eqref{KOCOPER EQ}
still makes sense 
at the level of symmetric sequences, i.e. 
with 
$\O \in \mathsf{sOp}^G_{\mathfrak{C}}$
replaced with
$X \in \mathsf{sSym}^G_{\mathfrak{C}}$.
Then, at the level of symmetric sequences, the left adjoint construction
$X \times K \in \mathsf{sSym}^G_{\mathfrak{C}}$
is simply given pointwise by
$(X \times K)(\vect{C}) = X(\vect{C}) \times K$.
It is now formal that, 
on a free operad $\mathbb{F} X$,
the tensor $\mathbb{F} X \otimes_{\mathfrak{C}_{\bullet}} K$
adjoint to \eqref{KOCOPER EQ} is given by 
$(\mathbb{F} X) \otimes_{\mathfrak{C}_{\bullet}} K
= \mathbb{F} (X \times K)$
so that,
for a general $\O \in \mathsf{sOp}^G$
(which has a description
$\O \simeq \mathop{\mathrm{coeq}} (\mathbb{F}\mathbb{F}\O \rightrightarrows \mathbb{F}\O)$
as a coequalizer of free algebras)
it is given by (cf. \eqref{CHECKF EQ})
\begin{equation}\label{OTIMESC EQ}
\mathcal{O} \otimes_{\mathfrak{C}_{\bullet}} K
\simeq
\mathop{\mathrm{coeq}}
\left(
\mathbb{F} (\mathbb{F} \O \times K) 
\rightrightarrows \mathbb{F} (\O \times K) 
\right).
\end{equation}
\begin{remark}\label{BYHAND2 REM}
	In \cite[\S 7.1]{CM13b}, 
	the objects $\Omega(T)  \otimes_{\mathfrak{C}_{\bullet}} K$
	were denoted $T[K]$ and built by hand.
\end{remark}

\begin{remark}
	The analogues of
	Remarks \ref{TENSCOADJ REM},\ref{NOTTWOVARADJ REM}
	apply mutatis mutandis to the operadic fiber tensor.
	In particular, one has that the canonical map
\begin{equation}\label{COCARTAR EQ}
	\amalg_i \O \otimes_{\mathfrak{C}_{\bullet}} K_i
\to
	\O \otimes_{\mathfrak{C}_{\bullet}} (\amalg_i K_i)
\end{equation}
	is a cocartesian arrow over the fold map
	$\nabla \colon \amalg_i \mathfrak{C} \to \mathfrak{C}$,
	i.e. 
	$\nabla_!\left(\amalg_i \O \otimes_{\mathfrak{C}_{\bullet}} K_i \right)
		\simeq
	\O \otimes_{\mathfrak{C}_{\bullet}} (\amalg_i K_i)$.
\end{remark}

\begin{proposition}\label{TAUOTIMES_PROP}
	For all $X \in \mathsf{PreOp}^G$ and $K \in \sSet$, 
	one has a natural identification
	\[\tau(X \otimes_{\mathfrak{C}_{\bullet}} K) \simeq \tau(X) \otimes_{\mathfrak{C}_{\bullet}} K\]
	with the first (resp. second) $\otimes_{\mathfrak{C}_{\bullet}}$ is 
	the fiber simplicial tensoring of $\mathsf{PreOp}^G$ 
	(resp. $\sOp^G$).
\end{proposition}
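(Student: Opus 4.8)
The plan is to reduce the claim to the corresponding statement on free operads, using that both fiber tensors commute with the relevant colimits and that $\tau$ is a left adjoint (hence colimit-preserving). First I would recall the two defining formulas: on the preoperad side, $X \otimes_{\mathfrak{C}_{\bullet}} K$ is the pushout \eqref{PREOPTENS EQ}, while on the operad side, $\mathcal{O} \otimes_{\mathfrak{C}_{\bullet}} K$ is given on free operads by $(\mathbb{F}X) \otimes_{\mathfrak{C}_{\bullet}} K = \mathbb{F}(X \times K)$ and in general by the coequalizer \eqref{OTIMESC EQ}. Since $\tau$ preserves colimits, it suffices to establish the identification on objects of the form $X = \Omega[F]$ (or, more generally, it suffices to check compatibility with the presentation of an arbitrary preoperad as a colimit of representables), and then to check that the identification is compatible with the colimit presentations defining $\otimes_{\mathfrak{C}_{\bullet}}$ on both sides.

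The key computation is the case $X = \Omega[T]$ for $T \in \Omega_G$ (equivalently, via the forest description, $X = \Omega[F]$ for $F \in \Phi$). By Remark \ref{NERVESIMDES REM} one has $N\Omega(F) = \Omega[F]$, i.e. $\tau\Omega[F] \simeq \Omega(F)$ (on the genuine/forest level this is the content of Remark \ref{NERVESIMDES REM} together with the discussion around \eqref{TAUNER EQ}); combined with \eqref{OMFFREE EQ}, $\Omega(F) = \mathbb{F}\Sigma_{\tau}[F]$ is free. Now $\Omega[F] \otimes_{\mathfrak{C}_{\bullet}} K$ is computed by the pushout \eqref{PREOPTENS EQ}; applying $\tau$ (which preserves this pushout) and using that $\tau$ sends $\mathsf{sk}_{\eta}\Omega[F]$, $\Omega[F]\times K$, etc. to the corresponding operadic data, one identifies $\tau(\Omega[F] \otimes_{\mathfrak{C}_{\bullet}} K)$ with the operad built from the free operad $\mathbb{F}\Sigma_{\tau}[F]$ by the analogous pushout, which by the formula $(\mathbb{F}X)\otimes_{\mathfrak{C}_{\bullet}} K = \mathbb{F}(X\times K)$ and the freeness statement equals $\Omega(F) \otimes_{\mathfrak{C}_{\bullet}} K = \tau(\Omega[F]) \otimes_{\mathfrak{C}_{\bullet}} K$. (Here one uses Remark \ref{OTIMCON REM}, $\gamma_!(X\times K)\simeq X\otimes_{\mathfrak{C}_{\bullet}}K$ for connected $K$, to reduce the pushout bookkeeping, and Remark \ref{NOTTWOVARADJ REM} / \eqref{COCARTAR EQ} on both sides to handle the coproduct/non-connected case via cocartesian arrows over fold maps.)

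For a general $X \in \mathsf{PreOp}^G$, I would write $X$ as a colimit of representables $\Omega[T_i]$ (respecting colors) — more precisely, express $X$ via a coequalizer/colimit diagram of preoperads of the form $\coprod \Omega[T_i]$ — and then invoke that both $\tau$ and $(-)\otimes_{\mathfrak{C}_{\bullet}} K$ are colimit-preserving in the $X$-variable (Remark \ref{TENSCOADJ REM} gives this for the preoperadic tensor; the operadic analogue holds by the stated "mutatis mutandis" remark, and $\tau$ is a left adjoint by \eqref{TAUNER EQ}). Naturality of the identification in $X$ then follows by naturality of all the colimit comparisons, and naturality in $K$ follows since both constructions are functorial in $K$ (with the mild care about connected vs. general colimits in $K$ handled uniformly by passing to the fibered categories $\mathsf{PreOp}^G_{\mathfrak{C}}$, $\mathsf{sOp}^G_{\mathfrak{C}}$ over a fixed color set, where $\tau$ restricts to $\tau\colon \mathsf{PreOp}^G_{\mathfrak{C}}\to\mathsf{sOp}^G_{\mathfrak{C}}$).

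The main obstacle I anticipate is purely bookkeeping: carefully matching the pushout \eqref{PREOPTENS EQ} defining the preoperadic tensor with the coequalizer \eqref{OTIMESC EQ} defining the operadic tensor, after applying $\tau$, in a way that is manifestly natural and does not secretly depend on a choice of presentation of $X$. The cleanest route is probably to verify directly that $\tau$ carries the universal property of the pushout \eqref{PREOPTENS EQ} to the universal property characterizing $\tau(X)\otimes_{\mathfrak{C}_{\bullet}}K$ — i.e. show $\mathsf{sOp}^G\big(\tau(X\otimes_{\mathfrak{C}_{\bullet}}K),\mathcal{P}\big)\simeq\mathsf{sOp}^G\big(\tau(X)\otimes_{\mathfrak{C}_{\bullet}}K,\mathcal{P}\big)$ naturally, using the adjunctions $(\tau,N)$ and $((-)\otimes_{\mathfrak{C}_{\bullet}}K,\{K,-\}_{\mathfrak{C}_{\bullet}})$ on both sides together with Remark \ref{NERTENID REM} ($N\{K,\mathcal{O}\}_{\mathfrak{C}_{\bullet}}\simeq\{K,N\mathcal{O}\}_{\mathfrak{C}_{\bullet}}$), thereby bypassing the explicit colimit diagrams altogether. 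This adjunction-chasing argument is likely the shortest and most robust; the representable-object computation above then serves only as a sanity check.
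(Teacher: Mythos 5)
Your final ``adjunction-chasing'' paragraph is precisely the paper's proof: the claimed identification of left adjoints is adjoint to the identification $N\{K,\O\}_{\mathfrak{C}_{\bullet}} \simeq \{K, N\O\}_{\mathfrak{C}_{\bullet}}$ of Remark \ref{NERTENID REM}, so uniqueness of left adjoints finishes the argument in one line. The longer reduction via representables and free operads that you lead with is, as you say yourself, only a sanity check; the paper dispenses with it entirely.
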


\begin{proof}
	This is equivalent to the 
	already established (cf. Remark \ref{NERTENID REM}) adjoint identification
	$
	N \{K,\O\}_{\mathfrak{C}_{\bullet}}
	\simeq
	\{K,N \O\}_{\mathfrak{C}_{\bullet}} 
	$
	for $\O \in \mathsf{sOp}^G$.	
\end{proof}

\begin{remark}
	Proposition \ref{TAUOTIMES_PROP} is a slight generalization of 
	\cite[Prop. 7.2]{CM13b},
	which establishes the case
	$X = \Omega[U]$, $U\in \Omega$
	by direct inspection
	(cf. Remarks \ref{BYHAND1 REM},\ref{BYHAND2 REM}).
\end{remark}

\begin{remark}\label{SIGMATAUQUOT REM}
Let $\Gamma \leq G \times \Sigma_n^{op}$
be the graph subgroup given by
$\Gamma = \{(h,\phi(h)^{-1})|h\in H\}$
for $H\leq G$, $\phi\colon H \to \Sigma_n$.
Writing
$C_{n}$ for the $n$-corolla, 
$\phi$ defines a left $H$-action on $C_{n}$,
so that one obtains an associated
$G$-corolla $C = G \cdot_{H} C_{n}$.
It is straightforward to check that there are natural identifications
(here we view the natural 
left $G^{op}\times \Sigma_n$-action on 
$G \cdot C_{n}$ as a 
right $G\times \Sigma_n^{op}$-action)
\begin{equation}\label{SIGMATAUQUOT EQ}
	(G \cdot C_{n})/\Gamma
\simeq
	G \cdot_H C_{n}
= 
	C
\qquad
	\Sigma_{\tau}[G \cdot C_{n}]/\Gamma
\simeq
	\Sigma_{\tau}[G \cdot_H C_{n}]
= 
	\Sigma_{\tau}[C]
\end{equation}
in $\Phi^G_{\bullet}$ and $\mathsf{Sym}^G$, respectively.
\end{remark}

We end the section by discussing the generating sets
for the model category $\mathsf{sOp}^G$ in Theorem \ref{SOPG_THM},
as given in \cite[Def. \ref{AC-OPGENCOF DEF}]{BP_ACOP}.
We need one last ingredient,
cf. \cite[Def. \ref{AC-VINTER DEF}]{BP_ACOP}, Definition \ref{PSEUINT DEF}.

\begin{definition}
	Let $\widetilde{[1]}$ denote the free isomorphism category,
	i.e. the contractible groupoid with two objects $0,1$.
	An \textit{interval} is a cofibrant simplicial category $\mathbb I \in \mathsf{sCat}_{\set{0,1}}$ equivalent to $\widetilde{[1]}$.
\end{definition}

\begin{remark}\label{GENCOF_SOPG_REM}
	Specifying \cite[Def. \ref{AC-OPGENCOF DEF}]{BP_ACOP}
	for the category $\mathsf{sSet}$,
	the notation therein becomes
\[
	\mathbb{F}
	\left(\Sigma_{\tau}[G \cdot C_{n}]/\Gamma \cdot f\right)
\simeq
	\mathbb{F}
	\left(\Sigma_{\tau}[C] \cdot f\right)
\simeq
	\mathbb{F}
	\left(\Sigma_{\tau}[C] \right) \otimes_{\mathfrak{C}_{\bullet}} f
\simeq 
	\Omega(C) \otimes_{\mathfrak{C}_{\bullet}} f
\]
where the first identification is
\eqref{SIGMATAUQUOT EQ},
the second follows by definition of
$\otimes_{\mathfrak{C}_{\bullet}}$,
and the third is \eqref{OMFFREE EQ}.
We thus have that the generating cofibrations in $\mathsf{sOp}^G$
are the maps
	\begin{itemize}
		\item[(C1)] $\emptyset \to G/H \cdot \Omega(\eta)$ for $H \leq G$;
		\item[(C2)] $\Omega(C) \otimes_{\mathfrak{C}_{\bullet}} (\partial \Delta[n] \to \Delta[n])$
		for $C \in \Sigma_G$, $n \geq 0$.
	\end{itemize}
	while the generating trivial cofibrations are 
	(for the countability condition, 
	see \cite[Rem. \ref{AC-SSETINT_REM}]{BP_ACOP})
	\begin{itemize}
		\item[(A1)] 
		$G/H \cdot \left(\eta \to \mathbb{G}\right)$ 
		for $H \leq G$ and $\mathbb G$ an interval with countably many simplices;
		\item[(A2)] 
		$\Omega(C) \otimes_{\mathfrak{C}_{\bullet}} 
		(\Lambda^i[n] \to \Delta[n])$
		for $C \in \Sigma_G$, $0 \leq i \leq n$, $1 \leq n$.
	\end{itemize}
\end{remark}

In (C2),(A2) above 
the group $G$ acts only on $\Omega(C)$
and not on the featured simplicial sets.
The following lemma considers the case where the simplicial sets also have a $G$-action
(for a discussion of the genuine model structure on 
$\mathsf{sSet}^G$, see \cite[Def. \ref{OC-GENMOD DEF}]{BP_FCOP}).
\begin{lemma}
	\label{OPTENSCOF_LEM}
	For $C \in \Sigma_G$ and $A \to B$ a genuine (trivial) cofibration in $\sSet^G$,
	$\Omega(C) \otimes_{\mathfrak{C}_{\bullet}} (A \to B)$ is a (trivial) cofibration in $\sOp^G$.
\end{lemma}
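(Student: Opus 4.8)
The plan is to reduce the claim to the already-established facts that (C1),(C2) generate the cofibrations and (A1),(A2) generate the trivial cofibrations in $\sOp^G$ (Remark \ref{GENCOF_SOPG_REM}), by analyzing how the fibered tensor $\Omega(C) \otimes_{\mathfrak{C}_{\bullet}} (-)$ interacts with the generating (trivial) cofibrations of the genuine model structure on $\sSet^G$. First I would recall (e.g. from \cite[Def. \ref{OC-GENMOD DEF}, Prop. 4.10]{BP_FCOP}) that the generating cofibrations of $\sSet^G$ are the maps $G/K \cdot (\partial \Delta[n] \to \Delta[n])$ for $K \leq G$, and the generating trivial cofibrations are $G/K \cdot (\Lambda^i[n] \to \Delta[n])$ for $K \leq G$, $0 \leq i \leq n$, $n \geq 1$. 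Since $\Omega(C) \otimes_{\mathfrak{C}_{\bullet}} (-)$ is a left adjoint (Remark \ref{NERTENID REM}, or the operadic analogue of Remark \ref{TENSCOADJ REM}), it preserves saturation; hence it suffices to show that $\Omega(C) \otimes_{\mathfrak{C}_{\bullet}} \big(G/K \cdot (\partial \Delta[n] \to \Delta[n])\big)$ is a cofibration and $\Omega(C) \otimes_{\mathfrak{C}_{\bullet}} \big(G/K \cdot (\Lambda^i[n] \to \Delta[n])\big)$ is a trivial cofibration in $\sOp^G$.

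The key step is to identify $\Omega(C) \otimes_{\mathfrak{C}_{\bullet}} (G/K \cdot S)$ for $S \in \sSet$ with a $G/K$-induced version of the tensor appearing in (C2),(A2). Concretely, using the description \eqref{OTIMESC EQ} of $\otimes_{\mathfrak{C}_{\bullet}}$ as a coequalizer of free operads on $\O \times (-)$, together with the fact that for a $G$-corolla $C = G \cdot_H C_n$ one has $\Omega(C) = \mathbb{F}\Sigma_{\tau}[C]$ (from \eqref{OMFFREE EQ}) and the identifications \eqref{SIGMATAUQUOT EQ}, I would show there is a natural isomorphism
\[
	\Omega(C) \otimes_{\mathfrak{C}_{\bullet}} (G/K \cdot S)
	\simeq
	\Omega(C') \otimes_{\mathfrak{C}_{\bullet}} S
\]
where $C'$ is the $G$-corolla obtained from $C$ by the $K$-induction construction (unwinding: $C = G\cdot_H C_n$ gives, upon tensoring with $G/K$, the $G$-corolla associated to the graph subgroup of $G \times \Sigma_n^{op}$ got by intersecting the data of $H$ and $K$). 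The point is that inducing up along $G/K$ in the simplicial variable is absorbed into a change of the $G$-corolla $C$, precisely because the $G$-action on $\Omega(C)$ is via $\Sigma_\tau[C]$ and the tensor is computed pointwise on symmetric sequences before applying $\mathbb{F}$. Granting this identification, the claim is immediate: the right-hand side is exactly (a building block of) the generating (trivial) cofibration (C2) (resp. (A2)) for the $G$-corolla $C'$, hence lies in the saturated class of cofibrations (resp. trivial cofibrations) of $\sOp^G$.

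The main obstacle I anticipate is making the identification $\Omega(C) \otimes_{\mathfrak{C}_{\bullet}} (G/K \cdot S) \simeq \Omega(C') \otimes_{\mathfrak{C}_{\bullet}} S$ precise and checking it respects the coequalizer presentation \eqref{OTIMESC EQ} — i.e. that the coequalizer defining $\otimes_{\mathfrak{C}_{\bullet}}$ is compatible with replacing $(G/K \cdot S)$-tensoring by $C'$-tensoring. This requires a careful bookkeeping of the $G$-actions on $\mathbb{F}(\O \times (-))$, using that $\mathbb{F}$ commutes with the relevant colimits and that $f_!$ for the fold map behaves well (the analogue, for operads, of Remark \ref{NOTTWOVARADJ REM} and \eqref{COCARTAR EQ}). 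Once the $G$-equivariant structure is correctly tracked, the genuine cofibration/trivial-cofibration conclusion for $\sSet^G$ transfers through $\mathbb{F}$ and the coequalizer without further homotopical input, since both operations are left adjoints and hence preserve saturated classes.
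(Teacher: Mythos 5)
Your overall strategy (reduce to generating (trivial) cofibrations, exploit that the fibered tensor is a left adjoint, and land among the maps in (C2)/(A2)) is the same as the paper's, but your key identification is false, and that is where the argument breaks.

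The claim is that $\Omega(C) \otimes_{\mathfrak{C}_{\bullet}} (G/K \cdot S) \simeq \Omega(C') \otimes_{\mathfrak{C}_{\bullet}} S$ for some $G$-corolla $C'$. This cannot hold already at the level of colors. The fibered tensor $\Omega(C) \otimes_{\mathfrak{C}_{\bullet}} (-)$ takes values in the fiber $\sOp^G_{\boldsymbol{E}(C)}$, so $\Omega(C) \otimes_{\mathfrak{C}_{\bullet}} (G/K \cdot S)$ has color $G$-set $\boldsymbol{E}(C)$, while $\Omega(C') \otimes_{\mathfrak{C}_{\bullet}} S$ has color $G$-set $\boldsymbol{E}(C')$. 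A $G$-corolla is determined (up to planar data) by its edge $G$-set, so any $C'$ with $\boldsymbol{E}(C') = \boldsymbol{E}(C)$ is $C$ itself; and $\Omega(C) \otimes_{\mathfrak{C}_{\bullet}} (G/K \cdot S) \not\simeq \Omega(C) \otimes_{\mathfrak{C}_{\bullet}} S$ unless $K = G$. Using $\Omega(C) = \mathbb{F}\Sigma_{\tau}[C]$ and $\Omega(C) \otimes_{\mathfrak{C}_{\bullet}} K = \mathbb{F}(\Sigma_{\tau}[C] \times K)$, the issue is that $\Sigma_{\tau}[C] \times G/K$ is \emph{not} of the form $\Sigma_{\tau}[C']$ in $\mathsf{Sym}^G_{\boldsymbol{E}(C)}$; rather it is $f_!\Sigma_{\tau}[C']$ for the fold map $f \colon G/K \times \boldsymbol{E}(C) \to \boldsymbol{E}(C)$, where $C'$ is the $G$-forest obtained from $G/K \times C$. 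The pushforward $f_!$ is precisely the piece your argument is missing.

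The paper's proof handles this by forming a commutative square comparing $(G/K \cdot \Omega(C)) \otimes_{\mathfrak{C}_{\bullet}} (K \to L)$ with $\Omega(C) \otimes_{\mathfrak{C}_{\bullet}} (G/K \cdot (K \to L))$, observing that the horizontal comparison maps are cocartesian over the fold of colors (Eq. \eqref{COCARTAR EQ}) and the verticals fix colors, hence the square is a \emph{pushout}. The left vertical decomposes, via $G/K \cdot \Omega(C) \simeq \amalg_i \Omega(C_i)$ with $C_i \in \Sigma_G$, as a coproduct of maps in (C2) (resp.\ (A2)); the map in question is then a pushout of a (trivial) cofibration, not a generator itself. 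If you replace your ``identification'' with this pushout square, the rest of your plan goes through and agrees with the paper.
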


\begin{proof}
	Since $\Omega(C) \otimes (-) \colon \sSet^G \to \sOp^G_{\boldsymbol{E}(T)}$ preserves colimits, 
	by \cite[\eqref{OC-GENGENSETEQ}]{BP_FCOP}
	it suffices to consider the case 
	$(A \to B) = G/H \cdot (K \to L)$ for $K \to L$ a
	generating (trivial) cofibration in $\sSet$.
	Now consider the following diagram.
\begin{equation}
\begin{tikzcd}[column sep = small, ampersand replacement = \&]
	\left(G/H \cdot \Omega(C)\right) \otimes_{\mathfrak{C}_{\bullet}} K 
	\arrow[r] \arrow[d]
\&
	\Omega(C) \otimes_{\mathfrak{C}_{\bullet}} \left(G/H \cdot K\right)
	\arrow[d]
\\
	\left(G/H \cdot \Omega(C)\right) \otimes_{\mathfrak{C}_{\bullet}} L
	\arrow[r] 
\&
	\Omega(C) \otimes_{\mathfrak{C}_{\bullet}} \left(G/H \cdot L\right)
\end{tikzcd}
\end{equation}
By \eqref{COCARTAR EQ} the horizontal arrows are cocartesian, while the vertical arrows fix colors, 
so this is a pushout square.
The result now follows since 
$G/H \cdot \Omega(C)$ decomposes as a coproduct 
$\amalg_i \Omega(C_i)$ with $C_i\in \Sigma_G$,
so that the left vertical map above 
is a coproduct of maps in (C2).
\end{proof}

%

\subsection{Equivalence between preoperads and operads}
\label{PREOPOPEQUIV SEC}

Our goal in this subsection is to prove
Theorem \ref{PREQUIEQUIV THM},
establishing the Quillen equivalence between
preoperads $\mathsf{PreOp}^G$
and operads $\mathsf{sOp}^G$.
The key to proving this result is given by 
Lemma \ref{UNITEQUIV LEM}
and the subsequent
Corollary \ref{KEYEQUIV COR},
which allow us to understand the counit of the adjunction.
These latter results in turn depend on the following key 
result, whose proof is deferred to \S \ref{KEYRES SEC}.

\begin{lemma}\label{KEYPRVAR LEM}
	Suppose that $\mathcal{O} \in \mathsf{Op}^{G}$
	is $\Sigma$-cofibrant.
	Further, let $C \in \Sigma_G$ be any $G$-corolla,
	$r \geq 1$ a positive integer and consider 
	a pushout in $\mathsf{Op}^{G}$ of the form below.
\begin{equation}\label{PUSHOUTPROPVAR EQ}
	\begin{tikzcd}
	\partial \Omega(C)^{\amalg r} \ar{r} \ar{d} 
	& \mathcal{O} \ar{d}
\\
	\Omega(C)^{\amalg r} \ar{r} & \mathcal{P}
	\end{tikzcd}
\end{equation}
	Then the induced map
\begin{equation}\label{ANODYNEVAR EQ}
	\Omega[C]^{\amalg r} 
	\amalg_{\partial \Omega[C]^{\amalg r}} N\mathcal{O} \to N\mathcal{P}
\end{equation}
	is $G$-inner anodyne.
\end{lemma}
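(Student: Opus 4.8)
The plan is to adapt the non-equivariant argument of \cite[\S 7]{CM13b}, tracking the new structure equivariantly through the $\Omega_G$-combinatorics of \cite{Per18,BP20}. First I would reduce to the case $r=1$. Attaching the $r$ copies of $\partial\Omega(C)\into\Omega(C)$ one at a time produces a finite tower $\O=\mathcal{P}_0\to\mathcal{P}_1\to\cdots\to\mathcal{P}_r=\mathcal{P}$ in $\Op^G$ in which each $\mathcal{P}_{k-1}\to\mathcal{P}_k$ is a pushout of the single map $\partial\Omega(C)\into\Omega(C)$ --- the $n=0$ instance of the generating cofibration (C2) of Remark \ref{GENCOF_SOPG_REM} --- along a map $\partial\Omega(C)\to\mathcal{P}_{k-1}$, and in particular is a color-fixed cofibration; by Proposition \ref{COPOFSIGCOF PROP} each $\mathcal{P}_k$ is again $\Sigma$-cofibrant, so the $r=1$ case applies at every stage. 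Writing $X_k=\Omega[C]^{\amalg k}\amalg_{\partial\Omega[C]^{\amalg k}}N\O$ (so $X_0=N\O$ and $X_r$ is the source of \eqref{ANODYNEVAR EQ}), the canonical map $X_k\to N\mathcal{P}_k$ factors as $X_k\to\Omega[C]\amalg_{\partial\Omega[C]}N\mathcal{P}_{k-1}\to N\mathcal{P}_k$, where the first arrow is a pushout of $X_{k-1}\to N\mathcal{P}_{k-1}$ and the second is $G$-inner anodyne by the $r=1$ case; since $G$-inner anodyne maps are closed under pushout and composition, induction on $k$ gives that $X_r\to N\mathcal{P}$ is $G$-inner anodyne. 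Thus I assume $r=1$ and write $c$ for the adjoined $G$-orbit of operations, of arity $n$ where $C\simeq G\cdot_H C_n$, so that $\mathcal{P}$ is $\O$ with $c$ freely adjoined.

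Next I would record the normal-form description of $\mathcal{P}$. Since $\O$ is $\Sigma$-cofibrant and $c$ is $\Sigma$-free, every operation of $\mathcal{P}$ is represented, uniquely up to unique isomorphism, by a tree with a labeling of its vertices by operations of $\O$ or by (translates of) $c$, subject to matching colors, no inner edge having both endpoints $\O$-labeled (adjacent $\O$-labels compose inside $\O$), and no vertex labeled by a unit. Passing to nerves (using that $N$ carries $\partial\Omega(C)\into\Omega(C)$ to $\partial\Omega[C]\into\Omega[C]$, cf.\ Remark \ref{NERVESIMDES REM}), one obtains: a non-degenerate simplex of $N\mathcal{P}$ over a tree $T$ not lying in $N\O$ corresponds to a pair $(\iota\colon T\into\widetilde{T},\lambda)$, where $\iota$ is an inner face onto the \emph{blow-up} $\widetilde{T}$ obtained by replacing each vertex of $T$ by the normal form of its assigned operation, and $\lambda$ is the resulting labeling (each vertex of $\widetilde{T}$ labeled by a single generator, no $\O$--$\O$ adjacency, no units). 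A pair with $\iota=\mathrm{id}$ --- equivalently, a simplex $(\widetilde{T},\lambda)$ that is its own blow-up --- I call \emph{characteristic}; every simplex $(\iota,\lambda)$ is then $\iota^{\**}$ of the characteristic simplex $(\widetilde{T},\lambda)$. In these terms $\Omega[C]\amalg_{\partial\Omega[C]}N\O$ is the subobject $N\O\cup\Omega[C]\subseteq N\mathcal{P}$, i.e.\ the simplices involving at most one copy of $c$ and, if one, no operation of $\O$; a non-$F_0$ characteristic $\widetilde{T}$ therefore has at least one inner edge, all of which are $c$-adjacent (by the normal-form condition). It is precisely $\Sigma$-freeness (with $\O$ $\Sigma$-cofibrant) that makes $(\widetilde{T},\lambda)$ well defined, makes the classifying map $\Omega[\widetilde{T}]\to N\mathcal{P}$ mono, and lets one organize the blow-up trees into $G$-orbits with graph-subgroup stabilizers, hence into $G$-trees $\widehat{T}\in\Omega_G$.

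I would then exhibit $\Omega[C]\amalg_{\partial\Omega[C]}N\O\to N\mathcal{P}$ as $G$-inner anodyne via a filtration $\Omega[C]\amalg_{\partial\Omega[C]}N\O=F_0\subseteq F_1\subseteq\cdots$ with $\bigcup_k F_k=N\mathcal{P}$, indexed by a rank on characteristic pairs, e.g.\ the number of vertices of $\widetilde{T}$. At stage $F_{k-1}\subseteq F_k$ one glues in, for each $G$-orbit of rank-$k$ characteristic pairs, the cell $\Omega[\widehat{T}]$ along $\Lambda^E[\widehat{T}]\into\Omega[\widehat{T}]$, where $E=\boldsymbol{E}^{\mathsf{i}}(\widehat{T})$ is the (nonempty, $G$-invariant) set of all inner edges; by Definition \ref{GINNERANN DEF} this is a $G$-inner horn inclusion. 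The content is to check $\Omega[\widehat{T}]\cap F_k^{\text{(so far)}}=\Lambda^E[\widehat{T}]$: the faces not in $\Lambda^E[\widehat{T}]$ are exactly $\widehat{T}$ and its inner faces (contractions of subsets of $E$), all of which have blow-up $\widehat{T}$ and hence are filled only by attaching $\Omega[\widehat{T}]$ itself (and by no other orbit at stage $k$, by uniqueness of the characteristic cell); the faces in $\Lambda^E[\widehat{T}]$ are the proper outer faces and their faces, whose blow-ups have strictly fewer vertices and so lie in $F_{k-1}$ (or in $F_0$). Thus each $F_{k-1}\subseteq F_k$ is a pushout of a coproduct of $G$-inner horn inclusions, giving the claim.

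The main obstacle is this last verification --- that the combinatorial filtration is a filtration by cells and that each characteristic cell is attached along exactly the asserted $G$-inner horn. This requires a careful face-by-face analysis of the blow-up trees, distinguishing inner faces (which preserve the blow-up and hence the rank) from proper outer faces (which strictly drop it), together with the bookkeeping of degeneracies and the nullary case $n=0$, all carried out equivariantly by working with $G$-orbits of trees and their graph-subgroup stabilizers --- exactly the point where the $\Omega_G$-machinery of \cite{Per18,BP20} is needed. The $\Sigma$-cofibrancy hypothesis on $\O$ enters precisely to guarantee uniqueness of normal forms; without it the classifying maps $\Omega[\widehat{T}]\to N\mathcal{P}$ need not be monomorphisms and $F_\bullet$ would fail to be a filtration by cells.
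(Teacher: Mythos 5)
Your reduction from general $r$ to $r=1$ is correct and matches the paper, and the $r=1$ statement is exactly Lemma \ref{KEYPR LEM}, proved in \S\ref{KEYRES SEC} via a filtration by normal-form cells glued along $G$-inner horns — so your broad strategy is right. But your proposal has two genuine gaps, and they are load-bearing, not cosmetic.

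First, your indexing set is too small. You claim that every non-degenerate dendrex $p\colon T\to N\P$ not contained in $N\O$ has a blow-up $\widetilde T$ with no $\O$--$\O$ adjacency, so that the ``characteristic pairs'' index a filtration exhausting $N\P$. But the blow-up only replaces each vertex of $T$ by the normal-form tree of its assigned $\P$-operation; it does nothing to $\O$--$\O$ inner edges already present in $T$. If $T$ has vertices $v_1,v_2$ both $\O$-labeled and adjacent, together with some $c$-labeled vertex $v_3$ elsewhere, then $\widetilde T=T$ still has the $\O$--$\O$ adjacency, so it is not ``characteristic'' in your sense; and $p$ is not a face of \emph{any} characteristic dendrex, since outer faces preserve labels and adjacency, while inner-face contractions can only produce an $\O$-labeled merged vertex from a cluster of adjacent $\O$-labeled vertices, which a characteristic tree does not have. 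Your filtration therefore misses such $p$ entirely. This is precisely why the paper's machinery in \S\ref{KEYRES SEC} filters over \emph{all} elementary dendrices (Definition \ref{DENDTYPES DEF}), with arbitrary generator labelings, and uses the characteristic inner edge set $\Xi^e$ — only the inner edges adjacent to a $\vect{C}$-labeled vertex, Definition \ref{XIEDGES DEF} — rather than all of $\boldsymbol{E}^{\mathsf i}(U^e)$. The horn attached is $\Lambda^{\Xi^e}$, not the horn on all inner edges, and the bookkeeping is done via the poset of elementary subpresheaves and the characteristic edge lemma (Lemma \ref{CHAREDGE LEM}), not a simple rank-by-vertex-count filtration.

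Second, the assertion that $\Sigma$-freeness ``makes the classifying map $\Omega[\widetilde T]\to N\P$ mono'' is false, and the paper says so explicitly in Remark \ref{CH01SUCKS REM}: injectivity of $e\colon\Omega[U^e]\to N\P$ fails generically (distinct edges or vertices of $U^e$ may carry the same color or operation; injectivity may even fail on inner faces $U^e-E$ with $E\not\subseteq\Xi^e$). What the argument actually needs — and what is genuinely nontrivial — is injectivity only on the faces $U^e-E$ with $E\subseteq\Xi^e$ not yet absorbed by the lower stages; this is condition (Ch0.1) of Definition \ref{CHAREDGE DEF}, and verifying it requires both $\Sigma$-freeness (injectivity within an isomorphism class of faces) \emph{and} Corollary \ref{ISODIFCL COR}, the latter resting on the normal-form lifting result Lemma \ref{UNIQINAN2 LEM} rather than on $\Sigma$-freeness. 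Your sketch correctly flags the face-by-face verification as the hard step, but the framework you have set up neither covers $N\P$ nor supplies the uniqueness lemmas needed to carry that verification out.
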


\begin{remark}\label{KEYPRVAR REM}
	Both \eqref{PUSHOUTPROPVAR EQ} and \eqref{ANODYNEVAR EQ}
	are unchanged if the copowers $(-)^{\amalg r}$ in 
	$\mathsf{Op}^G$, $\mathsf{dSet}^G$
	are replaced with fibered copowers 
	$(-)^{\amalg_{\mathfrak{C}_{\bullet}} r} \simeq 
	(-) \otimes_{\mathfrak{C}_{\bullet}} \{1,\cdots,r\}$
	in 
	$\mathsf{Op}^G_{\boldsymbol{E}(C)}$,
	$\mathsf{dSet}^G_{\boldsymbol{E}(C)}$.
	
	In addition, since 
	$\partial \Omega(C) = \Omega(C) \otimes_{\mathfrak{C}_{\bullet}} \emptyset$,
	one is moreover free to replace
	the left vertical map in \eqref{PUSHOUTPROPVAR EQ}
	with 
	$\Omega(C) \otimes_{\mathfrak{C}_{\bullet}} K
	\to
	\Omega(C) \otimes_{\mathfrak{C}_{\bullet}} L$
	for $K\to L$ any inclusion of sets.
\end{remark}

\begin{remark}
	The integer $r \geq 1$ in 
	Lemma \ref{KEYPRVAR LEM}
	is included to match our required application in Lemma \ref{UNITEQUIV LEM}.
	However, it readily follows by induction on $r$ that one needs only prove the $r=1$ case.
	Indeed, writing $\O_r$ for the pushout in 
	\eqref{PUSHOUTPROPVAR EQ}
	for each $r$,
	one has a diagram below
\begin{equation}
\begin{tikzcd}
	\Omega[C]^{\amalg r} \amalg_{\partial \Omega[C]^{\amalg r}} N\mathcal{O} \ar{r} \ar{d}
	\arrow[dr, phantom, "\ulcorner", very near start] 
&
	N \mathcal{O}_r \ar{d}
\\
	\Omega[C]^{\amalg r+1} \amalg_{ \partial \Omega[C]^{\amalg r+1}} N \mathcal{O} \ar{r}
&
	\Omega[C] \amalg_{\partial \Omega[C]} N \mathcal{O}_r \ar{r}
&
	N \mathcal{O}_{r+1}
\end{tikzcd}
\end{equation}
	where the square is a pushout so that induction on $r$ and the $r=1$ case yield that all horizontal maps
	are $G$-inner anodyne.
	The proof of the interesting $r=1$ case will occupy the entirety of \S \ref{KEYRES SEC}.
\end{remark}

\begin{lemma}[{cf. \cite[Lemma 8.2]{CM13b}}]
	\label{UNITEQUIV LEM}
	Let $A \to B$ be a tame cofibration in $\mathsf{PreOp}^G$, 
	$\mathcal{O} \in \mathsf{sOp}^G$ a $\Sigma$-cofibrant 
	$G$-operad,
	and consider a pushout diagram in $\mathsf{sOp}^G$ of the form below.
\begin{equation}\label{THEPUSH EQ}
\begin{tikzcd}
	\tau A \ar{r} \ar{d} & \mathcal{O} \ar{d}
\\
	\tau B \ar{r} & \mathcal{P}
\end{tikzcd}
\end{equation}
	Then $\mathcal{O} \to \mathcal{P}$ is a $\Sigma$-cofibration and 
	\begin{equation}\label{UNITEQUIV EQ}
	B \amalg_{A} N \mathcal{O}
	\to 
	N \mathcal{P}
	\end{equation}
	is a weak equivalence.
\end{lemma}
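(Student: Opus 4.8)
The plan is to run the usual reduction-to-generators argument and then treat the three families of generating tame cofibrations (TC1), (TC2), (TC3) of Definition~\ref{TAMEGENCOF DEF} separately, with essentially all of the content residing in the (TC2) case, which is precisely where the deferred Lemma~\ref{KEYPRVAR LEM} is used. First I would let $\mathcal{S}$ be the class of maps $A\to B$ in $\mathsf{PreOp}^G$ for which the conclusion holds (for every $\Sigma$-cofibrant $\mathcal{O}$ and every pushout as in \eqref{THEPUSH EQ}), and check that $\mathcal{S}$ is saturated: closure under pushouts and retracts is formal since $\tau$ is cocontinuous, so $\tau$ of a pushout of $A\to B$ is the corresponding pushout of $\tau A\to\tau B$ and the resulting operad $\mathcal{P}$ — hence the whole comparison map — is literally unchanged. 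Closure under transfinite composition is the only delicate point: writing $\mathcal{O}_0=\mathcal{O}$, $\mathcal{O}_{i+1}=\tau A_{i+1}\amalg_{\tau A_i}\mathcal{O}_i$ one has $\colim_i\mathcal{O}_i=\mathcal{P}$, and an induction shows each $\mathcal{O}_i$ is $\Sigma$-cofibrant and each $A_i\amalg_A N\mathcal{O}\to N\mathcal{O}_i$ is a weak equivalence, using left properness of the normal model structure on $\mathsf{PreOp}^G$ to push the previous weak equivalence along the normal cofibration $A_i\to A_{i+1}$ and the hypothesis $A_i\to A_{i+1}\in\mathcal{S}$ to compare with $N\mathcal{O}_{i+1}$; passing to the colimit, and using that $N$ preserves filtered colimits and that weak equivalences in $\mathsf{PreOp}^G$ are closed under filtered colimits (Theorem~\ref{PREOPMS THM}), gives the claim.

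Next I would dispatch the two easy families. For (TC3): since (TC3) coincides with (TA3), Lemma~\ref{TAMECOFCOF_LEM} shows $A\to B$ is a trivial cofibration in the normal model structure; moreover $\tau$ carries it to an isomorphism, because $\tau Sc[T]\xrightarrow{\sim}\tau\Omega[T]$ (Remark~\ref{TAUSC REM}) and $\tau$ commutes with $\otimes_{\mathfrak{C}_{\bullet}}$ (Proposition~\ref{TAUOTIMES_PROP}); hence $\mathcal{P}=\mathcal{O}$, the $\Sigma$-cofibration claim is trivial, and $N\mathcal{O}\to B\amalg_A N\mathcal{O}$ is a pushout of a trivial cofibration whose retraction onto $N\mathcal{O}=N\mathcal{P}$ is a weak equivalence by two-out-of-three. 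For (TC1): $\tau$ carries $G/H\cdot(\emptyset\to\Omega[\eta])$ to the generating cofibration $G/H\cdot(\emptyset\to\Omega(\eta))$, so $\mathcal{O}\to\mathcal{P}$ freely adjoins a color orbit $G/H$, which is trivially a $\Sigma$-cofibration (the only new operations are the added units), and a direct inspection of nerves gives $N\mathcal{P}\cong N\mathcal{O}\amalg(G/H\cdot\Omega[\eta])\cong B\amalg_A N\mathcal{O}$, in fact an isomorphism.

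The main case is (TC2). Here $\tau$ carries the generator to $\Omega(C)\otimes_{\mathfrak{C}_{\bullet}}(\partial\Delta[n]\to\Delta[n])$ (using $\tau\Omega[C]=\Omega(C)$ and Proposition~\ref{TAUOTIMES_PROP}), which is a color-fixed cofibration by Lemma~\ref{OPTENSCOF_LEM}; hence $\mathcal{O}\to\mathcal{P}$ is a color-fixed cofibration, and so a $\Sigma$-cofibration by Proposition~\ref{COPOFSIGCOF PROP}, since $\mathcal{O}$ is $\Sigma$-cofibrant. For the weak equivalence I would argue one simplicial level at a time: the nerve $N$, the fiber tensor $(-)\otimes_{\mathfrak{C}_{\bullet}}K$ with $K\in\mathsf{sSet}$, and pushouts are all computed level-wise in the simplicial direction (as $\mathbb{F}$, products and coequalizers are), and — all objects in sight having discrete color sets — $B\amalg_A N\mathcal{O}$ is formed in $\mathsf{PreOp}^G$ level-wise. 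At simplicial level $m$, the map $(B\amalg_A N\mathcal{O})_m\to(N\mathcal{P})_m$ is then the map on nerves induced by a pushout in $\mathsf{Op}^G$ of $\Omega(C)\otimes_{\mathfrak{C}_{\bullet}}\bigl((\partial\Delta[n])_m\hookrightarrow(\Delta[n])_m\bigr)$ along the $\Sigma$-cofibrant operad $\mathcal{O}_m$; since $(\partial\Delta[n])_m\hookrightarrow(\Delta[n])_m$ is an inclusion of sets, this is exactly the situation of Lemma~\ref{KEYPRVAR LEM} in the generalized form of Remark~\ref{KEYPRVAR REM}, so $(B\amalg_A N\mathcal{O})_m\to(N\mathcal{P})_m$ is $G$-inner anodyne in $\mathsf{dSet}^G$. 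A level-wise $G$-inner anodyne map in $\mathsf{sdSet}^G$ is a level-wise weak equivalence in $\mathsf{dSet}^G$ (Theorem~\ref{DSETGMOD THM}), i.e. a dendroidal equivalence, hence a joint equivalence (Theorem~\ref{JB_THM}\ref{SDEQUIV_LBL}) — that is, a weak equivalence in $\mathsf{PreOp}^G$, as desired.

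The hard part will be Lemma~\ref{KEYPRVAR LEM} itself, which is postponed to \S\ref{KEYRES SEC} and carries the entire geometric content; modulo that input, the only delicate point in the argument above is verifying the simplicial-level-wise identifications on the nose, in particular identifying the boundary term through $\partial\Omega(C)=\Omega(C)\otimes_{\mathfrak{C}_{\bullet}}\emptyset$, which is exactly where the discreteness of the color sets is used.
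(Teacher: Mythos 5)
Your proposal is correct and follows essentially the same approach as the paper's proof: the same three-way case analysis on (TC1), (TC2), (TC3), the same appeal to Proposition \ref{TAUOTIMES_PROP}, Proposition \ref{COPOFSIGCOF PROP}, Remark \ref{TAUSC REM}, and the deferred Lemma \ref{KEYPRVAR LEM} via Remark \ref{KEYPRVAR REM} for the crucial (TC2) case handled simplicial level-wise, and the same transfinite-composition induction using left properness and closure of weak equivalences under filtered colimits. The only cosmetic difference is that you frame the reduction as verifying saturation of the class of maps for which the conclusion holds, whereas the paper verifies the generators first and treats retracts and pushouts at the end; the mathematical content is identical.
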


Setting $A = \emptyset $, $\mathcal{O}= \emptyset$ in the previous result yields the following.

\begin{corollary}\label{KEYEQUIV COR}
	If $B \in \mathsf{PreOp}^G$ is tame cofibrant, then 
	$B \xrightarrow{\sim} N \tau B$ is a weak equivalence.
\end{corollary}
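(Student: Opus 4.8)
The plan is to specialize Lemma~\ref{UNITEQUIV LEM} to the case $A = \emptyset$ and $\mathcal{O} = \emptyset$, and then recognize the resulting map~\eqref{UNITEQUIV EQ} as the unit of the adjunction $\tau \colon \mathsf{PreOp}^G \rightleftarrows \mathsf{sOp}^G \colon N$ from~\eqref{TAUNER EQ}. First, since $B$ is tame cofibrant the map $\emptyset \to B$ is a tame cofibration, and the empty operad $\emptyset \in \mathsf{sOp}^G$, being the initial object, is cofibrant and hence $\Sigma$-cofibrant by Proposition~\ref{COPOFSIGCOF PROP}; thus the hypotheses of Lemma~\ref{UNITEQUIV LEM} hold for this choice.

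Next I would unwind the two sides of~\eqref{UNITEQUIV EQ}. On the target side, $\tau$ preserves the initial object (being a left adjoint), so $\tau\emptyset = \emptyset$ and the pushout $\mathcal{P}$ in~\eqref{THEPUSH EQ} of $\mathcal{O} = \emptyset \leftarrow \tau\emptyset \to \tau B$ is simply $\mathcal{P} = \tau B$. On the source side, the nerve $N\emptyset$ of the empty operad is the initial preoperad $\emptyset \in \mathsf{PreOp}^G$: by the defining formula for $(N\emptyset)_{\mathfrak{c}}(U)$ there is no edge coloring $\mathfrak{c}$ available when the color set is empty, so every simplicial level is empty. Consequently $B \amalg_A N\mathcal{O} = B \amalg_\emptyset \emptyset = B$, and~\eqref{UNITEQUIV EQ} becomes precisely the adjunction unit $B \to N\tau B$, which Lemma~\ref{UNITEQUIV LEM} asserts is a weak equivalence.

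There is no real obstacle here beyond the two identifications just mentioned — that $\tau$ carries the initial object to the initial object and that $N$ carries the initial operad to the initial preoperad — both of which are routine; the genuine content lies entirely in Lemma~\ref{UNITEQUIV LEM}, and ultimately in the key Lemma~\ref{KEYPRVAR LEM}, whose proof is postponed to \S\ref{KEYRES SEC}.
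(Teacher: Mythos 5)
Your proof is correct and takes essentially the same approach as the paper, which simply specializes Lemma~\ref{UNITEQUIV LEM} to $A = \emptyset$ and $\mathcal{O} = \emptyset$. You have merely unwound the two identifications $\tau\emptyset = \emptyset$ and $N\emptyset = \emptyset$ that the paper leaves implicit.
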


\begin{proof}[Proof of Lemma \ref{UNITEQUIV LEM}]
	We first consider the case where $A\to B$ is in one of (TC1),(TC2),(TC3).
	
	The (TC1) case is immediate, 
	since then
	$\mathcal{O} \to \mathcal{P}$
	is the $\Sigma$-cofibration
	$\mathcal{O} \to \mathcal{O} \amalg G/H \cdot \Omega(\eta)$   and \eqref{UNITEQUIV EQ}
	is the isomorphism
	$N\mathcal{O} \amalg G/H\cdot \Omega[\eta] \simeq 
	N\left( \mathcal{O} \amalg G/H \cdot \Omega(\eta) \right)$.
	
	The (TC3) case is also straightforward:
	since $\tau A \to \tau B$ is an isomorphism
	(Remark \ref{TAUSC REM}), one can take 
	$\mathcal{O}=\mathcal{P}$, so that 
	\eqref{UNITEQUIV EQ}
	becomes a section of the map
	$N \mathcal{O} \to B \amalg_{A} N \mathcal{O}$, which is a trivial cofibration (as it is a pushout of $A \to B$),
	and 2-out-of-3 thus implies that \eqref{UNITEQUIV EQ} is a weak equivalence.

	The most interesting case is then (TC2).
	Firstly, by Proposition \ref{TAUOTIMES_PROP} the functor $\tau$ sends maps in (TC2) to maps in (C2), 
	so $\O \to \P$ is indeed a $\Sigma$-cofibration
	by \cite[Prop. \ref{OC-SIGMAG_COF PROP}]{BP_FCOP}.
	Next, fixing a simplicial level $m\geq 0$,
	$A_m \to B_m$ then has the form
	$\Omega[C] \otimes_{\mathfrak{C}_{\bullet}} \left(\partial \Delta[n]_m \to \Delta[n]_m\right)$ so that
	$\tau A_m \to \tau B_m$ has the form
	$\Omega(C) \otimes_{\mathfrak{C}_{\bullet}} \left(\partial \Delta[n]_m \to \Delta[n]_m\right)$.
	But then (following the discussion in 
	Remark \ref{KEYPRVAR REM})
	Lemma \ref{KEYPRVAR LEM}
	yields that all levels
	$(B \amalg_{A} N \mathcal{O})_m
	\to 
	(N \mathcal{P})_m$
	for $m \geq 0$
	are equivalences in $\mathsf{dSet}^G$,
	showing that 
	$B \amalg_{A} N \mathcal{O}
	\to 
	N \mathcal{P}$
	is indeed a joint equivalence in
	$\mathsf{PreOp}^G$.

	We now turn to the case of $A \to B$ a general tame cofibration.
	As usual, $A \to B$ is a retract of a transfinite composition of pushouts of generating cofibrations.
	Since the conclusions of the result are invariant under retracts,
	we are free to assume that $A \to B$ is a transfinite composite
	\[
	A = A_0 \to A_1 \to A_2 \to \cdots \to 
	\colim_{\beta < \kappa} A_{\beta} = B.
	\]
	where each map $A_{\beta} \to A_{\beta +1}$
	for $\beta < \kappa$ is a pushout of a map in one of (TC1),(TC2),(TC3).

	Defining $\mathcal{O}_{\beta}$ by replacing $A \to B$ with $A \to A_{\beta}$ in the pushout \eqref{THEPUSH EQ},
	$\mathcal{O} \to \mathcal{P}$ becomes the transfinite composite of the maps $\mathcal{O}_{\beta} \to \mathcal{O}_{\beta + 1}$
	and \eqref{UNITEQUIV EQ} becomes
	$
	\colim_{\beta < \kappa} \left( 
	A_{\beta} \amalg_{A} N \mathcal{O}
	\to 
	N \mathcal{O}_{\beta}
	\right)
	$.

	It thus suffices to show, by induction on $\beta < \kappa$, 
	that the maps $\mathcal{O}_{\beta} \to \mathcal{O}_{\beta + 1}$ are $\Sigma$-cofibrations and that the maps 
	$A_{\beta} \amalg_{A} N \mathcal{O}
	\to 
	N \mathcal{O}_{\beta}$
	are weak equivalences
	(sufficiency of the latter condition uses the fact that 
	filtered colimits of weak equivalences in $\mathsf{PreOp}^G$ are weak equivalences, cf. Theorem \ref{PREOPMS THM}).
	Consider now the following diagrams.
	\[
	\begin{tikzcd}
	\tau A \ar{r} \ar{d} & \mathcal{O} \ar{d}
	&&
	A_{\beta} \amalg_{A} N \mathcal{O}
	\ar[>->]{r} \ar{d}[swap]{\sim} &
	A_{\beta+1} \amalg_{A} N \mathcal{O}
	\ar{d}[swap]{\sim}
	\\
	\tau A_{\beta} \ar{r} \ar{d} & \mathcal{O}_{\beta} \ar{d}
	&&
	N \mathcal{O}_{\beta} \ar[>->]{r} &
	A_{\beta+1} \amalg_{A_{\beta}} N \mathcal{O}_{\beta} \ar{d}
	\\
	\tau A_{\beta + 1} \ar{r} & \mathcal{O}_{\beta + 1}
	&&
	&
	N \mathcal{O}_{\beta+1}
	\end{tikzcd}
	\]
	The induction hypothesis states that
	$\mathcal{O} \to \mathcal{O}_{\beta}$ is a $\Sigma$-cofibration and that the map
	$A_{\beta} \amalg_A N \mathcal{O} \to N \mathcal{O}_{\beta}$ is a weak equivalence.
	Therefore, $\mathcal{O}_{\beta}$ is $\Sigma$-cofibrant 
	and both vertical maps marked $\sim$ in the rightmost diagram above are weak equivalences 
	(by left properness of $\mathsf{PreOp}^G_{tame}$),
	and thus the induction step will follow provided that the result holds for
	$A_{\beta} \to A_{\beta + 1}$ and $\mathcal{O}_{\beta}$.
	But $A_{\beta} \to A_{\beta + 1}$ is assumed to be a pushout of a map in (TC1),(TC2),(TC3), 
	in which case the result is already known, and thus noting that the result is invariant under pushouts finishes the proof.
\end{proof}

Before proving Theorem \ref{PREQUIEQUIV THM}, 
we recall the two following results,
which are adapted from \cite{JT07} (see Proposition 7.15 therein).

\begin{proposition}
	A cofibration $A \to B$ in a model category
	is a weak equivalence 	
	iff it has the left lifting property against all fibrations between fibrant objects.
\end{proposition}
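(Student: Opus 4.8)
The forward implication is immediate from the axioms: a trivial cofibration has the left lifting property against every fibration, hence in particular against fibrations between fibrant objects.

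For the converse, let $i \colon A \to B$ be a cofibration having the left lifting property against all fibrations between fibrant objects; the plan is to show $i$ is a weak equivalence in two stages. First I would reduce to the case in which $A$ and $B$ are both fibrant. To do this, pick a trivial cofibration $j_A \colon A \to \hat{A}$ with $\hat{A}$ fibrant, form the pushout $B' = \hat{A} \amalg_A B$ of $i$ along $j_A$ -- so that $B \to B'$ is a (trivial) cofibration and $\hat{A} \to B'$ is a cofibration -- and then pick a trivial cofibration $B' \to \hat{B}$ with $\hat{B}$ fibrant. Writing $j_B \colon B \to \hat{B}$ and $\hat{i} \colon \hat{A} \to \hat{B}$ for the composites, one obtains a commuting square whose horizontal edges $j_A, j_B$ are trivial cofibrations, so $i$ is a weak equivalence if and only if $\hat{i}$ is. The key point is that $\hat{i}$ still has the left lifting property against fibrations between fibrant objects: that class of maps is closed under pushouts and composition, and $\hat{i}$ is built from $i$ and a trivial cofibration (which lifts against all fibrations) by one pushout and one composition. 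So it suffices to handle the case $A, B$ fibrant.

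Assume then that $A$ and $B$ are fibrant. Since $A \to \ast$ is now a fibration between fibrant objects, lifting $i$ against it in the square with top edge $\mathrm{id}_A$ yields a retraction $r \colon B \to A$ with $ri = \mathrm{id}_A$. Next pick a path object $B \xrightarrow{s} B^{I} \xrightarrow{(d_0,d_1)} B \times B$ for $B$; as $B$ is fibrant so is $B^{I}$, and $(d_0,d_1)$ is a fibration between fibrant objects. The square with left edge $i$, right edge $(d_0,d_1)$, top edge $s \circ i$, and bottom edge $(ir, \mathrm{id}_B)$ commutes (using $ri = \mathrm{id}_A$), so a lift produces $H \colon B \to B^{I}$ with $d_0 H = ir$ and $d_1 H = \mathrm{id}_B$. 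Since $d_0 s = d_1 s = \mathrm{id}_B$ and $s$ is a weak equivalence, both $d_0$ and $d_1$ are weak equivalences; then $d_1 H = \mathrm{id}_B$ forces $H$ to be a weak equivalence, whence $ir = d_0 H$ is one as well. Now $ri = \mathrm{id}_A$ and $ir$ are weak equivalences, so applying the $2$-out-of-$6$ property of weak equivalences to the chain $A \xrightarrow{i} B \xrightarrow{r} A \xrightarrow{i} B$ shows that $i$ is a weak equivalence, which finishes this case and, by the first stage, the proof.

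I expect the reduction step to be the only real subtlety: applying the hypothesis just once, say against a fibrant replacement of $B$, leads to a circular appeal to $2$-out-of-$3$, so one genuinely needs the pushout construction together with the closure of the left-lifting-property class under pushouts and composites in order to transport the hypothesis from $i$ to $\hat{i}$. Once that reduction is in place, the rest is the standard ``retraction plus path-object homotopy'' computation.
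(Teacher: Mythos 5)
Your proof is correct and follows the standard argument that the paper cites to \cite{JT07} (Proposition 7.15 therein) without reproducing: reduce to the case of fibrant $A$ and $B$ by pushing out along a fibrant replacement of $A$ and then embedding into a fibrant replacement of the pushout (noting that the class of maps with the relevant left lifting property is closed under pushout and composition), then produce the retraction $r$ from lifting against $A \to \ast$, and finally use a path-object homotopy to show $ir$ is a weak equivalence. One small remark on the closing step: the appeal to the $2$-out-of-$6$ property, while valid (it is a theorem about weak equivalences in any model category, not merely an axiom of homotopical categories), is a slightly heavier tool than necessary. Once one knows $ri = \mathrm{id}_A$ and that $ir$ is a weak equivalence, one can instead exhibit $i$ as a retract of $ir$ by means of the diagram whose rows are $A \xrightarrow{i} B \xrightarrow{r} A$ (composing to $\mathrm{id}_A$) and $B = B = B$, with vertical maps $i$, $ir$, $i$ from left to right; both squares commute because $iri = i$. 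Closure of weak equivalences under retracts is one of the model category axioms, so this finishes the proof without invoking $2$-out-of-$6$. Either route is fine; the retract version is merely more self-contained relative to the bare axioms.
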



\begin{corollary}\label{SIMPLQUILL COR}
	An adjunction 
\[
	F \colon \mathcal{C}
	\rightleftarrows
	\mathcal{D} \colon G
\]
	between model categories is a Quillen adjunction
	provided that $F$ preserves cofibrations
	and $G$ preserves fibrations between fibrant objects.
\end{corollary}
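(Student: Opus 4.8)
The plan is to verify the standard criterion that a left adjoint is left Quillen iff it preserves cofibrations and trivial cofibrations. Since $F$ preserving cofibrations is assumed, the only remaining task is to show that $F$ sends trivial cofibrations to weak equivalences.

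First I would take a trivial cofibration $A \to B$ in $\mathcal{C}$. By hypothesis $F(A) \to F(B)$ is a cofibration in $\mathcal{D}$, so by the preceding Proposition it suffices to show that $F(A) \to F(B)$ has the left lifting property against every fibration $X \to Y$ between fibrant objects of $\mathcal{D}$. Given such a lifting problem, I would transpose it across the adjunction $F \dashv G$ to obtain a lifting problem of $A \to B$ against $G(X) \to G(Y)$. Since $X \to Y$ is a fibration between fibrant objects, the hypothesis on $G$ guarantees that $G(X) \to G(Y)$ is a fibration in $\mathcal{C}$; and since $A \to B$ is a trivial cofibration it has the left lifting property against all fibrations. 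The resulting lift transposes back across the adjunction to a solution of the original problem.

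Concluding: $F(A) \to F(B)$ is a cofibration with the left lifting property against all fibrations between fibrant objects, hence a weak equivalence by the Proposition; thus $F$ preserves trivial cofibrations and $(F,G)$ is a Quillen adjunction. There is no genuine obstacle here, but the point worth noting is that in the transposed lifting problem one does not need $G(X)$ and $G(Y)$ to be fibrant — only that $G(X) \to G(Y)$ is a fibration — which is precisely the weakened hypothesis we are given on $G$.
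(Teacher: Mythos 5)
Your proof is correct and is exactly the intended argument: reduce to showing $F$ preserves trivial cofibrations, then use the preceding Proposition by transposing a lifting problem against a fibration between fibrant objects in $\mathcal{D}$ across the adjunction, where the hypothesis on $G$ yields a fibration in $\mathcal{C}$. Your closing remark correctly pinpoints why the weakened hypothesis on $G$ suffices — the transposed problem only needs $G(X)\to G(Y)$ to be a fibration, not fibrant — which is precisely the content of this Joyal--Tierney style criterion.
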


\begin{theorem}\label{PREQUIEQUIV THM}
	The adjunction
\begin{equation}\label{PREQUIEQUIV EQ}
	\tau \colon \mathsf{PreOp}^G_{\text{tame}}
	\rightleftarrows 
	\mathsf{sOp}^G \colon N
\end{equation}
	is a Quillen equivalence.
\end{theorem}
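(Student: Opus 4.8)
The plan is to first recognize \eqref{PREQUIEQUIV EQ} as a Quillen adjunction by means of Corollary~\ref{SIMPLQUILL COR}, and then promote it to a Quillen equivalence using Corollary~\ref{KEYEQUIV COR} together with the fact that $N$ both preserves and reflects all weak equivalences. Since the reflection property is used in the verification of the Quillen adjunction, I would establish it at the outset.

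\emph{$N$ preserves and reflects weak equivalences.} For any operad $\mathcal{O}\in\mathsf{sOp}^G$ the strict Segal identities \eqref{STRSEGCON EQ} make each map $N\mathcal{O}(\Omega[T])\to N\mathcal{O}(Sc[T])$ an isomorphism, so $N\mathcal{O}$ is in particular a Segal operad; hence by Theorem~\ref{FIBPREOP THM} a map $NF$ is a joint equivalence iff it is a Dwyer--Kan equivalence. It then remains to match the conditions of Definition~\ref{DKEQUIV DEF} for $NF$ with those of Theorem~\ref{SOPG_THM} for $F$. Via the nerve formula \eqref{ALTNER EQ} and the identification of $(N\mathcal{O})_{\mathfrak{c}}(\Omega[C])$ with the fixed-point operation space $\mathcal{O}(\vect{C})^{\Gamma}$ for the $G$-corolla $C$, coloring $\mathfrak{c}$, and stabilizing graph subgroup $\Gamma$ they determine (Remark~\ref{MAPSPTRANS REM}), full-faithfulness of $NF$ coincides with full-faithfulness of $F$ in the sense of \eqref{DKEQUIV_EQ}; and via $(N\mathcal{O})^H=N(\mathcal{O}^H)$ together with the fact that $ho$ of a nerve is the corresponding homotopy category, essential surjectivity of $NF$ coincides with essential surjectivity of $F$ in the sense of \eqref{ESSSURJ EQ}. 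Hence $NF$ is a weak equivalence iff $F$ is.

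\emph{The adjunction is Quillen.} By Corollary~\ref{SIMPLQUILL COR} it suffices that $\tau$ preserve cofibrations and that $N$ preserve fibrations between fibrant objects. As $\tau$ preserves colimits, the first reduces to the generators of Definition~\ref{TAMEGENCOF DEF}: Proposition~\ref{TAUOTIMES_PROP} and the identity $N\Omega(F)=\Omega[F]$ (Remark~\ref{NERVESIMDES REM}) send (TC1) to (C1) and (TC2) to (C2), while Remark~\ref{TAUSC REM} makes $\tau(Sc[T]\to\Omega[T])$ an isomorphism, so (TC3) is sent to an isomorphism. For the second I would transpose a lifting problem for $NF$ against (TA$i$) into one for $F$ against $\tau$(TA$i$): $\tau$ sends (TA3) to an isomorphism and (TA2) to (A2); and it sends (TA1), namely $G/H\cdot(\Omega[\eta]\to I)$ for a countable pseudo-interval $I$, to $G/H\cdot(\Omega(\eta)\to\tau I)$. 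Here $\tau I$ is a cofibrant simplicial category on two objects (the map $\mathsf{sk}_{\eta}I\to I$ is a tame cofibration by Definition~\ref{PSEUINT DEF} and $\tau$ preserves cofibrations), and applying Corollary~\ref{KEYEQUIV COR} to the unit naturality square of the weak equivalence $I\to\Omega[\eta]$ shows $N\tau I\to\Omega[\eta]$ is a weak equivalence, whence by the reflection property above $\tau I$ is equivalent to $\widetilde{[1]}$; thus $\tau I$ is an interval and $\tau$(TA1) is of type (A1). Consequently any fibration $F$ in $\mathsf{sOp}^G$ lifts against $\tau$(TA1),(TA2),(TA3), and any fibrant (i.e.\ locally fibrant) $\mathcal{O}$ lifts against the same maps, so $N\mathcal{O}$ is tame fibrant and $NF$ is a fibration by Theorem~\ref{TAMEMS_THM}.

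\emph{Quillen equivalence, and the main obstacle.} Since $N$ preserves all weak equivalences, for $A$ tame cofibrant the derived unit $A\to N\big((\tau A)^{\mathrm{fib}}\big)$ differs from the ordinary unit $A\to N\tau A$ by postcomposition with $N$ of a weak equivalence, and the latter is a weak equivalence by Corollary~\ref{KEYEQUIV COR}; together with the reflection property (a fortiori between fibrant objects), the standard criterion for Quillen equivalences gives the claim. The genuinely technical content, Lemma~\ref{KEYPRVAR LEM} and through it Lemma~\ref{UNITEQUIV LEM} and Corollary~\ref{KEYEQUIV COR}, is isolated and deferred to \S\ref{KEYRES SEC}; within the present argument the only delicate step is the treatment of the pseudo-interval generators (TA1), which is precisely what forces the reflection property to be established before the Quillen adjunction, since $\Omega(\eta)\to\tau I$ can only be recognized as a weak equivalence by transporting the contractibility of $I$ through $N$ and back.
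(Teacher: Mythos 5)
Your proposal is correct and follows essentially the same route as the paper: establish that $N$ preserves and reflects weak equivalences via the Dwyer--Kan characterization for Segal operads, verify the Quillen adjunction through Corollary~\ref{SIMPLQUILL COR} and the behavior of $\tau$ on the (TC)/(TA) generators, and conclude with Corollary~\ref{KEYEQUIV COR}. The one cosmetic difference is in the (TA1) case: you work to identify $\tau I$ as an interval (so that $\tau$(TA1) lands in (A1)), whereas the paper shows directly that $\Omega(\eta)\to\tau I$ is a trivial cofibration via Corollary~\ref{KEYEQUIV COR}, 2-out-of-3, and reflection; your extra step (which would also need the countability of $\tau I$ to land exactly in (A1)) is not needed since any trivial cofibration suffices for the lifting argument.
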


\begin{proof}
	We first show that $N$ preserves and detects weak equivalences.
	To see this, note first that all objects in the image of $N$ are Segal operads
	(cf. Remark \ref{SEGCOLCHAR_REM}, \eqref{STRSEGCON EQ}) 
	 so that, by Theorem \ref{FIBPREOP THM}, 
	a map in the image of $N$ is a weak equivalence iff it is a Dwyer-Kan equivalence.
	But it is clear that $N$ preserves and reflects fully-faithful maps,
	and since
	$N \left(\iota^{\**} \O^H \right)
	\simeq
	\iota^{\**} \left( (N \O)^H \right)$
	for $H\leq G$
	one likewise has that 
	$N$ preserves and reflects essentially surjective maps.
	
	Next, we use Corollary \ref{SIMPLQUILL COR}
	to show that \eqref{PREQUIEQUIV EQ}
	is a Quillen adjunction.
	First,
	$\tau$ preserves cofibrations since,
	by Proposition \ref{TAUOTIMES_PROP},
	$\tau$ sends maps in (TC1),(TC2) to maps in (C1),(C2)
	and maps in (TC3) to isomorphisms (Remark \ref{TAUSC REM}).
	Second, to show that $N$ preserves fibrations between fibrant objects,
	by using the characterization in Theorem \ref{TAMEMS_THM}
	it suffices, thanks to an adjunction argument,
	to show that $\tau$
	sends the maps in (TA1),(TA2),(TA3)
	to trivial cofibrations. 
	Moreover, as we already know that $\tau$ preserves cofibrations, we need only show 	that $\tau$
	sends (TA1),(TA2),(TA3)
	to weak equivalences.
	The cases (TA2),(TA3) are again immediate 
	from Proposition \ref{TAUOTIMES_PROP},
	but (TA1) requires a different argument
	(which could also be used for (TA2),(TA3)).
	Writing $A \to B$ for a map in (TA1), one always has that $A,B$ are tame cofibrant, so that
	Corollary \ref{KEYEQUIV COR}
	and $2$-out-of-$3$ imply that 
	$N \tau A \to N \tau B$ is a weak equivalence
	and thus, since $N$ reflects weak equivalences,
	that $\tau A \to \tau B$ itself is a weak equivalence.
	This shows that \eqref{PREQUIEQUIV EQ}
	is a Quillen adjunction.
	
	Lastly, for the Quillen equivalence claim, 
	let $B \in \mathsf{PreOp}^G$ be tame cofibrant and
	$\mathcal{O} \in \mathsf{sOp}^G$ be fibrant.
	We must show the left map below is a weak equivalence iff 
	the right composite is.
	\[
	\tau B \to \mathcal{O},
	\qquad
	B \xrightarrow{\sim} N \tau B \to N \mathcal{O}
	\]
	This now follows from Corollary \ref{KEYEQUIV COR}
	and the fact that $N$ preserves and detects weak equivalences.
\end{proof}

\subsection{The homotopy coherent nerve and the proof of the main result}
\label{PFMNTHM SEC}

This section proves Theorem \ref{QE THM}.
We first recall the
$W_!\colon \mathsf{dSet}^G 
\rightleftarrows 
\mathsf{sOp}^G \colon hcN$
adjunction in \eqref{QE_EQ}.

In the categorical setting,
the left adjoint $W_!$ admits 
an explicit description, due to Dugger and Spivak \cite{DS11},
in terms of so called \emph{necklaces},
which we extend to the operadic setting in 
\cite{BP_WCONS}.
We now summarize the results therein we will need.

\begin{definition}
        \label{WU_DEF}
        For a tree $U \in \Omega$ there is 
        a simplicial operad
        $W(U) \in \mathsf{sOp}$
        with set of colors $\boldsymbol{E}(U)$
        and whose $n$-simplices evaluated at
        a $\boldsymbol{E}(U)$-corolla
        $\vect{C} = (C,\mathfrak{c})$ are
        \begin{equation} 
                W(U)_n(\vect C) =
                \begin{cases}
                        \left\{\text{factorizations }
                                C \xrightarrow{t} 
                                F_0 \xrightarrow{i,p} 
                                \cdots \xrightarrow{i,p}
                                F_n \xrightarrow{f,p} U
                        \right\}
                        &
                        \mbox{if $\boldsymbol{E}(C) \xrightarrow{\mathfrak{c}} \boldsymbol{E}(U)$ gives a map in $\Omega$}
                        \\
                        \varnothing
                        &
                        \mbox{otherwise.}
                \end{cases}
        \end{equation}
        where we label maps in $\Omega$ as
        $t/i/f/p$
        to indicate they are 
        tall/inner faces/faces/planar
        (cf. \S \ref{FORESTS_SEC}).
\end{definition}

\begin{remark}
	The factorization description in Definition \ref{WU_DEF} 
	reflects our approach in 
	\cite{BP_WCONS},
	which makes heavy use of the factorizations in 
	Proposition \ref{TREEFACT_PROP}.
	However, there is a simpler and more familiar description of $W(U)(\vect{C})$.
	If one lets
	$C \xrightarrow{t} U_C \xrightarrow{o,p} U$
	denote the unique ``tall map followed by planar outer face'' factorization, 
	repeated use of Proposition \ref{TREEFACT_PROP}
	shows that the 
	$F_0 \to \cdots \to F_n$
	strings in Definition \ref{WU_DEF} 
	are precisely the strings of planar inner faces of $U_C$.
	And since the latter are in bijection with strings of subsets of inner edges $\boldsymbol{E}^{\mathsf{i}}(U_C)$, 
	we have 
\begin{equation}\label{WU_EQ2}
	W(U)(\vect C) =
	\begin{cases}
	\Delta[1]^{\times \boldsymbol{E}^{\mathsf{i}}(U_{C})}
	\qquad
&
	\mbox{if $\boldsymbol{E}(C) \xrightarrow{\mathfrak{c}} \boldsymbol{E}(U)$ gives a map in $\Omega$}
\\
\varnothing
&
	\mbox{otherwise,}
	\end{cases}
\end{equation}
which recovers the description in \cite[\S 4]{CM13b}.
\end{remark}

\begin{remark}
	One neat feature of the description in 
	Definition \ref{WU_DEF} 
	is that the nerve $N W(U)$
	can be defined identically 
	(cf. \cite[Def. \ref{W-NWTNS DEF}]{BP_WCONS}; 
	compare with Remark \ref{NERVESIMDES REM}).
	As an aside, one can verify directly that
	\cite[Def. \ref{W-NWTNS DEF}]{BP_WCONS}
	defines a preoperad satisfying the strict Segal condition
	\cite[Rem. \ref{W-NWTNS_REM}]{BP_WCONS},
	thereby inducing the operad structure on $W(U)$.
\end{remark}

The adjunction
\begin{equation} \label{SOPDSET_EQ}
	W_! \colon \dSet \rightleftarrows \sOp \colon h c N           
\end{equation}
is then defined by
\[
	W_!X = \colim_{\Omega[U] \to X}W(U)
	\qquad
	hcN\O(U) = \sOp(W(U), \O)
\]
with the analogous equivariant adjunction \eqref{QE_EQ}
obtained by taking $G$-objects.

For a $G$-tree 
$T = \amalg_i T_i = G \cdot_H T_{\**}$
in $\Omega_G$
we abbreviate
$W(T) = W_!(\Omega[T])$.
Note that, since the $T_i$ have disjoint edge sets, we thus have
\[
	W(T) = \amalg_i W(T_i) \simeq G \cdot_H W(T_{\**}).
\]

The following formalizes some key observations in the proof of
\cite[Prop. 4.5]{CM13b}.

\begin{lemma}\label{WLEFTQPUSH LEM}
      For $\eta \neq T \in \Omega^G$
      a tree with a $G$-action
      and $G$-subset $\emptyset \neq E \subseteq \boldsymbol{E}^{\mathsf{i}}(T)$,
      one has pushout diagrams in $\sOp^G$
      (for $C=\mathsf{lr}(T)$
      the corolla with the same number of leaves as $T$)
\begin{equation}\label{WLEFTQPUSH_EQ}
\begin{tikzcd}[column sep = small]
	\Omega(C) \otimes_{\mathfrak C_\bullet}
	\partial \left(\Delta[1]^{\times \boldsymbol{E}^{\mathsf{i}}(T)}\right)
	\ar{r} \ar{d}
&
	W_! \left(\partial \Omega[T]\right) 
	\arrow[d]
& 
	\Omega(C) \otimes_{\mathfrak C_\bullet}
	\lambda^E \left(
	\Delta[1]^{\times \boldsymbol{E}^{\mathsf{i}}(T)}
	\right)
	\ar{r} \ar{d}
&
	W_! \left(\Lambda^E[T]\right) 
	\arrow[d]
\\
	\Omega(C) \otimes_{\mathfrak C_\bullet}
	\Delta[1]^{\times \boldsymbol{E}^{\mathsf{i}}(T)}
	\ar{r}
&
	W(T)
& 
	\Omega(C) \otimes_{\mathfrak C_\bullet}
	\Delta[1]^{\times \boldsymbol{E}^{\mathsf{i}}(T)}
	\ar{r}
&
	W(T)
\end{tikzcd}
\end{equation}
      where
      $\partial \left(\Delta[1]^{\times \boldsymbol{E}^{\mathsf{i}}(T)}\right)
      \to
      \Delta[1]^{\times \boldsymbol{E}^{\mathsf{i}}(T)}$
      and
      $\lambda^E
      \left(
      \Delta[1]^{\times \boldsymbol{E}^{\mathsf{i}}(T)}
      \right)
      \to \Delta[1]^{\times \boldsymbol{E}^{\mathsf{i}}(T)}$
      are the pushout products
\[
	\left(
	\partial\Delta[1] \to \Delta[1]
	\right)^{\square \boldsymbol E^{\mathsf{i}}(T)},
\qquad
	\left(
	\partial \Delta[1] \to \Delta[1]
	\right)^{\square (\boldsymbol{E}^{\mathsf{i}}(T) \setminus E)}
	\square
	\left(
	\{1\} \to \Delta[1]
	\right)^{\square E}
\]
      with $G$-action induced by the action on $\boldsymbol{E}^{\mathsf{i}}(T)$.
\end{lemma}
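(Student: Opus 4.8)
The plan is to reduce to the non-equivariant setting and then to identify both sides of each square explicitly from the description of $W(T)$ in Definition~\ref{WU_DEF}. Since $T \in \Omega^G$, the $G$-action on $T$ induces compatible $G$-actions on $\mathsf{lr}(T)$, on $\boldsymbol{E}^{\mathsf{i}}(T)$ — hence on $\Delta[1]^{\times \boldsymbol{E}^{\mathsf{i}}(T)}$ and on the subcomplexes $\partial(\Delta[1]^{\times \boldsymbol{E}^{\mathsf{i}}(T)})$ and $\lambda^E(\Delta[1]^{\times \boldsymbol{E}^{\mathsf{i}}(T)})$ — and on $\partial \Omega[T]$ and $\Lambda^E[T]$; by naturality of $W_!$, $\Omega(-)$ and $\otimes_{\mathfrak{C}_{\bullet}}$ the two diagrams are squares of $G$-objects. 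As the forgetful functor $\sOp^G \to \sOp$ creates colimits, it suffices to prove the underlying squares are pushouts in $\sOp$, so from now on I discard the $G$-action and take $T \in \Omega$, $C = \mathsf{lr}(T) \in \Sigma$, with the coloring $\boldsymbol{E}(C) \to \boldsymbol{E}(T)$ sending the leaves and root of $C$ to those of $T$.

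Next I would record the structure of the left-hand column. By \eqref{OMFFREE EQ} and the identity $(\mathbb{F}X)\otimes_{\mathfrak{C}_{\bullet}}K \cong \mathbb{F}(X\times K)$, the operad $\Omega(C)\otimes_{\mathfrak{C}_{\bullet}}K \cong \mathbb{F}(\Sigma_{\tau}[C]\times K)$ is free, on colors $\boldsymbol{E}(C)$, with a single generating operation $u$ (of maximal arity) decorated by $K$, together with its $\Sigma$-translates. Since the leaf colors and the root color of $C$ are pairwise distinct, $u$ admits no nontrivial self-composition; hence $\Omega(C)\otimes_{\mathfrak{C}_{\bullet}}K$ consists of exactly the identities and (the $\Sigma$-orbit of) the $K$-decorated $u$. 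In particular $\Omega(C)\otimes_{\mathfrak{C}_{\bullet}}(-)$ sends an inclusion $K \to L$ of simplicial sets to a map of operads which is an isomorphism at every profile other than the ``top'' profile realized by the tall map $C \to T$, and is $K \to L$ (up to $\Sigma$) at that profile.

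The core step is to identify, inside $W(T)$, the subobjects $W_!(\partial\Omega[T])$ and $W_!(\Lambda^E[T])$. Since $C \to T$ is tall one has $W(T)(\mathsf{lr}(T)) = \Delta[1]^{\times \boldsymbol{E}^{\mathsf{i}}(T)}$. Using $\partial\Omega[T] = \bigcup_{U\subsetneq T}\Omega[U]$ and the analogous union for $\Lambda^E[T]$ (Definition~\ref{DSETPRESHEAF_DEF}) together with cocontinuity of $W_!$, one checks directly from \eqref{WU_EQ2} that $W_!(\partial\Omega[T])$ (resp.\ $W_!(\Lambda^E[T])$) is the sub-operad of $W(T)$ that agrees with $W(T)$ at every profile $\neq \mathsf{lr}(T)$, while at $\mathsf{lr}(T)$ it is the subcomplex $\partial(\Delta[1]^{\times \boldsymbol{E}^{\mathsf{i}}(T)})$ (resp.\ $\lambda^E(\Delta[1]^{\times \boldsymbol{E}^{\mathsf{i}}(T)})$) of the cube. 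Concretely: facets of the cube in which an inner edge is ``contracted'' come from the corresponding inner face $T/E'$ of $T$; facets in which an inner edge $e$ is ``split'' arise as operadic composites of operations supported on the two (iterated outer) faces of $T$ obtained by cutting it at $e$; and every point of the open cube is of neither type, so nothing more is reached. Pinning down this bookkeeping — in particular why exactly the subcomplex $\lambda^E$, and no larger one, appears in the horn case — is the technical heart of the argument, and I would carry it out via the explicit descriptions of $W$ (and $NW$) from \cite{BP_WCONS}, which are the equivariant-ready form of the computations inside the proof of \cite[Prop.~4.5]{CM13b}.

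Granting this, both squares are pushouts. By the left-column step, forming the pushout of $W_!(\partial\Omega[T])$ (resp.\ $W_!(\Lambda^E[T])$) along $\Omega(C)\otimes_{\mathfrak{C}_{\bullet}}(\partial(\Delta[1]^{\times \boldsymbol{E}^{\mathsf{i}}(T)}) \to \Delta[1]^{\times \boldsymbol{E}^{\mathsf{i}}(T)})$ (resp.\ along the $\lambda^E$-inclusion) can only modify the value at the profile $\mathsf{lr}(T)$ — no operation of $W_!(\partial\Omega[T])$ or $W_!(\Lambda^E[T])$ other than an identity has its root colored by a leaf of $T$, nor a leaf colored by the root of $T$, so $u$ composes only trivially with the existing operations — and there it replaces $\partial(\Delta[1]^{\times \boldsymbol{E}^{\mathsf{i}}(T)})$ (resp.\ $\lambda^E(\Delta[1]^{\times \boldsymbol{E}^{\mathsf{i}}(T)})$) by the full cube, which is precisely $W(T)$. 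Hence both diagrams in \eqref{WLEFTQPUSH_EQ} are pushouts. The main obstacle is the combinatorial identification in the third paragraph; everything else is formal or a direct unwinding of definitions.
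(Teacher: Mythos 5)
Your proposal is correct in its essential content and relies on the same technical computations from \cite{BP_WCONS} (the identifications of $W_!(\partial \Omega[T])(\vect{C})$ and $W_!(\Lambda^E[T])(\vect{C})$, and the fact that these agree with $W(T)$ away from the profile $\vect{C} = \mathsf{lr}(T)$), but it establishes the pushout property by a genuinely different route than the paper. You argue directly at the level of $\sOp$: after identifying $\Omega(C) \otimes_{\mathfrak{C}_{\bullet}} K \simeq \mathbb{F}(\Sigma_\tau[C] \times K)$ as a free operad on a single generator $u$ (up to $\Sigma$), you observe that the coloring of $u$ — root colored by the root $r$ of $T$, leaves colored by the leaves of $T$ — forbids any nontrivial operadic composition with operations of $W_!(\partial \Omega[T])$ or $W_!(\Lambda^E[T])$, and conclude that the operad pushout is computed profile-by-profile, hence only modifies the value at $\vect{C}$, filling the boundary (resp.\ horn) of the cube to the full cube. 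The paper instead applies the nerve functor $N \colon \sOp^G \to \dSet^G$, checks that the resulting square of $G$-dendroidal sets is a pushout (which is an objectwise verification: the left vertical inclusions attach exactly the missing dendrices), and then appeals to the fact that $N$, being fully faithful, reflects colimits.

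The trade-off is worth noting. Your direct argument is more concrete, but it leans on the non-obvious fact that a pushout of operads along a free map can here be computed levelwise; that requires precisely the ``$u$ composes only trivially'' observation, and in a fully rigorous write-up you would also need to verify that $W_!(\partial\Omega[T]) \to W(T)$ and $W_!(\Lambda^E[T]) \to W(T)$ are monomorphisms (which your sketch implicitly assumes when treating them as suboperads) and track the colimit relations among the $W(U)$ carefully — in particular, the $1$-facets of the cube arise from operadic composites across a cut of $T$, not directly from any single $W(U)$, so the identification of the profile $\vect{C}$ value as exactly $\partial(\Delta[1]^{\times \boldsymbol{E}^{\mathsf{i}}(T)})$ needs the full force of the $W$-construction combinatorics. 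The paper's nerve trick sidesteps all the operadic subtleties at once: pushouts in $\dSet^G$ are automatic once one knows the objectwise sets, and reflection of colimits is a one-line formal argument. Your reduction to the non-equivariant case via the forgetful functor $\sOp^G \to \sOp$ is also an extra step the paper does not need, since everything in the paper's argument is already a statement about $G$-objects.
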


\begin{proof}
Note first that, by \eqref{OMFFREE EQ} and the discussion above
\eqref{OTIMESC EQ},
	\[\Omega(C) \otimes_{\mathfrak{C}_{\bullet}} K
	\simeq
	\left(
	\mathbb{F} \Sigma_{\tau}[C]	
	\right)	\otimes_{\mathfrak{C}_{\bullet}} K
	\simeq
	\mathbb{F} (\Sigma_{\tau}[C] \times K).\]
Let us write $\vect{C} = (C,\mathfrak{c})$
for the corollas colored by either
$\boldsymbol{E}(C)$ or $\boldsymbol{E}(T)$
via the (planar) coloring sending leaves to leaves and the root to the root. By definition of $\mathbb{F}$, cf. \eqref{FREEOP_EQ},
one has
	\[
	\left(\mathbb{F} (\Sigma_{\tau}[C] \times K)\right)(\vect{C}) =
	(\Sigma_{\tau}[C] \times K)(\vect{C}) = K,
	\]
so that the horizontal maps in 
\eqref{WLEFTQPUSH_EQ}
are the unique maps given by the identity at the 
$\vect{C}$ level,
as per the calculations of
$W_!\left(\partial \Omega[T] \right)(\vect{C})$,
$W_!\left(\Lambda^E[T] \right)(\vect{C})$
in 
\cite[Examples \ref{W-WPARTIALT_EX},\ref{W-WPARTIALT2_EX}]{BP_WCONS}.

Lastly, to see that the squares in
\eqref{WLEFTQPUSH_EQ} are pushout squares note that,
after taking nerves, it is clear that the left vertical inclusions attach precisely those dendrices 
missing from the right vertical inclusions.
I.e., upon applying the nerve functor, \eqref{WLEFTQPUSH_EQ}
induces pushouts in $\mathsf{dSet}^G$.
The result now follows since the nerve reflects colimits,
due to being a fully faithful right adjoint.
\end{proof}

\begin{proposition}[{cf. \cite[Prop. 4.9]{CM13b}}]
      \label{W!_LEFTQ_PROP}
      $W_! \colon \dSet^G \rightleftarrows \sOp^G\colon hcN$
      is a Quillen adjunction.
\end{proposition}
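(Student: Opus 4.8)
The plan is to verify the hypotheses of Corollary~\ref{SIMPLQUILL COR}: that $W_!$ preserves cofibrations and that $hcN$ preserves fibrations between fibrant objects. For cofibrations, since $W_!$ is a left adjoint it preserves the saturated class generated by the boundary inclusions $\partial\Omega[T]\into\Omega[T]$, $T\in\Omega_G$, so only these need be treated. Writing $T = G\cdot_H T_{\**}$ with $T_{\**}\in\Omega^H$ and using that $W_!$ intertwines the induction functors $G\cdot_H(-)$ on $\dSet$ and $\sOp$ (both of which preserve cofibrations), we reduce to the case $T\in\Omega^G$. If $T = G/H\cdot\eta$ this inclusion is sent to $\emptyset\to G/H\cdot\Omega(\eta)$, a generating cofibration (C1). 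If $T\neq\eta$, Lemma~\ref{WLEFTQPUSH LEM} exhibits $W_!(\partial\Omega[T]\to\Omega[T])$ as a pushout of $\Omega(C)\otimes_{\mathfrak{C}_{\bullet}}\bigl(\partial(\Delta[1]^{\times\boldsymbol{E}^{\mathsf{i}}(T)})\to\Delta[1]^{\times\boldsymbol{E}^{\mathsf{i}}(T)}\bigr)$ with $C=\mathsf{lr}(T)$; since cofibrations in the genuine model structure on $\sSet^G$ are exactly the monomorphisms, the cube boundary inclusion is a genuine cofibration, and Lemma~\ref{OPTENSCOF_LEM} finishes this case.

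For the other half, let $F\colon\O\to\P$ be a fibration between fibrant (equivalently, locally Kan) objects of $\sOp^G$; we must show $hcN\O$, $hcN\P$ are fibrant in $\dSet^G$ and that $hcN F$ is a fibration. By adjunction, $hcN F$ is a $G$-inner fibration iff $F$ has the right lifting property against the maps $W_!(\Lambda^E[T])\to W(T)$, and Lemma~\ref{WLEFTQPUSH LEM} expresses the latter as a pushout of $\Omega(C)\otimes_{\mathfrak{C}_{\bullet}}\bigl(\lambda^E(\Delta[1]^{\times\boldsymbol{E}^{\mathsf{i}}(T)})\to\Delta[1]^{\times\boldsymbol{E}^{\mathsf{i}}(T)}\bigr)$. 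The map $\lambda^E(\Delta[1]^{\times S})\to\Delta[1]^{\times S}$ is a monomorphism, hence a genuine cofibration in $\sSet^G$; passing to $K$-fixed points (which commutes with the products and unions of monomorphisms defining the relevant pushout product) yields a map of the same shape $\lambda^{E/K}(\Delta[1]^{\times S/K})\to\Delta[1]^{\times S/K}$ with $E/K\neq\emptyset$, which is anodyne because of the $\{1\}\to\Delta[1]$ factors; hence $\lambda^E(\Delta[1]^{\times S})\to\Delta[1]^{\times S}$ is a genuine trivial cofibration, and Lemma~\ref{OPTENSCOF_LEM} makes $W_!(\Lambda^E[T])\to W(T)$ a trivial cofibration in $\sOp^G$ against which the fibration $F$ lifts. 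Taking $\P=\**$ in the same argument shows $hcN\O$ is a $G$-$\infty$-operad.

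It remains to check the isofibration conditions in Theorems~\ref{DSETGMOD THM} and~\ref{SOPG_THM}: for each $H\leq G$, that $\tau\iota^{\**}\bigl((hcN\O)^H\to(hcN\P)^H\bigr)$ is an isofibration of categories. Since $\iota^{\**}(hcN\O)^H$ is the (categorical) homotopy coherent nerve of the locally Kan simplicial category $\iota^{\**}\O^H$, and $\tau$ of the homotopy coherent nerve of a locally Kan simplicial category is its homotopy category $\pi_0$, this map is identified with $\iota^{\**}\pi_0\O^H\to\iota^{\**}\pi_0\P^H$, which is an isofibration precisely because $F$ is a fibration in $\sOp^G$. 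With $\P=\**$ this upgrades ``$G$-$\infty$-operad'' to ``fibrant in $\dSet^G$'', completing the verification.

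The main obstacle is the second half, and within it the careful handling of the cubes $\Delta[1]^{\times\boldsymbol{E}^{\mathsf{i}}(T)}$ carrying the permutation $G$-action from Lemma~\ref{WLEFTQPUSH LEM}: one must check that $\lambda^E(\Delta[1]^{\times\boldsymbol{E}^{\mathsf{i}}(T)})\to\Delta[1]^{\times\boldsymbol{E}^{\mathsf{i}}(T)}$ is a genuine trivial cofibration in $\sSet^G$ (in particular that its $K$-fixed points again have the anodyne shape above), so that Lemma~\ref{OPTENSCOF_LEM} applies, and likewise to pin down the comparison $\tau\iota^{\**}hcN(\mathcal{C})\simeq\iota^{\**}\pi_0\mathcal{C}$ for locally Kan $\mathcal{C}$ that transports the isofibration condition across the nerve.
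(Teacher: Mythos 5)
Your proof is correct and follows essentially the same strategy as the paper's: verify Corollary~\ref{SIMPLQUILL COR} by combining Lemmas~\ref{WLEFTQPUSH LEM} and~\ref{OPTENSCOF_LEM} for the cofibration and inner-anodyne claims, then handle the remaining isofibration condition via the identification $\pi_0\mathcal Q\simeq\tau(hcN\mathcal Q)$ from \cite[Prop.~4.8]{CM13b}. You supply somewhat more detail than the paper on two points the paper leaves implicit — the reduction to $T\in\Omega^G$ via induction functors, and the verification that the cube pushout-products of Lemma~\ref{WLEFTQPUSH LEM} are genuine (trivial) cofibrations in $\sSet^G$ by taking $K$-fixed points — but both elaborations are accurate and consistent with what the paper is implicitly invoking.
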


\begin{proof}
	We verify the conditions in Corollary \ref{SIMPLQUILL COR}.
	Combining the pushouts in 
	Lemmas \ref{WLEFTQPUSH LEM} and \ref{OPTENSCOF_LEM}
	yields that 
	$W_!$ preserves cofibrations
	and sends $G$-inner anodyne extensions to trivial cofibrations.
	By adjunction,
	the latter claim implies that,
	if $f\colon \mathcal{O} \to \mathcal{P}$ is a fibration between fibrant objects,
	then 
	$hcN (f)\colon hcN \mathcal{O} \to hcN\mathcal{P}$
	is a $G$-inner fibration between $G$-$\infty$-operads.
	Thus, the remaining claim that 
	$hcN$ preserves fibrations between fibrant objects
	reduces to showing that the maps
	$\tau \iota^{\**} (h c N_d(f)^H) = \iota^{\**} \tau (h c N_d(f^H))$
	for $H \leq G$
	are isofibrations of (usual) categories
	(cf. Theorem \ref{DSETGMOD THM}).
	But since by definition 
	of fibration in $\mathsf{sOp}^G$
	the maps $\iota^{\**} \pi_0 f^H$ in \eqref{ESSSURJ EQ}
	are isofibrations,
	the result follows by the identification
	$\pi_0 \mathcal{Q} \simeq 
	\tau \left(h c N (\mathcal{Q}) \right)$ for fibrant operads $\mathcal{Q} \in \mathsf{sOp}$,
	cf. \cite[Prop. 4.8]{CM13b}.
\end{proof}

\begin{remark}\label{TWOHOMOP REM}
	The identification
	$\pi_0 \mathcal{Q} \simeq 
	\tau \left(h c N (\mathcal{Q}) \right)$ 
	in \cite[Prop. 4.8]{CM13b}
	used above
	identifies two procedures for discretizing 
	a simplicial operad $\mathcal{Q} \in \mathsf{sOp}$
	to obtain its \emph{homotopy operad}
	$\pi_0 \mathcal{Q} \in \mathsf{Op}$.

	Notably, however, neither 
	Proposition \ref{W!_LEFTQ_PROP}
	nor the original 
	\cite[Prop. 4.9]{CM13b}
	require the full strength
	of \cite[Prop. 4.8]{CM13b}, 
	as essential surjectivity depends only on the 
	the categories of unary operations within operads.
	Nonetheless, and in light of the fully faithful inclusions in 
	\eqref{ALLFULL EQ},
	it is natural to ask
	if \cite[Prop. 4.8]{CM13b}
	extends to the context of genuine operads.
	The answer to this question is affirmative,
	and is given by Proposition \ref{HOOPID_PROP}
	in Appendix \ref{HGEO AP}.
%
%
\end{remark}

We now turn to the proof of our main result,
Theorem \ref{QE THM}.

Recall that, given an object $X$ in a model category $\mathcal{M}$,
a simplicial frame for $X$ is a fibrant replacement
$c_!(X) \to \widetilde{X}(\bullet)$ of the constant 
simplicial object $c_!(X)$ in the Reedy model structure on $\mathcal{M}^{\Delta^{op}}$.
Moreover, if $X$ was already fibrant,
one is free to assume that $\widetilde{X}(0) = X$.

In addition, we will need some variants
of \cite[Prop. 4.5]{BP20},
regarding the categories
$\mathsf{sSet}^{\Delta^{op}}$
and 
$(\mathsf{sdSet}^G)^{\Delta^{op}}$.
We set some notation: for $X$ in one of these categories,
$X(m)$ (resp. $X_n$) denotes a level obtained by fixing the 
$(-)^{\Delta^{op}}$ simplicial coordinate
(resp. a simplicial coordinate in 
$\mathsf{sSet}$, $\mathsf{sdSet}^G$).
Further, $\delta^{\**}X$
denotes the object $X_n(n),n\geq 0$
given by the diagonal in the simplicial coordinates.

\begin{remark}\label{JOINTFIB REM}
	The proof of \cite[Prop. 4.5(ii)]{BP20}
	(or, alternatively, adapting \cite[Prop. 4.24(ii)]{BP20})
	shows that a Reedy fibrant $X(\bullet) \in \mathsf{sSet}^{\Delta^{op}}$
	is joint fibrant (i.e. its transpose swapping the two simplicial directions is also Reedy fibrant)
	iff the vertex maps $X(m) \to X(0)$
	(induced by the vertices $[0] \to [m]$)
	are Kan equivalences.
\end{remark}

\begin{lemma}\label{DIAGWE LEM}
If $X \in (\mathsf{sdSet}^G)^{\Delta^{op}}$ is
Reedy fibrant 
over the dendroidal Reedy model structure 
on $\mathsf{sdSet}^G$
and the vertex maps 
$X(m) \to X(0)$ are simplicial equivalences in $\mathsf{sdSet}^G$
then the two maps
\[
X(0) \to \delta^{\**} X \leftarrow X_0
\]
are also simplicial equivalences in $\mathsf{sdSet}^G$.
\end{lemma}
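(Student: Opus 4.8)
The plan is to deduce the statement, one $G$-tree at a time, from the analogous assertion about bisimplicial sets, which is the content of (a variant of) \cite[Prop.~4.5]{BP20}. Recall that a map in $\sdSet^G$ is a simplicial equivalence precisely when its image under each evaluation functor $(-)(\Omega[T]) = \dSet^G(\Omega[T],-) \colon \sdSet^G \to \sSet$, $T \in \Omega_G$, is a Kan equivalence. So I would begin by extending $(-)(\Omega[T])$ levelwise to $(\sdSet^G)^{\Delta^{op}} \to \sSet^{\Delta^{op}}$ and recording that, being at once a levelwise limit-preserving right adjoint and an operation computed pointwise in the two simplicial coordinates, it commutes with fixing an outer simplicial level, with fixing an inner simplicial level, with the diagonal, and with matching objects; concretely $\bigl(X(m)\bigr)(\Omega[T]) = \bigl(X(\Omega[T])\bigr)(m)$, $(X_n)(\Omega[T]) = \bigl(X(\Omega[T])\bigr)_n$, and $\bigl(\delta^{\**}X\bigr)(\Omega[T]) = \delta^{\**}\bigl(X(\Omega[T])\bigr)$. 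Thus it suffices to prove that, for each $T \in \Omega_G$, the maps $\bigl(X(\Omega[T])\bigr)(0) \to \delta^{\**}\bigl(X(\Omega[T])\bigr)$ and $\bigl(X(\Omega[T])\bigr)_0 \to \delta^{\**}\bigl(X(\Omega[T])\bigr)$ are Kan equivalences.

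Next I would check that $(-)(\Omega[T])$ carries dendroidal Reedy fibrations in $\sdSet^G$ to Kan fibrations in $\sSet$: since the trivial subgroup is a graph subgroup, and since matching objects of generalized Reedy fibrations are again generalized Reedy fibrations (the dual, for fibrations, of the argument behind Theorem~\ref{JB_THM}\ref{JTCFIB_MAP_LBL}), a dendroidal Reedy fibration $X \to Y$ induces Kan fibrations $X(\Omega[T]) \to Y(\Omega[T])$ for all $T \in \Omega_G$. Combined with the compatibility with matching objects noted above, this shows that if $X \in (\sdSet^G)^{\Delta^{op}}$ is Reedy fibrant over the dendroidal Reedy model structure then each $X(\Omega[T])$ is Reedy fibrant over the Kan model structure on $\sSet$; and the hypothesis that the vertex maps $X(m) \to X(0)$ are simplicial equivalences says exactly that the vertex maps of each bisimplicial set $X(\Omega[T])$ are Kan equivalences. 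It thus remains to establish, for a Reedy fibrant $Z \in \sSet^{\Delta^{op}}$ whose vertex maps $Z(m) \to Z(0)$ are Kan equivalences, that the canonical maps $Z(0) \to \delta^{\**}Z \leftarrow Z_0$ are Kan equivalences, and to apply this to $Z = X(\Omega[T])$.

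For the map $Z(0) \to \delta^{\**}Z$ this is formal: the hypothesis forces all structure maps $Z(0) \to Z(m)$ to be Kan equivalences (they are sections of the vertex maps), so the comparison $c(Z(0)) \to Z$ from the constant bisimplicial set is a levelwise Kan equivalence, and the diagonal---being a model for the homotopy colimit over $\Delta^{op}$---takes it to the Kan equivalence $Z(0) = \delta^{\**}(c(Z(0))) \to \delta^{\**}Z$. The map $Z_0 \to \delta^{\**}Z$ is the genuinely non-formal direction, and is exactly where Reedy fibrancy is used; here I would simply invoke \cite[Prop.~4.5]{BP20} (equivalently, the standard fact about homotopically constant Reedy fibrant simplicial spaces underlying Remark~\ref{JOINTFIB REM}). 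Accordingly, the main obstacle is packaging \cite[Prop.~4.5]{BP20} in exactly this bisimplicial form and verifying the bookkeeping identifications in the first step; the genuinely homotopical content lies entirely in the $Z_0 \to \delta^{\**}Z$ assertion for bisimplicial sets.
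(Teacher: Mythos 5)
Your proposal is correct and follows essentially the same route as the paper: reduce to bisimplicial sets via the functors $(-)(\Omega[T])$, establish Reedy fibrancy of each $X(\Omega[T])$ from the Reedy fibrancy of $X$ (using commutativity with matching objects), invoke Remark~\ref{JOINTFIB REM} for joint fibrancy, and then cite \cite[Prop.~4.5]{BP20}. The only (mild) variation is that you treat the map $Z(0)\to\delta^{\**}Z$ by the elementary "constant bisimplicial set / diagonal is a homotopy colimit" argument, whereas the paper derives both comparison maps from \cite[Prop.~4.5(iv)]{BP20} once joint fibrancy is in hand.
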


\begin{proof}
By definition,
we need to show that, for each $T \in \Omega_G$, the maps 
\[
	X(0)(\Omega[T]) \to 
	\delta^{\**} X (\Omega[T]) \leftarrow
	X_0 (\Omega[T])
\]	
are Kan equivalences in $\mathsf{sSet}$.
Both of these equivalences will follow from 
\cite[Prop. 4.5(iv)]{BP20}
provided we show that
$X(\Omega[T])$ is a joint fibrant object in $\mathsf{sSet}$.
And since the vertex maps 
$X(\Omega[T])(m) \to X(\Omega[T])(0)$
are Kan equivalences by assumption on $X$,
by Remark \ref{JOINTFIB REM} it remains only to check that
$X(\Omega[T])$
is Reedy fibrant in $\mathsf{sSet}^{\Delta^{op}}$.
For this last claim, 
note first that the Reedy fibrancy 
assumption on $X$ is that the matching maps 
(see, e.g., \cite[Thm. A.8]{BP20})
$X(m) \to M_m X(\bullet)$
are dendroidal fibrations in 
$\mathsf{sdSet}^G$.
Unpacking definitions,
this means that, for every normal monomorphism 
$A \to B$ in $\mathsf{dSet}^G$, the maps
\[
X(m)(B) \to 
X(m)(A) \underset{{M_m X(\bullet)(A)}}{\times} M_m X(\bullet)(B)
\]
are Kan fibrations.
But setting $A \to B$ to be $\emptyset \to \Omega[T]$
we get that the maps
$X(\Omega[T])(m) \to 
M_m X(\Omega[T])(\bullet)$
are Kan fibrations, i.e. that 
$X(\Omega[T])$ is Reedy fibrant
in $\mathsf{sSet}^{\Delta^{op}}$.
\end{proof}

\begin{proof}[Proof of Theorem \ref{QE THM}]
Consider the square of adjunctions on the left below 
(where we depict only the right adjoints).
We already know that all four adjunctions therein are Quillen,
and that those adjunctions other than the
$(W_!,hcN)$
adjunction are Quillen equivalences
(\cite[Thms. 4.30 and 4.41]{BP20} and Theorem \ref{PREQUIEQUIV THM}).
Next, 
we consider the induced diagram of homotopy categories
and derived functors on the right.
Crucially,
note that while 
the right Quillen functors $N$ and $hcN$
must be right derived, 
the left Quillen functors $\gamma^{\**}$ and $c_!$ do not, 
since they preserve all weak equivalences
(this holds for $\gamma^{\**}$ by definition,
cf. Theorem \ref{TAMEMS_THM},
and for $c_!$ since it sends weak equivalences to dendroidal equivalences, cf. Theorem \ref{JB_THM}\ref{SDEQUIV_LBL}).
\begin{equation}\label{SQINPROOF EQ}
\begin{tikzcd}
	\mathsf{PreOp}^G_{tame} 
&
	\mathsf{sOp}^G 
	\ar{l}[swap]{N}
	\ar{d}{hcN}
&&
	\Ho \mathsf{PreOp}^G 
	\ar{d}[swap]{\gamma^{\**}}{\sim}
&
	\Ho \mathsf{sOp}^G 
	\ar{l}[swap]{R N}{\sim}
	\ar{d}{R hcN}
\\
	\mathsf{sdSet}^G
	\ar{u}{\gamma_{\**}}
	\ar{r}[swap]{c^{\**}}
&
	\mathsf{dSet}^G
&&
	\Ho \mathsf{sdSet}^G
&
	\Ho \mathsf{dSet}^G
	\ar{l}{c_!}[swap]{\sim}
\end{tikzcd}
\end{equation}
Recalling that a Quillen adjunction
is a Quillen equivalence iff the induced adjunction of homotopy categories is an equivalence adjunction,
the desired claim that
$(W_!,hcN)$ is a Quillen equivalence will thus follow
provided we show that the right square in 
\eqref{SQINPROOF EQ}
commutes up to natural isomorphism.
In other words, we reduce to showing that
for fibrant operads
$\O \in \mathsf{sOp}^G$
there is a natural zigzag of joint equivalences
between 
$\gamma^{\**} N \O$ and
$c_! hcN \O$.

We now discuss this zigzag. Assume 
$\mathcal{O} \in \mathsf{sOp}^G$ is fibrant.
First, choose a (functorial) fibrant simplicial frame
$\widetilde{\mathcal{O}}(\bullet) \in (\mathsf{sOp}^G)^{\Delta^{op}}$, where we assume $\widetilde{\mathcal{O}} (0) = \mathcal{O}$.
Next, let 
$\gamma^{\**} N \widetilde{\mathcal{O}}(\bullet) 
\to \widetilde{Q}(\bullet)$
be a Reedy fibrant replacement in  
$(\mathsf{sdSet}^G)^{\Delta^{op}}$.
We note that 
both $\widetilde{\O}$ and $\widetilde{Q}$
have two simplicial directions:
the frame direction, whose levels are written as
$\widetilde{\O}(m),\widetilde{Q}(m)$,
and an internal direction
(determined by the simplicial levels in $\mathsf{sOp}^G,\mathsf{sdSet}^G$),
whose levels are written 
$\widetilde{\O}_n,\widetilde{Q}_n$.
Our desired zigzag of joint equivalences in $\mathsf{sdSet}^G$
will have the form below.
\begin{equation}\label{BIGZIG EQ}
\begin{tikzcd}[column sep =20pt]
	\gamma^{\**} N \mathcal{O} =
	\gamma^{\**} N \widetilde{\mathcal{O}} (0)
	\ar{r}{\sim}[swap]{(a)}
&
	\widetilde{Q}(0)
	\ar{r}{\sim}[swap]{(b)}
&
	\delta^{\**} \widetilde{Q}
	\ar[leftarrow]{r}{\sim}[swap]{(c)}
&	
	\widetilde{Q}_0
	\ar[leftarrow]{r}{\sim}[swap]{(d)}
&
	\left(\gamma^{\**} N \widetilde{\mathcal{O}}\right)_0
		\ar{r}{\sim}[swap]{(e)}
&
	hcN \widetilde{\mathcal{O}}
	\ar[leftarrow]{r}{\sim}[swap]{(f)}
&
	c_{!} hcN \mathcal{O}
\end{tikzcd}
\end{equation}

Firstly, the map (a) is a joint equivalence by definition of 
$\widetilde{Q}$.

Next, the maps 
(b),(c) are joint equivalences (in fact, simplicial equivalences)
by Lemma \ref{DIAGWE LEM}. 
Here, we note that though, a priori,
the maps $\widetilde{Q}(m) \to \widetilde{Q}(0)$
are only joint equivalences,
they are in fact simplicial equivalences,
since the levels $\widetilde{Q}(m)$ are joint fibrant,
cf. Theorem \ref{JB_THM}\ref{SFIB_JEQ_LBL},
\cite[Lemma A.29(i)]{BP20}.

To see that (d) is a joint equivalence,
note that one has identifications 
($\bullet$ tracks the simplicial index)
\begin{equation}\label{EQDMAPSP EQ}
\begin{tikzcd}[row sep=0,column sep=7pt]
	\widetilde{Q}_0(\Omega[T])(\bullet)
	\ar[equal]{r}
&
	\mathsf{sdSet}^G(\Omega[T],\widetilde{Q}(\bullet))
	\ar[equal]{r}
&
	\mathsf{PreOp}^G(\Omega[T],\gamma_{\**}\widetilde{Q}(\bullet))
\\
	\left(\gamma^{\**} N \widetilde{\mathcal{O}}\right)_0(\Omega[T])(\bullet)
	\ar[equal]{r}
&
	\mathsf{sdSet}^G(\Omega[T],\gamma^{\**} N \widetilde{\mathcal{O}}(\bullet))
	\ar[equal]{r}
&
	\mathsf{PreOp}^G(\Omega[T], N \widetilde{\mathcal{O}}(\bullet))
\end{tikzcd}
\end{equation}
in $\sSet$ for each $T \in \Omega_G$.
Next, since the counit maps 
$\gamma^{\**} \gamma_{\**} \widetilde{Q}(\bullet) \to \widetilde{Q}(\bullet)$
are joint equivalences
(due to $\gamma^{\**} \gamma_{\**} \widetilde{Q}(\bullet)$ already computing a derived functor)
in 
$\mathsf{sdSet}^G$,
one has that the maps
$N\widetilde{\O}(\bullet) \to \gamma_{\**}
\widetilde{Q}(\bullet)$
are joint equivalences in $\mathsf{PreOp}^G$.
Therefore, and since $\gamma_{\**}$ is right Quillen,
both $N\widetilde{\O}$ and $\gamma_{\**}\widetilde{Q}$
are simplicial frames for $N \O$
in the tame model structure
$\mathsf{PreOp}^G_{tame}$.
Thus, the fact that (d) is a joint equivalence
follows since both halves of \eqref{EQDMAPSP EQ}
compute the mapping space from 
$\Omega[T]$ to $N \O$
in the tame model structure
(this uses the observation that
$\Omega[T]$ is tame cofibrant, cf. Lemma \ref{OMEGATTAME_LEM}).

For (e), in which case we need also describe the map,
we consider the identifications
\begin{equation}\label{EQEMAPSP EQ}
\begin{tikzcd}[row sep=0,column sep=7pt]
	\left(\gamma^{\**} N \widetilde{\mathcal{O}}\right)_0(\Omega[T])(\bullet)
	\ar[equal]{r}
&
	\mathsf{PreOp}^G(\Omega[T], N \widetilde{\mathcal{O}}(\bullet))
	\ar[equal]{r}
&
	\mathsf{sOp}^G(\Omega(T), \widetilde{\mathcal{O}}(\bullet))
\\
	\left(hcN \widetilde{\mathcal{O}} \right)(\Omega[T])(\bullet)
	\ar[equal]{r} 
&
	\mathsf{sOp}^G(W(T),  \widetilde{\mathcal{O}}(\bullet))
\end{tikzcd}
\end{equation}
in $\mathsf{sSet}$ for each $T \in \Omega_G$.
The map (e) is then induced by the unique color preserving map
$W(T) \to \Omega(T)$ (recall that the mapping spaces of $\Omega(T)$ are either $\**$ or $\emptyset$).
Thus, since $W(T) \to \Omega(T)$ is a weak equivalence of cofibrant operads in $\sOp^G$
and $\widetilde{\O}$ is a simplicial frame for $\O$,
it follows that 
\eqref{EQEMAPSP EQ}
likewise computes mapping spaces, 
and thus (e) is indeed a weak equivalence. 

Lastly, that (f)
is a weak equivalence follows from the identification
$c_! hcN \mathcal{O} = hcN c_! \mathcal{O}$,
together with the fact that
$c_! \mathcal{O}(\bullet) \to \widetilde{\O}(\bullet)$
is a levelwise equivalence of levelwise fibrant operads
(where ``levelwise'' means ``for fixed $\bullet$'')
and $hcN \colon \sOp^G \to \dSet^G$ being right Quillen.
\end{proof}

\begin{remark}
	The previous proof is a close variation of 
	the proof of \cite[Thm. 8.14]{CM13b},
	although the equivariant context 
	forces us to use a more formal argument.
	
	More precisely, the given proof of \cite[Thm. 8.14]{CM13b}
	relies on 
	\cite[Thm 5.9(v)]{CM13b},
	which states that a preoperad
	$X \in \mathsf{PreOp}$
	is equivalent in $\mathsf{sdSet}$
	to the presheaf
	$T \mapsto \mathsf{Map}(\Omega[T],X)$
	for $T \in \Omega$
	(here $\mathsf{Map}(-,-)$
	denotes the homotopy space of maps).
	However, in the equivariant context
	the assignment 
	$T \mapsto \mathsf{Map}(\Omega[T],X)$
	for $T \in \Omega_G$
	does not produce a presheaf in 
	$\mathsf{sdSet}^G$
	(since the levels of such presheaves are indexed by
	$U \in \Omega$)
	but rather a presheaf in the category
	$\mathsf{sdSet}_G$
	of simplicial objects on $\mathsf{dSet}_G$ (cf. \eqref{UPSILONADJ EQ}),
	which does not appear in 
	\eqref{SQINPROOF EQ}.
	
	As such, rather than attempt to formulate and use
	an analogue of \cite[Thm 5.9(v)]{CM13b},
	our proof replaces the role of that result with 
	an explicit analysis of the simplicial framings
	needed to define the homotopy mapping spaces
	appearing in \cite[Thm 5.9(v)]{CM13b}.
\end{remark}

\begin{remark}
	There is a natural way to attempt to simplify the zigzag
	\eqref{BIGZIG EQ}
	in the a previous proof. 
	Namely, one may attempt to replace the first four maps therein
	with the simpler two map zigzag
\begin{equation}\label{SIMPZIG EQ}
	\gamma^{\**} N \widetilde{\O}(0)
\to 
	\delta^{\**} \gamma^{\**} N \widetilde{\O}
\leftarrow
	\left(\gamma^{\**} N \widetilde{\O}\right)_0.
\end{equation}
As it turns out, it can be shown that 
\eqref{SIMPZIG EQ} consists of weak equivalences, 
but our argument for this is substantially more involved than the argument given for \eqref{BIGZIG EQ}.

Briefly, if $\widetilde{X}$
in $\left(\mathsf{PreOp}^G_{tame}\right)^{\Delta^{op}}$
is a simplicial frame for $X$ in 
$\mathsf{PreOp}^G$,
one can find a levelwise simplicial equivalence
$\widetilde{X} \xrightarrow{\sim} \widetilde{Y}$
with $\widetilde{Y}$
a simplicial frame in
$\left(\mathsf{PreOp}^G_{normal}\right)^{\Delta^{op}}$.
One then has that 
$\widetilde{X}(0) \to \delta^{\**}\widetilde{X} \leftarrow \widetilde{X}_0$ 
consists of weak equivalences iff
$\widetilde{Y}(0) \to \delta^{\**}\widetilde{Y} \leftarrow \widetilde{Y}_0$ does,
and the latter can be shown by following the
(rather involved) proof of 
\cite[Prop. 5.41]{BP20}
with $\widetilde{Y}$ taking the role
of $X^{J^{\bullet}}$ therein.

As a side note, 
\cite[Prop. 5.41]{BP20} is one of the keys to the proof of
\cite[Thm. 5.48]{BP20}, 
which establishes the DK description of weak equivalences between fibrant objects in
$\mathsf{PreOp}^G$, $\mathsf{sdSet}^G$,
so our proof of Theorem \ref{QE THM}
does indirectly rely on \cite[Prop. 5.41]{BP20}.
\end{remark}

\section{Nerves of free extensions are homotopy pushouts}
\label{KEYRES SEC}

This section will prove the following key lemma,
which is the equivariant analogue of
\cite[Prop. 3.2]{CM13b}.
For a comparison with the work in \cite[\S 3]{CM13b},
see Remarks \ref{CHAREDCOMP REM}, \ref{CH01SUCKS REM}.

\begin{lemma}\label{KEYPR LEM}
	Suppose that $\mathcal{O} \in \mathsf{Op}^{G}$
	is $\Sigma$-cofibrant.
	Further, let $C \in \Sigma_G$ be any $G$-corolla and consider 
	a pushout in 
	$\mathsf{Op}^{G} = \mathsf{Op}^{G}(\mathsf{Set})$ 
	of the form below.
	\begin{equation}\label{PUSHOUTPROP EQ}
	\begin{tikzcd}
	\partial \Omega(C) \ar{r} \ar{d} & \mathcal{O} \ar{d}
	\\
	\Omega(C) \ar{r} & \mathcal{P}
	\end{tikzcd}
	\end{equation}
	Then the induced map
	\begin{equation}\label{ANODYNE MAP}
	\Omega[C] \amalg_{\partial \Omega[C]} N\mathcal{O} \to N\mathcal{P}
	\end{equation}
	is $G$-inner anodyne.
\end{lemma}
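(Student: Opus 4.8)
<br>

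The plan is to adapt the strategy of \cite[\S 3]{CM13b} to the equivariant setting, reducing everything to a careful filtration argument on the non-equivariant pieces. First I would set up the combinatorial framework: the operad $\mathcal{P}$ is obtained from $\mathcal{O}$ by freely adjoining a single (equivariant) operation indexed by $C$, and the nerve $N\mathcal{P}$ can be described explicitly in terms of ``reduced'' trees decorated by operations of $\mathcal{O}$ together with (possibly several) copies of the new generating operation. The key is a combinatorial analysis, analogous to the notion of a \emph{characteristic edge} / characteristic reduced composition in \cite{CM13b}, which organizes the dendrices of $N\mathcal{P}$ not already in $\Omega[C] \amalg_{\partial\Omega[C]} N\mathcal{O}$ according to how many times the new operation appears and along which inner edge the ``last'' such occurrence is attached. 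Using $\Sigma$-cofibrancy of $\mathcal{O}$ is essential here: it guarantees that the operations appearing are $\Sigma$-free, so the relevant free-action hypotheses needed to identify the attached cells with products of inner horn inclusions (rather than more degenerate quotients) hold. I would first do the non-equivariant bookkeeping on a chosen component $C_{\**}$ of $C$ with isotropy $H\leq G$, obtaining an $H$-equivariant filtration of $N\mathcal{P}_{\**}$ relative to $\Omega[C_{\**}] \amalg_{\partial\Omega[C_{\**}]} N\mathcal{O}_{\**}$ whose successive stages attach pushouts of maps of the form $\left(\Lambda^e[T] \to \Omega[T]\right)$ smashed (via pushout-products and the $\otimes_{\mathfrak{C}_\bullet}$ construction) with inclusions of finite $H$-sets.

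The second step is to assemble this into a $G$-equivariant statement. Writing $C \simeq G\cdot_H C_{\**}$, the nerve $N\mathcal{P} \simeq G\cdot_H N\mathcal{P}_{\**}$ and likewise $\Omega[C]\amalg_{\partial\Omega[C]} N\mathcal{O} \simeq G\cdot_H\left(\Omega[C_{\**}]\amalg_{\partial\Omega[C_{\**}]} N\mathcal{O}_{\**}\right)$, so applying the induction functor $G\cdot_H(-)$ to the $H$-equivariant filtration produces a filtration of the map \eqref{ANODYNE MAP} whose stages are pushouts of maps $G\cdot_H\left(\text{($H$-inner horn inclusion)}\boxtimes(\text{inclusion of }H\text{-sets})\right)$. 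Each such map is, by definition of the $G$-inner horn inclusions $\Lambda^E[T]\to\Omega[T]$ for $T\in\Omega_G$ (Definition \ref{DSETPRESHEAF_DEF}, Definition \ref{GINNERANN DEF}) together with the identifications $\Omega[G\cdot_H T_{\**}]\simeq G\cdot_H\Omega[T_{\**}]$, a $G$-inner anodyne extension; and $G$-inner anodyne extensions are saturated. Hence \eqref{ANODYNE MAP} is a transfinite composite of pushouts of $G$-inner anodyne maps, so it is $G$-inner anodyne.

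The main obstacle will be the combinatorial heart of the first step: precisely describing the non-$\mathcal{O}$-part of $N\mathcal{P}$ and setting up the ``characteristic data'' so that the attaching maps are genuinely inner horn inclusions and not something coarser. The subtleties here are exactly those handled in \cite[\S 3]{CM13b}: one must be careful that an inner edge chosen as ``characteristic'' is genuinely inner in the relevant subtree (this is where the tree-face combinatorics of \S\ref{FORESTS_SEC}, especially the factorization Proposition \ref{TREEFACT_PROP} and the notion of convex/outer faces in Remark \ref{CNVXM REM}, gets used) and that degeneracies interact correctly with the filtration. The $\Sigma$-cofibrancy hypothesis is what lets this argument go through equivariantly: without it, the $H$-action on the set of operations of $\mathcal{O}$ appearing in a given tree could have nontrivial stabilizers meeting $\Sigma_n$, and the attached cells would fail to be (inductions of) honest inner horn inclusions. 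I would organize the induction so that $\mathcal{O}$ being $\Sigma$-cofibrant is used exactly once, to reduce to the case where $\mathcal{O}$ is a free operad (via Proposition \ref{COPOFSIGCOF PROP} and a presentation of $\mathcal{O}$ as a suitable colimit), after which the combinatorics is entirely explicit — this mirrors the reduction in \cite[\S 3]{CM13b} and is, I expect, where the bulk of \S\ref{KEYRES SEC} is actually spent.
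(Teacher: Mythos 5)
The key structural step in your proposal --- writing $N\mathcal{P} \simeq G\cdot_H N\mathcal{P}_{\**}$ for some $H$-object $N\mathcal{P}_{\**}$, running an $H$-equivariant filtration on that, and then inducing up with $G\cdot_H(-)$ --- does not work, and this is the main gap. While it is true that $C \simeq G \cdot_H C_{\**}$ and hence $\Omega[C] \simeq G\cdot_H \Omega[C_{\**}]$, the pushout $\mathcal{P}$ and its nerve $N\mathcal{P}$ are not induced objects. The operad $\mathcal{O}$ is an arbitrary $\Sigma$-cofibrant $G$-operad whose operations and color set carry $G$-isotropy varying from operation to operation and in no sense induced from a single subgroup $H$; the pushout \eqref{PUSHOUTPROP EQ} freely adjoins the operation $C$, but the resulting $\mathcal{P}$ is built from both the new generator and the pre-existing $\mathcal{O}$-operations (cf.\ the formula \eqref{PUSHOPPR EQ}, where alternating trees have some vertices labeled by $\mathcal{O}$-operations). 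So there is no $H$-operad $\mathcal{P}_{\**}$ with $G\cdot_H N\mathcal{P}_{\**} \simeq N\mathcal{P}$, and the same objection applies to the claimed identification $\Omega[C]\amalg_{\partial\Omega[C]}N\mathcal{O} \simeq G\cdot_H\left(\Omega[C_{\**}]\amalg_{\partial\Omega[C_{\**}]}N\mathcal{O}_{\**}\right)$, since $N\mathcal{O}$ is not of this form. The filtration must be set up $G$-equivariantly from the outset, not built at $H$ and then induced.

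The paper does exactly that via the characteristic edge lemma, Lemma \ref{CHAREDGE LEM}, imported from \cite{BP20}: one equips a set of $\Sigma$-free principal subpresheaves $\langle y\rangle \subseteq N\mathcal{P}$ with a $G$-equivariant poset structure together with a $G$-equivariant assignment of characteristic inner edge sets $\Xi^{\langle y\rangle}$, and verifies conditions (Ch0.1)--(Ch3). In the proof of Lemma \ref{KEYPR LEM} the poset is that of the \emph{elementary} subpresheaves of $N\mathcal{P}$ (Definition \ref{DENDTYPES DEF}), ordered by inclusion, and the characteristic edges are those inner edges adjacent to a $\vect{C}$-labeled vertex (Definition \ref{XIEDGES DEF}). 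The $G$-equivariance of this data is built in; no reduction to a point subgroup occurs. Your intuition about the role of $\Sigma$-cofibrancy is roughly right, but the mechanism differs from what you describe: the paper uses $\Sigma$-cofibrancy of $\mathcal{O}$ (via Proposition \ref{COPOFSIGCOF PROP}) to conclude that $\mathcal{P}$ is $\Sigma$-cofibrant and hence that $N\mathcal{P}$ is a $\Sigma$-free dendroidal set, which is what the characteristic edge lemma requires and what makes the injectivity condition (Ch0.1) and Corollary \ref{ISODIFCL COR} go through. There is no reduction to the case of $\mathcal{O}$ a free operad. Incidentally, the paper also notes (Remark \ref{CHAREDCOMP REM}) that the argument in \cite[\S 3]{CM13b} has an apparent gap precisely around this injectivity condition, which the characteristic edge framework is designed to patch --- so even setting aside the induction issue, a too-literal adaptation of \cite{CM13b} would inherit that problem.
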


\subsection{The characteristic edge lemma}

The proof of Lemma \ref{KEYPR LEM}
will use the \emph{characteristic edge lemma}
\cite[Lemma 3.4]{BP20},
repackaged below as 
Lemma \ref{CHAREDGE LEM}
in a form better suited for our application.
We start with some notation.
For a discussion of (non-)degenerate 
dendrices in a dendroidal set,
see \cite[Prop. 5.62]{Per18}.

\begin{notation}
	Let $Y \in \mathsf{dSet}^G$ be a $G$-dendroidal set and 
	$y \colon \Omega[U^y] \to Y$
	a dendrex, $U^y \in \Omega$.
	
	We write $\langle y \rangle = y\left(  \Omega[U^y] \right)$
	and refer to
	$\langle y \rangle \subseteq Y$
	as the \emph{principal subpresheaf generated by $y$}.
	
	Moreover, if some (and thus any)
	non-degenerate representative $y$ is free 
	with respect to the $\mathsf{Aut}(U^y)$-action (via precomposition),
	we say $y$ and $\langle y \rangle$ are \emph{$\Sigma$-free}.
	If all dendrices $y$ are $\Sigma$-free, we say $Y$ itself is \textit{$\Sigma$-free}.
	
	Given a map of trees $V \to U^y$ we write $\partial_V y$ for the composite $\Omega[V] \to \Omega[U^y] \xrightarrow{y} Y$.
\end{notation}

\begin{remark}
	Note that
	$\langle y \rangle = \langle \bar{y} \rangle$
	iff $y,\bar{y}$ are both degeneracies of a common non-degenerate dendrex.
	In particular, if the chosen representatives $y,\bar{y}$ are both nondegenerate,
	there must exist an isomorphism
	$\varphi \colon U^y \xrightarrow{\simeq} U^{\bar{y}}$
	(which is unique if 
	$\langle y \rangle$ is $\Sigma$-free)
	such that $y= \bar{y} \circ \varphi$.
\end{remark}

\begin{notation}
	Given a $\Sigma$-free $\langle y \rangle$,
	a \emph{coherent inner edge set $E^{\langle y \rangle}$ for $\langle y \rangle$}
	is a collection of subsets 
	$E^y \subseteq \boldsymbol{E}^{\mathsf{i}}(U^y)$
	for each non-degenerate representative $y$ of $\langle y \rangle$, and such that 
	$E^{\bar{y}}  = \varphi \left(E^y \right)$
	for the unique $\varphi$ with $y= \bar{y} \circ \varphi$.
	Note that $E^{\langle y \rangle} = \left\{E^y \right\}$
	is entirely determined by any of the $E^y$.
	%
\end{notation}

\begin{remark}\label{PRINGACT REM}
	For $Y \in \mathsf{dSet}^G$, the group $G$
	acts on dendrices by postcomposition,
	i.e. $gy$ is the composite
	$\Omega[U^y] \xrightarrow{y} Y \xrightarrow{g} Y$.
	In particular, note that $U^{gy} = U^{y}$.
	Moreover, this action extends to principal subpresheaves as
	$g \langle y \rangle = \langle g y \rangle$.
	
	As such, if $\langle y\rangle$ is $\Sigma$-free, a coherent inner edge set 
	$E^{\langle y \rangle} = \{E^y \subseteq \boldsymbol{E}^{\mathsf{i}}(U^y)\}$
	for $\langle y \rangle$
	gives rise to a coherent inner edge set 
	$g E^{\langle y \rangle} = \{E^y \subseteq \boldsymbol{E}^{\mathsf{i}}(U^{gy})\}$
	for $g\langle y \rangle$
	with the same edge sets $E^y$.
\end{remark}

The following essentially replicates \cite[Def. 3.1]{BP20} as generalized in \cite[Rem. 3.7]{BP20},
except with dendrices
$y \colon \Omega[U^y] \to Y$
mostly replaced with the principal presheaves
$\langle y \rangle \subseteq Y$. 
The reformulation of (Ch0.2) and the descending chain condition
are discussed in Remarks \ref{CH02 REM}, \ref{DCC REM}.

\begin{definition}\label{CHAREDGE DEF}
	Let $f\colon X \to Y$ be a monomorphism in 
	$\mathsf{dSet}^G$ and 
	$\left\{ \langle y \rangle\right\}$
	a set of $\Sigma$-free principal subpresheaves of $Y$. 
	Suppose further that 
	$\left\{ \langle y \rangle \right\}$ is equipped 
	with a poset structure compatible with the $G$-action
	in Remark \ref{PRINGACT REM}
	and which satisfies the descending chain condition.
	For each $\langle y \rangle$ denote
	\[
	X_{< \langle y \rangle} = X \cup 
	\bigcup_{\langle\bar{y}\rangle < \langle y \rangle} \langle \bar{y} \rangle
	\]
	Given a coherent inner edge set 
	$
	\Xi^{\langle y \rangle} =
	\left\{ \Xi^y \subseteq \boldsymbol{E}^{\mathsf{i}}(U^y)\right\}$,
	non-degenerate representative
	$y \colon \Omega[U^y] \to Y$, and a subface $V \hookrightarrow U^y$,
	we write
	$\Xi^y_V = \Xi^y \cap \boldsymbol{E}^{\mathsf{i}}(V)$.
	
	We say
	$
	\left\{ \Xi^{\langle y \rangle} \right \} 
	$
	is a \emph{characteristic inner edge collection 
	of $\left\{ \langle y \rangle \right\}$ with respect to $X$} if,
	for some (and thus any) choice of non-degenerate representatives $y\colon \Omega[U^y] \to Y$,
	one has that:
	\begin{enumerate}
		\item[(Ch0.1)] $y \colon \Omega[U^y] \to Y$ is injective away from
		$y^{-1}\left( X_{< \langle y \rangle} \right)$; 
		\item[(Ch0.2)]
		$\{\langle y\rangle\}$ and
		$\{\Xi^{\langle y \rangle}\}$ are $G$-equivariant, in the sense that
		$g\langle y\rangle \in \{\langle y\rangle\}$ and 
		$g \Xi^{\langle y \rangle} =
		\Xi^{g \langle y \rangle}$,
		i.e. $\Xi^y = \Xi^{gy}$;
		\item[(Ch1)] if $V \hookrightarrow U^y$ is an outer face and $\Xi^y_V = \emptyset$,
		then $\langle \partial_V (y) \rangle \subseteq X_{< \langle y \rangle}$;
		\item[(Ch2)] if $V \hookrightarrow U^y$ is any face and
		$\langle \partial_{V-\Xi^y_V} (y)\rangle \subseteq X$,
		then
		$\langle \partial_V (y)\rangle \subseteq X_{< \langle y \rangle}$;
		\item[(Ch3)] if $\langle \bar{y} \rangle \not \geq \langle y \rangle$,
		$V \hookrightarrow U^y$,
		and
		$\langle \partial_{V-\Xi^y_V} (y)\rangle \subseteq \langle \bar{y} \rangle$,
		then
		$\langle \partial_V (y)\rangle \subseteq X_{< \langle y \rangle}$.
	\end{enumerate}
\end{definition}

\begin{remark}\label{CH02 REM}
	In \cite[Rem. 3.7]{BP20} the role of each presheaf 
	$\langle y \rangle$ is played by a special chosen representative,
	which we here denote by
	$y^{\mathsf{pl}} \in \langle y \rangle$. 
	The motivation for this is that in some key examples, such as in \cite[Ex. 3.9]{BP20}, one can choose preferred ``planar representatives for principal presheaves'',
	allowing for a pictorial depiction of the dendrices and poset as in
	\cite[Fig. 3.1]{BP20}. 
	
	There is then a bijection $\{\langle y \rangle\} = \{ y^{\mathsf{pl}}\}$ between principal presheaves and the set of representatives, but while the former has a $G$-action the latter a priori does not (as $gy^{\mathsf{pl}}$ may not be planar). Translating the $G$-action along this bijection one has that the action of $g$ on $y^{\mathsf{pl}}$ gives
	$(g y^{\mathsf{pl}})^{\mathsf{pl}}$ and (ii),(iii) in 
	(Ch0.2) of \cite[Rem. 3.7]{BP20} precisely 
	encode this action on planar representatives.
\end{remark}

\begin{remark}\label{DCC REM}
	Recall that a poset satisfies the 
	\emph{descending chain condition}
	
        As such, while  \cite[Lemma 3.4]{BP20}
	assumed that the poset $\{\langle y \rangle\}$ was finite,
	since its proof follows by iteratively adding elements to $G$-equivariant convex subsets of the poset (cf. the last paragraph of the proof in loc. cit.), 
	the argument generalizes to any poset satisfying the descending chain condition.
\end{remark}

\begin{lemma}[{cf. \cite[Lemma 3.4]{BP20}}]
	\label{CHAREDGE LEM}
	If
	$
	\left\{ \Xi^{\langle y \rangle} \right \} 
	$
	is a \emph{characteristic inner edge collection 
	of $\left\{ \langle y \rangle \right\}$ with respect to $X$} then
	\begin{equation}\label{CHAREDGE EQ}
	X \to X \cup \bigcup_{\{\langle y \rangle\}} \langle y \rangle
	\end{equation}
	is $G$-inner anodyne.
\end{lemma}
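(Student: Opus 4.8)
The plan is to deduce this from \cite[Lemma 3.4]{BP20}, together with the generalization of its hypotheses recorded in \cite[Rem. 3.7]{BP20}; the only real work is to match the present packaging (principal subpresheaves, $G$-equivariance, descending chain condition) with the packaging there (planar representatives, finite poset). First I would replace each principal subpresheaf $\langle y \rangle$ by a chosen non-degenerate planar representative $y^{\mathsf{pl}} \in \langle y \rangle$, which exists and is unique since $\langle y \rangle$ is $\Sigma$-free. As explained in Remark \ref{CH02 REM}, transporting the poset structure and the $G$-action on $\{\langle y \rangle\}$ along the bijection $\{\langle y \rangle\} \leftrightarrow \{y^{\mathsf{pl}}\}$ yields a poset with $G$-action on the set of representatives (the transported action need not send a planar representative to a planar one), and unwinding the definitions this recovers conditions (ii) and (iii) of (Ch0.2) in \cite[Rem. 3.7]{BP20}. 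Restricting the coherent inner edge sets $\Xi^{\langle y \rangle}$ to these representatives produces subsets $\Xi^{y^{\mathsf{pl}}} \subseteq \boldsymbol{E}^{\mathsf{i}}(U^{y^{\mathsf{pl}}})$, and since (Ch0.1), (Ch1), (Ch2), (Ch3) are phrased purely in terms of subfaces $V \hookrightarrow U^y$ and the subpresheaves $\langle \partial_V y \rangle$ they generate --- data unchanged under the isomorphism relating two non-degenerate representatives --- these conditions translate verbatim into the corresponding hypotheses of \cite[Rem. 3.7]{BP20}. Thus, in the case where $\{\langle y \rangle\}$ is finite, \eqref{CHAREDGE EQ} is $G$-inner anodyne by \cite[Lemma 3.4]{BP20}.

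Second, I would remove the finiteness assumption using the descending chain condition, as indicated in Remark \ref{DCC REM}. The idea is to build a transfinite increasing chain $\emptyset = S_0 \subseteq S_1 \subseteq \cdots$ of $G$-equivariant downward-closed subsets of the poset $\{\langle y \rangle\}$: at a successor stage, if $S_\alpha$ is not all of $\{\langle y \rangle\}$, use the descending chain condition (equivalently, well-foundedness) to pick a minimal element of the complement and let $S_{\alpha+1}$ be $S_\alpha$ together with its $G$-orbit, which is still downward closed by $G$-equivariance of the order (and is in fact an antichain, so the ``attach one $G$-orbit'' step is unambiguous); at a limit stage take unions. This chain strictly increases until it exhausts $\{\langle y \rangle\}$, hence stabilizes after some ordinal number of steps. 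Writing $X_\alpha = X \cup \bigcup_{\langle \bar y \rangle \in S_\alpha} \langle \bar y \rangle$, each map $X_\alpha \to X_{\alpha+1}$ attaches a single $G$-orbit of principal presheaves on top of $X_\alpha$ --- and since every element strictly below a member of that orbit already lies in $S_\alpha$, the inner workings of the proof of \cite[Lemma 3.4]{BP20} apply: one filters $\Omega[U^y]$ from the inside out by the faces containing the characteristic inner edges $\Xi^y$, uses (Ch1)--(Ch3) to identify the successive attaching maps with pushouts of $G$-inner horn inclusions $\Lambda^E[T] \to \Omega[T]$, and uses $\Sigma$-freeness to keep the $G$-action free on the newly attached cells, concluding that $X_\alpha \to X_{\alpha+1}$ is $G$-inner anodyne. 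Because $G$-inner anodyne maps form a saturated class, they are closed under transfinite composition, so the composite $X = X_0 \to \colim_\alpha X_\alpha = X \cup \bigcup_{\{\langle y \rangle\}}\langle y \rangle$ is $G$-inner anodyne as well.

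The only genuinely new point compared with \cite[Lemma 3.4]{BP20} is the bookkeeping in the second paragraph: verifying that the transfinite recursion through $G$-equivariant downward-closed subsets is well-founded and terminates, and that closure of $G$-inner anodyne maps under transfinite composition suffices to conclude. All the substantive combinatorics --- the inside-out filtration by characteristic inner edges and the identification of its graded pieces with $G$-inner horns --- is already carried out in loc. cit. and is invoked unchanged. I expect the only care needed is in confirming that the single-orbit-attachment step in that proof uses finiteness of the poset only \emph{below} the orbit being added (which the descending chain condition supplies), and not the finiteness of the whole poset $\{\langle y \rangle\}$.
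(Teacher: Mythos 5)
Your argument is correct and takes essentially the same route the paper indicates: translating the principal-subpresheaf packaging into the planar-representative packaging of \cite[Rem. 3.7]{BP20} (exactly the content of Remark \ref{CH02 REM}), invoking \cite[Lemma 3.4]{BP20} for the finite case, and then extending to the infinite case by the descending chain condition as sketched in Remark \ref{DCC REM}. Your spelled-out transfinite induction through $G$-equivariant downward-closed subsets (adding a $G$-orbit of minimal elements at each successor stage and using closure of $G$-inner anodynes under transfinite composition) is a clean formalization of what that remark asserts in passing.
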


\subsection{Proof of the key lemma}

This section will prove Lemma \ref{KEYPR LEM}
by applying the characteristic edge lemma, 
Lemma \ref{CHAREDGE LEM}.
This will require a fair bit of preparation,
starting with a description of the pushout operad
$\mathcal{P}$ in \eqref{PUSHOUTPROP EQ}.

First, we let $\mathfrak{C} = \mathfrak{C}_{\O}$
and write 
$\vect{C} = (C,\mathfrak{c})$
for 
$\mathfrak{c} \colon
\boldsymbol{E}(C) \to \mathfrak{C}$
the map of colors induced
by the top horizontal map in \eqref{PUSHOUTPROP EQ}.
One then has identifications
(cf. \eqref{OMFFREE EQ} and Remark \ref{CHECKF REM})
\[
\check{\mathfrak{c}}_{!} \Omega(C) 
\simeq 
\check{\mathfrak{c}}_{!} 
\left( \mathbb{F} \Sigma_{\tau}[C] \right)
\simeq 
\mathbb{F} 
\left(\mathfrak{c}_{!}  \Sigma_{\tau}[C] \right)
\simeq
\mathbb{F} 
\left( \Sigma_{\mathfrak{C}}[\vect{C}] \right)	
\]
\[
\check{\mathfrak{c}}_{!} \partial \Omega(C) 
\simeq 
\check{\mathfrak{c}}_{!} 
\left( \mathbb{F} \emptyset_{\boldsymbol{E}(C)} \right)
\simeq 
\mathbb{F} 
\left(\mathfrak{c}_{!} \emptyset_{\boldsymbol{E}(C)} \right)
\simeq
\mathbb{F} 
\left( \emptyset_{\mathfrak{C}} \right)	
\]
(for $\emptyset_{\mathfrak{C}}$
the initial object in 
$\mathsf{sSym}^G_{\mathfrak{C}}$)
allowing us to rewrite \eqref{PUSHOUTPROP EQ}
as the following pushout in 
$\mathsf{Op}^G_{\mathfrak{C}}$
\begin{equation}
\begin{tikzcd}
\mathbb{F} ( \emptyset_{\mathfrak{C}} ) \ar{r} \ar{d}
&
\mathcal{O} \ar{d}
\\
\mathbb{F} \left( 
\Sigma_{\mathfrak{C}}[\vect{C}] \right) \ar{r}
&
\mathcal{P}.
\end{tikzcd}
\end{equation}
By \cite[Lemma \ref{OC-OURE LEM}]{BP_FCOP}
with $u\colon X \to Y$
the map $\emptyset_{\mathfrak{C}} \to \Sigma_{\mathfrak{C}}[\vect{C}]$
(and as further detailed in 
\cite[Rem. \ref{OC-FILTPUSH REM}]{BP_FCOP})
one then has,
for each $\mathfrak{C}$-corolla
$\vect{D} \in \Sigma_{\mathfrak{C}}$,
a formula
\begin{equation}\label{PUSHOPPR EQ}
\mathcal{P}(\vect{D}) 
	\simeq 
\coprod_{
	[\vect{T}] \in \mathsf{Iso}
	\left(\vect{D} \downarrow \Omega_{\mathfrak{C}}^a \right)
}
\left(
\prod_{v \in \boldsymbol{V}^{ac}(T)} \mathcal{O}(\vect{T}_v)
\times
\prod_{v \in \boldsymbol{V}^{in}(T)} \Sigma_{\mathfrak{C}}[\vect{C}](\vect{T}_v)
\right)
\cdot_{\mathsf{Aut}_{\Omega^a_{\mathfrak{C}}}(\vect{T})} \mathsf{Aut}_{\Sigma_{\mathfrak{C}}}(\vect{D})
\end{equation}
(where,
since $X = \emptyset_{\mathfrak{C}}$,
it is likewise always
$Q^{in}_{\vect{T}}[u] = \emptyset_{\mathfrak{C}}$
in \cite[Lemma \ref{OC-OURE LEM}]{BP_FCOP}).

To explain the notation in \eqref{PUSHOPPR EQ},
we need to recall the category
$\Omega^a$ of \emph{alternating trees} 
\cite[Def. 5.52]{BP21}.
First, we consider trees whose vertices are partitioned 
$\boldsymbol{V}(T) = 
\boldsymbol{V}^{ac}(T) \amalg \boldsymbol{V}^{in}(T)$
into $\bullet$-labeled vertices, called active,
and $\circ$-labeled vertices, called inert,
as in Example \ref{ALTTREES_EX}. 
Such trees are called \emph{alternating} if each edge is adjacent to exactly one $\bullet$-labeled vertex
(i.e. if adjacent vertices have different labels $\bullet,\circ$ and vertices adjacent to the root and leaves are $\bullet$-labeled).
$\Omega^a_{\mathfrak{C}}$
is then defined by coloring edges, 
cf. Definition \ref{CFOREST_DEF}.
Further, as in \eqref{FREEOP_EQ},
one has a leaf-root functor
$\mathsf{lr} \colon 
\Omega^a_{\mathfrak{C}} \to \Sigma_{\mathfrak{C}}$
given by 
$\vect{T} = (T,\mathfrak{c})
\mapsto
(\mathfrak{c}(l_1),\cdots,\mathfrak{c}(l_n);
\mathfrak{c}(r))$
for $l_i$ the leaves and $r$ the root,
which gives rise to the undercategory
$\vect{D} \downarrow \Omega^a_{\mathfrak{C}}$
appearing in \eqref{PUSHOPPR EQ}.

\begin{example}
        In the following, $T$ is alternating while $S$ is not.
        \label{ALTTREES_EX}
        \begin{equation}
                \begin{tikzpicture}[grow=up,auto,level distance=1.9em,
                        every node/.style = {font=\footnotesize},
                        dummy/.style={circle,draw,inner sep=0pt,minimum size=2.1mm}]
                        \tikzstyle{level 2}=[sibling distance = 4em]
                        \tikzstyle{level 3}=[sibling distance = 3em]
                        \tikzstyle{level 4}=[sibling distance = 4em]
                        \tikzstyle{level 5}=[sibling distance = 2.5em]
                        \node at (0,0) [font=\normalsize]{$T$}
                        child{node [dummy,fill = black] {}
                          child{node [dummy,fill=white] {}
                            child{node [dummy,fill=black] {}
                              child{node [dummy,fill=white] {}
                                child{node [dummy,fill = black] {}
                                  child{
                                    edge from parent node [swap] {$g_1$}}
                                  edge from parent node [swap,near end] {$g_2$}}
                                child{node [dummy,fill = black] {}
                                  child{node [dummy,fill=white] {}
                                    edge from parent node  {$e$}}
                                  edge from parent node [near end] {$f$}}
                                edge from parent node [swap] {$h_1$}}
                              edge from parent node [swap] {$h_2$}}
                            edge from parent node [swap] {$i$}}
                          child{node [dummy,fill=white] {}
                            edge from parent node [swap, near end] {$d\phantom{1}$}}
                          child{node [dummy,fill=white] {}
                            child{node [dummy,fill=black] {}
                              child{
                                edge from parent node [swap] {$b_1$}}
                              edge from parent node [swap,near end] {$b_2$}}
                            child{node [dummy,fill=black] {}
                              child{
                                edge from parent node {$\phantom{b}a_1$}}
                              edge from parent node [near end] {$\phantom{b}a_2$}}
                            edge from parent node {$\phantom{1}c$}}
                          edge from parent node [swap] {$r$}};
                        
                        \begin{scope}[level distance=1.9em]
                                \tikzstyle{level 2}=[sibling distance = 4em]
                                \tikzstyle{level 3}=[sibling distance = 3em]
                                \tikzstyle{level 4}=[sibling distance = 2.25em]
                                \tikzstyle{level 5}=[sibling distance = 1.75em]
                                \node at (9,0) [font = \normalsize] {$S$}
                                child{node [dummy,fill = black] {}
                                  child{node [dummy,fill = white] {}
                                    child{node [dummy,fill=white] {}
                                      child{
                                        edge from parent node [swap] {$g$}}
                                      child{node [dummy,fill = black] {}
					child{node [dummy,fill=black] {}
					}
					child{node [dummy,fill=white] {}
                                          edge from parent node [near end] {$e$}}
                                        edge from parent node [near end] {$f$}}
                                      edge from parent node [swap] {$h\phantom{1}$}}
                                    edge from parent node [swap] {$i$}}
                                  child{node [dummy,fill = black] {}
                                  }
                                  child{node [dummy,fill = black] {}
                                    child{node [dummy,fill=white] {}
                                      edge from parent node [swap, near end] {$d$}}
                                    child{node [dummy,fill=white] {}
                                      child{
                                        edge from parent node [swap,near end]{$b$}}
                                      child{
                                        edge from parent node [near end]{$\phantom{b}a$}}
                                      edge from parent node [near end] {$\phantom{d}c$}}
                                  }
                                  edge from parent node [swap] {$r$}};
                        \end{scope}
                        \draw [->] (3,0.75) -- node [swap] {$\varphi$} (6,0.75);
                \end{tikzpicture}
        \end{equation}
\end{example}

Before proceeding, we recall two key properties of the map $\varphi$ above that will be used in what follows.
First, for a vertex $T_v$ of $T$, write $S_v$ for the outer face of $S$ whose outer edges are the image of $T_v$
(alternatively,
$T_v \to S_v \to S$ is the ``tall-outer factorization''
of $T_v \to T \to S$).
Then, $\varphi$ is a \emph{label map},
meaning that if $T_v$ has a $\bullet$-label 
(resp. $\circ$-label) then so do all vertices in $S_v$.
Moreover, $\varphi$ is \emph{$\circ$-inert}, 
meaning that if $T_v$ has a $\circ$-label then $S_v$ is a corolla
(on a terminological note, $\circ$-labeled nodes are called inert precisely because we work only with $\circ$-inert maps).

The decomposition \eqref{PUSHOPPR EQ}
will be the key to verifying the 
characteristic edge conditions in Definition \ref{CHAREDGE DEF}.
To do so, we will first find it useful to discuss a number of special types of dendrices and principal subpresheaves of $N \mathcal{P}$, suggested by \eqref{PUSHOPPR EQ}.
Recall that, by the strict Segal condition characterization of nerves \cite[Cor. 2.6]{CM13a},
a dendrex $p \colon \Omega[U] \to N \mathcal{P}$
is uniquely specified by the tree $U \in \Omega$ together with
a coloring
$\boldsymbol{E}(U) \to \mathfrak{C}$
and a choice of operations
$\{p_v \in \mathcal{P}(\vect{U}_v)\}_{v \in \boldsymbol{V}(U)}$.
Moreover,
for $\vect{D} = (D,\mathfrak{d})$ a $\mathfrak{C}$-corolla,
we will throughout make use of the decomposition 
\begin{equation}\label{DECONCOR EQ}
	\left(
	\Omega[C] \amalg_{\partial \Omega[C]} N \O
	\right)_{\mathfrak{d}} (D)
\simeq
	\Sigma_{\mathfrak{C}}[\vect{C}](\vect{D}) \amalg \O(\vect{D})
\end{equation}
where we recall that the left side 
is an instance of the $X_{\mathfrak{c}}(U)$ notation
in Notation \ref{XUDECALT NOT}.

\begin{definition}\label{DENDTYPES DEF}
	A dendrex $p\colon \Omega[U] \to N \mathcal{P}$ 
	is called:
	\begin{itemize}
		\item \emph{elementary} if for each vertex $U_v \hookrightarrow U$,
                        one has $ \partial_{U_v} p \in \mathcal{O} \amalg \Sigma_{\mathfrak{C}}[\vect{C}]$, 
                        cf. \eqref{DECONCOR EQ};
		\item \emph{alternating} if $U \in \Omega^a$ is an alternating tree
                        and for each active (resp. inert) vertex 
		$U_v \hookrightarrow U$ one has
		$\partial_{U_v} p \in \O$
		(resp. $ \partial_{U_v} p \in \Sigma_{\mathfrak{C}}[\vect{C}]$);
		\item \emph{canonical} if it is non-degenerate and has a degeneracy which is alternating.
	\end{itemize}
\end{definition}

\begin{definition}
	Let $\langle p \rangle \subseteq N \mathcal{P}$ be 
	a principal subpresheaf. 
	We say $\langle p \rangle$ is:
	\begin{itemize}
		\item \emph{unital} if there is a representative
		$p\colon \Omega[U] \to N \mathcal{P}$ with $U=\eta$ the stick tree;
		\item \emph{reduced} if there is a representative
		$p\colon \Omega[U] \to N \mathcal{P}$ with $U \in \Sigma$, i.e. with $U$ a corolla;
		\item \emph{elementary} 
		if there is an elementary representative
		$p\colon \Omega[U] \to N \mathcal{P}$;
		\item \emph{canonical} 
		if there is a canonical (equivalently, alternating) representative
		$p\colon \Omega[U] \to N \mathcal{P}$.
	\end{itemize}
\end{definition}

\begin{remark}
	A dendrex is elementary iff its degeneracies are elementary,
	so the definition of elementary subpresheaf does not depend on the choice of representative.
\end{remark}

\begin{notation}\label{REDUCT NOT}
	Recalling that any tree $U$
	has an associated corolla $\mathsf{lr}(U)$ (counting leaves of $U$) together with a map
	$\mathsf{lr}(U) \to U$, 
	we abbreviate
	$\partial_r p = \partial_{\mathsf{lr}(U)}p$
	and call
	$\langle \partial_r p \rangle$
	the \emph{reduction} of  
	$\langle p \rangle$.
\end{notation}

\begin{remark}\label{PUSHOPPRRST REM}
        A reduced principal subpresheaf $\langle r \rangle \subseteq N \mathcal P$
        is simply the data of a single operation in $\mathcal P$,
        and so the coproduct decomposition of 
        \eqref{PUSHOPPR EQ} 
	implies that
        for each such $\langle r \rangle$,
	there exists an alternating dendrex $a \colon \Omega[T] \to N \mathcal P$, 
	\emph{unique up to isomorphism}, 
	such that
	$\langle \partial_r a \rangle = \langle r \rangle$.
	%
	Moreover, $\langle a \rangle$ is thus the only canonical subpresheaf with reduction 
	$\langle r \rangle$,
	and we write
	$\langle r \rangle_{\chi} = \langle a \rangle$
	to denote this.
	
	Lastly, note that one thus has that 
	$\langle p \rangle$ is canonical iff
	$\langle p \rangle = \langle \partial_r p \rangle_{\chi}$.
\end{remark}

\begin{remark}\label{UNITALCASE REM}
	A reduced subpresheaf $\langle r \rangle$
	is unital iff 
	$\langle r \rangle_{\chi}$ is unital, in which case
	$\langle r \rangle = \langle r \rangle_{\chi}$.
\end{remark}

\begin{remark}\label{ELEMLABEL REM}
	If $e \colon \Omega[U^e] \to N \mathcal{P}$
	is an elementary dendrex, 
	the tree $U^e$ can be naturally regarded as an
	$\{\O,\vect{C}\}$-labeled tree by labeling each vertex $U^e_v \into U^e$ according to whether
	$\partial_{U^e_v} e \in \O $ or
	$ \partial_{U^e_v} e \in \Sigma_{\mathfrak{C}}[\vect{C}]$.
	
	By the alternating tree analogue of
        \cite[Prop. 5.57]{BP21},
        there is hence a unique alternating tree $U^a$ together with a tall planar $\vect{C}$-inert label map $U^a \to U^e$,
	and it then follows that
	$\partial_{U^a} e$ is an alternating dendrex so that
	$\langle \partial_{U^a} e \rangle = \langle \partial_r e \rangle_{\chi}$.
	In particular, this shows that
	$\langle \partial_r e \rangle_{\chi} \subseteq 
	\langle e \rangle$.
\end{remark}

\begin{example}
	Should $U^e$ in the previous remark be the tree $S$ in 
	Example \ref{ALTTREES_EX},
        then $U^a \to U^e$ is the map
	$\varphi\colon T \to S$ depicted therein. 
\end{example}

\begin{definition}\label{XIEDGES DEF}
	Let $e \colon \Omega[U^e] \to N \mathcal{P}$ be a non-degenerate elementary dendrex. 
	We write
	$\Xi^e \subseteq \boldsymbol{E}^{\mathsf{i}}(U^e)$
	for the subset of inner edges that are adjacent to at least one $\vect{C}$-labeled vertex.
\end{definition}

\begin{remark}\label{XIEREDEF REM}
	Let $e \colon \Omega[U^e] \to N \mathcal{P}$ be a non-degenerate elementary dendrex, 
	$U^a \to U^e$ be as in Remark \ref{ELEMLABEL REM}, 
	and write $a = \partial_{U^a} e$.
	Since $U^a$ is alternating, all of its inner edges are adjacent to a 
	$\vect{C}$-labeled vertex. 
	Therefore, the fact that $U^a \to U^e$ is a tall $\vect{C}$-inert label map
	implies that $\Xi^e$ consists of those inner edges which are in the image of $U^a$.
\end{remark}

\begin{proposition}\label{CANIFFXIE PROP}
	Let $e \colon \Omega[U^e] \to N \mathcal{P}$ be a non-degenerate elementary dendrex.
	Then $\langle e \rangle$
	is canonical iff $\Xi^e = \boldsymbol{E}^{\mathsf{i}}(U)$.
\end{proposition}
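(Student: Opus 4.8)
The statement to prove is Proposition \ref{CANIFFXIE PROP}: for a non-degenerate elementary dendrex $e \colon \Omega[U^e] \to N\mathcal{P}$, the principal subpresheaf $\langle e \rangle$ is canonical iff $\Xi^e = \boldsymbol{E}^{\mathsf{i}}(U^e)$. The plan is to run both implications off of the description of the comparison map $U^a \to U^e$ from Remark \ref{ELEMLABEL REM} and the identification in Remark \ref{XIEREDEF REM} of $\Xi^e$ as the set of inner edges in the image of $U^a$.

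\textbf{Direction ($\Leftarrow$).} First I would assume $\Xi^e = \boldsymbol{E}^{\mathsf{i}}(U^e)$ and show $\langle e \rangle$ is canonical. By Remark \ref{XIEREDEF REM}, $\Xi^e$ consists exactly of the inner edges lying in the image of the tall $\vect{C}$-inert label map $U^a \to U^e$. Since $U^a \to U^e$ is tall it is surjective on leaves and sends root to root, so it is already surjective on \emph{all} edges once it is surjective on inner edges; hence the hypothesis forces $U^a \to U^e$ to be an isomorphism on edge sets, i.e.\ an isomorphism of trees. Then $e = \partial_{U^a} e$ up to this isomorphism, and since $a = \partial_{U^a}e$ is alternating by Remark \ref{ELEMLABEL REM}, we conclude $e$ itself is alternating (being isomorphic to an alternating dendrex via an iso of trees which preserves the active/inert labelling). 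As $e$ is non-degenerate and alternating, it is in particular canonical in the sense of Definition \ref{DENDTYPES DEF} (it is its own alternating degeneracy), so $\langle e \rangle$ is canonical.

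\textbf{Direction ($\Rightarrow$).} Conversely, suppose $\langle e \rangle$ is canonical. Then by Remark \ref{PUSHOPPRRST REM}, $\langle e \rangle = \langle \partial_r e \rangle_{\chi} = \langle a \rangle$ for the (unique up to isomorphism) alternating dendrex $a$ with $\langle \partial_r a \rangle = \langle \partial_r e \rangle$; by Remark \ref{ELEMLABEL REM} this $a$ may be taken to be $\partial_{U^a}e$, so $\langle \partial_{U^a} e \rangle = \langle e \rangle$. Now I would compare the representing trees: $\langle \partial_{U^a} e \rangle = \langle e \rangle$ with both representatives non-degenerate forces (by the uniqueness of non-degenerate representatives up to isomorphism, recalled just after the definition of $\langle y \rangle$, using $\Sigma$-freeness) that the face inclusion $U^a \to U^e$ is an isomorphism. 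Hence every inner edge of $U^e$ is in the image of $U^a$, which by Remark \ref{XIEREDEF REM} says precisely $\Xi^e = \boldsymbol{E}^{\mathsf{i}}(U^e)$.

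\textbf{Main obstacle.} The one point needing care is justifying that $U^a \to U^e$ being an isomorphism on \emph{edges} upgrades to the cleanest possible statement in both directions — specifically, in ($\Rightarrow$), deducing from $\langle \partial_{U^a}e\rangle = \langle e\rangle$ that the tall map $U^a \to U^e$ is an isomorphism, rather than merely that $U^a$ and $U^e$ are abstractly isomorphic. This requires knowing that $\langle e \rangle$ is $\Sigma$-free (so that $N\mathcal{P}$ — or at least this subpresheaf — is $\Sigma$-free), which should follow from the $\Sigma$-cofibrancy hypothesis on $\mathcal{O}$ together with the coproduct decomposition \eqref{PUSHOPPR EQ}: each summand there is induced up along a free action, so the dendrices of $N\mathcal{P}$ are $\Sigma$-free. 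Once $\Sigma$-freeness is in hand, a subface $U^a \hookrightarrow U^e$ with $\langle \partial_{U^a}e\rangle = \langle e\rangle$ is forced to be all of $U^e$, and the proposition follows. I expect the rest to be routine bookkeeping with the label-map/alternating-tree correspondence already cited.
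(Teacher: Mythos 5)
Both you and the paper reduce the proposition to understanding the comparison map $\varphi\colon U^a \to U^e$ from Remark \ref{ELEMLABEL REM}, but your argument rests on a misidentification of what kind of map $\varphi$ is, and this is a genuine error, not a cosmetic one.

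The map $\varphi$ is a tall planar $\vect{C}$-inert label map, which in general has \emph{both} an inner-face component (collapsing inner edges lying between two $\O$-labeled vertices) \emph{and} a nontrivial degeneracy component (splitting each edge of $U^e$ that needs a unary $\bullet$-vertex inserted in $U^a$ — e.g.\ the root and every leaf whenever they touch a $\vect{C}$-labeled vertex, or any edge between two adjacent $\vect{C}$-labeled vertices). You repeatedly treat $\varphi$ as a face map (you call it a ``face inclusion'') and conclude in both directions that the relevant condition is ``$\varphi$ is an isomorphism.'' That is false. A concrete counterexample: take $U^e$ to be a single $\vect{C}$-labeled corolla, so $e$ is non-degenerate elementary, $\Xi^e = \emptyset = \boldsymbol{E}^{\mathsf{i}}(U^e)$, and $\langle e\rangle$ is canonical. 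But $U^a$ is that corolla with an extra $\bullet$-labeled unary vertex at the root and at each leaf, so $\varphi$ is a nontrivial degeneracy, not an isomorphism, and $a = \partial_{U^a}e$ is a \emph{degenerate} dendrex (contradicting the ``both representatives non-degenerate'' step in your $(\Rightarrow)$). Likewise, in your $(\Leftarrow)$, ``surjective on edges'' does not upgrade to ``bijective on edges'': it gives a degeneracy, which is exactly what the definition of canonical requires (a non-degenerate dendrex admitting an \emph{alternating degeneracy}), but is strictly weaker than an isomorphism.

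The correct pivot — which is what the paper uses — is that $\langle e\rangle$ is canonical iff $\langle a\rangle = \langle e\rangle$ iff $\varphi$ is a \emph{degeneracy} (not an isomorphism). Since $\varphi$ is tall, it is a degeneracy precisely when it is surjective on edges, i.e.\ when its image contains every inner edge of $U^e$, which by Remark \ref{XIEREDEF REM} is precisely $\Xi^e = \boldsymbol{E}^{\mathsf{i}}(U^e)$. Replacing ``isomorphism'' with ``degeneracy'' throughout, and dropping the claim that $a$ is non-degenerate, repairs the argument; as written, both directions contain false intermediate claims.
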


\begin{proof}
	We use the notation in Remark \ref{XIEREDEF REM}.
	Since $\langle a\rangle = \langle \partial_r e \rangle_{\chi}$, 
	$\langle e \rangle$
	is canonical 
	iff
	$\langle a\rangle = \langle e \rangle$, i.e.
	iff
	$U^a \to U^e$ is a degeneracy. 
	But since a map of trees is a degeneracy iff it is tall and surjective,
	it follows that $U^a \to U^e$ is a degeneracy
	iff its image includes all inner edges, i.e. iff $\Xi^e = \boldsymbol{E}^{\mathsf{i}}(U)$.
\end{proof}

\begin{remark}\label{WHENLRINN REM}
	If $U\neq \eta$ is not the stick tree,
	then $\mathsf{lr}(U) \simeq U - \boldsymbol{E}^{\mathsf{i}}(U)$
	is the inner face removing all inner edges.
\end{remark}

\begin{lemma}\label{ELEMEXIST LEM}
	For any principal subpresheaf $\langle p \rangle \subseteq N \mathcal{P}$
	there exists an elementary subpresheaf
	$\langle e \rangle \subseteq N \mathcal P$, 
	non-degenerate representative 
	$e \colon \Omega[U^e] \to N \mathcal{P}$,
	and a subset $E \subseteq \Xi^{e}$
	such that
	$\partial_{U^e-E} (e)$ is non-degenerate and 
	$\langle p \rangle = \langle \partial_{U^e-E} (e) \rangle$.
	In particular,  
	$\langle p \rangle \subseteq \langle e \rangle$.
\end{lemma}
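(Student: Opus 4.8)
Starting from an arbitrary principal subpresheaf $\langle p \rangle \subseteq N\mathcal{P}$, pick a non-degenerate representative $p \colon \Omega[U^p] \to N\mathcal{P}$. First I would reduce to the case where $p$ is elementary. If $p$ is not already elementary, some vertex $U^p_v \hookrightarrow U^p$ carries an operation $p_v \in \mathcal{P}(\vect{U}^p_v)$ which, under the decomposition \eqref{PUSHOPPR EQ}, is indexed by an isomorphism class $[\vect{T}_v] \in \mathsf{Iso}(\vect{U}^p_v \downarrow \Omega^a_{\mathfrak{C}})$ with $\vect{T}_v$ a nontrivial alternating tree (i.e. not a corolla). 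The idea is to ``expand'' $U^p$ at each such vertex by grafting in the alternating tree $T_v$, using the strict Segal condition to assemble the local operations $\mathcal{O}(\vect{T}_{v,w})$ and $\Sigma_{\mathfrak{C}}[\vect{C}](\vect{T}_{v,w})$ into a single dendrex $e \colon \Omega[U^e] \to N\mathcal{P}$ of the enlarged tree $U^e$; this $e$ is elementary by construction, and the map $U^p \to U^e$ is a composite of inner faces obtained by collapsing each grafted $T_v$ back down. Thus $\langle p \rangle = \langle \partial_{U^p \hookrightarrow U^e}(e) \rangle$, and $U^p$ is obtained from $U^e$ by removing exactly the inner edges internal to the grafted alternating pieces — but by Remark \ref{XIEREDEF REM} (or directly Definition \ref{XIEDGES DEF}), those are precisely edges lying in $\Xi^e$. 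Setting $E \subseteq \Xi^e$ to be that set of removed inner edges gives $U^p = U^e - E$ and $\partial_{U^e - E}(e) = p$, which is non-degenerate.

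So the real content is the expansion step, and the main obstacle is making it precise: given $p$ non-degenerate, one must check that the grafted dendrex $e$ is itself non-degenerate (if $e$ were degenerate, one could collapse a unary vertex, but such a vertex would correspond to a unary operation in $\mathcal{O}$ or in $\Sigma_{\mathfrak{C}}[\vect{C}]$ — the latter only contributes a unit, which would have been absorbed in the canonical form \eqref{PUSHOPPR EQ}; the former would contradict non-degeneracy of $p$ after collapsing the corresponding face), and that the inner edges removed to recover $U^p$ from $U^e$ are all in $\Xi^e$ and not accidentally more (edges of $U^p$ itself survive into $U^e$, and the label-map structure of Remark \ref{ELEMLABEL REM} — via the alternating-tree analogue of \cite[Prop. 5.57]{BP21} — guarantees the grafted-in subtrees are exactly the alternating fillers, whose inner edges are $\vect{C}$-adjacent hence in $\Xi^e$). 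I would organize this by first treating a single vertex $v$ (the local model: a corolla's worth of data in $\mathcal{P}$ canonically expands along an alternating tree), then grafting over all vertices of $U^p$ simultaneously using that the Segal condition factors through vertices independently.

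**Alternative organization.** A cleaner route avoids building $e$ by hand: apply the already-established machinery. Every operation in $\mathcal{P}(\vect{D})$ is, by \eqref{PUSHOPPR EQ}, represented by an alternating dendrex $a_{\vect{D}} \colon \Omega[T] \to N\mathcal{P}$ with $\langle \partial_r a_{\vect{D}} \rangle$ the given operation; applying this vertex-by-vertex to $p$ and gluing along the Segal cores $Sc[U^p] \hookrightarrow \Omega[U^p]$ produces the elementary dendrex $e$ together with the inner-face map $U^p \hookrightarrow U^e$ directly from Remark \ref{PUSHOPPRRST REM} and Remark \ref{ELEMLABEL REM}. The inclusion $\langle p \rangle \subseteq \langle e \rangle$ is then immediate since $p = \partial_{U^p-E}(e)$ factors through $e$. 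The last detail — that $\partial_{U^e - E}(e)$ is non-degenerate — follows because $U^e - E = U^p$ and $p$ was chosen non-degenerate; and one checks $E \subseteq \Xi^e$ by noting that each edge of $U^e$ not in $U^p$ was created inside a grafted alternating subtree, every inner edge of which is adjacent to a $\vect{C}$-labeled vertex by the alternating condition, hence lies in $\Xi^e$ by Definition \ref{XIEDGES DEF}. I expect the bookkeeping of ``which edges are new'' to be the fiddliest part, and I would lean on the label-map formalism (label map, $\circ$-inert) recalled just after Example \ref{ALTTREES_EX} to keep it honest.
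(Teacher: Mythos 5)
Your proposal is correct and matches the paper's argument in all essentials: both take a non-degenerate representative $p$ over $U^p$, replace each vertex operation $p_v=\partial_{U^p_v}p$ by a canonical representative of $\langle p_v\rangle_\chi$ (the paper cites Remark \ref{PUSHOPPRRST REM}), assemble these into an elementary $e$ over $U^e$ via the strict Segal condition (the paper formalizes ``grafting'' as a $U$-substitution datum, \cite[Prop.~3.46]{BP21}), and verify $E=\amalg_v\boldsymbol{E}^{\mathsf{i}}(U^e_v)\subseteq\Xi^e$ by the canonicity of the $e_v$ (via Proposition \ref{CANIFFXIE PROP}). Your ``alternative organization'' is essentially the paper's proof; the only place you over-elaborate is non-degeneracy of $e$, where the paper's shorter argument suffices: every vertex of $U^e$ lies in exactly one outer subtree $U^e_v$, so a degeneracy of $e$ would force a degeneracy of some $e_v$.
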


\begin{proof}
	Let $p\colon \Omega[U] \to N \mathcal{P}$
	be a non-degenerate representative. We first build $e$.
	
	For each vertex $U_v \hookrightarrow U$, write
	$p_v = \partial_{U_v} p$
	and,
	noting that $\langle p_v \rangle$ is reduced,
	we further write 
	$e_v \colon \Omega[U^e_v] \to N \mathcal{P}$
	for some canonical representative of 
	$\langle p_v \rangle_{\chi}$
	(cf. Remark \ref{PUSHOPPRRST REM}).

	The identity
	$\langle p_v \rangle = \langle \partial_r e_v \rangle$
	implies that
	$U_v \simeq \mathsf{lr}(U^e_v)$,
	so that by choosing tall maps $U_v \to U^e_v$ 
	one obtains a $U$-substitution datum
	\cite[Def. 3.43]{BP21}
	which by 
	\cite[Prop. 3.46]{BP21}
	can be assembled into a tree $U^e$
	together with a tall map
	$U \to U^e$ such that for every vertex $U_v$
	the ``tall map followed by outer face'' factorization of
	the composite
	$U_v \to U \to U^e$
	is given by
	$U_v \to U^e_v \to U^e$.
	Since each vertex of $U^e$ is in exactly one of the outer trees $U^e_v$,
	we define $e \colon \Omega[U^e] \to N \mathcal{P}$
	as the unique dendrex such that
	$\partial_{U^e_v} e = e_v$.
	Note that since the $e_v$ are non-degenerate 
	then so is $e$.

	Since $\partial_{U}e = p$ was chosen to be non-degenerate,
	it remains to show that
	$U \to U^e$ identifies $U \simeq U^e - E$
	for some $E \subseteq \Xi^{e}$.
	Since $\langle p_v \rangle, \langle p_v \rangle_{\chi}$ are non-unital
	(by assumption on $p$ and Remark \ref{UNITALCASE REM}),
	the $U^e_v$ are not stick trees,
	and Remark \ref{WHENLRINN REM} implies
	$U \simeq U^e-E$ for 
	$E = \amalg_{v \in \boldsymbol{V}(U)} \boldsymbol{E}^{\mathsf{i}}(U^e_v)$.
	That
	$E \subseteq \Xi^{e}$, i.e. that any edge in $E$ is adjacent to a $\vect{C}$-labeled vertex,
	follows from Proposition \ref{CANIFFXIE PROP}
	applied to the $\langle e_v \rangle$.
\end{proof}

\begin{lemma}\label{FORSAKEN LEM}
	Suppose $e \colon \Omega[U^e] \to N \mathcal{P}$ is elementary
	and $\langle \partial_r e\rangle$ is not unital.
	Then there exists an inner face map
	$U^c \to U^e$ such that $\partial_{U^c} e$ is canonical.
\end{lemma}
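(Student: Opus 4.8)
\textbf{Proof plan for Lemma \ref{FORSAKEN LEM}.}

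The plan is to locate the desired inner face $U^c \hookrightarrow U^e$ by tracking which inner edges of $U^e$ are ``forced'' and which can be contracted. First I would recall from Remark \ref{ELEMLABEL REM} that $U^e$ carries a canonical $\{\O,\vect{C}\}$-labeling of its vertices, and that $\langle \partial_r e\rangle_\chi = \langle \partial_{U^a} e\rangle$ for $U^a \to U^e$ the tall planar $\vect{C}$-inert label map; by Remark \ref{XIEREDEF REM}, $\Xi^e \subseteq \boldsymbol{E}^{\mathsf{i}}(U^e)$ consists exactly of the inner edges in the image of $U^a$. The natural candidate is to take $U^c = U^e - \left(\boldsymbol{E}^{\mathsf{i}}(U^e) \setminus \Xi^e\right)$, i.e. to contract every inner edge that is \emph{not} adjacent to a $\vect{C}$-labeled vertex. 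One must first check this is a legitimate inner face: contracting an inner edge between two $\O$-labeled vertices merges them, and since $\O$ is a genuine $G$-operad the operations $\partial_{U^e_v} e \in \O$ at those vertices compose to a single operation in $\O$ (this is where the strict Segal characterization of $N\O$ is used — composition along inner edges is well-defined), so the resulting dendrex $\partial_{U^c} e$ is again elementary with all of its vertices still $\O$- or $\vect{C}$-labeled.

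Next I would verify that $\partial_{U^c} e$ is canonical by applying Proposition \ref{CANIFFXIE PROP}: it suffices to show $\Xi^{\partial_{U^c} e} = \boldsymbol{E}^{\mathsf{i}}(U^c)$, i.e. every remaining inner edge is adjacent to a $\vect{C}$-labeled vertex. By construction we only contracted the inner edges \emph{not} adjacent to a $\vect{C}$-vertex, so the inner edges surviving in $U^c$ are precisely those of $\Xi^e$; I need that each such edge is still adjacent to a $\vect{C}$-labeled vertex in $U^c$, which holds since contraction of $\O$-$\O$ edges does not destroy adjacency of the surviving edges to $\vect{C}$-vertices. Here the hypothesis that $\langle \partial_r e\rangle$ is \emph{not} unital enters: it guarantees (via Remark \ref{UNITALCASE REM} and the fact that $\langle\partial_r e\rangle_\chi$ is then non-unital, so $U^a \neq \eta$) that the alternating tree $U^a$ — and hence $U^c$ — is not the stick tree, so that the label structure and Proposition \ref{CANIFFXIE PROP} apply meaningfully, and that $\partial_{U^c} e$ is non-degenerate (it is non-degenerate because $e$ was and $U^c \to U^e$ is a face, hence injective on edges). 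Indeed one expects $U^c \simeq U^a$, so $\partial_{U^c} e = \partial_{U^a} e$ is the alternating representative of $\langle \partial_r e\rangle_\chi$, which is canonical by definition.

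The main obstacle I anticipate is the bookkeeping around edges adjacent to \emph{two} vertices of differing label: an inner edge could be adjacent to one $\O$-vertex and one $\vect{C}$-vertex, in which case it lies in $\Xi^e$ and must be kept. The subtle point is confirming that after contracting all $\O$-$\O$ inner edges, no previously-distinct pair of $\vect{C}$-vertices becomes adjacent in a way that would violate the alternating structure — but this cannot happen precisely because we never contract an edge touching a $\vect{C}$-vertex, so the $\vect{C}$-vertices and their incident edges are untouched, while the blocks of $\O$-vertices each collapse to a single $\O$-vertex. Thus the resulting tree alternates $\O$/$\vect{C}$ labels correctly, i.e. $U^c$ is alternating, and $\partial_{U^c} e$ is alternating hence canonical. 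I would phrase the argument cleanly by invoking the alternating-tree analogue of \cite[Prop. 5.57]{BP21} (already used in Remark \ref{ELEMLABEL REM}) to get $U^a \to U^e$, observing $U^c = U^e - (\boldsymbol{E}^{\mathsf{i}}(U^e)\setminus\Xi^e)$ is exactly the outer-closure-free image of this map (an inner face since it contracts inner edges only), and concluding $\partial_{U^c} e = \partial_{U^a} e$ is the sought canonical dendrex.
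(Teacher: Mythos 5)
Your proposed $U^c = U^e - (\boldsymbol{E}^{\mathsf{i}}(U^e) \setminus \Xi^e)$ is a natural candidate, and it correctly recovers the principal subpresheaf $\langle \partial_r e\rangle_{\chi}$, but the key non-degeneracy step in your argument does not hold, and as a result this $U^c$ is not always the right tree. The specific problems are:

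\textbf{(1) Restriction along a face need not preserve non-degeneracy.} You assert that $\partial_{U^c}e$ is non-degenerate ``because $e$ was and $U^c \to U^e$ is a face, hence injective on edges.'' This inference is false in general: restricting a non-degenerate dendrex along an inner face merges blocks of vertices, and the composite operation at a merged vertex can be an identity even when none of the original vertices carried one. Concretely, let $U^e$ be a linear tree with vertices $v_1, v_2, v_3$ from top to bottom carrying operations $f, f^{-1}, g$ with $f, f^{-1} \in \mathcal{O}$ unary and mutually inverse (non-identity) and $g$ the $\vect{C}$-generator. Then $e$ is non-degenerate elementary, $\Xi^e$ contains only the edge between $v_2$ and $v_3$, and your $U^c$ contracts only the $\O$-$\O$ edge, merging $v_1, v_2$ into a single unary vertex with operation $f^{-1}f = \mathrm{id}$. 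So $\partial_{U^c}e$ is \emph{degenerate}, hence not canonical (and Definition \ref{XIEDGES DEF}/Proposition \ref{CANIFFXIE PROP} are not even applicable to it, since they require non-degeneracy). The correct inner face in this example is $\mathsf{lr}(U^e) \to U^e$, removing \emph{all} inner edges.

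\textbf{(2) The identifications $U^c \simeq U^a$ and ``$U^c$ is alternating'' are not true in general.} The tall $\vect{C}$-inert label map $U^a \to U^e$ from Remark \ref{ELEMLABEL REM} may be a nontrivial degeneracy composed with an inner face (e.g.\ whenever $U^e$ has a $\vect{C}$-vertex adjacent to a leaf, the root, or another $\vect{C}$-vertex, forcing dummy unary $\O$-vertices in $U^a$). So the image $U^e - (\boldsymbol{E}^{\mathsf{i}}(U^e)\setminus\Xi^e)$ is a \emph{degeneracy quotient} of $U^a$, not $U^a$ itself, and it is typically not alternating (its $\vect{C}$-vertices can sit at the leaf or root level). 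Relatedly, ``canonical'' is not synonymous with ``alternating'': the definition only demands that some degeneracy be alternating.

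The paper avoids both issues by working in the opposite direction: it \emph{starts} with the non-degenerate canonical representative $c$ of $\langle \partial_r e\rangle_{\chi}$ (so non-degeneracy is by hypothesis), takes an inner-face section $U^c \to U^a$ of the degeneracy $U^a \to U^c$ (using non-unitality to guarantee $U^c \neq \eta$ and invoking \cite[Cor.\ 5.38]{Per18}), and then argues that the composite $U^c \to U^a \to U^e$ must be a face, since otherwise its restriction $\partial_{U^c}e = c$ would be degenerate. The output is the inner face that removes \emph{all} inner edges that become redundant after composition — in general a larger set than $\boldsymbol{E}^{\mathsf{i}}(U^e) \setminus \Xi^e$.
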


\begin{proof}
	Let $U^a \to U^e$ be as in Remark \ref{ELEMLABEL REM}.
	
	Writing
	$a = \partial_{U^a} e$,
	$r = \partial_r e$, 
	and letting
	$c \colon \Omega[U^c] \to N \mathcal{P}$
	be a canonical representative of
	$\langle a \rangle = \langle r \rangle_{\chi}$,
	one has that $a$ is a degeneracy of $c$,
	i.e. there is a degeneracy map $U^a \to U^c$
	such that $\partial_{U^a} c = a$.
	Since $\langle r \rangle = \langle \partial_r e\rangle$ is not unital,
	neither is $\langle r \rangle_{\chi}$ (cf. Remark \ref{UNITALCASE REM}), 
	so that $U^c$ can not be the stick tree $\eta$,
	and thus $U^a \to U^c$ has a section which is an inner face 
	(this follows from \cite[Cor. 5.38]{Per18} since no edge of $U^c$ is both a root and a leaf).
	But then the composite 
	$U^c \to U^a \to U$
	must be a face (or else $c = \partial_{U^c} e$ would be degenerate) and is tall, and is hence an inner face.
\end{proof}

\begin{lemma}\label{UNIQINAN LEM}
	Let 
	$c \colon \Omega[U^c] \to N \mathcal{P}$ 
	be a non-unital canonical dendrex,
	$e \colon \Omega[U^{e}] \to N \mathcal{P}$
	an elementary dendrex,
	and $\mathsf{lr}(U^c) \to U^{e}$ a tall map.

	Then, 
	if the solid diagram below commutes, there exists a tall dashed map making the diagram commute.
	\begin{equation}\label{UNIQINAN EQ}
	\begin{tikzcd}
	\Omega\left[\mathsf{lr}(U^c)\right] \ar{r} \ar{rd}&
	\Omega[U^c] \ar{r}{c} \ar[dashed]{d} &
	N \mathcal{P}
	\\
	&
	\Omega[U^{e}] \ar{ru}[swap]{e} 
	\end{tikzcd}
	\end{equation}
\end{lemma}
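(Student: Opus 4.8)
The plan is to reduce the problem to the level of the reduction $\langle \partial_r c \rangle$ and then obtain the dashed map as the composite of a section of a degeneracy with the label map of Remark \ref{ELEMLABEL REM}.

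First I would fix notation: write $\psi \colon \mathsf{lr}(U^c) \to U^e$ for the given tall map and $\lambda \colon \mathsf{lr}(U^c) \hookrightarrow U^c$ for the leaf--root face. Commutativity of the solid diagram is exactly the equality $\partial_r c = e \circ \Omega[\psi]$ of reduced dendrices. Since $\psi$ is tall out of a corolla it factors, up to an automorphism of $\mathsf{lr}(U^c)$, through the leaf--root face $\mathsf{lr}(U^e) \hookrightarrow U^e$, so $\langle \partial_r c \rangle = \langle e \circ \Omega[\psi] \rangle = \langle \partial_r e \rangle =: \langle r \rangle$. Because $c$ is a non-unital canonical dendrex, Remark \ref{PUSHOPPRRST REM} gives $\langle c \rangle = \langle \partial_r c \rangle_{\chi} = \langle r \rangle_{\chi}$, which is non-unital by Remark \ref{UNITALCASE REM}; in particular $U^c \neq \eta$. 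Applying Remark \ref{ELEMLABEL REM} to the elementary dendrex $e$ (whose reduction generates $\langle r \rangle$) then produces an alternating tree $U^a$ together with a tall (planar $\vect{C}$-inert label) map $j \colon U^a \to U^e$ for which $a := e \circ \Omega[j]$ is alternating and $\langle a \rangle = \langle \partial_r e \rangle_{\chi} = \langle c \rangle$.

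Next I would exhibit $\varphi$. Since $c$ is non-degenerate and $\langle a \rangle = \langle c \rangle$, and $c$ is the non-degenerate representative of this principal subpresheaf, the dendrex $a$ is a degeneracy of $c$: there is a degeneracy $\sigma \colon U^a \to U^c$ with $c \circ \Omega[\sigma] = a = e \circ \Omega[j]$. As $U^c \neq \eta$, this degeneracy admits a section $s \colon U^c \to U^a$ which is an inner face (cf. \cite[Cor. 5.38]{Per18}). Setting $\varphi := j \circ s \colon U^c \to U^e$, the map $\varphi$ is tall, being a composite of the inner face $s$ with the tall map $j$, and
\[
e \circ \Omega[\varphi] = (e \circ \Omega[j]) \circ \Omega[s] = a \circ \Omega[s] = (c \circ \Omega[\sigma]) \circ \Omega[s] = c \circ \Omega[\sigma \circ s] = c,
\]
since $\sigma \circ s = \mathrm{id}_{U^c}$. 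It remains to check the triangle $\varphi \circ \lambda = \psi$: both sides are tall maps $\mathsf{lr}(U^c) \to U^e$, and $e \circ \Omega[\varphi \circ \lambda] = (e \circ \Omega[\varphi]) \circ \Omega[\lambda] = c \circ \Omega[\lambda] = \partial_r c = e \circ \Omega[\psi]$, so $\varphi \circ \lambda$ and $\psi$ differ by an automorphism of the corolla $\mathsf{lr}(U^c)$ fixing the operation $\partial_r c$; this automorphism is the identity because $\langle \partial_r c \rangle$ is $\Sigma$-free, a consequence of the standing hypothesis that $\mathcal{O}$ is $\Sigma$-cofibrant (which forces $N\mathcal{P}$ to be $\Sigma$-free). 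Hence $\varphi \circ \lambda = \psi$ and the whole diagram commutes.

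I expect the main obstacle to be, as throughout this section, the input supplied by Remark \ref{ELEMLABEL REM} — and behind it the alternating-tree analogue of \cite[Prop. 5.57]{BP21} — which is what makes available the tall map $j \colon U^a \to U^e$ relating the elementary dendrex $e$ to its canonical reduct; this is a prior result here. Within the present argument the only delicate points are the identification of $a$ as a degeneracy $c \circ \Omega[\sigma]$ of $c$ (using non-degeneracy of $c$) and the commutativity of the leaf--root triangle (using $\Sigma$-freeness of $\langle \partial_r c \rangle$); everything else is bookkeeping of the maps along which the dendrices are restricted.
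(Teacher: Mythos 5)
Your proof is correct, and the core ideas overlap with the paper's, but the route through them differs in a way worth noting. The paper first invokes Lemma \ref{FORSAKEN LEM} to produce an inner face $U^{\bar c}\to U^e$ with $\bar c = \partial_{U^{\bar c}}(e)$ canonical; since $\bar c$ may live on a tree different from $U^c$, it then builds a chain of three compatible isomorphisms $\mathsf{lr}(U^a)\simeq\mathsf{lr}(U^{\bar a})$, $U^a\simeq U^{\bar a}$, $U^c\simeq U^{\bar c}$ (using the explicit induced-set form of \eqref{PUSHOPPR EQ} to lift the middle isomorphism of alternating dendrices) so that the whole diagram, including the leaf--root triangle, commutes by construction. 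You instead apply Remark \ref{ELEMLABEL REM} directly to $e$, observe that the resulting alternating dendrex $a$ satisfies $\langle a\rangle=\langle c\rangle$ with $c$ the given non-degenerate representative, so $a$ is a degeneracy of \emph{this specific} $c$; you then take a section of that degeneracy (which is exactly the mechanism inside the proof of Lemma \ref{FORSAKEN LEM}) and compose it with the label map. This sidesteps the isomorphism bookkeeping entirely, but at the cost that your $\varphi$ is chosen without a priori regard for the leaf--root triangle, so you must verify it a posteriori; your $\Sigma$-freeness argument for this is sound (and $\partial_r c$ is indeed non-degenerate, since $\langle\partial_r c\rangle$ is non-unital and the only non-trivial degeneracy out of a corolla has a stick as target). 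In short: you inline Lemma \ref{FORSAKEN LEM}, collapse the middle tree identification, and trade the paper's diagrammatic lifting argument via \eqref{PUSHOPPR EQ} for an explicit appeal to $\Sigma$-freeness of $N\mathcal{P}$; both are available under the standing $\Sigma$-cofibrancy hypothesis on $\mathcal{O}$. One small point of exposition: you should say explicitly that $\partial_r c$ is non-degenerate before invoking $\Sigma$-freeness, since the definition of $\Sigma$-freeness for principal subpresheaves is stated in terms of the non-degenerate representative.
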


\begin{remark}
	The requirement that $\langle c \rangle$ is non-unital is essential, as there may exist non-unital $\langle e \rangle \subseteq N \O$
	such that $\langle \partial_r e\rangle = \langle \partial_r c\rangle$ is unital, in which case no dashed arrow as in \eqref{UNIQINAN EQ} can exist.
\end{remark}

\begin{proof}
	Since commutativity of \eqref{UNIQINAN EQ} implies
	$\langle \partial_r e \rangle =
	\langle \partial_r c \rangle$,
	which is not unital by assumption, 
	by Lemma \ref{FORSAKEN LEM} there is an inner face $U^{\bar{c}} \to U^{e}$
	such that $\bar{c} = \partial_{U^{\bar{c}}} (e)$ is canonical.
	
	By definition of canonical dendrex, there are
	degeneracies
	$U^a \to U^c$,
	$U^{\bar{a}} \to U^{\bar{c}}$
	with $U^a,U^{\bar{a}}$
	alternating trees
	and such that the composites 
	$\Omega[U^a] \to \Omega[U^c] \xrightarrow{c} N \mathcal{P}$,
	$\Omega[U^{\bar{a}}] \to \Omega[U^{\bar{c}}] \xrightarrow{\bar{c}} N \mathcal{P}$  
	are alternating dendrices. And since $\mathsf{lr}$ sends tall maps to isomorphisms, we can form the diagram
	\[
	\begin{tikzcd}
	\Omega[\mathsf{lr}(U^c)] \ar{r}{\simeq} \ar{rd}[swap]{\simeq} &
	\Omega\left[\mathsf{lr}(U^a)\right] \ar[dashed]{d}[swap]{\simeq} \ar{r} &
	\Omega[U^a] \ar[dashed]{d}[swap]{\simeq} \ar{r} &
	\Omega[U^c] \ar{r}{c} \ar[dashed]{d}[swap]{\simeq} &
	N \mathcal{P}
	\\
	&
	\Omega\left[\mathsf{lr}(U^{\bar{a}})\right] \ar{r} &
	\Omega[U^{\bar{a}}] \ar{r} &
	\Omega[U^{\bar{c}}] \ar{ru}[swap]{\bar c} \ar{r} &
	\Omega[U^{e}] \ar{u}[swap]{e}. &
	\end{tikzcd}
	\]
	We will argue that all dashed vertical isomorphisms exist.
	That the first vertical isomorphism exists is trivial.
	The existence of the second vertical isomorphism follows from
	\eqref{PUSHOPPR EQ} which implies that, for $a,\bar{a}$ alternating dendrices, all isomorphisms 
	$\partial_r a \simeq \partial_r \bar{a}$
	are induced from an isomorphism $a \simeq \bar{a}$.
	Lastly, the existence of the third isomorphism follows 
	from the fact that the factorization of degenerate dendrices through non-degenerate dendrices is unique up to (unique) isomorphism \cite[Prop. 5.62]{Per18}.
	%
\end{proof}

\begin{lemma}\label{UNIQINAN2 LEM}
	Let 
	$e \colon \Omega[U^e] \to N \mathcal{P}$ 
	be a non-degenerate elementary dendrex,
	$\bar{e} \colon \Omega[U^{\bar{e}}] \to N \mathcal{P}$
	be an elementary dendrex,
	and 
	$U^e-E \to U^{\bar{e}}$ be a map with $E \subseteq \Xi^e$.
	
	Then, 
	if the solid diagram below commutes, there exists a dashed map making the diagram commute.
	\begin{equation}\label{UNIQINAN2 EQ}
	\begin{tikzcd}
	\Omega[U^e-E] \ar{r} \ar{rd}&
	\Omega[U^e] \ar{r}{e} \ar[dashed]{d} &
	N \mathcal{P}
	\\
	&
	\Omega[U^{\bar{e}}] \ar{ru}[swap]{\bar{e}} 
	\end{tikzcd}
	\end{equation}
\end{lemma}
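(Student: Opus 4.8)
The plan is to argue in the spirit of Lemma \ref{UNIQINAN LEM}, reducing everything to the rigidity of the coproduct decomposition \eqref{PUSHOPPR EQ} of $\mathcal{P}$ and to the uniqueness of the factorization of a degenerate dendrex through a non-degenerate one \cite[Prop. 5.62]{Per18}. As there, unitality is the one delicate point: I would first treat separately, and easily, the case in which $\langle \partial_r e \rangle$ is unital --- then $\mathsf{lr}(U^e)$ is unary, so $U^e$ has a single leaf and the extension can be written down directly --- so assume henceforth that $\langle \partial_r e \rangle = \langle \partial_r (\partial_{U^e - E} e) \rangle$ is not unital, the equality holding because $U^e - E \to U^e$ is tall and hence induces an isomorphism $\mathsf{lr}(U^e - E) \xrightarrow{\simeq} \mathsf{lr}(U^e)$.

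Next I would normalize the situation. Factoring $U^e - E \to U^{\bar e}$ through its outer-face part (Proposition \ref{TREEFACT_PROP}) and noting that any outer face of $U^{\bar e}$ is again elementary (outer faces remove vertices without merging, hence preserve the $\{\mathcal{O},\vect{C}\}$-labelling), I may replace $\bar e$ by its restriction to that outer face and thereby assume $U^e - E \to U^{\bar e}$ is tall. Then, applying Lemma \ref{FORSAKEN LEM} to the elementary dendrex $e$ (non-unitality being in force) produces an inner face $U^c \to U^e$ with $c := \partial_{U^c} e$ canonical; by Remark \ref{XIEREDEF REM} the inner edges of $U^c$ lie inside $\Xi^e$, and since $\mathsf{lr}$ sends tall maps to isomorphisms, the composite $\mathsf{lr}(U^c) \xrightarrow{\simeq} \mathsf{lr}(U^e - E) \to U^{\bar e}$ is a tall map for which the solid triangle of \eqref{UNIQINAN EQ} commutes (both legs equal $\partial_r e$). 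Lemma \ref{UNIQINAN LEM} then supplies a tall map $U^c \to U^{\bar e}$ over $N\mathcal{P}$; carrying out the same with a canonical refinement of the restriction $\partial_{U^e - E} e$ in place of $c$ --- using Lemma \ref{ELEMEXIST LEM} to restore elementarity after deleting the $\Xi^e$-edges $E$ --- yields a compatible tall map out of that tree as well.

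Finally I would assemble the dashed map of \eqref{UNIQINAN2 EQ}. Using the strict Segal condition, a map $U^e \to U^{\bar e}$ over $N\mathcal{P}$ compatible with $e$ amounts to a compatible choice, for each vertex $v$ of $U^e$, of a map $U^e_v \to U^{\bar e}$ along which $\bar e$ restricts to $e_v$; for vertices not adjacent to $E$ this choice is already dictated by the given map $U^e - E \to U^{\bar e}$, while for the remaining vertices the tall maps produced above, together with the isomorphism-rigidity of \eqref{PUSHOPPR EQ} for alternating dendrices and \cite[Prop. 5.62]{Per18}, pin down the choice and guarantee the necessary compatibilities. I expect the genuine obstacle to be exactly this last assembly: the original map $U^e - E \to U^{\bar e}$ is an arbitrary map of trees and the canonical refinement $U^c$ need not contain the deleted edges $E$, so one must carefully reconcile the data coming from $U^c$ with the data coming from $U^e - E$ along their generally non-comparable positions as inner faces of $U^e$, most likely by localizing the argument to each ``$E$-blob'' of $U^e$ and invoking Lemma \ref{UNIQINAN LEM} there.
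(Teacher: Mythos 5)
Your outline assembles most of the right ingredients (the tall/outer reduction of $U' = U^e - E \to U^{\bar e}$, Lemma \ref{UNIQINAN LEM}, the rigidity of the alternating decomposition), but the concrete route you propose—a single global appeal to Lemma \ref{FORSAKEN LEM} to produce one canonical inner face $U^c \hookrightarrow U^e$, followed by an attempt to reconcile the map $U^c \to U^{\bar e}$ with the given map $U' \to U^{\bar e}$—has a real gap that you yourself flag. The difficulty is not (as you suggest) that $U^c$ might miss the edges of $E$; since $U^c \to U^e$ has inner-edge image all of $\Xi^e \supseteq E$, the two domains $U^c$ and $U'$ do cover $U^e$. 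The problem is that Lemma \ref{UNIQINAN LEM} only guarantees the tall map $U^c \to U^{\bar e}$ agrees with $U' \to U^{\bar e}$ after further restriction to $\mathsf{lr}(U^c) \simeq \mathsf{lr}(U')$; nothing forces agreement on the larger overlap $U^c \cap U'$, which generally contains inner edges. So the two partial maps need not glue, and the ``assembly'' step is not justified.

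The paper avoids this by localizing from the start to each vertex $U'_v \hookrightarrow U'$ (your ``$E$-blobs,'' made precise). Composing $U'_v \hookrightarrow U' \to U^e$ and $U'_v \hookrightarrow U' \to U^{\bar e}$ and taking tall-outer factorizations produces, for each $v$, outer faces $U^e_v \hookrightarrow U^e$ and $U^{\bar e}_v \hookrightarrow U^{\bar e}$ with restricted dendrices $e_v, \bar e_v$. The crucial observation—absent from your sketch—is that $e_v$ is \emph{canonical} (not merely elementary): every inner edge of $U^e_v$ lies in $E \subseteq \Xi^e$, so $\Xi^{e_v} = \boldsymbol{E}^{\mathsf{i}}(U^e_v)$ and Proposition \ref{CANIFFXIE PROP} applies. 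This means Lemma \ref{UNIQINAN LEM} applies directly at each vertex to produce tall maps $U^e_v \to U^{\bar e}_v$ over $N\mathcal P$, and these constitute a $U^e$-substitution datum in the sense of \cite[Def.~3.43]{BP21}, which \cite[Prop.~3.46]{BP21} assembles into the desired dashed arrow. No case split on unitality of $\langle \partial_r e\rangle$ is needed: unitality is only ruled out at each vertex, where it fails for the trivial reason that $U'_v \to U^e_v$ cannot be a degeneracy. So while your final sentence gestures at the correct idea, the proof as written does not execute it, and the global detour through $U^c$ is a dead end.
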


\begin{proof}
	We abbreviate $U' = U^e -E$. 
	Note first that, 
	by applying the
	``tall map followed by outer face'' factorization to
	$U' \to U^{\bar{e}}$
	to obtain
	$U' \to \tilde{U} \to U^{\bar{e}}$,
	the dendrex $\partial_{\tilde{U}} \bar{e}$
	is still elementary (being an outer face of an elementary dendrex),
	so we reduce to the case where $U' \to U^{\bar{e}}$ is a tall map.
	
	For each vertex $U'_v \hookrightarrow U'$ we apply the 
	``inner face followed by outer face factorization''
	to the composites
	$U'_{v} \hookrightarrow U' \to U^{e}$,
	$U'_{v} \hookrightarrow U' \to U^{\bar{e}}$
	to get
	$U'_{v} \to U_{v}^e \hookrightarrow U^e$,
	$U'_{v} \to U_{v}^{\bar{e}} \hookrightarrow U^{\bar{e}}$
	and, further writing
	$e_v = \partial_{U^e_v} e$,
	$\bar{e}_v = \partial_{U^{\bar{e}}_v} \bar{e}$, 
	we obtain solid diagrams
	\begin{equation}\label{UNIQINAN2A EQ}
	\begin{tikzcd}
	\Omega[U'_v] \ar{r} \ar{rd}&
	\Omega[U^e_v] \ar{r}{e_v} \ar[dashed]{d} &
	N \mathcal{P}
	\\
	&
	\Omega[U^{\bar{e}}_v] \ar{ru}[swap]{\bar{e}_v} 
	\end{tikzcd}
	\end{equation}
	We now claim that $e_v$ is canonical. Indeed, $e_v$ is non-degenerate elementary since it is an outer face of $e$, which is also non-degenerate elementary. 
	And, since all inner edges of $U^e_v$ are in 
	$E \subseteq \Xi^e$, they are all adjacent to $\vect{C}$-labeled vertices, 
	so $e_v$
	is indeed canonical by
	Proposition \ref{CANIFFXIE PROP}.

	Since $\langle e_v \rangle$ is non-unital
	(or $U'_v \to U_v^e$ would be a degeneracy),
	by Lemma \ref{UNIQINAN LEM} there is a tall dashed arrow in \eqref{UNIQINAN2A EQ} for each $v \in \boldsymbol{V}(U')$,
	i.e. a $U^e$-substitution datum. Thus
	by
	\cite[Prop. 3.46]{BP21}
	we obtain the desired dashed arrow in \eqref{UNIQINAN2 EQ}.
\end{proof}

\begin{corollary}\label{MINELEMSH COR}
	If $e \colon \Omega[U^e] \to N \mathcal{P}$ is a non-degenerate elementary dendrex
	and $E \subseteq \Xi^e$,
	then 
	$\langle e\rangle$ is the smallest elementary subpresheaf
	containing $\langle \partial_{U^e - E} (e)\rangle$,
	i.e. if 
	$\langle \partial_{U^e - E} (e)\rangle
	\subseteq \langle \bar{e} \rangle$
	with $\langle \bar{e} \rangle$
	elementary then 
	$\langle e\rangle
	\subseteq \langle \bar{e} \rangle$.
\end{corollary}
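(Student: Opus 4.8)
The plan is to read off Corollary \ref{MINELEMSH COR} almost immediately from Lemma \ref{UNIQINAN2 LEM}, which contains all the real work. First I would dispose of the trivial half of the ``smallest'' assertion: since $U^e - E \hookrightarrow U^e$ is a face, one has $\langle \partial_{U^e - E}(e)\rangle \subseteq \langle e\rangle$, and $\langle e\rangle$ is itself an elementary subpresheaf (being generated by the elementary dendrex $e$, cf. Definition \ref{DENDTYPES DEF}), so the content of the statement is minimality: given any elementary subpresheaf $\langle \bar{e}\rangle \subseteq N\mathcal{P}$ with $\langle \partial_{U^e - E}(e)\rangle \subseteq \langle \bar{e}\rangle$, we must show $\langle e\rangle \subseteq \langle \bar{e}\rangle$.

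For this I would proceed as follows. Pick a non-degenerate representative $\bar{e} \colon \Omega[U^{\bar{e}}] \to N\mathcal{P}$ of $\langle \bar{e}\rangle$; this is still elementary, since elementarity of a dendrex is shared by its non-degenerate core (an elementary dendrex has elementary degeneracies and conversely, as noted after the definition of elementary subpresheaf). Unpacking the inclusion $\langle \partial_{U^e - E}(e)\rangle \subseteq \langle \bar{e}\rangle$: it says exactly that the dendrex $\partial_{U^e - E}(e)$ lies in $\langle \bar{e}\rangle(U^e - E)$, hence factors as $\bar{e}$ precomposed with some element of $\Omega[U^{\bar{e}}](U^e - E) = \Omega(U^e - E, U^{\bar{e}})$, i.e.\ with some map of trees $U^e - E \to U^{\bar{e}}$. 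By hypothesis $E \subseteq \Xi^e$, so we are precisely in the setting of Lemma \ref{UNIQINAN2 LEM}, with the solid diagram \eqref{UNIQINAN2 EQ} commuting (the composite $\Omega[U^e - E] \hookrightarrow \Omega[U^e] \xrightarrow{e} N\mathcal{P}$ is $\partial_{U^e - E}(e)$ by definition, which agrees with the lower path $\Omega[U^e - E] \to \Omega[U^{\bar{e}}] \xrightarrow{\bar{e}} N\mathcal{P}$ just constructed). Lemma \ref{UNIQINAN2 LEM} then supplies a map $\Omega[U^e] \to \Omega[U^{\bar{e}}]$ whose composite with $\bar{e}$ equals $e$, and therefore $\langle e\rangle = e\bigl(\Omega[U^e]\bigr) \subseteq \bar{e}\bigl(\Omega[U^{\bar{e}}]\bigr) = \langle \bar{e}\rangle$, as desired.

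I do not expect any genuine obstacle here; the corollary is a bookkeeping consequence of Lemma \ref{UNIQINAN2 LEM}. The only point that warrants a moment's care is the passage to the non-degenerate representative $\bar{e}$ and the observation that it remains elementary (so that Lemma \ref{UNIQINAN2 LEM} is applicable with $\bar{e}$ allowed to be non-degenerate), together with the harmless Yoneda translation of ``$\langle \partial_{U^e - E}(e)\rangle \subseteq \langle \bar{e}\rangle$'' into the existence of the factoring map of trees $U^e - E \to U^{\bar{e}}$.
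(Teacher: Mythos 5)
Your argument is correct and is precisely the argument the paper gives, just written out in full: the paper's one-line proof also consists of observing that the inclusion $\langle \partial_{U^e - E}(e)\rangle \subseteq \langle \bar{e}\rangle$ produces the solid diagram \eqref{UNIQINAN2 EQ} and then invoking Lemma \ref{UNIQINAN2 LEM} for the dashed factorization $\Omega[U^e] \to \Omega[U^{\bar e}]$. Your extra step of replacing $\bar e$ by a non-degenerate representative is harmless but unnecessary, since Lemma \ref{UNIQINAN2 LEM} only requires $\bar e$ to be elementary, not non-degenerate.
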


\begin{proof}
	$\langle \partial_{U^e - E} (e)\rangle
	\subseteq \langle \bar{e} \rangle$
	yields the diagram \eqref{UNIQINAN2 EQ}
	and the dashed arrow therein shows
	$\langle e\rangle
	\subseteq \langle \bar{e} \rangle$.
\end{proof}

\begin{corollary}\label{CANCHAR COR}
	An elementary subpresheaf $\langle e \rangle$
	is canonical iff
	it is the smallest elementary subpresheaf containing
	$\langle \partial_r e \rangle$.
\end{corollary}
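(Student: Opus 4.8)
The plan is to prove \Corollary{CANCHAR COR} by combining \Remark{PUSHOPPRRST REM}, \Remark{ELEMLABEL REM}, and \Corollary{MINELEMSH COR}.

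First, suppose $\langle e \rangle$ is canonical. Then by \Remark{PUSHOPPRRST REM} one has $\langle e \rangle = \langle \partial_r e \rangle_{\chi}$, and by \Remark{ELEMLABEL REM} the canonical subpresheaf $\langle \partial_r e \rangle_{\chi}$ is contained in every elementary subpresheaf whose reduction is $\langle \partial_r e \rangle$; more precisely, I would argue that if $\langle \bar e \rangle$ is any elementary subpresheaf containing $\langle \partial_r e \rangle$, then since $\langle \partial_r \bar e \rangle = \langle \partial_r e \rangle$ (as $\langle \partial_r e \rangle$ is reduced, hence equals its own reduction, and the reduction of $\langle \bar e \rangle$ is forced to agree), \Remark{ELEMLABEL REM} gives $\langle \partial_r \bar e \rangle_{\chi} \subseteq \langle \bar e \rangle$, i.e. $\langle e \rangle = \langle \partial_r e \rangle_{\chi} = \langle \partial_r \bar e \rangle_{\chi} \subseteq \langle \bar e \rangle$. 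Hence $\langle e \rangle$ is the smallest elementary subpresheaf containing $\langle \partial_r e \rangle$.

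Conversely, suppose $\langle e \rangle$ is the smallest elementary subpresheaf containing $\langle \partial_r e \rangle$. Let $e \colon \Omega[U^e] \to N\mathcal{P}$ be a non-degenerate representative and let $U^a \to U^e$ be the tall planar $\vect{C}$-inert label map of \Remark{ELEMLABEL REM}, with $a = \partial_{U^a} e$ the associated alternating dendrex, so that $\langle a \rangle = \langle \partial_r e \rangle_{\chi}$ is canonical. Then $\langle a \rangle$ is an elementary subpresheaf containing $\langle \partial_r e \rangle$ (the containment is the final assertion of \Remark{ELEMLABEL REM}), so by minimality $\langle e \rangle \subseteq \langle a \rangle$. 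On the other hand $\langle a \rangle \subseteq \langle e \rangle$, again by \Remark{ELEMLABEL REM}. Therefore $\langle e \rangle = \langle a \rangle$, which is canonical.

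I expect the main subtlety to be the bookkeeping around reductions in the first paragraph: one must be careful that for an elementary $\langle \bar e \rangle \supseteq \langle \partial_r e \rangle$ the reduction $\langle \partial_r \bar e \rangle$ genuinely coincides with $\langle \partial_r e \rangle$, rather than merely containing it. This follows because $\langle \partial_r e \rangle$ is reduced, so it is its own reduction, and the reduction operation is monotone and idempotent on principal subpresheaves; once $\langle \partial_r e \rangle \subseteq \langle \bar e \rangle$ one gets $\langle \partial_r e \rangle = \langle \partial_r(\partial_r e)\rangle \subseteq \langle \partial_r \bar e \rangle$, and the reverse containment holds since a single operation in $\mathcal{P}$ determines its reduced subpresheaf uniquely (\Remark{PUSHOPPRRST REM}). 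Everything else is a direct appeal to the already-established lemmas, with \Corollary{MINELEMSH COR} available as an alternative route to the first implication if one prefers to phrase minimality via $\langle \partial_{U^e - E}(e)\rangle$ with $E = \Xi^e$ and \Proposition{CANIFFXIE PROP}.
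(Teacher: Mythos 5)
Your reverse direction is fine, and your closing remark correctly identifies the paper's actual route (apply Corollary~\ref{MINELEMSH COR} with $E = \Xi^c$ to a canonical representative $c$, using Proposition~\ref{CANIFFXIE PROP} and Remark~\ref{WHENLRINN REM}). But the primary argument you give for the forward direction has a genuine gap. You assert that $\langle \partial_r e\rangle \subseteq \langle \bar e\rangle$ forces $\langle \partial_r \bar e\rangle = \langle \partial_r e\rangle$, justified by the claim that the reduction $\langle p\rangle \mapsto \langle\partial_r p\rangle$ is monotone in $\langle p\rangle$. This is false. The reduction of $\langle\bar e\rangle$ is the restriction along the specific tall map $\mathsf{lr}(U^{\bar e}) \to U^{\bar e}$, but a reduced subpresheaf can sit inside $\langle\bar e\rangle$ as the restriction to \emph{any} sub-corolla of $U^{\bar e}$, not only the one obtained by deleting all inner edges. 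For instance if $U^{\bar e}$ has two vertices, the restrictions $\partial_{U^{\bar e}_v}\bar e$ to the individual sub-corollas are reduced subpresheaves contained in $\langle\bar e\rangle$, generally of different arity from $\partial_r \bar e$, so they are not equal to the reduction. Concretely, $\langle\partial_r e\rangle$ may land in $\langle\bar e\rangle$ as one of those sub-corollas rather than as $\mathsf{lr}(U^{\bar e})$, and then $\langle\partial_r\bar e\rangle$ is simply a different subpresheaf.

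The fix is exactly what the paper does: don't try to control $\partial_r \bar e$ at all. Apply Corollary~\ref{MINELEMSH COR} to a canonical representative $c$ of $\langle\partial_r e\rangle_\chi$ with $E = \Xi^c = \boldsymbol E^{\mathsf i}(U^c)$ (after the unital case is dispatched separately, since Remark~\ref{WHENLRINN REM} assumes $U^c \neq \eta$). Then $\langle\partial_{U^c - \Xi^c}c\rangle = \langle\partial_r c\rangle = \langle\partial_r e\rangle$, and the corollary says outright that $\langle c\rangle = \langle\partial_r e\rangle_\chi$ is the smallest elementary subpresheaf containing $\langle\partial_r e\rangle$, with no hypothesis on the reduction of the competitor $\langle\bar e\rangle$. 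Combined with the characterization $\langle e\rangle$ canonical iff $\langle e\rangle = \langle\partial_r e\rangle_\chi$ from Remark~\ref{PUSHOPPRRST REM}, both directions of the iff follow at once, so there is no need to treat the two implications separately as you do.
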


\begin{proof}
	As noted in Remark \ref{PUSHOPPRRST REM}, 
	$\langle e \rangle$ is canonical iff
	$\langle e \rangle = \langle \partial_r e \rangle_{\chi}$.
	If $\langle \partial_r e \rangle$ is unital, then 
	$\langle \partial_r e \rangle = \langle \partial_r e \rangle_{\chi}$,
	which is elementary (cf. Remark \ref{UNITALCASE REM}), so the claim is clear.
	Otherwise, letting 
	$c \colon \Omega[U^c] \to N \mathcal{P}$
	be a canonical representative of
	$\langle \partial_r e \rangle_{\chi} $,
	one has
	$\Xi^c = \boldsymbol{E}^{\mathsf{i}}(U^c)$
	and
	$\langle \partial_r e \rangle = 
	\langle \partial_{U^c-\Xi^c} c \rangle$
	so, by Corollary \ref{MINELEMSH COR},
	$\langle \partial_r e \rangle_{\chi} $
	is the smallest elementary containing $\langle \partial_r e \rangle$.
\end{proof}

\begin{corollary}\label{ISODIFCL COR}
	Suppose $N \mathcal{P}$ is $\Sigma$-free,
	and let $e \colon \Omega[U^e] \to N \mathcal{P}$
	be an elementary non-degenerate dendrex and
	$E,E' \subseteq \Xi^e$.
	If 
	$\langle \partial_{U^e-E} (e) \rangle
	=
	\langle \partial_{U^e-E'} (e) \rangle$,
	then in fact $E = E'$.
\end{corollary}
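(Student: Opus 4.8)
The plan is to leverage Corollary \ref{MINELEMSH COR} together with the $\Sigma$-freeness hypothesis. First I would observe that, since $E, E' \subseteq \Xi^e$, Corollary \ref{MINELEMSH COR} applies to both faces: $\langle e \rangle$ is the \emph{smallest} elementary subpresheaf containing $\langle \partial_{U^e - E}(e) \rangle$, and likewise the smallest elementary subpresheaf containing $\langle \partial_{U^e - E'}(e) \rangle$. The hypothesis says these two principal subpresheaves coincide, so they have the same smallest elementary cover; but that smallest elementary cover is $\langle e \rangle$ in both cases, which merely tells us we've landed back at $\langle e \rangle$ and does not yet pin down $E = E'$. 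So the real content must come from a more careful analysis using non-degeneracy.

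Next I would set $p = \partial_{U^e - E}(e) = \partial_{U^e - E'}(e)$ (the common dendrex, which is non-degenerate since $E, E' \subseteq \Xi^e$ and faces along $\Xi^e$-edges of a non-degenerate elementary dendrex remain non-degenerate — this is exactly the content appealed to in Lemma \ref{ELEMEXIST LEM}). Choosing a non-degenerate representative $p \colon \Omega[U^p] \to N\mathcal{P}$, the equality $\langle \partial_{U^e-E}(e) \rangle = \langle p \rangle = \langle \partial_{U^e-E'}(e)\rangle$ together with $\Sigma$-freeness of $N\mathcal{P}$ yields \emph{unique} isomorphisms $\psi_E \colon U^p \xrightarrow{\simeq} U^e - E$ and $\psi_{E'} \colon U^p \xrightarrow{\simeq} U^e - E'$ compatible with the inclusions into $U^e$ via $e$. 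Composing, one gets an isomorphism $U^e - E \xrightarrow{\simeq} U^e - E'$ over $U^e$, i.e. an automorphism datum of $U^e$ that restricts to the identity on the common subtree. The hard part — and the step I expect to be the main obstacle — is turning this ``isomorphism over $U^e$'' into the equality of edge subsets: I would argue that an isomorphism $U^e - E \to U^e - E'$ commuting with the face inclusions into $U^e$ must be induced by the identity on edge sets (since $\Sigma$-freeness of $\langle e \rangle$ forces $\mathrm{Aut}(U^e)$ to act freely on $e$, so no nontrivial automorphism of $U^e$ fixes $e$), hence $\boldsymbol{E}(U^e - E) = \boldsymbol{E}(U^e - E')$ as subsets of $\boldsymbol{E}(U^e)$, which is precisely $\boldsymbol{E}(U^e) \setminus E = \boldsymbol{E}(U^e) \setminus E'$ and therefore $E = E'$.

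To make the obstacle step precise I would spell out: the face map $\partial_{U^e-E}(e)$ factors through $p$ as $\Omega[U^p] \xrightarrow{\psi_E^{-1}} \Omega[U^e-E] \hookrightarrow \Omega[U^e] \xrightarrow{e} N\mathcal{P}$, and similarly for $E'$; since $e$ is non-degenerate and $\langle e\rangle$ is $\Sigma$-free, the factorization of the degenerate-or-not composite $p$ through the non-degenerate $e$ is unique up to unique isomorphism (cf. \cite[Prop. 5.62]{Per18}, used already in the proof of Lemma \ref{UNIQINAN LEM}), which forces the two inclusions $U^e - E \hookrightarrow U^e$ and $U^e - E' \hookrightarrow U^e$ to have the same image as subtrees of $U^e$. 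As both are face maps (injective on edges), equal images means equal edge subsets, giving $E = E'$. I would present this as a short clean argument once the $\Sigma$-freeness bookkeeping is set up; the only genuinely delicate point is confirming that ``same image under face inclusions into $U^e$'' is the right invariant to extract from the uniqueness of factorizations, rather than something weaker like ``abstractly isomorphic trees.''
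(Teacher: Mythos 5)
Your overall strategy is right, and you correctly recognize that Corollary~\ref{MINELEMSH COR} alone is insufficient and that the argument must ultimately turn on $\Sigma$-freeness. However, there is a genuine gap at the crucial step. You assert that ``the factorization of the [\dots] composite $p$ through the non-degenerate $e$ is unique up to unique isomorphism'' and cite \cite[Prop.~5.62]{Per18}. That result does not give this: it is the dendroidal Eilenberg--Zilber statement, asserting that a (possibly degenerate) dendrex factors uniquely as a degeneracy followed by a non-degenerate dendrex. It says nothing about uniqueness of factorizations of a given dendrex $p$ through a \emph{different} non-degenerate dendrex $e$, and such uniqueness is false in general: a non-degenerate elementary dendrex on $U^e$ can assign the same operation of $\mathcal{O}$ to two different vertices, yielding two inequivalent factorizations of a corolla through $e$.

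The reason the factorization \emph{is} rigid in this specific setting --- for inner faces $U^e - E$ with $E \subseteq \Xi^e$ of an elementary dendrex --- is precisely the content of Lemma~\ref{UNIQINAN2 LEM}, which your argument never invokes. The paper's proof applies Lemma~\ref{UNIQINAN2 LEM} with $\bar{e} = e$ and the map $U^e - E \to U^e$ given by the composite $U^e - E \xrightarrow{\simeq} U^e - E' \hookrightarrow U^e$, obtaining a dashed self-map $\Omega[U^e] \to \Omega[U^e]$ making the triangle over $N\mathcal{P}$ commute; $\Sigma$-freeness then forces this self-map to be the identity, from which $U^e - E = U^e - E'$ and hence $E = E'$. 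The fix to your proposal is short --- replace the appeal to \cite[Prop.~5.62]{Per18} with an application of Lemma~\ref{UNIQINAN2 LEM} --- but as written the argument does not close, because you are using a general-position uniqueness claim that is only true here for the structural reasons encoded in that lemma.
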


\begin{proof}
	If 
	$\langle \partial_{U^e-E} (e) \rangle
	=
	\langle \partial_{U^e-E'} (e) \rangle$
	then one can find a solid diagram as below
	\[
	\begin{tikzcd}
	\Omega[U^e-E] \ar{d}[swap]{\simeq} \ar{r} &
	\Omega[U^e] \ar{r}{e} \ar[dashed]{d} &
	N \mathcal{P}
	\\
	\Omega[U^e-E'] \ar{r} &
	\Omega[U^e] \ar{ru}[swap]{e} &
	\end{tikzcd}
	\]
	and thus by Lemma \ref{UNIQINAN2 LEM} one can also find the vertical dashed arrow.
	But since $N \mathcal{P}$ is $\Sigma$-free
	by assumption,
	the dashed arrow must be the identity,
	so that $U^e-E=U^e-E'$ and hence $E=E'$.
\end{proof}

\begin{proof}[Proof of Lemma \ref{KEYPR LEM}]
	We will verify the characteristic edge conditions in Definition \ref{CHAREDGE DEF}.
	Note that, 
	as $\partial \Omega[T] \to \Omega[T]$ is a generating cofibration in $\sOp^G$,
	$\O \to \P$ is a cofibration with $\Sigma$-cofibrant source and hence, by Proposition \ref{COPOFSIGCOF PROP},
	$\P$ is $\Sigma$-cofibrant.
	One thus has that 
	$N \mathcal{P}$ is $\Sigma$-free
	(cf. \cite[Lemma 5.9]{BM03})
	so that 
	the $\Sigma$-freeness conditions in 
	both Definition \ref{CHAREDGE DEF}
	and Corollary \ref{ISODIFCL COR}
	are satisfied.

	We set $X = \Omega[C] \amalg_{\partial \Omega[C]} N \mathcal{O}$, 
	with the $G$-poset of principal subpresheaves formed by the 
	elementary subpresheaves 
	$\langle e \rangle$
	under inclusion, and the characteristic inner edge sets
	$\Xi^{\langle e \rangle} = \left\{\Xi^{e}\right\}$ given as in Definition \ref{XIEDGES DEF}.
	Note that by Lemma \ref{ELEMEXIST LEM}
	every dendrex of $N \mathcal{P}$ is in some 
	$\langle e \rangle$, so that the map
	\eqref{CHAREDGE EQ} is indeed
	$\Omega[C] \amalg_{\partial \Omega[C]} N \mathcal{O}
	\to N \mathcal{P}$.
	
	Let $e\colon \Omega[U^e] \to N \mathcal{P}$
	be a non-degenerate elementary dendrex. We note the following: 
	\begin{itemize}
		\item[(a)] if $\Xi^e = \emptyset$ then 
		either all vertices of $U^e$ are $\O$-labeled, i.e. $e \in N \mathcal{O}$, 
		or $U^e$ is a $\vect{C}$-labeled corolla, 
		i.e. $e \in \Sigma_{\mathfrak{C}}[\vect{C}]$.
		In other words, $\Xi^e = \emptyset$ iff 
		$\langle e \rangle \subseteq X = N \mathcal{O} \amalg_{\partial \Omega[C]} \Omega[C]$.
		\item[(b)] any outer face of $e$ is again elementary,
		as is any inner face $\partial_{U^e-f} (e)$ such that $f \not \in \Xi^e$
		(since then both vertices adjacent to $f$ are $\O$-labeled).
		Therefore, by Corollary \ref{MINELEMSH COR},
		we see that a face of $e$ is \emph{not} in
		some elementary $\langle \bar{e} \rangle \subsetneq \langle e \rangle$
		iff it is of the form
		$\partial_{U^e - E} (e)$
		for some $E \subseteq \Xi^e$.
	\end{itemize}

	We now check the characteristic edge conditions. (Ch0.2) is clear.
	
	For (Ch1), by (b) any proper outer face of $e$ is in $X_{<\langle e\rangle}$, so we need only consider the case of
	$V=U^e$ with $\Xi^e=\emptyset$, in which case
	$\langle e \rangle \subseteq X \subseteq X_{<\langle e\rangle}$ by (a).
	
	For (Ch2),(Ch3), by (a) and the first half of (b) one needs only consider the case of
	$V \simeq U^e - E$ where $E \subseteq \Xi^e$ and $\Xi^e \neq \emptyset$.
	But then
	$V - \Xi^e_V \simeq U^e- \Xi^e$
	and, by the second half of (b),
	one has
	$\langle \partial_{U^e-\Xi^e}(e) \rangle \subseteq  X_{<\langle e\rangle}$
	iff
	$\langle e \rangle \subseteq  X_{<\langle e\rangle}$
	iff
	$\langle \partial_{U^e-E}(e) \rangle \subseteq  X_{<\langle e\rangle}$,
	so (Ch2),(Ch3) follow.

	Lastly, we address (Ch0.1). By (a) we need only consider the case of $\Xi^e \neq \emptyset$,
	so that by (b)
	the complement of
	the preimage
	$e^{-1}(X_{<\langle e\rangle})$
	consists of the faces isomorphic to
	$U^e-E$ for $E \subseteq \Xi^e$.
	Injectivity of $e$ within each isomorphism class of the faces away from 
	$e^{-1}(X_{<\langle e\rangle})$
	follows from $N \mathcal{P}$ being $\Sigma$-free,
	while injectivity across distinct isomorphism classes of faces is
	Corollary \ref{ISODIFCL COR}.
\end{proof}

\begin{remark}\label{CH01SUCKS REM}
	Condition (Ch0.1) is, by some margin, the subtlest condition in the previous proof, and the main reason for the chosen formulations of 
	Lemmas \ref{UNIQINAN LEM}, \ref{UNIQINAN2 LEM}.
	In particular, we note that injectivity of 
	$e \colon \Omega[U^e] \to N \mathcal{P}$ will in general fail away from 
	$e^{-1}(X_{< \langle e \rangle})$.
	For example, two edges/vertices of $U^e$
	may be assigned the same color/operation, and similarly for larger outer faces. In fact, injectivity may even fail on inner faces
	$T^e-E$ where $E \not \subseteq \Xi^e$.
\end{remark}

\begin{remark}\label{CHAREDCOMP REM}
	The proof of the characteristic edge lemma
	\cite[Lemma 3.4]{BP20}
	gives a filtration of 
	$\Omega[C] \amalg_{\partial \Omega[C]} N \mathcal{O}
	\to N \mathcal{P}$ by anodyne maps
	which, for $G=\**$,
	matches the filtration in \cite[\S 3]{CM13b}.

	As such, our arguments largely adapt \cite{CM13b},
	though we also patch an apparent gap in the proof therein.
	Namely, we believe that the appeal to  
	\cite[Lemma 3.4]{CM13b} in the proof of
	\cite[Lemma 3.5]{CM13b} is incorrect
	(namely, in the terminology
	of Definition \ref{DENDTYPES DEF},
	the assumption in \cite[Lemma 3.4]{CM13b}
	only holds for \emph{elementary} dendrices).
	To fix this, we replace the role of 
	\cite[Lemma 3.4]{CM13b} with 
	\cite[Lemma 3.4]{BP20}.
	From this point of view, \cite[Lemma 3.5]{CM13b}
	amounts to verifying (Ch1),(Ch2),(Ch3),
	though the tricky condition (Ch0.1)
	appears left unaddressed.
\end{remark}

\section{Indexing system analogue results}\label{INDSYS SEC}

As in \cite[\S 6]{BP20} and \cite[\S 9]{Per18}, 
we dedicate our final main section to 
discussing the variants of our main results 
for the \textit{indexing systems} of Blumberg-Hill \cite{BH15}
or, more precisely,
the slightly more general \textit{weak indexing systems}
introduced by the authors \cite[\S 9]{Per18}, \cite[\S 4.4]{BP21} (and independently identified by Gutierrez-White \cite{GW18}).

Weak indexing systems can regarded as
the operadic generalization of the notion of a family of subgroups 
(cf. Remark \ref{WIS_FAMILY_REM}).
Recall that families $\F$ of a subgroup $G$ are in
bijection with \textit{sieves} 
$\mathsf{O}_\F \subseteq \mathsf{O}_G$ of the orbit category of $G$.

\begin{definition}
      A \textit{sieve} of a category $\mathcal C$ is a full subcategory $\mathcal S \subseteq \mathcal C$ such that,
      for all arrows $A \to B$ in $\mathcal C$, 
      $B \in \mathcal S$ implies that $A$ and $A \to B$ are also in $\mathcal S$.
\end{definition}

\begin{definition}
      \label{WIS_DEF}
      A subcategory $\Omega_\F \subseteq \Omega_G$ is called a \textit{weak indexing system} if:
      \begin{enumerate}
      \item $\Omega_\F$ is a sieve in $\Omega_G$ and;
      \item for $T \in \Omega_G$, 
      one has $T \in \Omega_\F$ iff $T_v \in \Omega_\F$ for all $v \in \boldsymbol{V}_G(T)$.
      \end{enumerate}
\end{definition}

\begin{remark}\label{WIS_FAMILY_REM}
	The $\F$ in $\Omega_\F$ refers to an equivalent presentation of a weak indexing system.
	Fixing $\Omega_\F$, 
	let $\F_n$ for $n \geq 0$ denote the family of those
	graph subgroups $\Gamma \leq G \times \Sigma_n$
	(cf. \eqref{NORMMAP EQ}) such that,
	equipping the $n$-corolla $C_n$
	with the $H$-action given by the associated homomorphism
	$\phi_{\Gamma} \colon H \to \Sigma_n$,
	the induced $G$-corolla $G \cdot_H C_n$ is in $\Omega_\F$.
	The symbol $\F$ then denotes the collection of families $\set{\F_n}_{n \geq 0}$, whose data is equivalent to $\Omega_{\F}$.
	
	For later reference,
	replacing $C_n$ above with a tree $T\in \Omega$
	and $\Sigma_n$ with $\mathsf{Aut}(T)$,
	we write
	$\F_T$ for the family of graph subgroups
	$\Gamma \leq G \times \mathsf{Aut}(T)$ for which the similarly built $G\cdot_H T$ is in $\Omega_{\F}$.
	
	We note that the fact that each $\F_n$ is a family is consequence of $\Omega_\F$ being a sieve. More precisely, this follows from the sieve condition for the quotient maps in $\Omega_G$. 
	However, the $\F_n$ must satisfy additional operadic closure properties
	(resulting from the sieve condition for face maps in $\Omega_G$),
	which are made explicit in 
	\cite[Def. 3.22]{BH15}, \cite[Def. 2.12]{Rub_nf}.
%
\end{remark}

\begin{remark}\label{ETAF_REM}
	By vacuousness, Definition \ref{WIS_DEF}(ii) 
	implies that any weak indexing system $\Omega_\F$
	contains the equivariant stick trees
	$G/H \cdot \eta$.
	As such, by the sieve condition, 
	all equivariant linear trees
	$G/H \cdot [n]$ for $[n] \in \Delta$
	are likewise in $\Omega_\F$.
	As such, $\F_1$ always contains all subgroups
	$H \leq G \simeq G \times \Sigma_1$.
\end{remark}

For both of the categories appearing in Theorem \ref{QE THM},
each weak indexing system $\F$
gives rise to model structures (on the same underlying categories)
$\mathsf{dSet}^G_{\F}$, 
$\mathsf{sOp}^G_{\F}$
which, loosely speaking, are built by replacing the role
of $\Omega_G$ (or the families of graph subgroups $\F^{\Gamma}_n$)
with $\Omega_\F$ (the families $\F_n$) throughout.
With minor changes to our proofs, discussed in the remainder of the section, one then obtains an $\F$-variant of Theorem \ref{QE THM},
stated below as Theorem \ref{QEF THM}. 

First, we recall that $\F$-variant model structures on 
all the categories in the square \eqref{ADJSQ EQ}
have already been built,
with some of the adjunctions therein already known to be Quillen equivalences.

\begin{itemize}
	\item Replacing $\Omega_G$ with $\Omega_\F$
	in Definitions \ref{GINNERANN DEF},\ref{GINNERFIB DEF}
	one obtains the classes of 
	$\F$-normal monomorphisms, $\F$-inner anodyne maps,
	and $\F$-inner fibrations in $\mathsf{dSet}^G$.
	Using these alternative classes,
	the $\F$-model structure
	$\mathsf{dSet}^G_{\F}$
	\cite[Thm. 2.2]{Per18}
	is described as in Theorem \ref{DSETGMOD THM}.

\item As in the discussion before Theorem \ref{JB_THM},
	since $\Delta^{op}$ is a Reedy category,
	the identification $\sdSet^G = (\dSet^G)^{\Delta^{op}}$
	yields a \emph{$\F$-simplicial Reedy model structure} on $\sdSet^G$
	with weak equivalences are the 
	\textit{$\F$-dendroidal equivalences},
	i.e. maps $X\to Y$ for which the maps 
	$X_n \to Y_n$, $n\geq 0$ are weak equivalences in $\dSet^G_\F$.
	
	Similarly, as $\Omega^{op}\times G$ is generalized Reedy and the collection of families $\{\F_T\}_{T \in \Omega}$
	is Reedy admissible,
	the identification $\sdSet^G = (\sSet)^{\Omega^{op} \times G}$
	yields a \emph{$\F$-dendroidal Reedy model structure} on $\sdSet^G$
	with weak equivalences the 
	\textit{$\F$-simplcial equivalences},
	i.e. maps $X\to Y$ for which the maps 
	$X(\Omega[T]) \to Y(\Omega[T])$ for $T \in \Omega_\F$
	are Kan equivalences in $\sSet$.	
	
	The \emph{$\F$-joint/complete/Rezk model structure} on 
	$\mathsf{sdSet}^G$
	(cf. the discussion above \cite[Thm. 6.7]{BP20}), 
	denoted $\mathsf{sdSet}^G_{\F}$,
	then combines the $\F$-simplicial and $\F$-dendroidal Reedy model structures as in Theorem \ref{JB_THM}.
	
	\item The $\F$-\emph{normal model structure} on $\PreOp^G$,
	denoted $\PreOp^G_{normal,\F}$
	(cf. the discussion above \cite[Thm. 6.8]{BP20}),
	is obtained from the $\F$-joint model structure 
	$\mathsf{sdSet}^G_{\F}$
	as in Theorem \ref{PREOPMS THM}.

	\item The adjunctions
	$c_! \colon \mathsf{dSet}^G_{\F} 
	\rightleftarrows
	\mathsf{sdSet}^G_{\F} \colon c^{\**}$
	and
	$\gamma^{\**} \colon \mathsf{PreOp}^G_{normal,\F} 
	\rightleftarrows
	\mathsf{sdSet}^G_{\F} \colon \gamma_{\**}$
	are Quillen equivalences \cite[Thms. 6.7 and 6.8]{BP20}.
	
	\item The $\F$-model structure on $\sOp^G$, 
		denoted $\sOp^G_{\F}$,
		is an instance of 
		\cite[Thm. \ref{AC-THMA}]{BP_ACOP} with $\F_n$ therein as 	discussed in Remark \ref{WIS_FAMILY_REM}.
		This model structure is described as in 
		Theorem \ref{SOPG_THM} by restricting the 
		$\Gamma \leq G \times \Sigma_n^{op}$ to $\F_n$.
		Likewise, the generating sets are as in
		Remark \ref{GENCOF_SOPG_REM}
		with $C \in \Sigma_G$ restricted to 
		$C \in \Sigma_{\F} = \Sigma_G \cap \Omega_{\F}$.
\end{itemize}

Next, the notions of $\F$-Segal operad and 
$\F$-Dwyer Kan equivalence are obtained 
by restricting $T\in \Omega_G$, $C \in \Sigma_G$
in Definitions \ref{SEGCOLCHAR DEF},\ref{DKEQUIV DEF}
to $T\in \Omega_\F$, $C \in \Sigma_\F$.
As in Theorem \ref{FIBPREOP THM},
one has the following,
which is left unstated in \cite[\S 6]{BP20},
but follows from \cite[Thm. 6.9]{BP20}
and the $\F$-version of \cite[Cor. 5.51]{BP20}
(in fact, we need only the easier ``only if'' half of the latter,
which follows by the same proof, 
restricting $T$ therein to $\Omega_\F$).
\begin{remark}\label{FIBPREOPF REM}	
	A map between $\F$-Segal operads is a 
	$\F$-joint equivalence iff it is an $\F$-Dwyer-Kan equivalence.
\end{remark}

Adapting the work in \S \ref{TAMEDEFEX SEC},
and writing 
($\F$TC2),($\F$TC3),($\F$TA2),($\F$TA3)
for the maps in Definitions \ref{TAMEGENCOF DEF},\ref{TAMEGENANO DEF}
restricted to $C \in \Sigma_\F$, $T \in \Omega_\F$ we now get the following extension of Theorem \ref{TAMEMS_THM}.
Note that, by Remark \ref{ETAF_REM},
there is no need to alter (TC1),(TA1).

\begin{theorem}\label{TAMEMSF_THM}
	There is a left proper model structure on 
	$\mathsf{PreOp}^G$,
	called the \emph{$\F$-tame model structure}
	and denoted $\mathsf{PreOp}^G_{\F}$,
	such that:
	\begin{itemize}
		\item weak equivalences are the 
		$\F$-joint equivalences 
		(detected by inclusion into $\mathsf{sdSet}^G_{\F}$);
		\item the generating cofibrations are the maps (TC1),($\F$TC2),($\F$TC3);
		\item $X \in \mathsf{PreOp}^G_{\F}$ is fibrant iff
		$X \to \**$ has the right lifting property against 
		(TA1),($\F$TA2),($\F$TA3);
		\item a map $X \to Y$ between fibrant objects is a fibration iff
		it has the right lifting property against 
		(TA1),($\F$TA2),($\F$TA3).
	\end{itemize}
	Moreover, the identity adjunction
	$
	\mathsf{PreOp}^G_{tame,\F}
	\rightleftarrows
	\mathsf{PreOp}^G_{normal,\F} 
	$
	is a Quillen equivalence.
\end{theorem}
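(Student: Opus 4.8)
\textbf{Proof plan for Theorem \ref{TAMEMSF_THM}.}
The plan is to mimic the proof of Theorem \ref{TAMEMS_THM} verbatim, replacing $\Omega_G$ by $\Omega_{\F}$ and $\Sigma_G$ by $\Sigma_{\F} = \Sigma_G \cap \Omega_{\F}$ throughout, and checking that every auxiliary lemma in \S\ref{TAMEDEFEX SEC} goes through in the $\F$-graded setting. Concretely, I would first record the $\F$-analogues of Lemmas \ref{TAMECOFCOF_LEM}, \ref{TAMETRIVFIB LEM}, \ref{SLIMOD LEM}: for the first, the only inputs are Lemma \ref{OTIMSETPUSH LEM} (which is purely formal and $\F$-independent) and Remark \ref{SQUAREEQUI REM} applied now to $\F$-normal monomorphisms and $\F$-weak equivalences in $\mathsf{sdSet}^G_{\F}$, which holds since $\mathsf{sdSet}^G_{\F}$ is again a joint localization of two Reedy structures with the same cofibrations; for the second, the argument splitting $X \to f^{\**}Y \to Y$ and using the decomposition \eqref{COLDEC_EQ} together with Remark \ref{SCTCOLPR REM} is unchanged, one merely restricts the trees $T$ appearing to $\Omega_{\F}$ and the corollas $C$ to $\Sigma_{\F}$, and invokes Theorem \ref{JB_THM}\ref{SDEQUIV_LBL} for the $\F$-joint model structure (which is part of the $\F$-analogue of Theorem \ref{JB_THM}, cf. \cite[Thm. 6.7]{BP20}); for Lemma \ref{SLIMOD LEM} I would use Remark \ref{FIBPREOPF REM} in place of Theorem \ref{FIBPREOP THM}, noting that Remark \ref{ETAF_REM} guarantees all linear trees $G/H\cdot[n]$ lie in $\Omega_\F$, so the pseudo-interval $J$ and the countability/replacement arguments (which take place entirely among unary operations, hence inside $\Omega_\F$) are untouched, and the small object argument uses ($\F$TC2),($\F$TC3) instead of (TC2),(TC3).

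With these lemmas in hand, the existence proof is the verification of conditions C1--C5 of \cite[Prop. 2.3]{Sta14}, exactly as in the proof of Theorem \ref{TAMEMS_THM}. Local presentability is automatic; accessibility of the $\F$-joint equivalences is inherited from $\mathsf{sdSet}^G_{\F}$ via $\gamma^{\**}$ and \cite[Cors. A.2.6.5, A.2.6.6]{Lur09}; C1, C3 are the 2-out-of-6 property, inherited from $\mathsf{sdSet}^G_{\F}$; C2 is the $\F$-analogue of Lemma \ref{TAMETRIVFIB LEM}; C4 follows since ($\F$TA2),($\F$TA3) lie in the saturation of ($\F$TC2),($\F$TC3) and are $\F$-weak equivalences by the $\F$-analogue of Lemma \ref{TAMECOFCOF_LEM}, while (TA1) is tautological by definition of pseudo-interval; and C5 is where I would be most careful --- the lifting condition against ($\F$TA3) forces $\mathcal{J}$-fibrant objects to be $\F$-Segal operads, so by Remark \ref{FIBPREOPF REM} it suffices to check that $\mathcal{J}$-fibrations between $\F$-Segal operads which are also $\F$-Dwyer--Kan equivalences lift against (TC1),($\F$TC2),($\F$TC3), and the three sub-cases (lifting against ($\F$TC3) is tautological since it equals ($\F$TA3); lifting against ($\F$TC2) uses the fully faithfulness half of the DK condition, now only for $C \in \Sigma_\F$; lifting against (TC1) uses essential surjectivity plus the $\F$-analogue of Lemma \ref{SLIMOD LEM}) go through verbatim. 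Finally, the ``moreover'' (Quillen equivalence with $\mathsf{PreOp}^G_{normal,\F}$) and left properness follow formally, since $\F$-tame cofibrations are $\F$-normal cofibrations by the $\F$-analogue of Lemma \ref{TAMECOFCOF_LEM} and both model structures share the $\F$-joint equivalences as weak equivalences, just as in the proof of Theorem \ref{TAMEMS_THM}.

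The only place where a genuinely new small point arises, and hence the step I expect to be the main (if modest) obstacle, is confirming that the $\F$-analogues of the two external ingredients --- the $\F$-version of Theorem \ref{JB_THM} for $\mathsf{sdSet}^G_{\F}$ and the $\F$-version of Theorem \ref{FIBPREOP THM} (i.e. Remark \ref{FIBPREOPF REM}) --- are actually available with the precise statements used in the proof. The former is \cite[Thm. 6.7]{BP20} combined with the fact that the $\F$-dendroidal and $\F$-simplicial Reedy structures have the same cofibrations (both are the $\F$-normal monomorphisms); the latter combines \cite[Thm. 6.9]{BP20} with the ``only if'' half of the $\F$-version of \cite[Cor. 5.51]{BP20}, whose proof is the same as the non-$\F$ one with $T$ restricted to $\Omega_\F$. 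Once these are in place, no further ideas are needed: the entire argument of \S\ref{TAMEDEFEX SEC} is $\F$-robust because $\Omega_\F \subseteq \Omega_G$ is a sieve closed under vertices, so restricting all indexing sets of trees and corollas preserves every closure and factorization property invoked.
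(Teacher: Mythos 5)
Your proposal is correct and matches the paper's approach: the paper likewise observes that all arguments from Theorem \ref{TAMEMS_THM} and Lemmas \ref{TAMECOFCOF_LEM}, \ref{TAMETRIVFIB LEM}, \ref{SLIMOD LEM} carry through, flagging exactly the same two substitutions you identify (the $\F$-variant of Remark \ref{SQUAREEQUI REM} in Lemma \ref{TAMECOFCOF_LEM}, and Remark \ref{FIBPREOPF REM} in place of Theorem \ref{FIBPREOP THM} for C5). Your more detailed walkthrough of C1--C5 and your explicit appeal to Remark \ref{ETAF_REM} for the unary/linear-tree business in Lemma \ref{SLIMOD LEM} are faithful elaborations of what the paper leaves implicit.
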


\begin{proof}
	All the arguments in the proof of Theorem \ref{TAMEMS_THM},
	as well as in the proofs of necessary 
	Lemmas \ref{TAMECOFCOF_LEM},\ref{TAMETRIVFIB LEM},\ref{SLIMOD LEM}
	carry through, 
	with the biggest differences being that 
	Lemma \ref{TAMECOFCOF_LEM} 
	uses the obvious $\F$-variant of Remark \ref{SQUAREEQUI REM}
	and that the verification of C5 in the proof of Theorem \ref{TAMEMS_THM} replaces the appeal to 
	Theorem \ref{FIBPREOP THM} with an appeal to 
	Remark \ref{FIBPREOPF REM}.
\end{proof}


We now turn to the $\F$-variants of the Quillen equivalences established in
\S \ref{QE_SEC}. 

The $\Sigma$-cofibrations 
$\O \to \mathcal{P}$ in $\mathsf{sOp}^G$ (or $\mathsf{Op}^G$)
discussed after Proposition \ref{SSYMCOFCH PROP}
generalize to the notion of $\Sigma_\F$-cofibration,
demanding that any point 
$x \in \mathcal{P} \setminus \O$
at a profile $\vect{C}$
of arity $n$ must have isotropy in $\F_n$.
We note that this is connected to the characterization, 
given in the discussion after \cite[Def. 9.8]{Per18},
of the $\F$-normal monomorphisms $X\to Y$ in $\mathsf{dSet}^G$
as the maps such that points
$x \in Y\setminus X$ at a tree $T$ have isotropy in $\F_T$.
Indeed, the nerve formula \eqref{ALTNER EQ}
and Definition \ref{WIS_DEF}(ii)
imply that, for $\O \to \mathcal{P}$ a 
$\Sigma_{\F}$-cofibration between 
$\Sigma_{\F}$-cofibrant objects in $\mathsf{Op}^G$,
the nerve map
$N\O \to N\mathcal{P}$
is an $\F$-normal monomorphism between
$\F$-normal objects in $\mathsf{dSet}^G$.

The following is the extension of Theorem \ref{PREQUIEQUIV THM}.

\begin{theorem}\label{PREQUIEQUIVF THM}
	The following adjunction is a Quillen equivalence.
	\begin{equation}\label{PREQUIEQUIVF EQ}
	\tau \colon \mathsf{PreOp}^G_{\text{tame},\F}
	\rightleftarrows 
	\mathsf{sOp}^G_{\F} \colon N
	\end{equation}
\end{theorem}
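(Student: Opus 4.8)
\textbf{Proof plan for Theorem \ref{PREQUIEQUIVF THM}.}
The plan is to replay the proof of Theorem \ref{PREQUIEQUIV THM} verbatim, substituting the $\F$-restricted data ($\Sigma_\F$ in place of $\Sigma_G$, $\Omega_\F$ in place of $\Omega_G$, the families $\F_n$ in place of the full graph-subgroup families) at each step, and checking that every cited ingredient has an $\F$-analogue. First I would observe that $N$ preserves and detects weak equivalences: all objects in the image of $N$ are $\F$-Segal operads (the strict Segal conditions \eqref{STRSEGCON EQ} only become stronger when one restricts to $T \in \Omega_\F$), so by Remark \ref{FIBPREOPF REM} it suffices to note that $N$ preserves and reflects $\F$-full-faithfulness and, via the identification $N(\iota^{\**}\O^H) \simeq \iota^{\**}((N\O)^H)$, $\F$-essential surjectivity --- exactly as in the proof of Theorem \ref{PREQUIEQUIV THM}, since restricting $C \in \Sigma_\F$ changes nothing in these formal arguments.

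Next I would verify that \eqref{PREQUIEQUIVF EQ} is a Quillen adjunction via Corollary \ref{SIMPLQUILL COR}. That $\tau$ preserves cofibrations follows from Proposition \ref{TAUOTIMES_PROP} (which is insensitive to which corollas appear), sending (TC1),($\F$TC2) to (C1),($\F$C2) and ($\F$TC3) to isomorphisms (Remark \ref{TAUSC REM}); here $(\F\text{C}1),(\F\text{C}2)$ denote the generating cofibrations of $\sOp^G_\F$, i.e. the maps of Remark \ref{GENCOF_SOPG_REM} with $C$ restricted to $\Sigma_\F$. That $N$ preserves fibrations between fibrant objects reduces, by the characterization in Theorem \ref{TAMEMSF_THM} and an adjunction argument, to showing $\tau$ sends (TA1),($\F$TA2),($\F$TA3) to weak equivalences; the cases ($\F$TA2),($\F$TA3) are immediate from Proposition \ref{TAUOTIMES_PROP}, while (TA1) uses the $\F$-analogue of Corollary \ref{KEYEQUIV COR} together with $2$-out-of-$3$ and the fact that $N$ reflects weak equivalences. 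The $\F$-analogue of Corollary \ref{KEYEQUIV COR} (and of the underlying Lemma \ref{UNITEQUIV LEM}) follows by the same induction, provided one has the $\F$-variant of the key Lemma \ref{KEYPRVAR LEM}: that for $\Sigma_\F$-cofibrant $\O \in \mathsf{Op}^G$ and $C \in \Sigma_\F$, the nerve of a free extension along $\partial\Omega(C)^{\amalg r} \to \Omega(C)^{\amalg r}$ is $\F$-inner anodyne. But the proof of Lemma \ref{KEYPR LEM} via the characteristic edge lemma goes through with $\Omega_G$ replaced by $\Omega_\F$: the pushout formula \eqref{PUSHOPPR EQ} is unchanged, $N\mathcal{P}$ is now $\F$-normal with $\Sigma$-free simplices at profiles with isotropy in $\F_n$ (by the nerve formula \eqref{ALTNER EQ} and Definition \ref{WIS_DEF}(ii), as discussed above), and every dendrex $U^e \to N\mathcal{P}$ occurring has $U^e \in \Omega_\F$ by the sieve property of $\Omega_\F$, so the $\F$-version of Lemma \ref{CHAREDGE LEM} applies.

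Finally, for the Quillen equivalence, I would take $B \in \mathsf{PreOp}^G$ $\F$-tame cofibrant and $\O \in \mathsf{sOp}^G$ $\F$-fibrant, and show $\tau B \to \O$ is a weak equivalence iff the composite $B \xrightarrow{\sim} N\tau B \to N\O$ is --- which follows from the $\F$-analogue of Corollary \ref{KEYEQUIV COR} and the fact, established in the first step, that $N$ preserves and detects weak equivalences.

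\textbf{Main obstacle.} The real content is the $\F$-version of Lemma \ref{KEYPRVAR LEM} / Lemma \ref{KEYPR LEM}: one must confirm that the entire apparatus of \S \ref{KEYRES SEC} --- elementary, alternating, and canonical dendrices, the characteristic inner edge collection, and conditions (Ch0.1)--(Ch3) --- survives restriction to $\Omega_\F$. The crucial points are (i) that all trees $U^e$ arising as domains of elementary dendrices of $N\mathcal{P}$ lie in $\Omega_\F$ (which holds because $N\mathcal{P}$ is $\F$-normal, so any dendrex factors through $\Omega[T]$ with $T \in \Omega_\F$, and $\Omega_\F$ is a sieve closed under the face/degeneracy operations used), (ii) that the $G$-poset of elementary subpresheaves still satisfies the descending chain condition and $G$-equivariance (unchanged), and (iii) that $N\mathcal{P}$ is $\Sigma$-free at the relevant profiles, which needs the $\F$-version of Proposition \ref{COPOFSIGCOF PROP} (i.e. \cite[Prop. \ref{OC-SIGMAG_COF PROP}]{BP_FCOP} in its $\F$-form) to conclude $\mathcal{P}$ is $\Sigma_\F$-cofibrant. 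Once these are in place, Corollary \ref{ISODIFCL COR} and Lemmas \ref{UNIQINAN LEM}, \ref{UNIQINAN2 LEM} --- all of which are purely combinatorial statements about maps of (uncolored) trees, hence $\F$-agnostic --- deliver (Ch0.1), and the remaining conditions follow exactly as before.
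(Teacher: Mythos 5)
Your overall plan matches the paper's, and you correctly identify that the heart of the matter is the $\F$-version of Lemma \ref{KEYPRVAR LEM} / Lemma \ref{KEYPR LEM}. But for that key ingredient you take a genuinely different route from the paper, and a somewhat heavier one.

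The paper does \emph{not} re-run the argument of \S\ref{KEYRES SEC} in the $\F$-setting. Instead it uses an upgrade trick: Lemma \ref{KEYPR LEM} (already proved) shows the map \eqref{ANODYNE MAP} is $G$-inner anodyne, the target $N\P$ is $\F$-normal (because $\P$ is $\Sigma_\F$-cofibrant, via the $\F$-form of Proposition \ref{COPOFSIGCOF PROP} and the remarks after it), and since the characteristic edge lemma's proof proceeds by attaching $G$-inner horns determined by dendrices of $N\P$, $\F$-normality of $N\P$ forces all those horns to lie in $\Omega_\F$. Hence the map is automatically $\F$-inner anodyne, with no need to re-verify (Ch0.1)--(Ch3), the DCC, or any of the apparatus of elementary/alternating/canonical dendrices. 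This is the same ``an anodyne map into an $\F$-normal object is $\F$-anodyne'' argument that precedes \cite[Thm.~6.7]{BP20}.

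Your route — re-deriving the whole of \S\ref{KEYRES SEC} in the $\F$-setting — should also work, and has the virtue of being self-contained rather than depending on how the characteristic edge lemma was proved internally. But it is more laborious, and the extra detail introduces small slips: for example, your claim that ``every dendrex $U^e \to N\mathcal P$ occurring has $U^e \in \Omega_\F$'' does not typecheck, since $U^e$ is a plain tree in $\Omega$ while $\Omega_\F \subseteq \Omega_G$ consists of $G$-trees; what you mean is that the $G$-orbits $G\langle e\rangle$ assemble into $G$-trees lying in $\Omega_\F$, which is what $\F$-normality of $N\P$ guarantees. You also omit the $\F$-variant of Lemma \ref{FCOLIM_WE_LEM} (weak equivalences closed under filtered colimits), which the paper notes is needed in the induction of Lemma \ref{UNITEQUIV LEM}; this is immediate, but it is part of the checklist.
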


\begin{proof}
	Most of the proof of Theorem \ref{PREQUIEQUIV THM}
	extends, with the role of  
	Theorem \ref{FIBPREOP THM} 
	replaced with
	Remark \ref{FIBPREOPF REM}.
	The trickiest part concern the appeals 
	to Corollary \ref{KEYEQUIV COR}.
	To obtain an $\F$-version of the latter, 
	we need an $\F$-version of Lemma \ref{UNITEQUIV LEM}.
	Analyzing its proof, 
	this comes down to having $\F$-variants of 
	Lemma \ref{FCOLIM_WE_LEM} and of Lemma \ref{KEYPR LEM}.
	For Lemma \ref{FCOLIM_WE_LEM}, its proof immediately generalizes.
	
	As for Lemma \ref{KEYPR LEM},
	\cite[Prop. \ref{OC-SIGMAG_COF PROP}]{BP_FCOP}
	ensures that the map $\O \to \P$ in \eqref{PUSHOUTPROP EQ}
	is a $\Sigma_\F$-cofibration betwen $\Sigma_\F$-cofibrant objects.
	Thus, by the discussion above,
	the map
	$\Omega[C]\amalg_{\partial \Omega[C]} N \O \to N\P$ in
	\eqref{ANODYNE MAP} has $\F$-normal target. 
	Hence, since this map is shown to be $G$-anodyne via the characteristic edge lemma \cite[Lemma 3.4]{BP20},
	whose proof follows by attaching $G$-inner horns determined by $N\P$,
	the fact that $N\P$ is $\F$-normal implies that 
	\eqref{ANODYNE MAP} must be built by attaching only 
	$\F$-inner horns, and thus be an $\F$-anodyne map
	(this is identical to the argument before  
	\cite[Thm. 6.7]{BP20}).
\end{proof}

Lastly, we have the following, which is the 
$\F$-variant of the main result, Theorem \ref{QE THM}.

\begin{theorem}\label{QEF THM}
	Let $\F$ be a weak indexing system.
	One then has a Quillen equivalence
	\begin{equation}
	\label{QEF_EQ}
	W_! \colon 
	\dSet^G_{\F} \rightleftarrows \sOp^G_{\F}
	\colon hcN.
	\end{equation}
\end{theorem}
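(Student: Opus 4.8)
\textbf{Proof plan for Theorem \ref{QEF THM}.}
The plan is to mirror exactly the proof of Theorem \ref{QE THM}, replacing each of the four Quillen adjunctions in the square \eqref{ADJSQ EQ} with its $\F$-variant. First I would observe that all the needed ingredients are already in place: the $\F$-model structures $\dSet^G_\F$, $\sdSet^G_\F$, $\PreOp^G_{normal,\F}$, $\PreOp^G_{tame,\F}$, $\sOp^G_\F$ have been recorded above (with $\PreOp^G_{tame,\F}$ supplied by Theorem \ref{TAMEMSF_THM}), and the adjunctions $c_! \colon \dSet^G_\F \rightleftarrows \sdSet^G_\F \colon c^{\**}$ and $\gamma^{\**} \colon \PreOp^G_{normal,\F} \rightleftarrows \sdSet^G_\F \colon \gamma_{\**}$ are known to be Quillen equivalences by \cite[Thms. 6.7 and 6.8]{BP20}, while $\tau \colon \PreOp^G_{tame,\F} \rightleftarrows \sOp^G_\F \colon N$ is a Quillen equivalence by Theorem \ref{PREQUIEQUIVF THM}. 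The first substantive step is then to prove the $\F$-analogue of Proposition \ref{W!_LEFTQ_PROP}, i.e.\ that $W_! \colon \dSet^G_\F \rightleftarrows \sOp^G_\F \colon hcN$ is a Quillen adjunction. For this I would re-run the proof of Proposition \ref{W!_LEFTQ_PROP} verbatim: the pushout squares of Lemma \ref{WLEFTQPUSH LEM} only involve trees $T \in \Omega^G$ (with a genuine $G$-action), so combining them with the $\F$-variant of Lemma \ref{OPTENSCOF_LEM}—where the genuine model structure on $\sSet^G$ is replaced by its $\F$-analogue and $C \in \Sigma_G$ is restricted to $C \in \Sigma_\F$—yields that $W_!$ preserves $\F$-cofibrations and sends $\F$-inner anodyne extensions to $\F$-trivial cofibrations; the remaining statement about $hcN$ preserving fibrations between fibrant objects reduces, exactly as before, to the identification $\pi_0 \mathcal{Q} \simeq \tau(hcN(\mathcal{Q}))$ of \cite[Prop. 4.8]{CM13b}, which is insensitive to $\F$.

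The second step is to run the zigzag argument at the level of homotopy categories. Form the square of right adjoints as in \eqref{SQINPROOF EQ} but with all categories replaced by their $\F$-versions; since $\gamma^{\**}$ and $c_!$ still preserve all weak equivalences (by definition of $\PreOp^G_{tame,\F}$ and since $c_!$ sends $\F$-weak equivalences to $\F$-dendroidal equivalences, cf.\ the $\F$-analogue of Theorem \ref{JB_THM}\ref{SDEQUIV_LBL}), we reduce, exactly as in the proof of Theorem \ref{QE THM}, to producing for each fibrant $\O \in \sOp^G_\F$ a natural zigzag of $\F$-joint equivalences between $\gamma^{\**} N \O$ and $c_! hcN \O$. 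The zigzag \eqref{BIGZIG EQ} itself transfers unchanged: one picks a functorial fibrant simplicial frame $\widetilde{\O}(\bullet)$ in $(\sOp^G_\F)^{\Delta^{op}}$, takes a Reedy fibrant replacement $\gamma^{\**} N \widetilde{\O}(\bullet) \to \widetilde{Q}(\bullet)$ over the $\F$-dendroidal Reedy structure, and checks maps (a)--(f) as before. Maps (a),(e),(f) go through formally. Maps (b),(c) require the $\F$-analogue of Lemma \ref{DIAGWE LEM}, which in turn requires the $\F$-analogue of Remark \ref{JOINTFIB REM} and \cite[Prop. 4.5(iv)]{BP20}; these are routine since they concern only the interaction of the two simplicial directions and the normal-monomorphism structure, both of which behave identically in the $\F$-setting. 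Map (d) uses that $\Omega[T]$ is $\F$-tame cofibrant (the $\F$-analogue of Lemma \ref{OMEGATTAME_LEM}, with $T \in \Omega_\F$, proven by the same pushout argument) together with the identification of both halves of \eqref{EQDMAPSP EQ} with the mapping space from $\Omega[T]$ to $N\O$ in $\PreOp^G_{tame,\F}$.

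The main obstacle I anticipate is bookkeeping rather than conceptual: one must be careful that every appeal to a result of \cite{BP20} cited in the original proof (the joint-fibrancy criterion, the $\mathsf{Map}$-versus-diagonal comparisons of \cite[Prop. 4.5]{BP20}, left properness, closure of weak equivalences under filtered colimits) has the stated $\F$-analogue, and that in Lemma \ref{DIAGWE LEM}'s proof the normal monomorphism $\emptyset \to \Omega[T]$ used to extract Reedy fibrancy must now range over $T \in \Omega_\F$ to match the $\F$-dendroidal Reedy structure. None of these is genuinely difficult, since the $\F$-model structures are by construction obtained from the non-$\F$ ones by restricting the indexing trees/subgroups from $\Omega_G$ (resp.\ the graph-subgroup families $\F^\Gamma_n$) to $\Omega_\F$ (resp.\ $\F_n$), so every proof above localizes. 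Accordingly, the write-up would be short: state that the four $\F$-adjunctions are Quillen with three of them Quillen equivalences (citing \cite[Thms. 6.7, 6.8]{BP20} and Theorem \ref{PREQUIEQUIVF THM}), establish the $\F$-version of Proposition \ref{W!_LEFTQ_PROP} by repeating its proof with the $\F$-variant of Lemma \ref{OPTENSCOF_LEM}, and then observe that the zigzag \eqref{BIGZIG EQ}, together with the $\F$-analogues of Lemmas \ref{DIAGWE LEM} and \ref{OMEGATTAME_LEM}, goes through word for word, so that the right square of the $\F$-analogue of \eqref{SQINPROOF EQ} commutes up to natural isomorphism and $(W_!, hcN)$ is therefore a Quillen equivalence.
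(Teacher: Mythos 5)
Your proposal follows essentially the same route as the paper's own proof: establish the $\F$-analogue of Proposition \ref{W!_LEFTQ_PROP} by rerunning that argument with the $\F$-variant of Lemma \ref{OPTENSCOF_LEM}, then repeat the zigzag \eqref{BIGZIG EQ} verbatim, noting the $\F$-analogues of Lemmas \ref{DIAGWE LEM} and \ref{OMEGATTAME_LEM} for steps (b),(c),(d). The one point you gloss over, which the paper flags as the genuinely nontrivial observation in the adjunction step, is \emph{why} the $\F$-variant of Lemma \ref{OPTENSCOF_LEM} holds: in its proof one decomposes $G/H\cdot\Omega(C) \simeq \amalg_i \Omega(C_i)$ and needs each $C_i \in \Sigma_\F$, which is exactly the sieve condition in Definition \ref{WIS_DEF}(i); merely restricting the hypothesis to $C\in\Sigma_\F$ is not enough without this closure observation. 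Since the rest of your plan matches the paper's reasoning (including the key role of $\F$-tame cofibrancy of $\Omega[T]$ for step (d)), this is the only refinement you'd need to make before writing it up.
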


\begin{proof}
	First, the fact that \eqref{QEF_EQ} is a Quillen adjunction
	follows as in the proof of Proposition \ref{W!_LEFTQ_PROP}
	where we note that, in the appeal to 
	Lemma \ref{OPTENSCOF_LEM},
	the fact that for $C \in \Sigma_\F$ the decomposition
	$G/H \cdot \Omega(C) \simeq \amalg_i \Omega(C_i)$
	satisfies $C_i \in \Sigma_\F$ 
	follows from the sieve condition in Definition \ref{WIS_DEF}(i).

	The remainder of the Quillen equivalence proof follows as written,
	using the same zigzag \eqref{BIGZIG EQ}
	(the $\F$ version of Lemma \ref{DIAGWE LEM}
	works as expected, using $\F$-simplicial equivalences)
	with the only notable point being that, for the map (d) therein,
	one needs to know that 
	$\Omega[T]$ for $T\in \Omega_\F$
	is $\F$-tame fibrant, which is clear from the proof of
	Lemma \ref{OMEGATTAME_LEM}.
\end{proof}

\appendix

\section{The homotopy genuine equivariant operad}
\label{HGEO AP}

The goal of this appendix is to establish
Proposition \ref{HOOPID_PROP},
which compares two procedures of discretizing 
an equivariant operad $\O \in \mathsf{sOp}^G$,
and is the full equivariant generalization of
\cite[Prop. 4.8]{CM13b},
cf. Remark \ref{TWOHOMOP REM}.

Most of the work will be spent 
describing the ``genuine operadification'' functor
$\tau_G \colon \mathsf{dSet}_G \to \mathsf{Op}_G$
first mentioned in
\eqref{TAUFUNCTS EQ}.
In general, $\tau_G Z$ for some $Z \in \mathsf{dSet}_G$
is given by the formula 
\eqref{TAUGFORM EQ} below,
though this formula is cumbersome in practice
(and included mostly for completeness).
Instead, our focus will be on the special case of
$Z \in \mathsf{dSet}_G$
a \emph{genuine $G$-$\infty$-operad}
(Definition \ref{GENINFOP DEF}),
for which $\tau_G Z$ admits a simpler description as a 
\emph{homotopy operad}, 
which we denote
$\mathop{\mathrm{ho}}(Z)$ (Definition \ref{XTENDSIM DEF}).
This alternative $\mathop{\mathrm{ho}}(Z)$
construction mimics the similar description 
of $\tau X$ for $X\in \mathsf{dSet}$
an $\infty$-operad given in \cite[\S 6]{MW09},
so that the majority of this appendix is a fairly direct generalization of the work, 
although with one interesting nuance.
Namely, \cite[\S 6]{MW09} makes heavy use of tree diagrams,  
and associated faces/horns, 
whose equivariant generalization turns out to involve
\emph{orbital representations of $G$-trees}
rather than expanded representations
(cf. \eqref{GTREE_EQ}).
As such, we will need to recall the notions of
orbital faces (Def. \ref{ORBITALFACE_DEF}) and 
orbital horns (Eq. \eqref{ORBITALHORN_EQ})
introduced in 
\cite[\S2.2,\S2.3]{BP20},
which are distinct from any of the notions recalled in 
\S \ref{EDS_SEC}.

We first recall the notion of orbital face.

\begin{definition}
        \label{ORBITALFACE_DEF}
	A map $S \to T$ of $G$-trees
	that is injective on edges is called an
	\emph{orbital face map}.
\end{definition}

\begin{example}\label{ORBFACE EQ}
	Let $T$ be the $G$-tree in \eqref{GTREE_EQ},
	whose edge orbits we abbreviate as $Ga,Gb,Gc,Gd$.
	
	$T$ has orbital faces
	$T-Gd = T-\{d,\rho d\}$
	and 
	$T-Gc = T-\{c,\sigma c, \rho c, \rho \sigma c\}$,
	where we note that the expanded representation of
	$T-Gd$ has four tree components
	(in contrast to $T$ and $T-Gc$, whose expanded representations have only two components).
	The term ``orbital'' refers to the fact that orbital faces 
	correspond to (usual) faces of the underlying tree in the orbital representation.
\end{example}

As one would expect, an orbital outer face 
$S \to T$ is called \emph{inner} if it is given by removing inner edges (e.g. $T-Gc$ in Example \ref{ORBFACE EQ}).
Orbital faces allow for a general description of $\tau_G(Z)$ for any $Z \in \dSet_G$, cf. \eqref{TAUGFORM EQ} below.
This description makes use of the following, 
which is the key definition in \cite{BP_WCONS},
and is connected to the description of the $W$-construction,
Definition \ref{WU_DEF},
built therein.

\begin{definition}\label{DENDNECK DEF}
	The category $\mathsf{Nec}^t_G$ of $G$-dendroidal necklaces and tall maps has:
	\begin{enumerate}
		\item objects planar orbital inner faces 
		$\mathfrak{n} \colon J \to T$, 
		called \emph{necklaces};
		\item maps from 
		$\mathfrak{n} \colon J \to T$ to
		$\mathfrak{n}^{\**} \colon J^{\**} \to T^{\**}$
		given by tall maps $\varphi \colon T \to T^{\**}$
		such that $\varphi(J) \supseteq J^{\**}$.
	\end{enumerate}
\end{definition}

The category $\mathsf{Nec}^t_G$ above is part of a larger category $\mathsf{Nec}_G$ where maps need not be tall,
though defining the latter requires more care
(see \cite[Def. \ref{W-NECKREP_DEF} and Prop. \ref{W-MAPNECK PROP}]{BP_WCONS}).
Nonetheless, by restricting to 
$\mathsf{Nec}^t_G$
one has a \emph{leaf-root functor}
$\mathsf{lr} \colon \mathsf{Nec}^t_G \to \Sigma_G$
given by $(J \to T) \mapsto \mathsf{lr}(T)$
where (cf. \eqref{FREEOP_EQ}, \eqref{PUSHOPPR EQ}, Notation \ref{REDUCT NOT})
the $G$-corolla
$\mathsf{lr}(T)$ replaces each tree component of $T$
with the corolla with the same number of leaves
(e.g. in Example \eqref{ORBFACE EQ}
one has $\mathsf{lr}(T) = T-Gc$).
\cite[Rem. \ref{W-GTAUFUNEX REM}]{BP_WCONS}
can then be restated as follows.

\begin{remark}\label{TAUGFORM REM}
	Let $Z \in \mathsf{dSet}_G$. 
	Then, at a $G$-corolla $C$ one has the formula
\begin{equation}\label{TAUGFORM EQ}
	\tau_G Z(C) = 
	\colim_{(C \to \mathsf{lr}(J \to T))\in(C \downarrow \mathsf{Nec}_G^{t,op})}
	\prod_{v \in \boldsymbol{V}_G(J)} Z(T_v).
\end{equation}
\end{remark}

Noting that any necklace $(J\to T)$ 
receives a map 
$(T \to T) \to (J \to T)$
in $\mathsf{Nec}^t_G$,
one has that every element in the colimit
\eqref{TAUGFORM EQ} has a representative in 
$\prod_{v \in \boldsymbol{V}_G(T)} Z(T_v)$
for some $G$-tree $T\in \Omega_G$
(though general necklaces are needed to encode all the relations).

We now turn our attention to the special case of 
$G$-$\infty$-operads $Z \in \mathsf{dSet}_G$, 
defined as follows,
and for which all elements of
$\tau_G Z(C)$ in \eqref{TAUGFORM EQ}
will be represented by elements of $Z(C)$ itself.

\begin{definition}\label{GENINFOP DEF}
        $Z \in \mathsf{dSet}_G$
        is a \emph{genuine $G$-$\infty$-operad}
        if it has the right lifting property against all maps
        $\upsilon_{G,\**} 
        \left(\Lambda^E[T] \to \Omega[T]\right)$
        for $T \in \Omega_G$,
        $G$-subset
        $E \subseteq \boldsymbol{E}^{\mathsf{i}}(T)$,
        and $\upsilon_{G,\**}$
        as defined in \eqref{UPSILONADJ EQ}.
\end{definition}

\begin{remark}
	Genuine $G$-operads (Definition \ref{OPG_DEF})
	can also be defined via a \emph{strict}
	lifting property against the maps
	$\upsilon_{G,\**} \left(\Lambda^E[T] \to \Omega[T]\right)$,
	cf. \cite[Def. 3.35]{BP20}.
	Thus, genuine $G$-$\infty$-operads
	can be viewed as ``weak'' genuine $G$-operads,
	much as the relation between quasicategories
	and categories.
\end{remark}

\begin{remark}
	Since $\upsilon_{G,\**}$
	is fully faithful
	one has that 
	$X \in \mathsf{dSet}^G$
	is a $G$-$\infty$-operad 
	(cf. Definition \ref{GINNERFIB DEF})
	iff
	$\upsilon_{G,\**}X \in \mathsf{dSet}_G$
	is a genuine $G$-$\infty$-operad.
\end{remark}

We now turn to the task of describing
$\tau_G Z$ for $Z \in \mathsf{dSet}_G$
a genuine $G$-$\infty$-operad.

We start with some notation. 
Given a multiset $I$ of edges of a tree $T \in \Omega$
(formally, a function 
$I \colon \boldsymbol{E}(T) \to \mathbb{N}_0$),
we write $\sigma^I T \in \Omega$
for the tree obtained by degenerating $T$ once for each edge in $I$.
More explicitly, $\sigma^I T$ is the unique tree such that there is a planar degeneracy
$\pi \colon \sigma^I T \to T$
with $|\pi^{-1}(e)| = I(e) + 1$.
Moreover,
note that, if $T\in \Omega_G$ is a $G$-tree, 
then $\sigma^{I} T \in \Omega_{G}$
can be defined if $I$ is $G$-equivariant
(formally, this means that the multiset $I$ 
factors as
$I \colon \boldsymbol{E}(T) \to \boldsymbol{E}_G(T)
\to \mathbb{N}_0$;
i.e. if $I(e)=n$ then so is $I(ge)=n$ for all $g \in G$).

Our main interest will be in degeneracies of $G$-corollas. 
Up to isomorphism, 
a $G$-corolla $C \in \Sigma_G$ is determined by the number $0 \leq k$ of leaf orbits
and isotropy subgroups
$H_i \leq H_0 \leq G$ for $0 \leq i \leq k$,
where $H_0$ is the isotropy of a (chosen) root edge.
Pictorially, such a $G$-corolla has the orbital representation (cf. \eqref{GTREE_EQ})
given on the left below,
but in this section we will find it more convenient to label edge orbits using coset notation as on the right below,
so that $[e_i] = G e_i$ denotes the $G$-orbit of $e_i$.
\begin{equation}\label{GCOR EQ}
\begin{tikzpicture}
[grow=up,auto,level distance=2.3em,every node/.style = {font=\footnotesize},dummy/.style={circle,draw,inner sep=0pt,minimum size=1.75mm}]
	\node at (0,0) [font=\normalsize]{$C$}
		child{node [dummy] {}
			child{
			edge from parent node [swap,near end] {$G/H_k$} node [name=Kn] {}}
			child{
			edge from parent node [near end] {$G/H_1$}
node [name=Kone,swap] {}}
		edge from parent node [swap] {$G/H_0$}
		};
		\draw [dotted,thick] (Kone) -- (Kn) ;
	\node at (5,0) [font=\normalsize]{$C$}
		child{node [dummy] {}
			child{
			edge from parent node [swap,near end] {$[e_k]$} node [name=Kn] {}}
			child{
			edge from parent node [near end] {$[e_1]$}
node [name=Kone,swap] {}}
		edge from parent node [swap] {$[e_0]$}
		};
		\draw [dotted,thick] (Kone) -- (Kn) ;
\end{tikzpicture}
\end{equation}
We will then abbreviate $\sigma^i C = \sigma^{[e_i]} C$, and write $e_i$, $e_i'$ for the two edges of $\sigma^i C $ that degenerate the edge $e_i$ of $C$,
with $e_i$ denoting the inner edge and $e'_i$ the outer
edge.
\begin{equation}\label{DEGGCOR EQ}
\begin{tikzpicture}
[grow=up,auto,level distance=3em,
every node/.style = {font=\footnotesize},
dummy/.style={circle,draw,inner sep=0pt,minimum size=1.75mm}]
	\node at (0,0) [font=\normalsize]{$\sigma^0 C$}
		child{node [dummy] {}
			child{node [dummy] {}
				child{
				edge from parent node [swap,near end] {$[e_k]$} node [name=Kn] {}}
				child{
				edge from parent node [near end] {$[e_1]$}
node [name=Kone,swap] {}}
			edge from parent node [swap] {$[e_0]$}}
		edge from parent node [swap] {$[e'_0]$}
		};
		\draw [dotted,thick] (Kone) -- (Kn) ;
	\node at (5,0) [font=\normalsize]{$\sigma^i C$}
		child{node [dummy] {}
			child{
			edge from parent node [swap,near end] {$[e_k]$} node [near start,inner sep=1pt,name=Kn] {}}
			child[level distance=3.4em]{node [dummy] {}
				child[level distance=2.7em]{
				edge from parent node [swap] {$[e'_i]$}
}
			edge from parent node [near end,swap] {$[e_i]$}
node [near start,inner sep=1pt,name=Kone,swap] {}
node [near start,inner sep=1pt,name=Kone1] {}}
			child{
			edge from parent node [near end] {$[e_1]$}
node [swap] {}
node [near start,inner sep=1pt,name=Kn1,swap]{}}
		edge from parent node [swap] {$[e_0]$}
		};
		\draw [dotted,thick] (Kone) -- (Kn) ;
		\draw [dotted,thick] (Kone1) -- (Kn1) ;
\end{tikzpicture}
\end{equation}
The $G$-tree $\sigma^i C$ then has an orbital inner face
$\sigma^i C - [e_i]$ obtained by removing $[e_i]$
as well as an orbital outer face obtained by removing $e'_i$,
which we denote $\sigma^i C - [e'_i]$.
Moreover, note that we have natural identifications
$C \simeq \sigma^i C - [e_i]$,
$C \simeq \sigma^i C - [e'_i]$.

In what follows, we will find it convenient to simplify notation by denoting maps $\upsilon_{G,\**}\Omega[T] \to Z$,
where $T \in \Omega_G$ and $Z \in \mathsf{dSet}_G$,
simply as $T \to Z$.

\begin{definition}\label{HOEQUIVS DEF}
	Let $Z \in \mathsf{dSet}_G$ be a genuine $G$-$\infty$-operad and $C$ a $G$-corolla with edge orbits
	$[e_0],\cdots,[e_k]$.
	Given two operations 
	$f,g\colon C \to Z$,
	we write $f \sim_i g$ if there exists a map
	$H \colon \sigma^i C \to Z$ such that
\begin{itemize}
\item $f$ equals the restriction $H|_{\sigma^i C-[e'_i]}$;
\item $g$ equals the restriction $H|_{\sigma^i C-[e_i]}$;
\item the restriction $H|_{\sigma^i [e_i]}$
is the degeneracy $\sigma^i [e_i] \to [e_i] \to C \to Z$.
\end{itemize}
\end{definition}

\begin{remark}\label{HOMOTBOUND REM}
	Note that, if $f \sim_i g$, then it must be that
	$f|_{\partial C} = g|_{\partial C}$.
\end{remark}

\begin{example}\label{EQUIVSIM EX}
	Let $G = \mathbb{Z}_{/2} = \{\pm 1\}$
	and consider the $G$-corolla with orbital and expanded representations as given on the left below.
\[
\begin{tikzpicture}
[grow=up,auto,level distance=2.3em,every node/.style = {font=\footnotesize},dummy/.style={circle,draw,inner sep=0pt,minimum size=1.75mm}]
	\node at (0,0) [font=\normalsize]{$C$}
		child{node [dummy] {}
			child{
			edge from parent node [swap] {$G \cdot e$}
node [name=Kone,swap] {}}
		edge from parent node [swap] {$G/G \cdot r$}
		};
	\node at (3,0) [font=\normalsize]{$C$}
		child{node [dummy] {}
			child{
			edge from parent node [swap,near end] {$-e$} node [name=Kn] {}}
			child{
			edge from parent node [near end] {$e$}
node [name=Kone,swap] {}}
		edge from parent node [swap] {$r$}
		};
	\node at (7,0) [font=\normalsize]{$\sigma^{\{e,-e\}} C$}
		child{node [dummy] {}
			child{node [dummy] {}
				child{
				edge from parent node [swap] {$G \cdot e'$}
node [swap] {}}
			edge from parent node [swap] {$G \cdot e$}
node [swap] {}}
		edge from parent node [swap] {$G/G \cdot r$}
		};
	\node at (10,0) [font=\normalsize]{$\sigma^{\{e,-e\}} C$}
		child{node [dummy] {}
			child{node [dummy] {}
				child{
				edge from parent node [swap] {$-e'$} node {}}
			edge from parent node [swap,near end] {$-e$} node {}}
			child{node [dummy] {}
				child{
				edge from parent node {$e'$}
node [swap] {}}
			edge from parent node [near end] {$e$}
node [swap] {}}
		edge from parent node [swap] {$r$}
		};
\end{tikzpicture}
\]
$C$ then has a single $G$-leaf orbit $[e] = G \cdot e$ so that,
for $f,g \colon C \to Z$, 
one has $f \sim_1 g$ if there exists a dendrex
$H \colon \sigma^{\{e,-e\}}C \to Z$
such that 
\begin{equation}\label{EQUIVHOMOT EQ}
	f = H|_{\sigma^{\{e,-e\}}C - \{e',-e'\}}
\qquad
	g = H|_{\sigma^{\{e,-e\}}C - \{e,-e\}}
\qquad
	H_{\sigma^e e}, H|_{\sigma^{-e}-e} \text{ are degenerate}.
\end{equation}
It is worthwhile to compare this equivariant relation 
$f \sim_1 g$
with the relations obtained if one forgets the $G$-actions. Indeed, while \eqref{EQUIVHOMOT EQ} implicitly assumes that all of $f,g,H$ are $G$-equivariant,
by omitting that assumption one can reinterpret 
\eqref{EQUIVHOMOT EQ}
as defining a relation
$f \sim_{[e]} g$ between not necessarily $G$-equivariant maps $f,g \colon C \to Z$.

A priori, the $\sim_{[e]}$ relation differs from the 
non-equivariant 
$\sim_{e}$ and $\sim_{-e}$
relations obtained forgetting the $G$-actions
and regarding $C$ as a non-equivariant corolla.
However, for $f,g,H$ as in \eqref{EQUIVHOMOT EQ} one has
\begin{equation}\label{EQUIVSIM EQ}
f = H|_{\sigma^{\{e,-e\}}C - \{e',-e'\}}
\sim_e H|_{\sigma^{\{e,-e\}}C - \{e,-e'\}}
\sim_{-e} H|_{\sigma^{\{e,-e\}}C - \{e,-e\}} =g
\end{equation}
so that, by Lemma \ref{EQUIVI LEM}(b) below,
one has that $f \sim_{[e]} g$ in fact implies $f \sim_{e} g$. 
Moreover, the converse statement follows immediately by using degeneracies.

More generally, similar considerations show that the $\sim$ relations are compatible with restricting the $G$-actions
to $H$-actions for some $H \leq G$.
\end{example}

Our next task is to establish the key properties
of the $\sim_i$ relations.
This will adapt a slew of arguments in
\cite[\S 6]{MW09}
that make use of lifting properties against horn inclusions.
However, as $\sim_i$ is motivated by the
orbital representations \eqref{GCOR EQ},\eqref{DEGGCOR EQ},
we will need a notion of horn that is likewise motivated by
such representations.
This is the notion of \emph{orbital horn},
introduced in \cite[\S 2.3]{BP20},
and defined as follows.
Letting $T \in \Omega_G$ be a $G$-tree,
$E \subseteq \boldsymbol{E}^{\mathsf{i}}(T)$
be a $G$-subset,
and writing
$\mathsf{Face}_o(T)$
for the poset of planar orbital faces, 
the \emph{orbital $G$-inner horn} 
$\Lambda_o^E[T] \in \mathsf{dSet}^G$ is given by
\begin{equation}
        \label{ORBITALHORN_EQ}
        \Lambda_o^E[T]
        = \mathop{\colim}\limits_{S \in \mathsf{Face}_o(T), (T - E) \not\into S} \Omega[S]
        = \bigcup_{S \in \mathsf{Face}_o(T), (T - E) \not\into S} \Omega[S].
\end{equation}
We caution that this differs from 
the $\Lambda^E[T]$ horns in 
\eqref{GINNERHORN_EQ} and Definition \ref{GENINFOP DEF},
as in general one has
$\Lambda_o^E[T] \subsetneq \Lambda^E[T]$
(see \cite[Ex. 2.34]{BP20} for a detailed example).
Nonetheless, as orbital $G$-inner horn inclusions 
$\Lambda_o^E[T] \to \Omega[T]$
are $G$-inner anodyne \cite[Prop. 3.13]{BP20},
the genuine $G$-$\infty$-operads in
Definition \ref{GENINFOP DEF}
also have the right lifting property against the maps
$\upsilon_{G,\**}(\Lambda_o^E[T] \to \Omega[T])$.

\begin{example}
	For the trees $\sigma^i C$ in \eqref{DEGGCOR EQ},
	the orbital inner horn 
	$\Lambda_o^{[e_i]}[\sigma^i C]$
	is the union of the orbital faces 
	$\sigma^i C - [e'_i]$ and 
	$\sigma^i[e_i]$
	(as expected by treating the orbital
	picture as a usual tree).
\end{example}

\begin{lemma}[{cf. \cite[Prop. 6.3 and Lemma 6.4]{MW09}}]
	\label{EQUIVI LEM}
	Let $Z \in \mathsf{dSet}_G$ be a genuine $G$-$\infty$-operad and $C$ a $G$-corolla with edge orbits
	$[e_0],\cdots,[e_k]$. Then:
\begin{itemize}
	\item[(a)] each of the relations $\sim_i$ 
	in Definition \ref{HOEQUIVS DEF}
	is an equivalence relation;
	\item[(b)] all the equivalence relations $\sim_i$ coincide.
\end{itemize}
\end{lemma}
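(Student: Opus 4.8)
The statement is the exact equivariant analogue of \cite[Prop. 6.3 and Lemma 6.4]{MW09}, and the plan is to follow that template closely, with the key substitution being that every use of an inner horn inclusion $\Lambda^e[U]\to\Omega[U]$ in the non-equivariant argument is replaced by the corresponding \emph{orbital} $G$-inner horn inclusion $\upsilon_{G,\**}\left(\Lambda_o^{[e_i]}[T]\to\Omega[T]\right)$, against which genuine $G$-$\infty$-operads lift by \cite[Prop. 3.13]{BP20}. For part (a), reflexivity of $\sim_i$ is immediate by taking $H$ to be the degeneracy $\sigma^iC\to C\xrightarrow{f} Z$. For symmetry and transitivity, I would argue exactly as in \cite[Prop. 6.3]{MW09}: given witnesses of $f\sim_i g$ and $g\sim_i h$, one builds an appropriate $G$-tree obtained by grafting two copies of $\sigma^iC$ along the common edge orbit $[e_i]$ (more precisely, degenerating $C$ twice at $[e_i]$, i.e. $\sigma^{2[e_i]}C$ in the notation above), assembles the two witnessing dendrices together with the degenerate pieces into a map from an orbital inner horn of that tree, and then uses the lifting property of $Z$ against $\upsilon_{G,\**}$ of the orbital horn inclusion to fill it; the appropriate orbital face of the filler then witnesses $f\sim_i h$. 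For symmetry the same grafting idea is used with one of the two homotopies being the degeneracy at $g$. The fact that one may work entirely with orbital representations, treating the orbital picture of $\sigma^{2[e_i]}C$ as if it were an ordinary linear-ish tree, is exactly what makes the non-equivariant combinatorics transfer verbatim.

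For part (b), that all the $\sim_i$ coincide, I would again mimic \cite[Lemma 6.4]{MW09}. The crucial input is a two-parameter degeneration: starting from $C$ with distinct leaf orbits $[e_i]$ and $[e_j]$ (the root orbit $[e_0]$ is handled symmetrically, cf. Remark~\ref{HOMOTBOUND REM} and the shape of $\sigma^0 C$ in \eqref{DEGGCOR EQ}), degenerate at both $[e_i]$ and $[e_j]$ to obtain a $G$-tree $\sigma^{i,j}C$ whose orbital representation has two adjacent degenerate vertices on different branches; an orbital horn of this tree filled using genuine-$\infty$-operad lifting produces a dendrex whose various orbital faces witness that a homotopy in the $i$-direction can be slid to a homotopy in the $j$-direction (with the intermediate faces being degenerate where needed). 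Concretely, the same chain of relations as displayed in \eqref{EQUIVSIM EQ} in Example~\ref{EQUIVSIM EX} — passing through an intermediate common refinement — shows $f\sim_i g \iff f\sim_j g$. One should also note that, since by Remark~\ref{HOMOTBOUND REM} any two $\sim_i$-related operations agree on $\partial C$, all of these relations live within a fixed fibre of $Z(C)\to Z(\partial C)$, which keeps the bookkeeping clean.

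The main obstacle I anticipate is purely combinatorial bookkeeping rather than conceptual: one must write down explicitly the $G$-trees $\sigma^{2[e_i]}C$ and $\sigma^{i,j}C$, identify precisely which of their \emph{orbital} faces are removed in the relevant orbital horn $\Lambda_o^E[-]$ versus which are used to encode the input data, and check that the pieces of data (the two given homotopies plus the prescribed degenerate faces) do glue to a well-defined map out of that orbital horn — i.e. that they agree on overlaps. This compatibility-on-overlaps check is where care is needed, because the orbital faces of a degenerated $G$-corolla intersect in lower orbital faces and one must confirm the restrictions match; but this is exactly parallel to the verification in \cite[\S 6]{MW09} and, given the orbital-face formalism recalled from \cite[\S 2.2, \S 2.3]{BP20}, it goes through. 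A secondary subtlety worth flagging explicitly is the $G$-equivariance of the filler: since $Z\in\dSet_G$ and the orbital horn inclusions are maps in $\dSet_G$ (equivalently, $G$-equivariant maps after applying $\upsilon_{G,\**}$), any lift produced is automatically $G$-equivariant, so no extra averaging or choice-of-representative argument is required — this is precisely the point made at the end of Example~\ref{EQUIVSIM EX} about compatibility with restriction of the group action.
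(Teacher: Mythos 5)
Your proposal matches the paper's proof in all essential respects: reflexivity via degeneracy, symmetry and transitivity via the double degeneracy $\sigma^{ii}C$ (your $\sigma^{2[e_i]}C$) together with filling against an orbital $G$-inner horn using \cite[Prop.\ 3.13]{BP20}, and part (b) via the mixed degeneracy $\sigma^{ij}C$ with the same lifting argument. The only cosmetic difference is which constant degenerate face you choose as padding data when defining the horn map for symmetry (you suggest the degeneracy at $g$ where the paper uses the degeneracy at $f$), but either choice works with the corresponding orbital horn, so this is the same argument.
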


\begin{proof}
	We first address (a). 
	
	For reflexivity $f \sim_i f$,
	we take the exhibiting homotopy 
	$H$ to be the degeneracy
	$\sigma^i C \xrightarrow{\sigma^i} C \xrightarrow{f} Z$.
	
	Both symmetry and transitivity will use the 
	tree $\sigma^{ii} C$ below, which degenerates $[e_i]$ twice.
\[
\begin{tikzpicture}
[grow=up,auto,level distance=3em,
every node/.style = {font=\footnotesize},
dummy/.style={circle,draw,inner sep=0pt,minimum size=1.75mm}]
	\node at (0,0) [font=\normalsize]{$\sigma^{ii} C$}
		child{node [dummy] {}
			child{
			edge from parent node [swap,near end] {$[e_k]$} node [near start,inner sep=1pt,name=Kn] {}}
			child[level distance=3.4em]{node [dummy] {}
				child[level distance=2.7em]{node [dummy] {}
					child[level distance=2.7em]{
					edge from parent node [swap] {$[e''_i]$}
}
				edge from parent node [swap] {$[e'_i]$}
}
			edge from parent node [near end,swap] {$[e_i]$}
node [near start,inner sep=1pt,name=Kone,swap] {}
node [near start,inner sep=1pt,name=Kone1] {}}
			child{
			edge from parent node [near end] {$[e_1]$}
node [swap] {}
node [near start,inner sep=1pt,name=Kn1,swap]{}}
		edge from parent node [swap] {$[e_0]$}
		};
		\draw [dotted,thick] (Kone) -- (Kn) ;
		\draw [dotted,thick] (Kone1) -- (Kn1) ;
\end{tikzpicture}
\]
For symmetry, suppose $f \sim_i g$ with 
$H \colon \sigma^{i} C \to Z$ the exhibiting homotopy.
Define a map 
$\bar{H} \colon \Lambda^{[e_i]}_o[\sigma^{ii} C] \to Z$ via
\[
	\bar{H}|_{\sigma^{ii}C - [e''_i]} = H,
		\qquad
	\bar{H}|_{\sigma^{ii}C - [e'_i]} = f \circ \sigma^i,
		\qquad
	\bar{H}|_{\sigma^{ii} [e_i]} = 
	f|_{[e_i]} \circ \sigma^{ii} =
	g|_{[e_i]} \circ \sigma^{ii}.
\]
Since the orbital inner horn inclusion
$\bar{H} \colon \Lambda^{[e_i]}_o[\sigma^{ii} C] \to \Omega[C]$
is $G$-inner anodyne by \cite[Prop. 3.13]{BP20},
$\bar{H}$ admits an extension $\widetilde{H} \colon \sigma^{ii}C \to Z$.
The restriction $\widetilde{H}|_{\sigma^{ii}C - [e_i]}$ then provides the homotopy exhibiting $g \sim_i f$, and symmetry of $\sim_i$ follows.

Next, suppose $f \sim_i g$ and $g \sim_i h$, and let 
$H \colon \sigma^{i} C \to Z$,
$K \colon \sigma^{i} C \to Z$ be the exhibiting homotopies.
Define a map 
$\bar{H} \colon \Lambda^{[e'_i]}_o[\sigma^{ii} C] \to Z$ by
\[
	\bar{H}|_{\sigma^{ii}C - [e''_i]} = H,
		\qquad
	\bar{H}|_{\sigma^{ii}C - [e_i]} = K,
		\qquad
	\bar{H}|_{\sigma^{ii} [e_i]} = 
	f|_{[e_i]} \circ \sigma^{ii} =
	g|_{[e_i]} \circ \sigma^{ii} =
	h|_{[e_i]} \circ \sigma^{ii}.
\]
$\bar{H}$ again admits an extension $\widetilde{H} \colon \sigma^{ii}C \to Z$, 
and the restriction $\widetilde{H}|_{\sigma^{ii}C - [e'_i]}$
provides the homotopy exhibiting $f \sim_i g$, and transitivity of $\sim_i$ follows.

We next turn to (b). Consider the tree $\sigma^{ij} C$ which degenerates $C$ once along each of $[e_i]$ and $[e_j]$.
\[
\begin{tikzpicture}
[grow=up,auto,level distance=2.75em,
every node/.style = {font=\footnotesize},
dummy/.style={circle,draw,inner sep=0pt,minimum size=1.75mm}]
	\node at (0,0) [font=\normalsize]{$\sigma^{ij} C$}
		child{node [dummy] {}
			child{
			edge from parent node [swap,near end] {$[e_k]$} node [near start,inner sep=1pt,name=Kn] {}}
			child[level distance=3.4em,sibling distance=2em]{node [dummy] {}
				child[level distance=2.7em]{
				edge from parent node [swap] {$[e'_j]$}
}
			edge from parent node [very near end,swap] {$[e_j]$}
node [near start,inner sep=1pt,name=Kone,swap] {}
node [inner sep=1pt,name=Kn2] {}}
			child[level distance=3.4em,sibling distance=2em]{node [dummy] {}
				child[level distance=2.7em]{
				edge from parent node {$[e'_i]$}
}
			edge from parent node [very near end] {$[e_i]$}
node [inner sep=1pt,name=Kone2,swap] {}
node [near start,inner sep=1pt,name=Kone1] {}}
			child{
			edge from parent node [near end] {$[e_1]$}
node [swap] {}
node [near start,inner sep=1pt,name=Kn1,swap]{}}
		edge from parent node [swap] {$[e_0]$}
		};
		\draw [dotted,thick] (Kn) -- (Kone) ;
		\draw [dotted,thick] (Kone1) -- (Kn1) ;
		\draw [dotted,thick] (Kone2) -- (Kn2) ;
\end{tikzpicture}
\]
Suppose $f \sim_i g$ with $H \colon \sigma^{i} C \to Z$ the associated homotopy.
Define a map 
$\bar{H} \colon \Lambda^{[e_i]}_o[\sigma^{ij} C] \to Z$ by
\[
	\bar{H}|_{\sigma^{ij}C - [e'_j]} = H,
		\qquad
	\bar{H}|_{\sigma^{ij}C - [e_j]} = f \circ \sigma^i,
		\qquad
	\bar{H}|_{\sigma^{ij}C - [e'_i]} = f \circ \sigma^j.
\]
Yet again, $\bar{H}$ admits an extension $\widetilde{H} \colon \sigma^{ij}C \to Z$, and the restriction $\widetilde{H}|_{\sigma^{ij}C - [e_i]}$
provides a homotopy exhibiting $g \sim_j f$. (b) now follows.
\end{proof}

In light of Lemma \ref{EQUIVI LEM},
given operations $f,g \colon C \to Z$ with  
$C$ a $G$-corolla and $Z$ a genuine $G$-$\infty$-operad,
we will henceforth write $f \sim g$ whenever $f \sim_i g$ for some (and thus all) $i$.
We now extend the $\sim$ relation,
allowing us to define
$\mathop{\mathrm{ho}}(Z)$
for $Z$ a genuine $G$-$\infty$-operad.

\begin{definition}\label{XTENDSIM DEF}
	Let $T \in \Omega_G$ be a $G$-tree
	and $Z \in \mathsf{dSet}_G$ be a 
	genuine $G$-$\infty$-operad.
	
	Given dendrices $x,y\colon T \to Z$ we write
	$x \sim y$ if there are equivalences of restrictions
	$x|_{T_v} \sim y|_{T_v}$ for all $G$-vertices
	$v \in \boldsymbol{V}_G(T)$.
	
	Further, we define $\mathop{\mathrm{ho}}(Z)(T) = Z(T)/\sim$.
\end{definition}

\begin{proposition}
	Let $Z \in \mathsf{dSet}_G$ be a genuine $G$-$\infty$-operad. Then the assignment 
	$T \mapsto \mathop{\mathrm{ho}}(Z)(T)$
	is a contravariant functor on $T \in \Omega_G$, i.e.
	$\mathop{\mathrm{ho}}(Z) \in \mathsf{dSet}_G$.
\end{proposition}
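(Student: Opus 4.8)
The plan is to verify directly that the relation $\sim$ of Definition \ref{XTENDSIM DEF} is a congruence for the presheaf structure of $Z$, so that the quotients $Z(T)/\sim$ assemble into a genuine dendroidal set. First, for each $G$-tree $T$ the relation $\sim$ on $Z(T)$ is an equivalence relation, being the conjunction over the $G$-vertices $v\in\boldsymbol V_G(T)$ of the relations ``$x|_{T_v}\sim y|_{T_v}$'', each of which is an equivalence relation on $Z(T_v)$ by Lemma \ref{EQUIVI LEM}(a) (and is unambiguous, by Lemma \ref{EQUIVI LEM}(b), as a relation on the $G$-corolla $T_v$). It then remains to show that for every arrow $\varphi\colon S\to T$ in $\Omega_G$ the restriction $\varphi^{*}\colon Z(T)\to Z(S)$ satisfies $x\sim y\Rightarrow \varphi^{*}x\sim\varphi^{*}y$. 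Since the arrows with this property form a class closed under composition, the factorization of Corollary \ref{OMGFACT COR} reduces us to treating sorted isomorphisms, pullback maps, sorted degeneracies, sorted outer faces, and sorted inner faces.

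The first four cases are formal. A sorted isomorphism or pullback map is an edge bijection intertwining the decompositions into $G$-vertices, hence preserves $\sim$. For a sorted outer face $\varphi$, every $G$-vertex $S_v$ of $S$ maps isomorphically onto a $G$-vertex $T_v$ of $T$ with $(\varphi^{*}x)|_{S_v}=x|_{T_v}$, so $x\sim y$ immediately gives $\varphi^{*}x\sim\varphi^{*}y$. For a sorted degeneracy $\varphi$, a $G$-vertex $S_v$ either maps isomorphically onto a $G$-vertex of $T$, where the same identity applies, or is a collapsed unary $G$-vertex lying over an edge orbit $[e]$, in which case $(\varphi^{*}x)|_{S_v}$ is a degeneracy of the restriction of $x$ along $G/H\cdot\eta\to T$ selecting $[e]$, and hence depends only on the value of $x$ on $[e]$; since $\sim$ on $G$-corollas preserves boundaries (Remark \ref{HOMOTBOUND REM}), $x\sim y$ forces $x$ and $y$ to agree on every edge of $T$ (each edge being adjacent to some vertex), so $(\varphi^{*}x)|_{S_v}=(\varphi^{*}y)|_{S_v}$ on the nose. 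Thus again $\varphi^{*}x\sim\varphi^{*}y$.

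The substantive case is a sorted inner face $\varphi\colon S\to T$. Here a $G$-vertex $S_w$ of $S$ has tall--outer factorization $S_w\to T^w\hookrightarrow T$ with $T^w\in\Omega_G$ a connected (componentwise) $G$-outer face of $T$ all of whose inner edges are removed by $\varphi$, and with $S_w\simeq\mathsf{lr}(T^w)$, so that $(\varphi^{*}x)|_{S_w}=\partial_r\bigl(x|_{T^w}\bigr)$ in the notation of Notation \ref{REDUCT NOT}. This reduces the inner face case to the following \emph{Key Lemma}: if $T'$ is a $G$-tree and $x,y\colon T'\to Z$ satisfy $x|_{T'_v}\sim y|_{T'_v}$ for all $v\in\boldsymbol V_G(T')$, then $\partial_r x\sim\partial_r y$. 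Since $\mathsf{lr}(T')\to T'$ is a composite of inner faces each removing a single inner edge orbit and $\partial_r$ is restriction along this map, the Key Lemma follows by induction on the number of inner edge orbits of $T'$ from the \emph{two-vertex case}: if $T'$ has exactly two $G$-vertices $v_1,v_2$ joined along an inner edge orbit $[e]$ with $x|_{T'_{v_1}}\sim y|_{T'_{v_1}}$ and $x|_{T'_{v_2}}\sim y|_{T'_{v_2}}$, then $\partial_r x\sim\partial_r y$ — indeed, removing one inner edge orbit from $T'$ merges the two adjacent $G$-vertices into one whose corolla-restriction is exactly such a composite, while all other vertex-restrictions are unchanged.

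The two-vertex case is the genuine obstacle, and I would prove it by a horn-filling argument patterned on the proof of Lemma \ref{EQUIVI LEM}. Using Lemma \ref{EQUIVI LEM}(b) one may choose witnessing homotopies $H_1,H_2$ for $x|_{T'_{v_1}}\sim y|_{T'_{v_1}}$ and $x|_{T'_{v_2}}\sim y|_{T'_{v_2}}$ that both degenerate the shared edge orbit $[e]$ (a leaf orbit of $T'_{v_1}$, the root orbit of $T'_{v_2}$); as the degenerated copies of $[e]$ all carry the common color of $[e]$, one grafts $H_1$ and $H_2$ along $[e]$ into a single dendrex on a suitable double degeneration of $T'$, and extracts the required homotopy between $\partial_r x$ and $\partial_r y$ by filling an orbital $G$-inner horn of a further degeneration of that tree, using that the genuine $G$-$\infty$-operad $Z$ has the right lifting property against orbital $G$-inner horn inclusions, cf. \cite[Prop. 3.13]{BP20}. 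The main point requiring care is precisely this last step: matching the grafting combinatorics to the orbital degenerations $\sigma^i,\sigma^{ii},\sigma^{ij}$ and carrying everything out in terms of \emph{orbital} faces and horns (as dictated by the orbital representations \eqref{GCOR EQ}--\eqref{DEGGCOR EQ}) rather than the faces and horns of \S\ref{EDS_SEC}; once the two-vertex composite is pinned down, the remainder is bookkeeping.
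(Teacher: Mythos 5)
Your overall strategy — verify that $\sim$ is a congruence for each class of generators of $\Omega_G$ and use the factorization of Corollary~\ref{OMGFACT COR} — matches the paper's. Your handling of degeneracies and outer faces is sound and even more explicit than the paper's, and your reduction of the inner-face case to a ``two-vertex'' Key Lemma is a legitimate reorganization of the paper's reduction to $T=C_1\amalg_{[e_i]}C_2$. But two of your claims don't hold up.

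\textbf{The pullback-map case is wrong.} You assert that a pullback map is ``an edge bijection intertwining the decompositions into $G$-vertices, hence preserves $\sim$.'' A pullback map is a (planar) quotient map, and quotient maps are in general \emph{not} injective on edges: if $T\simeq G\cdot_H T_\**$ and $S\simeq G\cdot_K T_\**$ with $K<H$, the quotient $S\to T$ is $[H:K]$-to-one on edges. Correspondingly, a pullback map induces quotient maps (not isomorphisms) $S_w\to T_{\varphi(w)}$ on $G$-corollas. One must then actually prove that $f\sim g\colon T_{\varphi(w)}\to Z$ forces $f\pi\sim g\pi\colon S_w\to Z$ for a quotient $\pi$ of $G$-corollas, and this is where the subtlety lies: a quotient can merge two leaf orbits $[e_j],[e_{j'}]$ of $S_w$ into a single leaf orbit $[e_i]$ of $T_{\varphi(w)}$, so a homotopy exhibiting $\sim_i$ pulls back to a homotopy degenerating \emph{both} $[e_j]$ and $[e_{j'}]$ — which is not a witness for any single $\sim_j$, only for a \emph{chain} of such relations, exactly as in \eqref{EQUIVSIM EQ}. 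The paper's proof circumvents this by checking that the $\sim_0$ relation (degenerating the root, which always pulls back to the unique root orbit) is preserved; by Lemma~\ref{EQUIVI LEM}(b) this suffices. Your proof as written contains no such argument.

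\textbf{The two-vertex case is underspecified at its one nontrivial step.} You propose to ``graft $H_1$ and $H_2$ along $[e]$ into a single dendrex on a suitable double degeneration of $T'$.'' Since $Z$ is only a genuine $G$-$\infty$-operad (it satisfies the \emph{weak} inner-horn condition, not the strict Segal condition), one cannot simply glue two dendrices along a shared edge orbit: that amalgamation is itself an extension problem, solvable only via another horn filling, and the result is not canonical. Worse, if you do fill the natural horn $\Lambda_o^{\{[e_{\mathrm{lower}}],[e_{\mathrm{upper}}]\}}[\sigma^{[e]}T']\hookrightarrow\Omega[\sigma^{[e]}T']$ with boundary data $H_1,H_2$, the resulting dendrex $\widetilde H$ has inner faces that are ``mixed'' dendrices on $T'$ (one with vertex data $(x|_{C_1},y|_{C_2})$, the other $(y|_{C_1},x|_{C_2})$), neither of which is $x$ or $y$. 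You do not directly obtain a homotopy between $x|_D$ and $y|_D$ from this; an intermediate comparison dendrex on $T'$ must be introduced and itself compared to both $x$ and $y$. The paper's proof makes this two-step structure explicit via claims (i) (using the horn $\Lambda_o^{[e_i]}[\sigma^0 T]$, assuming strict equality of vertex restrictions) and (ii) (using $\Lambda_o^{[e'_i]}[\sigma^i T]$, producing the intermediate dendrex from the two vertex homotopies), and the conclusion follows by chaining them. Your ``graft then fill a horn of a further degeneration'' may well be untangled into this, but as written the combinatorics of which horn, on which tree, with which boundary data — precisely the part you defer as ``bookkeeping'' — is the entire content of the proof.
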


\begin{proof}
	It suffices to show that the $\sim$ equivalence relations are compatible with the generating classes of maps in $\Omega_G$, namely 
	(cf. Proposition \ref{TREEFACT_PROP})
	(planar) degeneracies, inner faces, outer faces, and quotient maps.
	
	The cases of degeneracies and outer faces are obvious. 
	For quotients, 
	since any quotient $\bar{T} \to T$ of $G$-trees induces quotients on $G$-vertices, it suffices to consider the case of a quotient
	$\bar{C} \xrightarrow{\pi} C$ of $G$-corollas.
	But it is straightforward to check that a homotopy exhibiting $f \sim_0 g$ also induces a homotopy exhibiting 
	$f \circ \pi \sim_0 g \circ \pi$
	(notably, this needs not hold for the relations $f \sim_i g$ for $0<i$, 
	since then the exhibiting homotopy 
	may instead exhibit a string of relations 
	$f \circ \pi \sim \cdots \sim g \circ \pi$
	as in \eqref{EQUIVSIM EQ},
	due to quotient maps possibly sending two leaf orbits to the same leaf orbit).

It remains to address the most interesting case,
that of inner faces. Since inner faces can be factored as composites of inner faces that each collapse a singe inner edge orbit,
it suffices to consider the case of faces $D \to T$,
where $T$ has a single inner edge orbit.
That is, we can assume that there are $G$-corollas
$C_1$, $C_2$ such that 
$T = C_1 \amalg_{[e_i]} C_2$ and
$D = T - [e_i]$, as illustrated below.
\[
\begin{tikzpicture}
[grow=up,auto,level distance=3em,
every node/.style = {font=\footnotesize},
dummy/.style={circle,draw,inner sep=0pt,minimum size=1.75mm}]
	\node at (0,0) [font=\normalsize]{$C_1$}
		child{node [dummy] {}
			child[level distance=2.15em]{
			edge from parent node [swap,near end] {} node [inner sep=1pt,name=Kn] {}}
			child[level distance=4.5em]{node {}
			edge from parent node [pos=0.65,swap] {$[e_i]$}
node [near start,inner sep=1pt,name=Kone,swap] {}
node [near start,inner sep=1pt,name=Kone1] {}}
			child[level distance=2.15em]{
			edge from parent node [near end] {}
node [swap] {}
node [inner sep=1pt,name=Kn1,swap]{}}
		edge from parent node [swap] {$[e_0]$}
		};
		\draw [dotted,thick] (Kone) -- (Kn) ;
		\draw [dotted,thick] (Kone1) -- (Kn1) ;
	\node at (4,0) [font=\normalsize]{$C_2$}
		child{node [dummy] {}
			child{
			edge from parent node [swap,near end] {} node [name=Kn] {}}
			child{
			edge from parent node [near end] {}
node [name=Kone,swap] {}}
		edge from parent node [swap] {$[e_i]$}
		};
		\draw [dotted,thick] (Kone) -- (Kn) ;
	\node at (9,0) [font=\normalsize]{$T$}
		child{node [dummy] {}
			child[level distance=2.15em]{
			edge from parent node [swap,near end] {} node [inner sep=1pt,name=Kn] {}}
			child[level distance=4em]{node [dummy] {}
				child[level distance=3em]{
				edge from parent node [swap,near end] {} node [name=Kn2] {}}
				child[level distance=3em]{
				edge from parent node [near end] {}
node [name=Kone2,swap] {}}
			edge from parent node [pos=0.7,swap] {$[e_i]$}
node [pos=0.28,inner sep=1pt,name=Kone,swap] {}
node [pos=0.28,inner sep=1pt,name=Kone1] {}}
			child[level distance=2.15em]{
			edge from parent node [near end] {}
node [swap] {}
node [inner sep=1pt,name=Kn1,swap]{}}
		edge from parent node [swap] {$[e_0]$}
		};
		\draw [dotted,thick] (Kone) -- (Kn) ;
		\draw [dotted,thick] (Kone1) -- (Kn1) ;
		\draw [dotted,thick] (Kone2) -- (Kn2) ;
\end{tikzpicture}
\]
The claim is now that,
if $x,y \colon T \to Z$ are such that
$x|_{C_1} \sim y|_{C_1}$ and
$x|_{C_2} \sim y|_{C_2}$,
then one must have 
$x|_{D} \sim y|_{D}$.
This will follow from the following two claims:
\begin{itemize}
\item[(i)] if $x,y \colon T \to Z$ are such that
$x|_{C_1} = y|_{C_1}$ and
$x|_{C_2} = y|_{C_2}$
then $x|_{D} \sim y|_{D}$;
\item[(ii)]
given $x \colon T \to Z$, $f\colon C_1 \to Z$ and
$g \colon C_2 \to Z$ such that
$f \sim x|_{C_1}$, $g \sim x|_{C_2}$,
there exists
$y \colon T \to Z$ such that
$y|_{C_1} = f$, $y|_{C_2} = g$ and
$y|_D = x|_D$.
\end{itemize}
To show (i) and (ii), consider the degeneracies
$\sigma^0 T$ and $\sigma^i T$ pictured below.
\[
\begin{tikzpicture}
[grow=up,auto,level distance=3em,
every node/.style = {font=\footnotesize},
dummy/.style={circle,draw,inner sep=0pt,minimum size=1.75mm}]
	\node at (0,0) [font=\normalsize]{$\sigma^0 T$}
		child{node [dummy] {}
		child{node [dummy] {}
			child{
			edge from parent node [swap,near end] {} node [near start,inner sep=1pt,name=Kn] {}}
			child[level distance=3.4em]{node [dummy] {}
				child{
				edge from parent node [swap,near end] {} node [name=Kn2] {}}
				child{
				edge from parent node [near end] {}
node [name=Kone2,swap] {}}
			edge from parent node [near end,swap] {$[e_i]$}
node [near start,inner sep=1pt,name=Kone,swap] {}
node [near start,inner sep=1pt,name=Kone1] {}}
			child{
			edge from parent node [near end] {}
node [swap] {}
node [near start,inner sep=1pt,name=Kn1,swap]{}}
		edge from parent node [swap] {$[e_0]$}}
		edge from parent node [swap] {$[e'_0]$}
		};
		\draw [dotted,thick] (Kone) -- (Kn) ;
		\draw [dotted,thick] (Kone1) -- (Kn1) ;
		\draw [dotted,thick] (Kone2) -- (Kn2) ;
	\node at (6,0) [font=\normalsize]{$\sigma^i T$}
		child{node [dummy] {}
			child{
			edge from parent node [swap,near end] {} node [near start,inner sep=1pt,name=Kn] {}}
			child[level distance=3.4em]{node [dummy] {}
			child{node [dummy] {}
				child{
				edge from parent node [swap,near end] {} node [name=Kn2] {}}
				child{
				edge from parent node [near end] {}
node [name=Kone2,swap] {}}
			edge from parent node [swap] {$[e'_i]$}}
			edge from parent node [near end,swap] {$[e_i]$}
node [near start,inner sep=1pt,name=Kone,swap] {}
node [near start,inner sep=1pt,name=Kone1] {}}
			child{
			edge from parent node [near end] {}
node [swap] {}
node [near start,inner sep=1pt,name=Kn1,swap]{}}
		edge from parent node [swap] {$[e_0]$}
		};
		\draw [dotted,thick] (Kone) -- (Kn) ;
		\draw [dotted,thick] (Kone1) -- (Kn1) ;
		\draw [dotted,thick] (Kone2) -- (Kn2) ;
\end{tikzpicture}
\]
Given $x,y$ as in (i), one can now build a map
$H \colon \Lambda_o^{[e_i]}[\sigma^0 T] \to Z$ by
\[
	H|_{\sigma^0 T - [e_0]} = x,
\qquad
	H|_{\sigma^0 T - [e'_0]} = y,
\qquad
	H|_{\sigma^0 C_1} = 
	x|_{C_1} \circ \sigma^0 = 
	y|_{C_1} \circ \sigma^0.
\]
Letting $\widetilde{H}\colon \sigma^0 T \to Z$
be an extension of $H$,
the restriction $H|_{\sigma^0 T - [e_i]}$
provides the desired homotopy 
$x|_{D} \sim y|_{D}$, showing (i).

Lastly, let $x,f,g$ be as in (ii), 
and let
$K \colon \sigma^i C_1 \to Z$ exhibit the relation
$f \sim_i x|_{C_1}$
and 
$ \bar{K} \colon \sigma^i C_2 \to Z$
exhibit the relation
$x|_{C_2} \sim_i g$ (note the reversed order).
Now build the map
$H \colon \Lambda_o^{[e'_i]}[\sigma^i T] \to Z$ by
\[
	H|_{\sigma^i T - [e_i]} = x,
\qquad
	H|_{\sigma^i C_1} = K,
\qquad
	H|_{\sigma^i C_2} = \bar{K}.
\]
Again letting 
$\widetilde{H} \colon \sigma^i T \to Z$ be a lift,
the restriction 
$\widetilde{H}|_{\sigma^i T - [e'_i]}$
provides the desired $y \colon T \to Z$ in (ii),
finishing the proof.
\end{proof}

\begin{corollary}\label{HOOPUNIV COR}
Let $Z \in \mathsf{dSet}_G$ be a genuine $G$-$\infty$-operad. Then:
	\begin{itemize}
	\item[(a)] $\mathop{\mathrm{ho}}(Z) \in \mathsf{dSet}_G$ is a genuine equivariant operad
                (Definition \ref{OPG_DEF}).
	\item[(b)] the quotient map
	$Z \to \mathop{\mathrm{ho}}(Z)$ is the universal map from $Z$ to a genuine equivariant operad.
	\end{itemize}
In particular, (a) and (b) yield a natural identification
$
\mathsf{ho}(Z)
\simeq
\tau_G(Z)
$.
\end{corollary}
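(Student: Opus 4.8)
The plan is to establish parts (a) and (b) separately, after which the closing identification $\mathop{\mathrm{ho}}(Z)\simeq\tau_G(Z)$ is purely formal. Throughout I write $C\downarrow$-type restrictions using the abbreviated notation $T\to Z$ for maps $\upsilon_{G,\**}\Omega[T]\to Z$ already set up before Definition \ref{HOEQUIVS DEF}.

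\emph{Part (a).} By Definition \ref{OPG_DEF} I must show that for every $T\in\Omega_G$ the Segal map $\mathop{\mathrm{ho}}(Z)(\Omega[T])\to\mathop{\mathrm{ho}}(Z)(Sc[T])$ is a bijection. Since $Sc[T]$ is a union of subpresheaves of $\Omega[T]$ and $\upsilon_{G,\**}$ (being computed by $H$-fixed points, $\upsilon_{G,\**}X(T)\simeq X(T_{\**})^H$, cf.\ \eqref{UPSILONADJ EQ}) commutes with such unions, the target is the set of families $([f_v])_{v\in\boldsymbol{V}_G(T)}$, $[f_v]\in\mathop{\mathrm{ho}}(Z)(T_v)$, which agree over the shared edges, and the Segal map is $[x]\mapsto([x|_{T_v}])_v$. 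Injectivity is \emph{immediate} from Definition \ref{XTENDSIM DEF}: if $x|_{T_v}\sim y|_{T_v}$ for all $v$ then $x\sim y$ by definition. For surjectivity I lift a compatible family to genuine representatives $f_v\colon T_v\to Z$; because $\mathop{\mathrm{ho}}(Z)$ is trivial on stick trees, a class $[f_v]$ determines $f_v$ on edges (this is exactly Remark \ref{HOMOTBOUND REM}), so the $f_v$ already agree on the shared edges and glue to a single $\phi\colon Sc[T]\to Z$. As $Z$ is a genuine $G$-$\infty$-operad and $Sc[T]\to\Omega[T]$ is $G$-inner anodyne, the reasoning recorded after Definition \ref{GENINFOP DEF} for orbital horns applies verbatim and $Z$ lifts against $\upsilon_{G,\**}(Sc[T]\to\Omega[T])$; any lift $x\colon\Omega[T]\to Z$ of $\phi$ satisfies $x|_{T_v}=f_v$, so $[x]\mapsto([f_v])_v$.

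\emph{Part (b).} The quotient $Z\to\mathop{\mathrm{ho}}(Z)$ is levelwise surjective, hence an epimorphism in $\mathsf{dSet}_G$, so any factorization through it is automatically unique; it remains to show an arbitrary map $Z\to\mathcal{O}$ with $\mathcal{O}$ a genuine equivariant operad identifies $\sim$-equivalent dendrices. By Definition \ref{XTENDSIM DEF} together with the strict Segal condition for $\mathcal{O}$ (which makes $\mathcal{O}(T)\hookrightarrow\prod_v\mathcal{O}(T_v)$ injective), this reduces to the corolla case: if $f\sim_i g$ in $Z(C)$ then the images of $f$ and $g$ agree in $\mathcal{O}(C)$. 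Given the exhibiting homotopy $H\colon\sigma^iC\to Z$ of Definition \ref{HOEQUIVS DEF}, its image $\bar H\colon\sigma^iC\to\mathcal{O}$ decomposes via the strict Segal condition into the operation at the big vertex (an element of $\mathcal{O}(C)$) and the operation at the unary vertex $\sigma^i[e_i]$, which is a degeneracy and hence an identity in $\mathcal{O}$; therefore both faces $\bar H|_{\sigma^iC-[e'_i]}$ and $\bar H|_{\sigma^iC-[e_i]}$ equal that big-vertex operation, i.e.\ the images of $f$ and of $g$ coincide. This produces the factorization, so $Z\to\mathop{\mathrm{ho}}(Z)$ is the universal map from $Z$ to a genuine equivariant operad.

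\emph{Identification with $\tau_G$ and the main obstacle.} Since $\tau_G$ is the left adjoint to the fully faithful inclusion $N_G\colon\mathsf{Op}_G\to\mathsf{dSet}_G$ (cf.\ \eqref{TAUFUNCTS EQ} and Remark \ref{TAUGFORM REM}), the unit $Z\to\tau_G(Z)$ is by definition the universal map from $Z$ to a genuine equivariant operad; part (b) exhibits $Z\to\mathop{\mathrm{ho}}(Z)$ as another such, so uniqueness of the reflection gives a natural isomorphism $\mathop{\mathrm{ho}}(Z)\simeq\tau_G(Z)$ under $Z$, naturality in $Z$ coming from that of the quotient. The subtle step is the surjectivity in part (a): it is not formal that a genuine $G$-$\infty$-operad fills $\upsilon_{G,\**}$ of the Segal-core inclusion, since $\upsilon_{G,\**}$ is only a right adjoint and need not preserve the saturation defining $G$-inner anodyne maps — this is circumvented, exactly as in the orbital-horn discussion after Definition \ref{GENINFOP DEF}, by observing that the anodyne filtration of $Sc[T]\to\Omega[T]$ attaches horns as \emph{subpresheaves}, a colimit shape that $\upsilon_{G,\**}$ does preserve; the remaining gluing and identity-recognition steps are the corolla-level horn-filling arguments already carried out in Lemma \ref{EQUIVI LEM} and in the Proposition preceding this corollary.
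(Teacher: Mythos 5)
Your proof is correct and follows essentially the same route as the paper's. In part (a) the paper phrases the conclusion as existence of lifts against $\upsilon_{G,\**}(Sc[T]\to\Omega[T])$ (via factoring any map $\upsilon_{G,\**}Sc[T]\to\mathop{\mathrm{ho}}(Z)$ through $Z$ using Remark~\ref{HOMOTBOUND REM}, then using $Z$'s lifting property) plus strictness; you phrase it as bijectivity of the Segal map, but this is the same argument, and in part (b) you reduce to the corolla case and use the strict Segal condition exactly as the paper does. One small remark: you are a bit more explicit than the paper about why $Z$ inherits a lifting property against $\upsilon_{G,\**}$ of Segal core inclusions, noting that $\upsilon_{G,\**}$ preserves the relevant colimits of subpresheaves; the paper leaves this implicit as ``automatic from the lifting property for $Z$'' (relying on the same observation made after Definition~\ref{GENINFOP DEF} for orbital horns). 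That added care is warranted, since $\upsilon_{G,\**}$ is only a right adjoint and does not preserve arbitrary colimits.
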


\begin{proof}
	Note first that, 
	since by Remark \ref{HOMOTBOUND REM}
	the relation $\sim$ preserves edge colorings,
	Definition \ref{XTENDSIM DEF} implies that
	any map 
	$\upsilon_{G,\**}Sc[T] \to \mathop{\mathrm{ho}}(Z)$
	admits a factorization 
	$\upsilon_{G,\**}Sc[T] \to Z \xrightarrow{q} \mathop{\mathrm{ho}}(Z)$.
	 
	The right lifting property for $\tau_G(Z)$
	against the maps 
	$\upsilon_{G,\**}(Sc[T] \to \Omega[T])$
	is then automatic from the lifting property for $Z$.

	For strictness,	
	note that Definition \ref{XTENDSIM DEF}
	can be reinterpreted as saying that
	a pair of dendrices $\Omega[T] \rightrightarrows Z$
	give rise to the same point of 
	$\mathop{\mathrm{ho}}(Z)$, i.e. 
	the composites 
	$\Omega[T] \rightrightarrows Z \xrightarrow{q}
	\mathop{\mathrm{ho}}(Z)$ coincide, 
	iff the composites 
	$Sc[T] \to \Omega[T] \rightrightarrows Z \xrightarrow{q}
	\mathop{\mathrm{ho}}(Z)$ coincide, showing strictness, and establishing (a).
		
	For (b), since $\mathop{\mathrm{ho}}(Z)$ is a quotient of
	$Z$, it suffices to show that any map
	of the form $F \colon Z \to Y$ with $Y$ a genuine equivariant operad must also enforce the $\sim$ relation.
	For a $G$-corolla $C$ and
	$f,g\colon C \rightrightarrows Z$ such that 
	$H \colon \sigma^i C \to Z$ exhibits
	$f \sim_i g$, 
	the strict lifting condition for $Y$
	shows that the maps
	$F\circ H \colon \sigma^i C \to Y$,
	$F(f) \circ \sigma^i \colon \sigma^i C \to Y$
	must coincide, and hence $F(f)=F(g)$.
	The claim that $F$ respects equivalences
	of general pairs of dendrices $T \rightrightarrows Z$
	is now clear from Definition \ref{XTENDSIM DEF}.
\end{proof}

The following is the equivariant analogue of \cite[Prop. 4.8]{CM13b},
as discussed in Remark \ref{TWOHOMOP REM}.

\begin{proposition}\label{HOOPID_PROP}
Let $\mathcal{O} \in \mathsf{sOp}^G$
be a fibrant operad. 
Then there is a natural isomorphism of genuine equivariant operads
\begin{equation}\label{HOOPID EQ}
\tau_G(hcN(\mathcal{O})) \xrightarrow{\simeq}
\pi_0 \left( \upsilon_{G,\**} N\mathcal{O} \right).
\end{equation}
\end{proposition}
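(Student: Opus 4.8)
The plan is to recognize both sides as explicit ``homotopy genuine operads'' and then to compare them corolla-by-corolla using the canonical map $W(U)\to\Omega(U)$. First, I would note that the statement implicitly concerns $\tau_G$ applied to $\upsilon_{G,\**}hcN(\mathcal{O})\in\mathsf{dSet}_G$: since $\mathcal{O}$ is fibrant and $hcN$ is right Quillen (Proposition \ref{W!_LEFTQ_PROP}), $hcN(\mathcal{O})$ is a $G$-$\infty$-operad, so $\upsilon_{G,\**}hcN(\mathcal{O})$ is a genuine $G$-$\infty$-operad and Corollary \ref{HOOPUNIV COR} identifies $\tau_G(hcN(\mathcal{O}))$ with $\mathop{\mathrm{ho}}(\upsilon_{G,\**}hcN(\mathcal{O}))$. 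Dually, $N\mathcal{O}$ satisfies the strict Segal condition \eqref{STRSEGCON EQ}, hence is a Segal operad, so $\pi_0(\upsilon_{G,\**}N\mathcal{O}) = ho(N\mathcal{O})$ is a genuine equivariant operad (Definition \ref{HOMGENOP DEF} and the discussion after Definition \ref{OPG_DEF}). The goal is thus reduced to constructing a natural isomorphism of genuine equivariant operads between $\pi_0(\upsilon_{G,\**}N\mathcal{O})$ and $\mathop{\mathrm{ho}}(\upsilon_{G,\**}hcN(\mathcal{O}))$.

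Next I would build the comparison map from the canonical color-preserving map $W(U)\to\Omega(U) = \tau(\Omega[U])$ of Definition \ref{WU_DEF}, which is natural in $U\in\Omega$ and $G$-equivariant. Applying $\mathsf{sOp}^G(-,\mathcal{O})$ yields a natural map of $G$-dendroidal sets $N_0\mathcal{O}\to hcN(\mathcal{O})$, where $N_0\mathcal{O}$ is the dendroidal set of $0$-simplices of the preoperad $N\mathcal{O}$; by \eqref{ALTNER EQ} and \eqref{OMFFREE EQ} this is the nerve of the discrete $G$-operad $\mathcal{O}_0$ of $0$-simplices, so that $\upsilon_{G,\**}N_0\mathcal{O}$ surjects onto $\pi_0(\upsilon_{G,\**}N\mathcal{O})$. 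I would then check that the composite $\upsilon_{G,\**}N_0\mathcal{O}\to\upsilon_{G,\**}hcN(\mathcal{O})\to\mathop{\mathrm{ho}}(\upsilon_{G,\**}hcN(\mathcal{O}))$ sends $\pi_0$-equivalent $0$-simplices to $\sim$-equivalent dendrices, so that it descends to a natural transformation $\Psi\colon\pi_0(\upsilon_{G,\**}N\mathcal{O})\to\mathop{\mathrm{ho}}(\upsilon_{G,\**}hcN(\mathcal{O}))$. As a map of presheaves on $\Omega_G$ between genuine equivariant operads, $\Psi$ is an isomorphism as soon as it is one on $G$-corollas.

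The remaining, and crucial, step is the corolla computation. Fix a $G$-corolla $C\in\Sigma_G$, say $C\simeq G\cdot_H C_n$ with associated graph subgroup $\Gamma\leq G\times\Sigma_n^{op}$. Since $C_n$ has no inner edges, \eqref{WU_EQ2} gives $W(C_n) = \Omega(C_n)$, whence $(hcN(\mathcal{O}))(C_n) = \coprod_{\mathfrak{c}}\mathcal{O}(\vect{C_n})_0$ and, after passing to the relevant fixed points, $(\upsilon_{G,\**}hcN(\mathcal{O}))(C)\simeq\coprod_{[\mathfrak{c}]}\mathcal{O}(\vect{C})_0^{\Gamma}$, while $(\upsilon_{G,\**}N\mathcal{O})(C)\simeq\coprod_{[\mathfrak{c}]}\mathcal{O}(\vect{C})^{\Gamma}$. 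By Lemma \ref{EQUIVI LEM}(b), the relation $\sim$ on the former is the relation $\sim_0$ of Definition \ref{HOEQUIVS DEF}. I would then analyze the $W$-construction on the once-degenerate $G$-corolla $\sigma^0 C$ of \eqref{DEGGCOR EQ}: using \eqref{WU_EQ2}, \eqref{OMFFREE EQ} and \eqref{OTIMESC EQ} one sees that $W(\sigma^0 C)$ is obtained from $\Omega(C)\otimes_{\mathfrak{C}_{\bullet}}\Delta[1]$ by formally adjoining the extra outer edge $e_0'$ together with the degeneracy relating it to $e_0$, so that a map $\sigma^0 C\to hcN(\mathcal{O})$ satisfying the three conditions of Definition \ref{HOEQUIVS DEF} amounts precisely to a $1$-simplex of $\mathcal{O}(\vect{C})^{\Gamma}$ with the prescribed endpoints. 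Hence $\sim_0$ on $\coprod_{[\mathfrak{c}]}\mathcal{O}(\vect{C})_0^{\Gamma}$ is exactly the relation of lying in the same path component, i.e.\ the $\pi_0$-relation; this simultaneously proves that $\Psi$ is well defined and that it is an isomorphism on $C$. Naturality in $\mathcal{O}$ is immediate from the construction.

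I expect the main obstacle to be precisely this $\sigma^0 C$ computation, which is the equivariant port of \cite[\S 6]{MW09} and of \cite[Prop. 4.8]{CM13b}; it demands care both with the ``doubled root color'' by which $W(\sigma^0 C)$ differs from the cleaner $\Omega(C)\otimes_{\mathfrak{C}_{\bullet}}\Delta[1]$ expected on the $\tau$-side, and with the bookkeeping of graph-subgroup fixed points throughout. Once the corolla case is settled, the outstanding points — well-definedness of $\Psi$ over an arbitrary $T\in\Omega_G$, which reduces vertexwise to corollas via Definition \ref{XTENDSIM DEF}, and compatibility of $\Psi$ with all $\Omega_G$-structure, which is automatic since $\Psi$ is built from the natural transformation $W(-)\to\Omega(-)$ — are routine.
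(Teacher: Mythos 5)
Your proposal is correct and rests on the same key computation as the paper, but sets up the comparison map in the opposite direction and with slightly different bookkeeping. You first reduce both sides to explicit \textsf{ho}-style quotients via Corollary~\ref{HOOPUNIV COR}, and then hinge everything on identifying, at a $G$-corolla $C$, the $\sim_0$-relation of Definition~\ref{HOEQUIVS DEF} with the $\pi_0$-relation by inspecting $W(\sigma^0 C)$; that corolla-level identification is indeed the heart of the matter and is exactly what the paper extracts (as a citation to the pushout of Lemma~\ref{WLEFTQPUSH LEM}). The paper, however, constructs the comparison map going the other way, $\tau_G(hcN(\O)) \to \pi_0(\upsilon_{G,\**}N\O)$, through a chain of natural mapping-set identifications (using $\pi_0\upsilon_{G,\**}NW(T) \simeq \upsilon_{G,\**}\Omega[T]$) followed by the universal property of $\tau_G=\mathsf{ho}$ from Corollary~\ref{HOOPUNIV COR}. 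This has the advantage that the map is automatically well defined and natural, so the only thing left to check is that two operations identified in $\pi_0\upsilon_{G,\**}N\O$ were already $\sim$-equivalent (both targets being quotients of $\upsilon_{G,\**}hcN(\O)$). Your route through $N_0\O$ and the map $W(U)\to\Omega(U)$ reaches the same conclusion but requires you to verify descent through $\pi_0$ \emph{and} injectivity as separate steps, both of which reduce to the same $\sigma^0 C$ calculation. One small point you should make explicit: when you say a map $\sigma^0 C\to hcN(\O)$ satisfying the conditions of Definition~\ref{HOEQUIVS DEF} ``amounts precisely to a $1$-simplex with prescribed endpoints,'' you should note that since $\O$ is fibrant the simplicial sets $\O(\vect{C})^\Gamma$ are Kan, so the one-simplex relation already equals the path-component relation; otherwise $\pi_0$-equivalence is a priori a zigzag condition and the identification of relations needs that extra word.
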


\begin{proof}
	To ease notation, we abbreviate
	$\upsilon_{G,\**}$ as $\upsilon_{\**}$
	throughout the proof.

      By \cite[Prop. 5.9]{BP20}, $\pi_0(\upsilon_{\**}N\O)$ is a genuine equivariant operad,
      and the existence of the map in \eqref{HOOPID EQ}
      will be an application of
      Corollary \ref{HOOPUNIV COR}(b).

Firstly,
note that we have the following identifications,
naturally on $T \in \Omega_G$.
\[
\upsilon_{\**}hcN(\O)(T)
\simeq
\mathsf{sOp}^G(W_!\Omega[T],\O)
\simeq
\mathsf{sdSet}^G(N W_!\Omega[T],N \O)
\simeq 
\mathsf{sdSet}_G(\upsilon_{\**}N W_!\Omega[T],\upsilon_{\**}N \O)
\]
where the second and third identifications use the fact that 
$N\colon \mathsf{Op} \to \mathsf{dSet}$ and $\upsilon_{\**} \colon \dSet^G \to \dSet_G$
are fully faithful inclusions. 
One now has a map
\begin{align*}
  \mathsf{sdSet}_G(\upsilon_{\**}N W_!\Omega[T],\upsilon_{\**}N \O)
  \to \phantom{|} &
    \mathsf{sdSet}_G(\upsilon_{\**}N W_!\Omega[T],\pi_0\upsilon_{\**}N \O)
  \\ & \simeq
       \mathsf{dSet}_G(\pi_0 \upsilon_{\**}  N W_!\Omega[T],\pi_0\upsilon_{\**}N \O)
  \\ & \simeq
       \mathsf{dSet}_G(\upsilon_{\**}\Omega[T],\pi_0\upsilon_{\**}N \O)
  \\ & =
       (\pi_0\upsilon_{\**}N \O)(T)
\end{align*}
so altogether we obtain a map
$\upsilon_{\**}hcN(\O) \to \pi_0 \upsilon_{\**} N \O$
and hence, by Corollary \ref{HOOPUNIV COR},
the desired map 
\[
	\tau_G(hcN(\O)) \to \pi_0 \upsilon_{\**} N \O.
\]
Moreover, both of these are quotients of $\upsilon_{\**}hcN(\O)$,
so to prove that this map is an isomorphism one needs only show that any two
operations $f,g \colon C \to hcN \O$ of $\upsilon_{\**}hcN(\O)$
that are identified in 
$\pi_0 \upsilon_{\**} N \O$
were already identified in 
$\mathop{\mathrm{ho}}(hcN(\O))$.
But this now follows from the pushout below,
cf. Lemma \ref{WLEFTQPUSH LEM},
\[
\begin{tikzcd}
	\Omega(C) \otimes_{\mathfrak{C}_{\bullet}}
	\partial \Delta[1]
	\ar{r} \ar{d}
&
	W_! \left(\partial \Omega[\sigma^0 C]\right) 
	\ar{d}
\\
	\Omega(C) \otimes_{\mathfrak{C}_{\bullet}}
	\Delta[1]
	\ar{r}
&
	W(\sigma^0 C)
\end{tikzcd}
\]
since mapping out of the bottom left (resp. right) corner
encodes homotopy in
$\pi_0 \upsilon_{\**} N \O$
(resp. $\mathop{\mathrm{ho}}(hcN(\O))$).
\end{proof}

\providecommand{\bysame}{\leavevmode\hbox to3em{\hrulefill}\thinspace}
\providecommand{\MR}{\relax\ifhmode\unskip\space\fi MR }
\providecommand{\MRhref}[2]{%
  \href{http://www.ams.org/mathscinet-getitem?mr=#1}{#2}
}
\providecommand{\doi}[1]{%
  doi:\href{https://dx.doi.org/#1}{#1}}
\providecommand{\arxiv}[1]{%
  arXiv:\href{https://arxiv.org/abs/#1}{#1}}
\providecommand{\href}[2]{#2}

\makeatletter\@input{labels-TAS.tex}\makeatother

\end{document}